\let\frak\mathfrak
\let\Bbb\mathbb
\def\>{\relax\ifmmode\mskip.666667\thinmuskip\relax\else\kern.111111em\fi}
\def\<{\relax\ifmmode\mskip-.333333\thinmuskip\relax\else\kern-.0555556em\fi}
\def\vsk#1>{\vskip#1\baselineskip}
\def\vv#1>{\vadjust{\vsk#1>}\ignorespaces}
\def\vvn#1>{\vadjust{\nobreak\vsk#1>\nobreak}\ignorespaces}
  \let\ssize\scriptstyle
\let\sssize\scriptscriptstyle
\let\Medskip\medskip
\def\medskip{\par\Medskip}
\let\Bigskip\bigskip
\def\bigskip{\par\Bigskip}
\let\Maketitle\maketitle
\def\maketitle{\Maketitle\thispagestyle{empty}\let\maketitle\empty}
\newtheorem{thm}{Theorem}[section]
\newtheorem{cor}[thm]{Corollary}
\newtheorem{lem}[thm]{Lemma}
\newtheorem{prop}[thm]{Proposition}
\theoremstyle{definition}                                  
\numberwithin{equation}{section}
\theoremstyle{definition}
\newtheorem*{rem}{Remark}
\newcommand{\nc}{\newcommand}
\nc{\Z}{\mathbb{Z}}
\nc{\Q}{\mathbb{Q}}
\nc{\C}{\mathbb{C}}
\nc{\R}{\mathbb{R}}
\nc{\G}{\mathbb{G}}
\nc{\N}{\mathbb{N}}
\nc{\V}{\mathbb{V}}
\nc{\D}{\mathbb{D}}
\nc{\dc}{\Delta}
\nc{\ms}{\mathscr}
\nc{\mr}{\mathrm}
\nc{\mc}{\mathcal}
\nc{\mt}{\mathtt}
\nc{\msf}{\mathsf}
\nc{\mf}{\mathfrak}
\nc{\mb}{\mathbb} 
\newcommand{\ml}{\mathcal{L}}
\nc{\wtid}{\widetilde{\mathcal{D}}}
\nc{\wtic}{\widetilde{\mathcal{C}}}
\nc{\wh}{\widehat}
\nc{\ti}{\widetilde}
\nc{\rat}{\mt{rat}}
\nc{\la}{\lambda}
\nc{\msp}{\ms{P}}
\nc{\ve}{\varepsilon}
\nc{\vpi}{\varpi}
\nc{\sm}{\setminus}
\nc{\dbc}{\mr{D}^b_c}
\nc{\dpl}{\mr{D}^+}
\nc{\mrd}{\mr{D}}
\nc{\db}{\mr{D}^b}
\nc{\msm}{\ms{M}}
\nc{\msk}{\msf{k}}
\nc{\h}{\mbox{-}}
\nc{\bs}{\backslash}
\nc{\ov}{\overline}
\nc{\rank}{\mr{rank}}
\newcommand\im{\operatorname{im}}
\newtheorem{remark}[equation]{Remark}
\newtheorem{proposition}[equation]{Proposition}
\newtheorem*{claimn}{Claim}
\newtheorem{lemma}[equation]{Lemma}
\let\mc\mathcal
\let\nc\newcommand
\let\al\alpha
\let\dl\delta
\let\ka\kappa
\let\la\lambda
\let\phi\varphi
\let\si\sigma
\let\om\omega
\let\Om\Omega
\let\der\partial
\let\ox\otimes
\let\geq\geqslant
\let\leq\leqslant
\let\on\operatorname
\let\bi\bibitem
\let\bs\boldsymbol
\def\C{{\mathbb C}}
\def\Z{{\mathbb Z}}
\def\R{{\mathbb R}}
\def\F{{\mathbb F}}   
\def\V{{\mc V}}
\def\+#1{^{\{#1\}}}
\def\Gr{\on{Gr}}
\def\rank{\on{rank}}
\def\beq{\begin{equation}}
\def\eeq{\end{equation}}
\def\be{\begin{equation*}}
\def\ee{\end{equation*}}
\nc{\bea}{\begin{eqnarray*}}
\nc{\eea}{\end{eqnarray*}}
\nc{\bean}{\begin{eqnarray}}
\nc{\eean}{\end{eqnarray}}
\def\h{{\mathfrak h}}
\let\ga\gamma
\nc{\Il}{{\mc I_{\bs\la}}}
\nc{\bla}{{\bs\la}}
\nc{\Fla}{\F_\bla}
\nc{\tfl}{{T^*\Fla}}
\nc{\GL}{{GL_n(\C)}}
\nc{\GLC}{{GL_n(\C)\times\C^*}}
\let\sd s 
\def\ddk_#1{\kk_{#1}\<\>\frac\der{\der\<\>\kk_{#1}}}
\def\bul{\mathbin{\raise.2ex\hbox{$\sssize\bullet$}}}
\def\intt{\mathchoice
{\mathop{\raise.2ex\rlap{$\,\,\ssize\backslash$}{\intop}}\nolimits}
{\mathop{\raise.3ex\rlap{$\,\sssize\backslash$}{\intop}}\nolimits}
{\mathop{\raise.1ex\rlap{$\sssize\>\backslash$}{\intop}}\nolimits}
{\mathop{\rlap{$\sssize\<\>\backslash$}{\intop}}\nolimits}}
\let\kk q 
\let\cc c
\let\Ko K
\def\GZ/{Gelfand-Zetlin}
\def\KZ/{{\slshape KZ\/}}
\def\qKZ/{{\slshape qKZ\/}}
\def\XXX/{{\slshape XXX\/}}
\def\Sym{\on{Sym}}
\nc{\A}{{\mc A}}
\def\Sing{{\on{Sing}}}
\def\sll{{\frak{sl}}}
\def\slt{{\frak{sl}_2}}
\def\Ik{{\mc I_r}}
\def\Q{{\mathbb Q}}
\def\K{{\mathbb K}}
\nc{\hsl}{\widehat{{\frak{sl}_2}}}
\nc{\BC}{{ \mathbb C}}
\nc{\lra}{\longrightarrow}
\nc{\CO}{{\mathcal{O}}}
\nc{\BZ}{{ \mathbb Z}}
\nc{\hfn}{\hat{\frak{n}}}
\nc\Zs{{\Z/p^s\Z}}
\nc\Zo{{\Zs[z]^0}}
\nc\gr{{\on{gr}}}
\nc\fD{{\frak D}}
\let\wh\widehat
\nc\kka{{\tilde \ka}}
\nc\Tt{{\bf T}}
\begin{document}

\hrule width0pt
\vsk->

\title[$p$-curvature operators]
{$p$-curvature operators and Satake-type phenomenon
\\
for $\frak{sl}_2$ \KZ/ equations with $\ka=\pm 2$}

\author[P.~Belkale, E.~Mukhin, A.~Varchenko ]
{P.~Belkale$^*$, E.~Mukhin$^\circ$, A.~Varchenko$^{\star}$}

\maketitle

\begin{center}

{\it $^{* \star}$Department of Mathematics, University
of North Carolina at Chapel Hill\\ Chapel Hill, NC 27599-3250, USA\/}

\vsk.5>

{\it $\kern-.4em^\circ\<$Department of Mathematical Sciences,
Indiana University  Indianapolis\kern-.4em\\
402 North Blackford St, Indianapolis, IN 46202-3216, USA\/}

\end{center}

\vsk>
{\it Key words\/}:  
\KZ/ equations,  $p$-hypergeometric solutions,  $p$-curvature
\vsk.2>

2020 Mathematics Subject Classification: 11D79, 12H25 (32G34, 33C05, 33E30, 33E50)



{\let\thefootnote\relax
\footnotetext{\vsk-.8>\noindent
$^*\<${\sl E\>-mail}:\enspace belkale@email.unc.edu\>,
supported in part by NSF grant DMS - 2302288
\\
$^\circ\<${\sl E\>-mail}:\enspace emukhin@iupui.edu\>,
supported in part by Simons Foundation grant \rlap{709444}
\\
$^\star\<${\sl E\>-mail}:\enspace anv@email.unc.edu\>}}

\begin{abstract}

The $\frak{sl}_2$ \KZ/ differential equations with values in the tensor power of the fundamental representation 
with parameter $\ka=\pm 2$ are considered. 
A Satake-type correspondence  is established over complex 
numbers and subsequently  reduced to finite characteristic. 
This correspondence enables the study of the \KZ/ equations on the lower weight subspaces of the tensor power
in terms of the wedge powers of the weight subspace of the weight  just below the highest weight. 

We apply this approach to analyze the $p$-curvature operators associated with our \KZ/ equations, evaluate
 the dimension  of the solution space in characteristic $p$, 
 and  determine whether all solutions are generated by the so-called $p$-hypergeometric solutions. In particular, we 
 show that not all solutions  of the \KZ/   equations with $\ka=2$ in characteristic $p$ 
 are generated by $p$-hypergeometric solutions.  Previously, no such examples were known.

 \end{abstract}

{\small\tableofcontents\par}

\setcounter{footnote}{0}
\renewcommand{\thefootnote}{\arabic{footnote}}

\section{Introduction}

The \KZ/ equations form a system of linear partial differential equations
 that exhibit profound connections with theoretical physics, representation theory, algebraic geometry,
and integrable systems. In 1984, Vladimir Knizhnik and Alexander Zamolodchikov first introduced
these equations within the context of conformal field theory to depict the behavior of correlation
functions of vertex operators in two-dimensional quantum field theories, \cite{KZ}. Since then, the
equations have emerged
as a central object in both mathematical physics and pure mathematics, bridging seemingly disparate
fields such as Lie algebras, braid groups, hypergeometric integrals, congruences in number theory, 
and geometric topology.

In this paper we consider the example of the \KZ/ differential equations associated with the tensor power
$L^{\ox(2g+1)}$ of the two-dimensional irreducible representation $L$ of the Lie algebra $\frak{sl}_2$ 
with standard generators $e,f,h$.
The \KZ/ equations  are the system of differential equations on an $L^{\ox(2g+1)}$-valued function
$N(z)$, $z=(z_1,\dots, z_{2g+1})$,
\bean
\label{KZ int}
\qquad
\left(\frac{\der}{\der z_i} - \frac 1\ka\sum_{j\ne i}\frac{ P^{(i,j)}-1}{z_i-z_j}\right) \!
N(z) =0, 
\qquad  i = 1, \dots , 2g+1, 
\eean
where  $P^{(i,j)}$ is the permutation of the $i$-th and $j$-th factors of $L^{\ox(2g+1)}$ 
and $\ka$ is a scalar parameter of the equations.  

The differential operators $\nabla ^{\on{KZ},\ka}_i = 
\der_i - \frac 1\ka\sum_{j\ne i}\frac{ P^{(i,j)}-1}{z_i-z_j}$ define the \KZ/ connection on the trivial
bundle  $L^{\ox(2g+1)}\times \mathbb A^{2g+1} \to \mathbb A^{2g+1}$, where 
$\mathbb A^{2g+1}$ is the affine space with coordinates $z_1,\dots,z_{2g+1}$. The connection is flat
for any value of $\ka$ and has singularities over the union of all diagonal hyperplanes in $\mathbb A^{2g+1}$.

As a rule one considers the \KZ/  equations over the field of complex numbers.
Then the \KZ/ differential  equations have  multidimensional hypergeometric
solutions,  \cite{SV1}.

\vsk.2>
 
More precisely, the
 differential operators $\nabla ^{\on{KZ},\ka}_i$ commute with the $\frak{sl}_2$-action on 
 $L^{\ox(2g+1)}$. Hence the \KZ/ connection 
 has invariant subbundles
 $\Sing L^{\ox(2g+1)}[2g+1-2r]\times \mathbb A^{2g+1} \to \mathbb A^{2g+1}$ where
$\Sing L^{\ox(2g+1)}[2g+1-2r] \subset L^{\ox(2g+1)}$ is the subspace of vectors $v$ such that 
$ev=0$ and $hv= (2g+1-2r)v$.  Here $r=0,\dots, g$.  All $L^{\ox(2g+1)}$-valued solutions of 
the \KZ/ equations are generated by solutions with values in these subbundles.

\vsk.4>

We have
$\dim \Sing L^{\ox(2g+1)}[2g+1-2r] = \binom{2g}{r} - \binom{2g}{r-2}$.
For $r=0$,  $\dim \Sing L^{\ox(2g+1)}[2g+1]=1$, and the \KZ/ equations on a
$\Sing L^{\ox(2g+1)}[2g+1]$-valued (scalar) function $N(z)$ take the form
$\frac{\der}{\der z_i}N(z) = 0$, $i=1,\dots, 2g+1$, with obvious solutions.
 (In this case $P^{(i,j)}-1=0$.)

\vsk.4>

We have $\dim \Sing L^{\ox(2g+1)}[2g-1]=2g$ if  $r=1$. 
In a natural basis of $L^{\ox(2g+1)}$, the 
vectors  of $\Sing L^{\ox(2g+1)}[2g-1]$ are identified with 
$2g+1$-vectors $(N_1,\dots,N_{2g+1})$ with  $\sum_i N_i$=0, and the $\Sing L^{\ox(2g+1)}[2g-1]$-valued
hypergeometric solutions   are given by the vectors of one-dimensional hyperelliptic
integrals,
\bean
N^\ga(z) = \int_{\ga(z)} \prod_{i=a}^{2g+1}(t_1-z_a)^{-1/\ka}
\left(\frac 1{t_1-z_1}, \dots, \frac 1{t_1-z_{2g+1}}\right) dt_1
\eean
where $\ga(z)$ is a flat family of  Pochhammer cycles. The solutions depend on the choice of Pochhammer cycles. All
$\Sing L^{\ox(2g+1)}[2g-1]$-valued solutions are linear combinations of such vectors of integrals.

For $r>1$, the $\Sing L^{\ox(2g+1)}[2g+1-2r]$-valued hypergeometric solutions solutions are given by analogous 
$r$-dimensional hypergeometric  integrals,    \cite {CF, DJMM, SV1, V2, V3},  but considerably
more complicated. 

\vsk.2>

 Notice that  solutions of \KZ/ equations over complex numbers have remarkable properties. 
 For example, it was shown  in  \cite {L2,BBM,BF,BFM} recently,
  that when $\kappa$ is an arbitrary nonzero rational number, the spaces of solutions of \KZ/ equations 
carry the structure of variations of mixed Hodge structures with coefficients in a cyclotomic field.

\vsk.2>
In this paper we study the \KZ/ differential equations with parameters $\ka=2$ and $-2$.

\vsk.2>

Our first result, Theorem \ref{cor T(z)} and formula \eqref{T(z)}, describes
 the  $\Sing L^{\ox(2g+1)}[2g+1-2r]$-valued solutions of the \KZ/ equations with $\ka=2$ 
in terms of 
the $r$-fold wedge products of the 
$\Sing L^{\ox(2g+1)}[2g-1]$-valued solutions with $\kappa=2$.
The transformation of  solutions  is given by a  linear map
$T(z) : \wedge^r\Sing L^{\ox(2g+1)}[2g-1] \to \Sing L^{\ox(2g+1)}[2g+1-2r]$
 which is a homogeneous polynomial in $z_1,\dots,z_{2g+1}$ of degree $\binom{r}{2}$.

We
construct a natural direct decomposition of the space 
$\wedge^r\Sing L^{\ox(2g+1)}[2g-1] $ with the first direct summand denoted by
 $ \mc P_r^{\on{KZ}}(z)$ and called the primitive part. We show that
$T(z) \vert_{ \mc P_r^{\on{KZ}}(z) } :  \mc P_r^{\on{KZ}}(z)  \to 
 \Sing L^{\ox(2g+1)}[2g+1-2r]$ is an isomorphism of the corresponding \KZ/ connections with $\ka=2$ and show that 
 $T(z)$ restricted to other direct summands equals zero.

\vsk.2>

Theorem \ref{thm 5.4} describes an analogous statement for
 $\ka=-2$. In that case, we construct 
a map $\bar T(z) :   \Sing L^{\ox(2g+1)}[2g+1-2r] \to \wedge^r \Sing L^{\ox(2g+1)}[2g-1]$ which defines an embedding
of the corresponding \KZ/ connections with $\ka=-2$. 
The map $\bar T(z)$ is a homogeneous polynomial in $z_1,\dots,z_{2g+1}$ of degree $\binom{r}{2}$.

Formula \eqref{bar TTT}  presents a useful map
 $\tilde T(z) :   \Sing L^{\ox(2g+1)}[2g+1-2r] \to \wedge^r \Sing L^{\ox(2g+1)}[2g-1]$.
 We show that $\tilde T(z)$ equals $\bar T(z)$ 
up to a nonzero multiplicative constant,  Theorem \ref{thm tilde T}.

\begin{rem}

The 
 quantum Satake principle in \cite{GM, GGI} says that
 the quantum connection on the Grassmannian $\Gr(r,n)$ is
  the $r$-th wedge of the quantum connection on the projective space
$\on{P}^{n-1}$.  
The quantum connection on $\Gr(r,n)$ is a version of the KZ connection on
the weight space $L^{\ox n}[n-2r]$, while the
quantum connection on $\on{P}^{n-1}$ is a version of the KZ connection on
the weight space $L^{\ox n}[n-2]$, see, for example,
\cite{MO, GRTV, CDG, CV}.  Therefore our Theorems \ref{cor T(z)} and \ref{thm 5.4} 
exhibit  a flavor of the quantum Satake phenomenon.

\end{rem}

\begin{rem}
The development of the Satake-type operators \( T(z) \) and \( \bar{T}(z) \) was inspired by \cite[Proposition 12.1]{BFM}, which demonstrated that the \KZ/ connection on \(\Sing L^{\otimes (2g+2)}[0]\) with \(\kappa=2\) is isomorphic to the local system of the \(g\)-th primitive cohomology groups (tensored with \(\mathbb{C}\)) of the hyperelliptic genus \(g\) curve obtained as a double cover of \(\on{P}^1\) branched over the \(2g+2\) points (not over infinity).

Our construction of the Satake-type operators \( T(z) \) and \( \bar{T}(z) \) was motivated by applications in characteristic \( p \).

\end{rem}

In \cite{SV2}, the differential \KZ/ equations were considered modulo a   prime $p$. It turned
out that modulo $p$ the \KZ/ equations have a family of polynomial solutions. The construction
of these solutions is analogous to the construction of the hypergeometric
solutions over complex numbers, and these polynomial solutions are called the $p$-hypergeometric solutions.
\vsk.2>

The $\Sing L^{\ox(2g+1)}[2g-1]$-valued $p$-hypergeometric solutions of the \KZ/ equations with $\ka=2$
are defined as follows.
For any positive integer $\ell_1$, denote by $N^{\ell_1}(z)$ the coefficient of the monomial
$t_1^{\ell_1 p-1}$
in the vector of polynomials
\bean
 \prod_{i=a}^{2g+1}(t_1-z_a)^{(p-1)/2}
\left(\frac 1{t_1-z_1}, \dots, \frac 1{t_1-z_{2g+1}}\right)
\eean
in the variables $t_1,z_1,\dots, z_{2g+1}$.
For any positive integer $\ell_1$, the vector of polynomials $N^{\ell_1}(z)$ is a 
$\Sing L^{\ox(2g+1)}[2g-1]$-valued solution modulo $p$ of the \KZ/ equations with $\ka=2$, \cite{SV2}.
The solutions $N^{\ell_1}(z)$, $1\leq \ell_1\leq g$, are nonzero, linearly independent, and called 
the $\Sing L^{\ox(2g+1)}[2g-1]$-valued $p$-hypergeometric solutions with $\ka=2$.
For other positive integers $\ell_1$ the solutions  $N^{\ell_1}(z)$ equal zero,  \cite{V4}.

\vsk.2>

The $\Sing L^{\ox(2g+1)}[2g-1]$-valued $p$-hypergeometric solutions of the \KZ/ equations with $\ka=-2$
are defined similarly.
For any positive integer $\ell_1$, denote by $\bar N^{\ell_1}(z)$ the coefficient of
the monomial  $t_1^{\ell_1 p-1}$
in the vector of polynomials
\bean
 \prod_{i=a}^{2g+1}(t_1-z_a)^{(p+1)/2}
\left(\frac 1{t_1-z_1}, \dots, \frac 1{t_1-z_{2g+1}}\right)
\eean
in the variables $t_1,z_1,\dots, z_{2g+1}$.
For any positive integer $\ell_1$, the vector of polynomials $\bar N^{\ell_1}(z)$ is a 
$\Sing L^{\ox(2g+1)}[2g-1]$-valued solution modulo $p$ of the \KZ/ equations with $\ka=-2$,  \cite{SV2}.
The solutions $\bar N^{\ell_1}(z)$, $1\leq \ell_1\leq g$, are nonzero, linearly independent, and called 
the $\Sing L^{\ox(2g+1)}[2g-1]$-valued $p$-hypergeometric solutions  with $\ka=-2$.
For other positive integers $\ell_1$ the solutions  $\bar N^{\ell_1}(z)$ equal zero.

\vsk.2>

The $\Sing L^{\ox(2g+1)}[2g+1-2r]$-valued solutions modulo $p$ of the \KZ/ equations with $\ka=2$
are defined analogously.
For any positive integers $\ell_1, \dots,\ell_r$, one considers the coefficient
$N^{\ell_1, \dots,\ell_r}(z)$  of the monomial $t_1^{\ell_1 p-1}\dots t_r^{\ell_r p-1}$ in a suitably defined vector of polynomials
$\Psi_2(t_1,\dots,t_r, z_1,\dots$, $z_{2g+1})$. Then 
$N^{\ell_1, \dots,\ell_r}(z)$ is a $\Sing L^{\ox(2g+1)}[2g+1-2r]$-valued solution modulo $p$ of the \KZ/ equations with 
$\ka=2$.
  
\vsk.2>

The $\Sing L^{\ox(2g+1)}[2g+1-2r]$-valued solutions modulo $p$ of the \KZ/ equations with $\ka=-2$
are defined as  coefficients
$\bar N^{\ell_1, \dots,\ell_r}(z)$  of the monomials $t_1^{\ell_1 p-1}\dots t_r^{\ell_r p-1}$ in a suitably defined 
another vector of polynomials  $\Psi_{-2}(t_1,\dots,t_r, z_1,\dots$, $z_{2g+1})$. 

\vsk.2> 

The solutions \( N^{\ell_1, \dots, \ell_r}(z) \) and \( \bar{N}^{\ell_1, \dots, \ell_r}(z) \) are quite intricate. Determining which of these solutions are nonzero, understanding the linear relations among them, and establishing the dimensions of the solution spaces they generate pose significant challenges. 
Additionally, an important question is whether the \(\mathrm{KZ}\) equations admit more solutions in characteristic \( p \) than the \( p \)-hypergeometric solutions. We address these issues by applying our Satake-type operators \( T(z) \) and \( \bar{T}(z) \) in  characteristic \( p \).

\vsk.2>

We show that for all prime $p$ with finitely many exceptions, the maps 
$T(z) \vert_{ \mc P_r^{\on{KZ}}(z) } :  \mc P_r^{\on{KZ}}(z)  \to 
 \Sing L^{\ox(2g+1)}[2g+1-2r]$ and
$\bar T(z) :   \Sing L^{\ox(2g+1)}[2g+1-2r] \to \wedge^r \Sing L^{\ox(2g+1)}[2g-1]$  can be reduced modulo $p$ and
the reduced maps preserve the properties of these maps over complex numbers,  
see Sections \ref{sec 5.6}, \ref{sec 7.3} and Corollaries \ref{cor main T}, \ref{cor bar red}.
 These statements in particular show that all isomorphism-type characteristics of the \KZ/ connections with $\ka=2$ on
$\mc P_r^{\on{KZ}}(z)$  and
on $\Sing L^{\ox(2g+1)}[2g+1-2r]$ are identified by $T(z)$. 
See for example, Corollary \ref{cor main T}, Theorem \ref{thm r=2 all sol}, and Section \ref{sec 8.6}.

\vsk.3>

In Theorem \ref{thm p-map}, we show that $T(z)$ transforms wedges of the
$\Sing L^{\ox(2g+1)}[2g-1]$-valued $p$-hypergeometric 
solutions to the $\Sing L^{\ox(2g+1)}[2g+1-2r]$-valued 
$p$-hypergeometric solutions, $T(z)(N^{\ell_1}(z) \wedge \dots\wedge N^{\ell_r}(z)) = 
N^{\ell_1, \dots, \ell_r}(z)$.
Similarly, in Theorem \ref{thm p-map -2}, for $\ka=-2$, 
the map $\bar T(z)$  expresses the $r$-fold wedges 
$\bar N^{\ell_1}(z) \wedge \dots \wedge \bar N^{\ell_r}(z)$  in terms of the
$p$-hypergeometric  solutions $\bar N^{m_1,\dots, m_r}(z)$.

The statements of Theorems \ref{thm p-map} and \ref{thm p-map -2} were unexpected. 
Initially, it  was not 
clear why an isomorphism between two systems of differential equations would preserve the property of solutions being $p$-hypergeometric.

\vsk.2>

Using  Theorem 
\ref{thm p-map -2},  we 
show that  set of all  $p$-hypergeometric solutions
$\bar N^{\ell_1,\dots, \ell_r}(z)$
spans a  space of dimension $\binom{g}{r}$, see Corollary \ref{cor t vs b N}.

\vsk.2>

In Theorem \ref{thm ort r=2}, we prove  orthogonality relations
$S\big(N^{\ell_1,\dots, \ell_r}(z), \bar N^{m_1,\dots, m_r}(z)\big)=0$
 where 
$S$ is the Shapovalov form on
$\Sing L^{\ox(2g+1)}[2g+1-2r]$ and 
$\ell_1,\dots, \ell_r$ and $m_1,\dots, m_r$
are arbitrary positive integers.
 These 
relations are generalizations of the orthogonal relations
in \cite{VV1}. Cf. the orthogonal relations in \cite{MV} for simplest \qKZ/ difference equations.

\vsk.>

The \( p \)-curvature operators \( C_i := \left(\nabla^{\mathrm{KZ}, \kappa}_i\right)^p \), for \( i = 1, \dots, 2g+1 \), serve as a tool to analyze the solution space of the \(\mathrm{KZ}\) equations in characteristic \( p \). These operators act on sections of the \(\mathrm{KZ}\) connection and commute with multiplication of sections by functions on the base. In other words, the \( C_i \) are linear operators on the fibers of the \(\mathrm{KZ}\) connection. 
It is known that the intersection of their kernels,
$\bigcap_{i=1}^{2g+1} \ker C_i$,
coincides with the space generated by flat local sections of the \(\mathrm{KZ}\)-connection \(\nabla^{\mathrm{KZ}, \kappa}\), see  the Cartier descent theorem \cite[Theorem 5.1]{K1}.

\vsk.2>

Determining the \( p \)-curvature operators is quite nontrivial, as it requires computing the \( p \)-th powers of differential operators. Cf. \cite{EV}, where the spectrum of the \( p \)-curvature operators was explicitly determined.

\vsk.2>

An explicit formula for the \( p \)-curvature operators of the \(\mathrm{KZ}\) connection with fiber
\\
 \(\Sing L^{\otimes (2g+1)}[2g - 1]\) was obtained in \cite{VV1}. This formula naturally extends to a formula for the \( p \)-curvature operators of the \(\mathrm{KZ}\) connection with fiber \(\wedge^r \Sing L^{\otimes (2g+1)}[2g - 1]\).

\vsk.2>

In this paper, we analyze the case $r=2$ and $\ka=\pm 2$, and then 
using the operators $T(z)$ and 
$\bar T(z)$, demonstrate that the intersection of the kernels of the $p$-curvature operators 
on $\Sing L^{\ox(2g+1)}[2g-3]$ coincides with the space spanned by the $p$-hypergeometric 
solutions. As a consequence, we conclude that for
$\ka=\pm 2$,  all $\Sing L^{\ox(2g+1)}[2g-3]$-valued solutions in characteristic
 $p$ are $p$-hypergeometric, and the dimension of the solution space in these cases equals $\binom{g}{2}$,
 see Theorems \ref{thm r=2 all GM sol} and \ref{conj -2}.

\vsk.2>

We also analyze the $p$-curvature operators on $\Sing L^{\ox(2g+1)}[2g-5]$ for $\ka=2$. 
We observe that the intersection of
kernels of the $p$-curvature operators become larger than the space generated by the $p$-hypergeometric solutions, 
see Corollary \ref{cor a6}.
Hence the corresponding \KZ/ equations have solutions which are not $p$-hypergeometric.
These are the first examples where not all solutions in characteristic $p$ 
are $p$-hypergeometric. It would be interesting to find an effective construction of these new solutions.

\subsection*{Plan of the paper}

In Section \ref{sec 2}, we define the \KZ/ equations and  the  $\Sing L^{\ox(2g+1)}[2g+1-2r]$-valued 
hypergeometric solutions over complex numbers. In Section \ref{sec 3}, we define the \KZ/ equations with values in
$\wedge^r \Sing L^{\ox(2g+1)}[2g-1]$ and their hypergeometric solutions. We also introduce the primitive part of 
$\wedge^r H_1(C)$ and formulate Theorem \ref{thm Iso}.
Theorem \ref{thm Iso} is proved in Section \ref{sec 4}. Explicit formulas for $T(z)$ and $\bar T(z)$
 are developed in Section \ref{sec 5}.
We describe the $p$-hypergeometric solutions for
 $\ka=2$ in Section \ref{sec 6} and
  the $p$-hypergeometric solutions for $\ka=-2$ in Section \ref{sec 7}. 
  The orthogonal relations are presented in Section \ref{sec ort}.  
  The $p$-curvature operators for $r=2$ are studied in Section \ref{sec 8}. 
  The $p$-curvature operators for  $r=3$  are
  analyzed  in Section \ref{sec 9}.

In Appendix \ref{sec 10} we identify the action of the $p$-curvature operators with the action of the corresponding 
Kodaira-Spencer maps
and apply  our results on the $p$-curvature maps to  the Kodaira-Spencer maps.

\subsection*{Acknowledgments}  The authors thank G. Cotti, R. Rim\'anyi, and V.~Vologodsky for useful discussions. The third author thanks IHES for hospitality in July-August 2025.

\section{$\sll_2$ \KZ/ equations}
\label{sec 2}

\subsection{Definition of KZ equations}

Consider the  complex Lie algebra $\slt$ with generators $e,f,h$ and relations $[e,f]=h,\, [h,e]=2e, \,[h,f]=-2f$.
Consider the complex vector space $L$ with basis $w_1,w_2$ and the $\slt$-action,
\bea
e=\begin{pmatrix}
0 & 1
\\
0 & 0 & 
\end{pmatrix}, \qquad
f=\begin{pmatrix}
0 & 0
\\
1 & 0 & 
\end{pmatrix},
\qquad
h=\begin{pmatrix}
1 & 0
\\
0 & -1 & 
\end{pmatrix}.
\eea 
The $\slt$-module $L^{\ox n}$ has a basis labeled by subsets $J\subset\{1,\dots,n\}$,
\bea
w_J= w_{j_1}\ox \dots\ox w_{j_n},
\eea
where $j_i =1$ if $j_i\notin J$  and
$j_i =2$ if $j_i\in J$.

\vsk.2>

Consider the weight decomposition of $L^{\ox n}$ 
into eigenspaces of $h$,
  $L^{\ox n} = \sum_{r=0}^n L^{\ox n}[n-2r]$. 
The vectors $w_J$ with $|J|=r$ form a  basis of $L^{\ox n}[n-2r]$.
Denote by $\Ik$ the set of all $r$-element subsets of $\{1,\dots,n\}$.

Define the space of singular vectors of weight $n-2r$,
\bea
\Sing L^{\ox n}[n-2r] = \{ w\in L^{\ox n}[n-2r]\mid ew=0\}.
\eea
This space is nonempty if and only if $n\geq 2r$. We have
\bea
\dim \Sing L^{\ox n}[n-2r] = \dim L^{\ox n}[n-2r] - \dim L^{\ox n}[n-2r+2]
=\binom{n}{r} - \binom{n}{r-1}.
\eea
We shall also use the formula
\bean
\label{dim sing}
\dim \Sing L^{\ox n}[n-2r] =\binom{n-1}{r} - \binom{n-1}{r-2}.
\eean
Define the Casimir element 
$\Om =  \frac12 h\ox h + e\ox f+f\ox e\, \in\, \slt\ox\slt$. It acts on 
$L\ox L$ as $P-\frac12$ where $P$ is the permutation of factors.

\vsk.2>

Define  the linear operators on $L^{\ox n}$ depending on parameters $z=(z_1,\dots,z_n)$,
\bea
\tilde H_i(z) = \sum_{j\ne i}\frac{ \Om^{(i,j)}}{z_i-z_j}\,,
\qquad i=1,\dots,n,
\eea
where 
$ \Om^{(i,j)}:L^{\ox n}\to L^{\ox n}$ is the Casimir operator acting in the $i$-th and $j$-th tensor factors.
Denote
\bea
\der_i =\frac{\der}{\der z_i}\,,\qquad \tilde \nabla^{\on{KZ}, \ka}_i = \der_i - \frac1\ka \tilde H_i(z),
\qquad
i=1,\dots,n, \qquad \ka\in\C^\times.
\eea
The operators $\der_i - \frac1\ka \tilde H_i(z)$
define the \KZ/ connection on the trivial bundle 
over $\C^n$ with fiber $L^{\ox n}$. The connection has singularities over the diagonal hyperplanes in $\C^n$. 
The connection is flat.
For any $x\in\slt$ and $i=1,\dots,n,$    we have
\bean
\label{h inv}
[ \tilde H_i(z_1,\dots,z_n), x\ox 1\ox\dots\ox 1+\dots + 1\ox\dots\ox 1\ox x] =0.
\eean
The system of differential equations 
\bean
\label{kz tilde}
\left(\der_i - \frac1\ka \tilde H_i(z) \right)\tilde N(z_1,\dots,z_n)=0, 
\qquad  i = 1, \dots , n,
\eean
on  an $L^{\ox n}$-valued function
$\tilde N(z_1, \dots, z_n)$ is called the system of \KZ/ equations with parameter $\ka$, see \cite{KZ, EFK}.
By property \eqref{h inv},   the system of  \KZ/ equations can be considered with values in
any particular space   $\Sing L^{\ox n}[n-2r]$.

\subsection{Gauge transformation} 

If $\tilde N(z)$ satisfies the \KZ/ equations \eqref{kz tilde}, then the function $N(z)$, defined by 
\bean
\label{tilde I}
\tilde N(z) = N(z) \prod_{1\leq i<j\leq n}(z_i-z_j)^{1/2\ka}\,,
\eean
satisfies the equations
\bean
\label{KZ}
\qquad
\left(\der_i - \frac 1\ka\sum_{j\ne i}\frac{ P^{(i,j)}-1}{z_i-z_j}\right) 
N(z_1,\dots,z_n) =0, 
\qquad  i = 1, \dots , n,
\eean
which we also call the system of \KZ/ equations. Denote
\bean
\label{KZ op}
\nabla ^{\on{KZ},\ka}_i = 
\der_i - \frac 1\ka\sum_{j\ne i}\frac{ P^{(i,j)}-1}{z_i-z_j}.
\eean
We have $[ \nabla ^{\on{KZ},\ka}_i , \nabla ^{\on{KZ},\ka}_j]=0$ for all $i,j$.

\vsk.2>
Define
the nondegenerate symmetric bilinear form $S$ on $L^{\ox n}$ by the formula
\bean
\label{Sha}
S(w_I,w_J) = \delta_{I,J}\,.
\eean
The form is called the (tensor) Shapovalov form. 
The KZ connections $\nabla^{\on{KZ}, \ka}$ and $\nabla^{\on{KZ}, -\ka}$ are dual with respect to the Shapovalov form, 
\bean
\label{do}
\der_i S(x(z), y(z)) = S(\nabla^{\on{KZ}, \ka}_i x(z), y(z))+ S(x(z), \nabla^{\on{KZ}, -\ka}_i y(z))
\eean
for any $x(z), y(z)$. 

The Shapovalov form restricted to every subspace $\Sing L^{\ox n}[n-2r]$ is nondegenerate.

\subsection{Solutions of \KZ/ equations over $\C$}
  \label{Sol in C}

Denote $t=(t_1,\dots,t_r)$.
Define the   master function
\bean
\label{Master}
\Phi(t,z)
= 
\prod_{1 \leq i < j \leq r}\!\!  (t_i-t_j)^{2}
\prod_{a=1}^{n} \prod_{i=1}^{r} (t_i-z_a)^{-1}.
\eean
 Define the symmetrization and anti-symmetrization of a given differential form $f(t_1,\dots,t_k)$ by the formulas:
\bea
\Sym_{t_1,\dots,t_k}f =\sum_{\si\in S_k} f(t_{\si(1)}, \dots, t_{\si(k)}),\qquad
\on{Ant}_{t_1,\dots,t_k}f =\sum_{\si\in S_k} (-1)^{|\si|}f(t_{\si(1)}, \dots, t_{\si(k)}),\qquad
\eea
For $J  \in \Ik$ define the differential form
\bea
\nu_J(t,z)  
&=&
   \Phi(t,z;\ka)^{1/\ka}         
\Sym_t \left( \prod_{i=1}^{r}  \frac{1}{t_{i} - z_{j_i}} \right) dt_1\wedge\dots\wedge dt_r\,.
\eea
Given a flat family $ \gamma(z)$ in $\{z\} \times \C^r_t$ of
$r$-dimensional cycles of the twisted homology defined by the multivalued
function $\Phi(t,z;\ka)^{1/\ka}$, denote 
\bea
N^\ga_J(z) = \int_{\ga(z)} \nu_J\,.
\eea
Consider the  $L^{\otimes n}[n -2r]$-valued
function
\bean
\label{intrep}
N^{\gamma}(z) =  \sum_{J\in\Ik} N_J^\ga(z)\, w_J\,.
\eean

\begin{thm}
\label{thm s}
The function  $N^{\gamma} (z)$ takes values in
 $ \Sing L^{\otimes n}[n-2r]$
and satisfies the \KZ/ equations \eqref{KZ}.
\end{thm}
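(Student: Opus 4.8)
The plan is to prove Theorem \ref{thm s} as a special case of the general theory of hypergeometric solutions of KZ equations due to Schechtman--Varchenko \cite{SV1}, while carefully recording the two separate assertions: (i) that $N^\gamma(z)$ is annihilated by the $\slt$-action of $e$, i.e.\ takes values in the singular subspace, and (ii) that it solves the KZ system \eqref{KZ}.

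\textbf{Step 1: Reduce to the gauge-transformed equations.} By the gauge transformation \eqref{tilde I}, a function $\tilde N(z) = N^\gamma(z)\prod_{i<j}(z_i-z_j)^{1/2\ka}$ solves \eqref{KZ} if and only if $\tilde N$ solves \eqref{kz tilde}. The factor $\prod_{i<j}(z_i-z_j)^{1/2\ka}$ combines with $\Phi(t,z)^{1/\ka}$ to produce the standard Schechtman--Varchenko master function $\prod_{i<j}(t_i-t_j)^{2/\ka}\prod_{a<b}(z_a-z_b)^{1/2\ka}\prod_{a,i}(t_i-z_a)^{-1/\ka}$, whose critical points and associated twisted (co)homology govern the hypergeometric solutions of \eqref{kz tilde} with values in $L^{\ox n}[n-2r]$. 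So it suffices to verify that the weight functions $\nu_J$ assemble into a genuine hypergeometric form in the sense of \cite{SV1}.

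\textbf{Step 2: Verify the KZ equations via the standard weight-function identities.} The core computation is to differentiate $N^\gamma(z)$ under the integral sign (legitimate since $\gamma(z)$ is a flat family of cycles in the twisted homology, so no boundary terms appear) and check that $\sum_J \der_i N^\gamma_J(z)\,w_J = \tfrac1\ka \sum_J (\tilde H_i(z) \cdot \text{coeff})_J w_J$ after undoing the gauge factor. Concretely, one shows the differential-form identity
\bean
\label{dnu}
\der_i\,\nu_J(t,z) - \text{(KZ right-hand side applied to the }\nu\text{'s)} = d_t(\text{something}),
\eean
i.e.\ the discrepancy is $d_t$-exact, hence integrates to zero over the cycle $\gamma(z)$. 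This is exactly the content of the key lemma in \cite{SV1} (and is reproved in many later references, e.g.\ \cite{V2,V3}); for the rank-one Casimir weight functions $\Sym_t\prod_i (t_i-z_{j_i})^{-1}$ the relevant identities are the ``$R$-matrix/weight function'' relations. I would cite this rather than reprove it, indicating which form of the statement is being invoked.

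\textbf{Step 3: Verify the singular-vector property.} One must show $e\cdot N^\gamma(z) = 0$, i.e.\ $\sum_{J\in\Ik} N_J^\gamma(z)\, e\,w_J = 0$ in $L^{\ox n}[n-2r+2]$. Expanding $e\,w_J = \sum_{b\in J} w_{J\setminus\{b\}}$, the coefficient of $w_K$ for $|K| = r-1$ becomes $\sum_{b\notin K} N^\gamma_{K\cup\{b\}}(z)$. Substituting the integral representation, this equals $\int_{\gamma(z)} \Phi^{1/\ka}\,\Sym_t\!\big(\sum_{b\notin K}\frac1{t_r-z_b}\prod_{i=1}^{r-1}\frac1{t_i-z_{k_i}}\big)\,dt_1\wedge\dots\wedge dt_r$; one shows the integrand is $d_t$-exact. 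The standard trick is that $\sum_{b\notin K}\frac1{t_r-z_b}$ together with the $\Phi^{1/\ka}$ factor produces, up to $d_t$-exact terms, the logarithmic derivative $\der_{t_r}\log\Phi \cdot(\dots)$ which after symmetrization gives a total $t$-differential; again this is a known lemma from \cite{SV1}. The main obstacle, and the step I would spend the most care on, is bookkeeping: matching the normalization of $\Phi$ in \eqref{Master} (with exponents $2$ on $(t_i-t_j)$ and $-1$ on $(t_i-z_a)$, then raised to the $1/\ka$ power) against the conventions of the cited source, tracking the gauge factor $\prod_{i<j}(z_i-z_j)^{1/2\ka}$ through both the $e$-action check and the KZ check, and confirming that the orientation/ordering conventions in $dt_1\wedge\dots\wedge dt_r$ and in $\Sym_t$ are consistent so that no spurious signs or combinatorial factors appear.
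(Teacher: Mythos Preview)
Your proposal is correct and aligns with the paper's treatment: the paper does not give a proof of this theorem but simply cites \cite{CF, DJMM, SV1, V2, V3}, exactly the references you invoke. Your sketch of the underlying mechanism (gauge transformation, $d_t$-exactness of the discrepancy, and the singular-vector check) is the standard argument from those sources, so there is nothing to add.
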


This theorem and  its generalizations can be found, for example,  in \cite {CF, DJMM, SV1, V2, V3}.
The solutions in Theorem \ref{thm s}  are called the  
hypergeometric solutions of the \KZ/ equations.

\begin{thm}
[{\cite[Theorem 12.5.5]{V2}}]
\label{thm alls}
If $\ka\in\C^\times$ is generic, then all solutions of the \KZ/ equations \eqref{KZ} have this form.

\end{thm}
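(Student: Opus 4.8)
The plan is to reduce the statement to a rank count for the period matrix of the hypergeometric integrals. We read the theorem as asserting that every solution of \eqref{KZ} taking values in a fixed subspace $\Sing L^{\ox n}[n-2r]$ (to which, by \eqref{h inv}, the \KZ/ system restricts) is a linear combination of the hypergeometric solutions of Theorem \ref{thm s}; the general $L^{\ox n}$-valued case follows by applying the $\slt$-action. Fix a base point $z^0\in\C^n$ off the diagonal hyperplanes. Since $[\nabla^{\on{KZ},\ka}_i,\nabla^{\on{KZ},\ka}_j]=0$, the connection is integrable, so on $\Sing L^{\ox n}[n-2r]$ the sheaf of local holomorphic solutions of \eqref{KZ} near $z^0$ is locally constant of rank $d:=\dim\Sing L^{\ox n}[n-2r]=\binom{n-1}{r}-\binom{n-1}{r-2}$, see \eqref{dim sing}. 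By Theorem \ref{thm s} each flat family $\gamma(z)$ of $r$-cycles of the twisted homology of $\Phi(\cdot,z;\ka)^{1/\ka}$ produces a solution $N^{\gamma}(z)$, and these span a subspace $\mathcal S$ of the solution space. It therefore suffices to prove $\dim\mathcal S\ge d$ for generic $\ka$.

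Next I would set up the homological framework. For fixed generic $z$ put $U_z=\{t\in\C^r: t_i\ne t_j\ (i\ne j),\ t_i\ne z_a\ \forall a\}$ and let $\mathcal L_\ka$ be the rank-one local system on $U_z$ attached to the multivalued function $\Phi(t,z;\ka)^{1/\ka}$, with $\Phi$ as in \eqref{Master}. The forms $\nu_J$, $J\in\Ik$, are top-degree rational forms on $\C^r$, hence twisted-closed, and define classes in the twisted de Rham cohomology $H^r(U_z,\mathcal L_\ka)$; being anti-invariant for the $S_r$-action (the $\Sym_t$ factor is invariant, $dt_1\wedge\dots\wedge dt_r$ is anti-invariant, $\Phi^{1/\ka}$ is invariant), they lie in its sign-isotypic part. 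The coordinates $N^{\gamma}_J(z)=\int_{\gamma}\nu_J$ of \eqref{intrep} are the values of the period pairing between $[\gamma]\in H_r(U_z,\mathcal L_\ka^{-1})$ and $[\nu_J]$. For generic $\ka$ the exponents of $\mathcal L_\ka$ (namely $-1/\ka$ along $t_i=z_a$, $2/\ka$ along $t_i=t_j$, and their sums along flats and at infinity) are nonresonant, so by the Esnault--Schechtman--Viehweg / Aomoto--Kita vanishing theorem the twisted (co)homology is concentrated in degree $r$ and the integration pairing $H_r(U_z,\mathcal L_\ka^{-1})\times H^r(U_z,\mathcal L_\ka)\to\C$ is perfect. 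Consequently $\dim\mathcal S$ equals $\dim_\C\bigl(\operatorname{span}\{[\nu_J]:J\in\Ik\}\subseteq H^r(U_z,\mathcal L_\ka)\bigr)$, and the statement reduces to: these classes span a subspace of dimension $\ge d$.

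The technical core is then to exhibit $d$ independent periods. I would fix $d$ explicit flat families $\gamma_1(z),\dots,\gamma_d(z)$ — for instance iterated Pochhammer (bounded-chamber) cycles, equivalently the flag, or nbc-type, basis of Schechtman--Varchenko — and analyze the $d\times|\Ik|$ matrix $\bigl(\int_{\gamma_k(z)}\nu_J\bigr)$ under a maximal degeneration of the points $z_a$ (an ``asymptotic zone'': let the $z_a$ collide along a fixed nested cluster pattern). Under such a limit $\Phi$ factorizes and the $r$-dimensional integrals separate, to leading order, into products of one-dimensional Euler beta integrals; the leading term of the period matrix then acquires a block-triangular shape whose diagonal blocks are Selberg-type determinants. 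Each of these is an explicit finite product of ratios of $\Gamma$-values in $1/\ka$, nonzero for all but countably many $\ka$. Nondegeneracy of this leading-order matrix forces the period matrix to have rank $\ge d$ for generic $(z,\ka)$, whence $\dim\mathcal S=d$ and $\mathcal S$ is the full solution space.

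The main obstacle is precisely the bookkeeping in this degeneration: one must track which twisted cycles survive to leading order and pair them with the subsets $J\in\Ik$ compatibly with the $S_r$-antisymmetrization, and one must see that the singular-vector cut-down is matched on the homology side — this is exactly why the relevant count is $\binom{n-1}{r}-\binom{n-1}{r-2}$ rather than $\binom{n}{r}$, with the resonances/vanishing at $t_i=z_a$ and at infinity annihilating the surplus classes. Evaluating the resulting Selberg determinant and verifying its nonvanishing is the other delicate point. All of this is the content of Varchenko's determinant formula for multidimensional hypergeometric integrals; for the present paper it suffices to invoke \cite[Theorem 12.5.5]{V2}, the sketch above indicating the underlying mechanism. (An alternative packaging identifies the monodromy of $\nabla^{\on{KZ},\ka}$ with the quantum-group $R$-matrix representation via the Drinfeld--Kohno theorem and notes that the hypergeometric solutions realize the same representation on an irreducible module — but this still rests on the same nondegeneracy input.)
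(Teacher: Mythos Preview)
The paper does not prove Theorem~\ref{thm alls}; it merely records it as a citation to \cite[Theorem~12.5.5]{V2}. Your proposal correctly identifies the mechanism behind that cited result---nonresonance of the local system, concentration of twisted (co)homology in top degree, and nondegeneracy of the period/Selberg determinant---and you yourself note at the end that the present paper need only invoke the citation. So your sketch is consistent with, and an accurate expansion of, what the paper defers to; there is nothing to compare beyond that.
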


For special values of the parameter $\ka$,  
the  hypergeometric solutions of the \KZ/ equations may span only 
a proper subspace of the space of all solutions.  For a discussion of 
the relations of this subspace of solutions and conformal blocks in conformal field theory
 see  \cite{FSV1, FSV2, BF}.

\subsection{\KZ/ equations for $\ka =2$}  

In this paper, the integer  $n$ is odd, $n=2g+1,$ and $\ka=\pm 2$. For $\ka=2$, the \KZ/ equations are
\bean
\label{KZ +2}
\qquad
\left(\der_i \, -\, \frac 1 2\,\sum_{j\ne i}\frac{ P^{(i,j)}-1}{z_i-z_j}\right) 
I(z_1,\dots,z_{2g+1}) =0, 
\qquad  i = 1, \dots , 2g+1.
\eean
Denote
\bean
\label{Peta}
\Psi(x,z) = \prod_{a=1}^{2g+1}(x-z_a), 
\qquad
\eta_{i}^{(k)}(x,z) 
=\Psi(x,z)^{-1/2}\frac{x^k dx}{x-z_i}.
\eean
Then
\bean
\label{Phi +2}
\Phi(t,z)^{1/2}
&=& 
\prod_{1 \leq i < j \leq r} \!\! (t_i-t_j) \prod_{i=1}^r \Psi(t_i,z)^{-1/2},
\eean
For $1\leq i_1,\dots,i_r\leq 2g+1$, we have
\bean
\label{nu}
\nu_{ i_1,\dots,i_r}
&=& \Phi(t,z)^{1/2}
\Sym_t \left( \prod_{j=1}^{r}  \frac{1}{t_{j} - z_{i_j}} \right) dt_1\wedge\dots\wedge dt_r\,
\\
\notag
&=&
\left(\prod_{1\leq i<j\leq r}(t_i-t_j)\right)
\on{Sym}_{t}
 \left(\prod_{j=1}^r \frac{\Psi(t_j,z)^{-1/2}}{t_j-z_{i_j}}
 \right) dt_1\wedge\dots\wedge dt_r
\\
\notag
&=&
\on{Sym}_{t}\left(
\left(\prod_{1\leq i<j\leq r}(t_i-t_j)\right)
 \frac{\Psi(t_1, z)^{-1/2}dt_1}{t_1-z_{i_1}}
 \wedge \dots \wedge \frac{\Psi(t_r, z)^{-1/2}dt_r}{t_r-z_{i_r}}\right)
\\
\notag
&=&
(-1)^{\frac{n(n-1)}2}
 \sum_{\si\in S_r} (-1)^{|\si|} 
\on{Sym}_{t}\left(
\eta^{(\si(1)-1)}_{i_1}(t_1,z) \wedge\dots\wedge
\eta^{(\si(r)-1)}_{i_r}(t_r,z) \right).
\eean
Notice that the differential form $\nu_{i_1,\dots,i_r}$ is skew-symmetric with respect to permutation of indices $i_1,\dots,i_r$.

Given a flat family $ \gamma(z)$ in $\{z\} \times \C^r_t$ of
$r$-dimensional cycles of the twisted homology defined by the multivalued
function $\Phi(t,z;\ka)^{1/2}$, define 
\bean
\label{Ng r}
N^\ga_{ i_1,\dots,i_r}(z) = \int_{\ga(z)} \nu_{ i_1,\dots,i_r}\,.
\eean
By Theorem \ref{thm s}, the  $L^{\otimes (2g+1)}[2g+1 -2r]$-valued
function
\bean
\label{int 2 r}
N^{\gamma}(z) =  \sum_{1\leq i_1<\dots<i_r\leq 2g+1} N_{ i_1,\dots,i_r}^\ga(z)\, w_{\{ i_1,\dots,i_r\}}\,
\eean
 takes values in
 $ \Sing L^{\otimes n}[2g+1-2r]$
and satisfies the \KZ/ equations \eqref{KZ} over $\C$ with $\ka=2$.

\subsection{\KZ/ equations for $\ka =2$ and $r=1$}  In that case, previous formulas take the following form.
For $i_1=1,\dots, 2g+1$, we have
\bean
\label{nu r=1}
\nu_{ i_1}(t_1)
=
\frac{\Psi(t_1, z)^{-1/2}dt_1}{t_1-z_{i_1}}
=
\eta^{(0)}_{i_1}(t_1) .
\eean
Consider the family of projective hyperelliptic curves $C(z)$ depending on $z$ and defined by the affine equation
$y^2 = \prod_{a=1}^{2g+1}(t_1-z_a).$
For distinct $z_1,\dots,z_{2g+1}$, the curve is a nonsingular curve of genus $g$.
Given a flat family $ \gamma(z)\in H_1(C(z))$, we define
\bea
N^\ga_{ i_1}(z) = \int_{\ga(z)} \nu_{ i_1}(t_1)\,.
\eea
By Theorem \ref{thm s},  the  $L^{\otimes n}[2g-1]$-valued
function
\bean
\label{int 2 r=1}
N^{\gamma}(z) =  \sum_{i_1=1}^{2g+1} N_{ i_1}^\ga(z)\, w_{\{ i_1\}}\,
\eean
 takes values in
 $ \Sing L^{\otimes (2g+1)}[2g-1]$
and satisfies the \KZ/ equations \eqref{KZ} with $\ka=2$. 

\vsk.2>
Recall that 
$\Sing L^{\ox(2g+1)}[2g-1]$ consists of the vectors
$\sum_{i_1=1}^{2g+1} c_{i_1} w_{\{ i_1\}} $ with  $ \sum_{i_1=1}^{2g+1} c_{i_1}=0$.

\begin{thm} [{\cite[Formula (1.3)]{V1}}]

\label{thm r=1 all sol}
All solutions of the \KZ/ equations with $\ka=2$ and values in $\Sing L^{\ox(2g+1)}[2g-1]$ have this form. Namely, the complex vector space of solutions of the form $N^{\gamma}(z)$ is $2g$-dimensional.

\end{thm}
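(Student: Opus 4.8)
The plan is to identify the space of $\Sing L^{\ox(2g+1)}[2g-1]$-valued solutions with a classical object — the local system whose fiber is $H^1$ of the hyperelliptic curve $C(z)$ — and then to invoke the well-known fact that this local system has rank $2g$ and that the period integrals $N^\ga_{i_1}(z)$ realize all of its sections. Concretely, I would first recast the differential forms $\nu_{i_1}(t_1) = \eta^{(0)}_{i_1}(t_1,z) = \Psi(t_1,z)^{-1/2}\,dt_1/(t_1-z_{i_1})$ as meromorphic one-forms on $C(z)$ (after the substitution $y = \Psi(t_1,z)^{1/2}$, these become $dt_1/\big(y(t_1-z_{i_1})\big)$, i.e. differentials of the second/third kind on the curve), and observe that the relations $\sum_{i_1}c_{i_1}=0$ defining $\Sing L^{\ox(2g+1)}[2g-1]$ correspond exactly to the condition that the associated combination $\sum_i c_i\,dt_1/\big(y(t_1-z_i)\big)$ have no residues, hence represents a class in $H^1_{\mathrm{dR}}(C(z))$, which is $2g$-dimensional.

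Next I would argue that the $2g$ period maps $\ga \mapsto N^\ga(z)$ are linearly independent: since a flat family $\ga(z)$ ranges over $H_1(C(z))$, which has rank $2g$, and the de Rham pairing between $H_1$ and the $2g$-dimensional space of admissible forms is perfect (Poincaré duality / the nondegeneracy of the period matrix for a smooth projective curve), no nontrivial linear combination of the $N^\ga$ can vanish identically in $z$. Thus the solutions $N^\ga(z)$ span a subspace of dimension exactly $2g$ inside the solution space. By \eqref{dim sing} with $n=2g+1$, $r=1$ we have $\dim\Sing L^{\ox(2g+1)}[2g-1] = \binom{2g}{1}-\binom{2g}{-1} = 2g$, so the full solution space of a flat connection on a rank-$2g$ bundle is at most $2g$-dimensional locally; hence the hypergeometric solutions already exhaust it. This is precisely the content cited from \cite[Formula (1.3)]{V1}, and I would present the proof as reproducing that argument.

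The main obstacle — and the step that genuinely requires care rather than citation — is the linear independence / surjectivity claim: one must rule out "accidental" collapse of the period map at the level of the whole $z$-family, not just at a single fiber. The clean way around this is to fix a generic base point $z^0$, where the curve $C(z^0)$ is smooth of genus $g$, and use that the forms $\eta^{(0)}_{i_1}$ restricted to this fiber, modulo exact forms and subject to the residue-free condition, span $H^1_{\mathrm{dR}}(C(z^0))$; this is a standard computation with differentials of the third kind on a hyperelliptic curve (the $2g$ classes $x^k\,dx/y$ for $0\le k\le g-1$ give holomorphic differentials, and suitable residue-free combinations of $dx/\big(y(x-z_i)\big)$ give the rest). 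Pairing against a basis of $H_1(C(z^0);\Z)$ then gives an invertible period matrix, which forces linear independence of the $N^\ga$ as functions of $z$ near $z^0$, and by analytic continuation everywhere on the complement of the diagonals.

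Finally, to conclude that every solution is of the stated form, I would note that the space of local flat sections of $\nabla^{\mathrm{KZ},2}$ on $\Sing L^{\ox(2g+1)}[2g-1]$ has dimension equal to the rank of the bundle, namely $2g$; since we have exhibited $2g$ linearly independent solutions $N^\ga(z)$, they form a basis, and hence every solution is a linear combination of them. This matches the assertion of Theorem \ref{thm r=1 all sol} and also foreshadows the $r>1$ case, where the wedge-power description via $T(z)$ will play the analogous role.
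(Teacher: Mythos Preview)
Your overall strategy is sound and gives a genuine alternative to what the paper does: the paper offers no argument beyond citing the explicit determinant formula \cite[Formula~(1.3)]{V1}, which evaluates the period determinant as a nonzero product and thereby establishes linear independence of the $N^\ga$ directly. Your route instead identifies the $\Sing L^{\ox(2g+1)}[2g-1]$-valued \KZ/ system with the de~Rham local system $H^1_{\mathrm{dR}}(C(z))$ and appeals to the nondegeneracy of the period pairing on a smooth curve. Both reach the same conclusion; the determinant formula is more explicit (and is what the paper leans on throughout), while your argument is more conceptual and closer in spirit to the later Satake discussion.

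There is one actual error and one understated step. The error: your claim that the constraint $\sum_i c_i=0$ ``corresponds exactly to the condition that the associated combination have no residues'' is wrong. Each individual form $\nu_i = dt/\bigl(y(t-z_i)\bigr)$ already has zero residues: at the Weierstrass point $t=z_i$ the local expansion in $u$ with $u^2=t-z_i$ gives a pole of order~$2$ with even principal part, hence no residue. The relation $\sum_i c_i=0$ is instead the \emph{cohomological} relation $\sum_i[\nu_i]=0$, coming from the exactness $\sum_i\nu_i = -2\,d(y^{-1})$. The understated step is the converse: you need that this is the \emph{only} linear relation among the classes $[\nu_i]$ in $H^1_{\mathrm{dR}}(C(z))$, so that the map $\Sing L^{\ox(2g+1)}[2g-1]\to H^1_{\mathrm{dR}}(C(z))$ is injective (hence an isomorphism by dimension count). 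You call this ``a standard computation with differentials of the third kind''; it is indeed standard (and is stated with a reference later in the paper, see Section~\ref{sec 10}), but it is the crux of your argument and deserves to be stated precisely rather than folded into the residue remark.
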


This theorem follows from the determinant formula for multidimensional hypergeometric
integrals in \cite{V1}, in particular, from \cite[Formula (1.3)]{V1}. 

The $\Sing L^{\ox(2g+1)}[2g-1]$-valued \KZ/ equations with $\ka=2$  were analyzed in \cite{VV3} over $p$-adic fields.

\subsection{\KZ/ equations for $\ka =-2$}  

For $\ka=-2$, the \KZ/ equations are
\bean
\label{KZ -2}
\qquad
\left(\der_i \,+\, \frac 1 2\,\sum_{j\ne i}\frac{ P^{(i,j)}-1}{z_i-z_j}\right) 
I(z_1,\dots,z_{2g+1}) =0, 
\qquad  i = 1, \dots , 2g+1.
\eean
Then
\bean
\label{Phi -2}
\Phi(t,z)^{-1/2}
&=& 
\prod_{1 \leq i < j \leq r} \!\! (t_i-t_j) ^{-1}\prod_{i=1}^r \Psi(t_i,z)^{1/2},
\eean
For $1\leq i_1,\dots,i_r\leq 2g+1$, we have
\bean
\label{nu -}
\bar \nu_{ i_1,\dots,i_r}
&=& \Phi(t,z)^{-1/2}
\Sym_t \left( \prod_{j=1}^{r}  \frac{1}{t_{j} - z_{i_j}} \right) dt_1\wedge\dots\wedge dt_r\,
\eean
Given a flat family $ \gamma(z)$ in $\{z\} \times \C^r_t$ of
$r$-dimensional cycles of the twisted homology defined by the multivalued
function $\Phi(t,z;\ka)^{-1/2}$, define 
\bean
\label{Ng r}
\bar N^\ga_{ i_1,\dots,i_r}(z) = \int_{\ga(z)} \bar\nu_{ i_1,\dots,i_r}\,.
\eean
By Theorem \ref{thm s}, the  $L^{\otimes n}[2g+1 -2r]$-valued
function
\bean
\label{int -2 r}
\bar N^{\gamma}(z) =  \sum_{1\leq i_1<\dots<i_r\leq 2g+1} \bar N_{ i_1,\dots,i_r}^\ga(z)\, w_{\{ i_1,\dots,i_r\}}\,
\eean
 takes values in
 $ \Sing L^{\otimes n}[2g+1-2r]$
and satisfies the \KZ/ equations \eqref{KZ} over $\C$ with $\ka=-2$.

\subsection{\KZ/ equations for $\ka =-2$ and $r=1$}  In that case, 
for $i_1=1,\dots, 2g+1$, we have
\bean
\label{nu r=1}
\bar \nu_{ i_1}(t_1)
=
\frac{\Psi(t_1, z)^{1/2}dt_1}{t_1-z_{i_1}}\,.
\eean
Recall our family of projective hyperelliptic curves $C(z)$ depending on $z$ and defined by the affine equation
$y^2 = \prod_{a=1}^{2g+1}(t_1-z_a)$.
Given a flat family $ \gamma(z)\in H_1(C(z))$, we define
\bea
\bar N^\ga_{ i_1}(z) = \int_{\ga(z)} \bar\nu_{ i_1}(t_1)\,.
\eea
By Theorem \ref{thm s},  the  $L^{\otimes n}[2g-1]$-valued
function
\bean
\label{int 2 r=1}
\bar N^{\gamma}(z) =  \sum_{i_1=1}^{2g+1} \bar N_{ i_1}^\ga(z)\, w_{\{ i_1\}}\,
\eean
 takes values in
 $ \Sing L^{\otimes (2g+1)}[2g-1]$
and satisfies the \KZ/ equations \eqref{KZ} with $\ka=-2$. 

\begin{thm} [{\cite[Formula (1.3)]{V1}}]
\label{thm r=1 all sol -2}
All solutions of the \KZ/ equations with $\ka=-2$ and values in $\Sing L^{\ox(2g+1)}[2g-1]$ have this form. Namely, the complex vector space of solutions of the form $\bar N^{\gamma}(z)$ is $2g$-dimensional.

\end{thm}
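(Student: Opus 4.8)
The plan is to run the proof of Theorem~\ref{thm r=1 all sol} verbatim, replacing $\ka=2$ by $\ka=-2$; the only input that changes is the value of one local exponent. First I would reduce the assertion to a rank computation. By \eqref{dim sing} with $n=2g+1$ and $r=1$ we have $\dim\Sing L^{\ox(2g+1)}[2g-1]=2g$, and the curve $C(z)$ has genus $g$, so $\dim H_1(C(z);\C)=2g$ as well. On any connected, simply connected open subset $U$ of the complement of the diagonals one may take the family $\gamma(z)$ to be flat, and $\gamma\mapsto\bar N^{\gamma}$ is then a $\C$-linear map from the $2g$-dimensional space $H_1(C(z);\C)$ to the space of solutions of the \KZ/ equations with $\ka=-2$ on $U$ with values in $\Sing L^{\ox(2g+1)}[2g-1]$; since this connection is flat on a rank-$2g$ bundle, that solution space is itself exactly $2g$-dimensional. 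Thus the statement ``all solutions have the form $\bar N^{\gamma}$'' is equivalent to injectivity of $\gamma\mapsto\bar N^{\gamma}$. Because a flat section vanishing at one point of $U$ vanishes on $U$, the kernel of this map is the annihilator, under the perfect de Rham pairing $H_1(C(z);\C)\times H^1_{\mr{dR}}(C(z))\to\C$, of the span of the classes $[\bar\nu_i]$, $i=1,\dots,2g+1$, where on $C(z)$ one has $\bar\nu_i=\frac{y\,dt_1}{t_1-z_i}$. Hence injectivity is equivalent to the assertion that these $2g+1$ classes span $H^1_{\mr{dR}}(C(z))$. One relation among them is canonical and visible: $\sum_{i=1}^{2g+1}\bar\nu_i=\frac{\Psi'(t_1,z)}{\Psi(t_1,z)}\,y\,dt_1=\frac{\Psi'(t_1,z)}{\Psi(t_1,z)^{1/2}}\,dt_1=d(2y)$ on $C(z)$, so $\sum_i[\bar\nu_i]=0$; integrating, this re-proves $\sum_i\bar N^{\gamma}_i(z)=0$, i.e.\ that $\bar N^{\gamma}(z)$ lands in $\Sing L^{\ox(2g+1)}[2g-1]$. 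The claim is then precisely that there are no further relations.

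This last point --- that the period matrix $\big(\oint_{\gamma_a(z)}\bar\nu_i\big)$, for a basis $\gamma_1,\dots,\gamma_{2g}$ of $H_1(C(z);\C)$, has rank $2g$ --- is where the content lies, and it is exactly the $r=1$, $\ka=-2$ instance of the hyperelliptic period--determinant formula used in the proof of Theorem~\ref{thm r=1 all sol}. Concretely, fixing a basis of the $2g$-dimensional space $\Sing L^{\ox(2g+1)}[2g-1]$ and letting $M(z)$ be the $2g\times 2g$ matrix of coordinates of $\bar N^{\gamma_1}(z),\dots,\bar N^{\gamma_{2g}}(z)$ in that basis, the formula \cite[Formula (1.3)]{V1} evaluates $\det M(z)$ as a nonzero constant times a monomial in the differences $z_i-z_j$, with exponents governed by the local exponent $+\tfrac12$ of $\Psi(t_1,z)^{1/2}$ at each branch point (in place of the exponent $-\tfrac12$ occurring for $\ka=2$). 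In particular $\det M(z)\not\equiv 0$, so $M(z)$ is invertible for generic $z$, which is the required independence. The only thing to check beyond the $\ka=2$ case is that $\ka=-2$ is a permitted value in \cite{V1} --- equivalently that $\tfrac12$ is not a resonant exponent forcing the determinant to vanish --- which holds since $\tfrac12\notin\Z$. I expect this period--determinant nonvanishing, which I would cite from \cite{V1} rather than rederive, to be the sole obstacle.

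Finally I would record a second, more structural route to the independence that uses only results already in hand. By \eqref{do} the pairing $(x(z),y(z))\mapsto S(x(z),y(z))$ between solutions of the $\ka=2$ and of the $\ka=-2$ equations with values in $\Sing L^{\ox(2g+1)}[2g-1]$ is constant in $z$; evaluation at a point of $U$ identifies each solution space with the fiber $\Sing L^{\ox(2g+1)}[2g-1]$, on which $S$ is nondegenerate, so this pairing is perfect. Since Theorem~\ref{thm r=1 all sol} supplies a basis $\{N^{\gamma_a}\}$ of the $\ka=2$ solution space, linear independence of $\{\bar N^{\gamma_b}\}$ becomes equivalent to nonsingularity of the Gram matrix $\big(S(N^{\gamma_a}(z),\bar N^{\gamma_b}(z))\big)_{a,b}$; expanding via $S(w_I,w_J)=\delta_{I,J}$ turns its entries into the double integrals $\oint_{\gamma_a}\oint_{\gamma_b}\big(\sum_{i=1}^{2g+1}\tfrac{1}{(t-z_i)(s-z_i)}\big)\tfrac{y(s)}{y(t)}\,dt\,ds$ of an explicit rational kernel on $C(z)\times C(z)$, and nonvanishing of its determinant is again a classical period computation for $C(z)$. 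Either route thus reduces the theorem to the same period--determinant nonvanishing, consistent with the proof of Theorem~\ref{thm r=1 all sol}.
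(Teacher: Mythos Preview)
Your proposal is correct and takes essentially the same approach as the paper: both reduce the statement to the nonvanishing of the period determinant and cite \cite[Formula~(1.3)]{V1} for that nonvanishing (the paper gives no further argument beyond the citation, while you supply the surrounding reduction in detail). Your second route via the Shapovalov duality \eqref{do} and Theorem~\ref{thm r=1 all sol} is a genuine alternative not mentioned in the paper; it trades the direct determinant computation for the already-established $\ka=2$ case plus nondegeneracy of the constant Gram matrix, which is a clean way to bootstrap but ultimately rests on the same period input.
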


\section{Wedge-power construction}
\label{sec 3}

\subsection{Wedge-power of \KZ/ equations on $\Sing L^{\ox(2g+1)}[2g-1]$ with $\ka=2$}
\label{sec 3.1}

The \KZ/ equations on $L^{\ox(2g+1)}[2g-1]$ with $\ka=2$ induce a system of differential equations
with values in $\wedge^r L^{\ox(2g+1)}[2g-1]$ with the invariant subspace
\bean
\label{subs}
\wedge^r \Sing L^{\ox(2g+1)}[2g-1] \subset \wedge^r L^{\ox(2g+1)}[2g-1].
\eean
The integral representations for solutions look as follows.

\vsk.2>

For $1\leq i_1,\dots ,i_r \leq 2g+1$, define
\bean
\label{mu}
\mu_{i_1,\dots,i_r} 
&=&
\on{Ant}_{t_1,\dots,t_r}
 \left(\prod_{j=1}^r \frac{\Psi(t_j,z)^{-1/2}}{t_j-z_{i_j}}
 \right) dt_1\wedge\dots\wedge dt_r\,,
\\
\notag
&=&
\on{Sym}_{t_1,\dots,t_r}
 \left( \eta^{(0)}_{i_1}(t_1)\wedge\dots\wedge  \eta^{(0)}_{i_r}(t_r)  \right).
 \eean
For any flat families $\ga_1(z),\dots,\ga_r(z) \in H_1(C(z))$, denote
\bea
M_{i_1,\dots,i_r}^{\ga_1\times\dots\times\ga_r} (z)
=
 \int_{\ga_1(z)\times\dots\times\ga_r(z)} \mu_{i_1,\dots,i_r}\,.
\eea

Notice that $\mu_{i_1,\dots,i_r}$ 
is skew-symmetric with respect to permutations of
the indices $i_1,\dots,i_r$.
Hence $M_{i_1,\dots,i_r}^{\ga_1\times\dots\times\ga_r}$ 
is skew-symmetric with respect to permutations of
the indices $i_1,\dots,i_r$.

Define a basis in $\wedge^r L^{\ox(2g+1)}[2g-1]$ by the formulas
\bea
w^{\{i_1,\dots,i_r\}} =w_{\{i_1\}}\wedge \dots\wedge w_{\{i_r\}}, \qquad 1\leq i_1<\dots<i_r\leq 2g+1.
\eea
For any flat families $\ga_1(z),\dots,\ga_r(z)\in H_1(C(z))$, the vector 
\bean
\label{sM}
M^{\ga_1\times\dots\times\ga_r}(z) =    \sum_{1\leq i_1<\dots <i_r\leq 2g+1} M^{\ga_1\times\dots\times\ga_r}_{i_1,\dots,i_r} (z) w^{\{i_1,\dots,i_r\}}
\eean
is a solution of the KZ equations over the field $\C$ with $\ka=2$ and values  in
\\
 $\wedge^r \Sing L^{\ox(2g+1)}[2g-1]$.
 By Theorem \ref{thm r=1 all sol},  all solutions over the field $\C$ with values in 
 \\
$\wedge^r \Sing L^{\ox(2g+1)}[2g-1]$ have this form. Thus we obtain an isomorphism
\bean
\label{mc I}
\mc I : \wedge^rH_1(C) \to \wedge^r \Sing L^{\ox(2g+1)}[2g-1],
\quad
\ga_1(z)\wedge \dots\wedge \ga_r(z) \mapsto M^{\ga_1\times\dots\times\ga_r}(z)
\eean
of the  Gauss-Manin connection on $\wedge^rH_1(C)$ and the \KZ/ connection with $\ka=2$ on
\\
$\wedge^r \Sing L^{\ox(2g+1)}[2g-1]$.

\subsection{Curves and their primitive cohomology}
\label{curves}

For a smooth connected genus $g$ curve $C$ over the field of complex numbers, 
we may identify the homology and cohomology of the 
curve. This identification arises by  the Poincare pairing and is given by 
an element $\delta\in \wedge^2 H^1(C)$ and also by an element
$\Delta\in \wedge^2H_1(C)$ which we can think of as symplectic forms. 

For any integer $r$ with
$0 \leq r \leq g$, we have the primitive cohomology ${\mc P}^r(C)$
of $\wedge^r {H}^1(C)$ given as the kernel of the map
$\wedge^r {H}^1(C) \to \wedge^{2g-r+2}{H}^1(C)$
induced by ``wedging with $\delta$'' $g-r+1$ times. 
 We may also
view ${\mc P}^r(C)$ as the quotient of $\wedge^r {H}^1(C)$ by
$\on{Image} \big(\delta \wedge\,:\, \wedge^{r-2} {H}^1(C)$ 
$\to \wedge^{r} {H}^1(C)\big)$.
The rank of
${\mc P}^r(C)$ is $ \binom{2g}{r} - \binom{2g}{r-2}$.

\vsk.2>

These constructions can also be carried out in homology groups and we get primitive parts  ${\mc P}_r(C)$ in 
$\wedge^r H_1(C)$. When the curve $C$ varies in a family, the homology and cohomology groups
carry the Gauss-Manin connection, and the Poincare elements are flat. Therefore we get local systems
of primitive homology and cohomology groups $\mc{P}_r(C)$ and $\mc{P}^r(C)$ for $0\leq r\leq g$.

For the family of curves $C(z)$ considered before, the local systems of primitive cohomology and homology groups 
are irreducible, and pairwise distinct. We will review this setting in detail in Section \ref{sec 4}.

It is known that we get a direct decomposition,
\bean
\label{dec}
\wedge^r H_1(C(z)) 
&=&
 \mc P_r(C(z)) \oplus \left(\Delta(z)\wedge \mc P_{r-2}(C(z))\right)\oplus
\\
\notag
&\oplus& 
\Big(\Delta(z)\wedge \Delta(z)
\wedge
\mc P_{r-4}(C(z))\Big)\oplus \dots.
\eean

\subsection{Primitive part and KZ equations}

\label{sec 3.3}

Notice that  $\dim \mc P^r = \dim \Sing L^{\ox(2g+1)}[2g+1-2r]$. This observation extends to the following theorem.

\begin{thm}
\label{thm Iso}
For $0\leq r\leq g$, there is a nonzero homomorphism
\bean
\label{T iso}
\mc T : \wedge^r H_1(C) \to \Sing L^{\ox(2g+1)}[2g+1-2r]
\eean
of the Gauss-Manin connection  on $\wedge^r H_1(C)$ to  the \KZ/  connection on 
$\Sing L^{\ox(2g+1)}[2g+1-2r] $ with $\ka=2$.  The nonzero homomorphism is unique up to a
 multiplicative constant.

The map $\mc T$ restricted to 
$\mc P_r(C(z))$ is an isomorphism, and restricted to other summands in \eqref{dec} equals zero.

The map $\mc T$ is given by the formula,
\bean
\label{sM new}
\mc T \ :\  \ga_1(z)\wedge\dots\wedge\ga_r(z) \to  \sum_{1\leq i_1<\dots <i_r\leq 2g+1} 
 N^{\ga_1\times\dots\times\ga_r}_{i_1,\dots,i_r} (z) \,w_{\{i_1,\dots,i_r\}}
\eean
where $\ga_1(z),\dots,\ga_r(z)\in H_1(C(z))$ are flat families, and
$N^{\ga_1\times\dots\times\ga_r}_{ i_1,\dots,i_r}(z) =
 \int_{\ga_1(z)\times\dots\times\ga_r(z) } \nu_{ i_1,\dots,i_r}$.
\end{thm}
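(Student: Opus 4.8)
The plan is to establish Theorem~\ref{thm Iso} by combining the already-proven wedge-power isomorphism $\mc I$ of \eqref{mc I} with a careful analysis of a single linear algebra map, and then invoking the irreducibility/distinctness of the primitive local systems. First I would observe that the differential forms $\nu_{i_1,\dots,i_r}$ of \eqref{nu} and the forms $\mu_{i_1,\dots,i_r}$ of \eqref{mu} are related by a fixed, $z$-independent up to polynomial-in-$z$ linear transformation: expanding $\nu_{i_1,\dots,i_r}$ via the last line of \eqref{nu} writes it as an alternating sum over $\si\in S_r$ of symmetrizations of wedges $\eta^{(\si(1)-1)}_{i_1}\wedge\dots\wedge\eta^{(\si(r)-1)}_{i_r}$, whereas $\mu_{i_1,\dots,i_r}$ is the symmetrization of $\eta^{(0)}_{i_1}\wedge\dots\wedge\eta^{(0)}_{i_r}$. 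Using that the extra factor $x^k$ in $\eta^{(k)}_i$ relative to $\eta^{(0)}_i$ changes the cohomology class of the form only by a combination of lower classes (the forms $x^k\,dx/((x-z_i)\sqrt{\Psi})$ for $k=0,\dots,g-1$ span the same $g$-dimensional cohomology as the $\eta^{(0)}_i$ modulo exact forms, with transition matrix polynomial in $z$), one deduces that $\mc T$ of \eqref{sM new} equals $\mc I$ of \eqref{mc I} post-composed with a morphism of \KZ/ connections $\wedge^r\Sing L^{\ox(2g+1)}[2g-1]\to \Sing L^{\ox(2g+1)}[2g+1-2r]$. Since $\mc I$ is an isomorphism of flat connections, $\mc T$ is automatically flat (a morphism of Gauss–Manin to \KZ/), so the content to prove is: $\mc T$ is nonzero, it kills the non-primitive summands of \eqref{dec}, and it restricts to an isomorphism on $\mc P_r(C)$.

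Next I would exploit representation theory of the monodromy. The Gauss–Manin monodromy of $H_1(C(z))$ for our family of hyperelliptic curves is Zariski-dense in $\on{Sp}(2g,\C)$ (this is the standard fact cited via \cite{ACN, fulton-harris} that the author promises to review in Section~\ref{sec 4}), and hence the local systems $\mc P_r(C(z))$, $0\le r\le g$, are irreducible and pairwise non-isomorphic, being the fundamental representations of $\on{Sp}(2g)$. The target $\Sing L^{\ox(2g+1)}[2g+1-2r]$ carries a \KZ/ monodromy representation of dimension $\binom{2g}{r}-\binom{2g}{r-2}=\dim\mc P_r$; I would need that this \KZ/ local system is isomorphic to the irreducible $\mc P_r(C(z))$. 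This last identification is essentially the $r=1$ content of Theorem~\ref{thm r=1 all sol} (which says the \KZ/ solution space on $\Sing L^{\ox(2g+1)}[2g-1]$ is exactly $H_1(C)$, i.e.\ the standard representation), bootstrapped to general $r$: because $\mc I$ identifies the \KZ/ connection on $\wedge^r\Sing L^{\ox(2g+1)}[2g-1]$ with $\wedge^r H_1(C)$, and \eqref{dec} decomposes the latter into the distinct irreducibles $\mc P_{r-2j}(C)$, Schur's lemma forces any flat map from it to $\Sing L^{\ox(2g+1)}[2g+1-2r]$ to vanish on every summand $\Delta^{\wedge j}\wedge\mc P_{r-2j}(C)$ whose monodromy representation is not isomorphic to that of the target, and to be a scalar on the unique summand (if any) whose monodromy matches. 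So once I know the target's monodromy is the irreducible $\mc P_r$, uniqueness up to scalar and the vanishing on other summands are immediate from Schur, and the map is either $0$ or an isomorphism on $\mc P_r$.

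The remaining and genuinely substantive point is that $\mc T\neq 0$ — equivalently, that the scalar on the $\mc P_r$-summand is nonzero. I would prove this by an explicit nonvanishing computation of a single matrix entry of $\mc T$, i.e.\ of some period integral $N^{\ga_1\times\dots\times\ga_r}_{i_1,\dots,i_r}(z)=\int_{\ga_1\times\dots\times\ga_r}\nu_{i_1,\dots,i_r}$ for a well-chosen tuple of cycles $\ga_j$ and a well-chosen element of $\mc P_r(C(z))$. Concretely, since $\nu_{i_1,\dots,i_r}$ factors as an antisymmetrization of a product of one-variable forms (the $r=1$ forms $\eta^{(k)}_i$ integrated against $\ga_j$), the period is (up to sign) a determinant $\det\big(\int_{\ga_j}\eta^{(k-1)}_{i}\big)$ of $r\times r$ minors of the period matrix of $C(z)$. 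The nonvanishing of suitable such determinants is exactly the determinant formula for one-dimensional hypergeometric integrals from \cite{V1} already invoked in Theorem~\ref{thm r=1 all sol}; choosing $\ga_1,\dots,\ga_r$ to be $r$ members of a symplectic basis and projecting to the primitive part, the $\wedge^r$ of a nondegenerate pairing is nondegenerate on $\mc P_r$, so some primitive wedge has nonzero image. I expect this nonvanishing step to be the main obstacle: it requires threading the explicit $\mc P_r$-projection through the determinantal period formula and checking no spurious cancellation occurs; everything else is formal consequence of $\mc I$, \eqref{dec}, and Schur's lemma. I would finish by assembling these pieces: flatness from the factorization through $\mc I$, uniqueness and vanishing on non-primitive summands from Schur plus irreducibility/distinctness, and the isomorphism on $\mc P_r(C)$ from nonvanishing of the period determinant together with equality of dimensions $\dim\mc P_r=\dim\Sing L^{\ox(2g+1)}[2g+1-2r]$.
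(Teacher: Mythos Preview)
Your structural argument is correct and parallels the paper's: once one knows the target local system is isomorphic to $\mc P_r(C)$, the decomposition \eqref{dec} into pairwise non-isomorphic $\on{Sp}(2g)$-irreducibles together with Schur's lemma immediately yields uniqueness up to scalar, vanishing on the non-primitive summands, and (by the dimension equality) that $\mc T|_{\mc P_r}$ is an isomorphism. The factorization $\mc T=T(z)\circ\mc I$ via the cohomological relations among the $\eta^{(k)}$ is also correct; the paper carries this out in Section~\ref{sec 5}, formula~\eqref{TtT}, though there it is a \emph{consequence} of Theorem~\ref{thm Iso} rather than an ingredient in its proof.

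The genuine gap is at the nonvanishing step, and the paper's treatment of it is quite different from what you sketch. You need $\mc T|_{\mc P_r}\neq 0$, not merely $\mc T\neq 0$: if $\mc T$ were nonzero only on some $\mc P_{r-2j}$ with $j>0$, the conclusion would fail. The determinant formula of \cite{V1} gives nonvanishing of the full $2g\times 2g$ period matrix (equivalently, that $\mc I$ is an isomorphism), but says nothing about whether the polynomial map $T(z)$ annihilates the primitive subspace. Your appeal to ``the $\wedge^r$ of a nondegenerate pairing is nondegenerate on $\mc P_r$'' is a statement about the Poincar\'e form, not about $\mc T$, and does not bridge the gap. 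The paper instead proves \emph{independently} (Proposition~\ref{primitiveKZ}) that the \KZ/ local system on $\Sing L^{\ox(2g+1)}[2g+1-2r]$ is isomorphic to $\mc P^r(C)$. The argument maps both the \KZ/ space and $\mc P^r(C)$ into the cohomology of the cover $X^0_r\setminus\Delta_r\to U$, identifies the \KZ/ image using Looijenga's description \eqref{Looijenga}, and proves the key nonzero intersection via Hodge theory: $H^{r,0}(\mc P^r(C))=\wedge^r H^{1,0}(C)\neq 0$ for $r\leq g$, global holomorphic top-forms lift to compactly supported classes on Zariski opens, and Lemma~\ref{compact} shows the \KZ/ image contains the image of compactly supported cohomology. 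This Hodge-theoretic input is the missing idea in your proposal.
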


Theorem \ref{thm Iso} is proved in Section \ref{sec 4.6}.

\vsk.2>

A  polarized variation of Hodge structures is an integrally defined  local system with the extra data of a Hodge filtration (tensored with 
$\C$)  and has a non-degenerate bilinear pairing called the polarization (plus Griffiths transversality and other data). These local systems tensored with $\C$ have a Hodge decomposition. The indices $p,q$ in $H^{p,q}$ that appear here add up to the weight of the Hodge structure.

The local system of primitive cohomology  groups $\mc P_r(C(z))$ with $\C$ coefficients is the local system of primitive cohomology groups with integer coefficients (which is known to be a polarized variation of the Hodge structures) tensored with $\C$. Therefore, Theorem \ref{thm Iso} implies that the \KZ/ local system, which is over the  complex numbers, is isomorphic to a polarized variation of pure Hodge structures of weight $r$ tensored with complex numbers:
\begin{cor}
The monodromy of the \KZ/ equations on $\Sing L^{\ox(2g+1)}[2g+1-2r] $ with $\ka=2$ is irreducible and isomorphic to a polarized variation of pure Hodge structures of weight $r$ tensored with $\C$. 
\end{cor}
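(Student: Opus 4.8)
The plan is to deduce the corollary directly from Theorem~\ref{thm Iso}. That theorem asserts that the homomorphism $\mc T$, restricted to $\mc P_r(C(z))$, is an isomorphism $\mc T\colon \mc P_r(C(z)) \xrightarrow{\sim} \bigl(\Sing L^{\ox(2g+1)}[2g+1-2r],\,\nabla^{\on{KZ},2}\bigr)$ of the Gauss--Manin connection on the primitive part with the \KZ/ connection at $\ka=2$ (and vanishes on the remaining summands of \eqref{dec}). Both irreducibility of the monodromy and the structure of a polarized variation of pure Hodge structures tensored with $\C$ are properties of a flat complex bundle that are preserved under a flat isomorphism; hence it is enough to verify them for $\mc P_r(C(z))$, where they are classical.

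\emph{Irreducibility.} As recalled in Section~\ref{curves} and established in Section~\ref{sec 4}, for our family $C(z)$ the local system $\mc P_r(C(z))$ is irreducible (its monodromy is $\on{Sp}(2g)$ acting through a nontrivial irreducible representation). Transporting this along $\mc T$ shows that the monodromy of $\nabla^{\on{KZ},2}$ on $\Sing L^{\ox(2g+1)}[2g+1-2r]$ is irreducible.

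\emph{Hodge structure.} Using the Poincar\'e identification of $H_1(C(z))$ with $H^1(C(z))$ built into Section~\ref{curves} --- a flat isomorphism of Gauss--Manin connections, up to a Tate twist --- the source $\wedge^r H_1(C)$ of $\mc T$ is identified with $\wedge^r H^1(C(z))$, and the primitive part $\mc P_r(C(z))$ with the primitive cohomology $\mc P^r(C(z))$. By the classical theory of variations of Hodge structure for a smooth projective family, together with the Lefschetz decomposition, $\mc P^r(C(z))$ is the $\C$-extension $\C\ox_\Z \mc P^r_\Z$ of an integrally defined local system that underlies a polarized variation of pure Hodge structures of weight $r$: a Hodge filtration satisfying Griffiths transversality, with polarization given by the cup product restricted to primitive classes. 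Pulling the lattice $\mc P^r_\Z$, the Hodge filtration, and the polarization back along the inverse flat isomorphism equips $\bigl(\Sing L^{\ox(2g+1)}[2g+1-2r],\,\nabla^{\on{KZ},2}\bigr)$ with exactly such a structure, since every defining axiom --- flatness of the lattice, the Hodge decomposition on each fibre, transversality, positivity of the polarization --- is preserved by an isomorphism of flat bundles. I would also remark, though it is not needed, that under $\mc T$ this polarization should match, up to a nonzero scalar, the Shapovalov pairing between the $\ka=2$ and $\ka=-2$ connections, in line with the duality~\eqref{do}.

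\emph{Main obstacle.} The substantive work --- constructing $\mc T$ and identifying the summand of \eqref{dec} it is supported on --- is precisely Theorem~\ref{thm Iso}, proved in Section~\ref{sec 4}. Granting that, the remaining points are bookkeeping: one must use the cohomological rather than the homological primitive part so that the resulting Hodge structure has weight $r$ and not a Tate-twisted variant, and one must settle for the statement only after tensoring with $\C$, since $\mc T$ lives over $\C$ and no integral refinement of the \KZ/ side is asserted. Beyond this there is no analytic or representation-theoretic difficulty.
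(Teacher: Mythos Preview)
Your proof is correct and follows essentially the same approach as the paper: deduce both irreducibility and the polarized VHS structure from Theorem~\ref{thm Iso} by transporting the known properties of the primitive cohomology $\mc P^r(C(z))$ along the flat isomorphism $\mc T$. Your added care about passing from homology to cohomology (so that the weight comes out as $r$) and the remark that one only gets the statement after tensoring with $\C$ are apt elaborations of what the paper states more tersely in the sentence preceding the corollary.
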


\begin{rem}

It was shown in \cite[Proposition 12.1]{BFM} that the \KZ/ connection on \(\Sing L^{\otimes (2g+2)}[0]\) with \(\kappa=2\) is isomorphic to the local system of the \(g\)-th primitive cohomology groups  of the hyperelliptic genus \(g\) curve obtained as a double cover of \(\mathbb{P}^1\) branched over the \(2g+2\) points (not over infinity).  Note that there is a Tate twist in \cite{BFM} but that can be ignored for 
the statement on local systems.  Some farther expectations were formulated in   \cite[Section 12.2.2]{BFM} when not all 
tensor factors are $L$, but still only singular vectors of weight zero were considered, see also  \cite[Remark 12.14 ]{BFM}.

\end{rem}

\subsection{Wedge-power of \KZ/ equations on $\Sing L^{\ox(2g+1)}[2g-1]$ with $\ka=-2$}
\label{sec 3.4}

The \KZ/ equations on $L^{\ox(2g+1)}[2g-1]$ with $\ka=-2$ induce a system of differential equations
with values in $\wedge^r L^{\ox(2g+1)}[2g-1]$ with the invariant subspace
\bean
\label{subs bar}
\wedge^r \Sing L^{\ox(2g+1)}[2g-1] \subset \wedge^r L^{\ox(2g+1)}[2g-1].
\eean
The integral representations for solutions look as follows.  
For $1\leq i_1,\dots ,i_r \leq 2g+1$, define
\bean
\label{mu bar}
\bar \mu_{i_1,\dots,i_r} 
&=&
\on{Ant}_{t_1,\dots,t_r}
\!
 \left(\prod_{j=1}^r \frac{\Psi(t_j,z)^{1/2}}{t_j-z_{i_j}}
 \right) dt_1\wedge\dots\wedge dt_r\,.
 \eean
For any flat families $\ga_1(z),\dots,\ga_r(z) \in H_1(C(z))$, denote
\bea
\bar M_{i_1,\dots,i_r}^{\ga_1\times\dots\times\ga_r} (z)
=
 \int_{\ga_1(z)\times\dots\times\ga_r(z)} \bar \mu_{i_1,\dots,i_r}\,.
\eea

Notice that $\bar \mu_{i_1,\dots,i_r}$ 
is skew-symmetric with respect to permutations of
the indices $i_1,\dots,i_r$.
Hence $\bar M_{i_1,\dots,i_r}^{\ga_1\times\dots\times\ga_r}$ 
is skew-symmetric with respect to permutations of
the indices $i_1,\dots,i_r$.

For any flat families $\ga_1(z),\dots,\ga_r(z)\in H_1(C(z))$, the vector 
\bean
\label{sM bar}
\bar M^{\ga_1\times\dots\times\ga_r}(z) =    \sum_{1\leq i_1<\dots <i_r\leq 2g+1} 
\bar M^{\ga_1\times\dots\times\ga_r}_{i_1,\dots,i_r} (z) \,w^{\{i_1,\dots,i_r\}}
\eean
is a solution of the KZ equations over the field $\C$ with values  in
 $\wedge^r \Sing L^{\ox(2g+1)}[2g-1]$ and $\ka=-2$.
 By Theorem \ref{thm r=1 all sol -2},  all solutions over the field $\C$ with values in 
$\wedge^r \Sing L^{\ox(2g+1)}[2g-1]$ have this form.
Thus we obtain an isomorphism
\bean
\label{mc bar I}
\phantom{aaa}
\bar{\mc I} : \wedge^rH_1(C) \to \wedge^r \Sing L^{\ox(2g+1)}[2g-1],
\quad
\ga_1(z)\wedge \dots\wedge \ga_r(z) \mapsto \bar M^{\ga_1\times\dots\times\ga_r}(z)
\eean
of the  Gauss-Manin connection on $\wedge^rH_1(C)$ and the \KZ/ connection with $\ka=-2$ on
\\
$\wedge^r \Sing L^{\ox(2g+1)}[2g-1]$.

\subsection{Shapovalov form}
\label{sec 3.5}

For any $2\leq r \leq g$, the Shapovalov form on $\Sing L^{\ox(2g+1)}[2g-1]$ induces a nondegenerate 
symmetric bilinear form $S^{\wedge^r}$ on
$\wedge^r\Sing L^{\ox(2g+1)}[2g-1]$, also called the Shapovalov form.
The \KZ/ connections on $\wedge^r\Sing L^{\ox(2g+1)}[2g-1]$ with $\ka=2$ 
and $-2$ are dual with respect to $S^{\wedge^r}$.

\section{Proof of Theorem \ref{thm Iso}}
\label{sec 4}

\subsection{Hyperelliptic curves}

Let $\mc{C}_{2g+1}$ be the configuration space of $2g+1$ distinct points on $\Bbb{A}^1_{\Bbb{C}}$.
Recall that $n=2g+1$ and we have a family of hyperelliptic curves $C(z)$ over $\mc{C}_{2g+1}$. 
There are corresponding local systems of homology and cohomology groups over $\mc{C}_{2g+1}$.

We introduce some formal notation to facilitate some of the arguments in this section. Let $\pi:C \to \mc{C}_{2g+1}$ be the family of smooth hyperelliptic
curves given by the normalisation of the closure of
$y^2=\prod_{i=1}^{2g+1}(z_i-t) \subset \mc{C}_{2g+1} \times \Bbb{A}^2$ in
$\mc{C}_{2g+1} \times \Bbb{P}^2$.   The fiber of $\pi$ over $(z_1,\dots,z_{2g+1})$ is the curve
$C(z)$ defined earlier.

We denote by $\mc{H}^1(C)$ the local system of rank $2g$ on
$S$ given by $H^1(\pi_*\Q_C)$. We have a canonical injection
$\delta:\Q_S \to \wedge^2 {\mc H}^1(C)$ given by the Poincar\'e pairing. For any integer $r$ with
$0 \leq r \leq g$, we have a canonical sub-local system $\mc{P}^r(C)$
of $\wedge^r (\mc{H}^1(C))$ given as the kernel of the map
$\wedge^r \mc{H}^1(C) \to \wedge^{2g-r+2}\mc{H}^1(C)$
induced by ``wedging with $\delta$'' $g-r+1$ times. Here $\delta$ is the 
Poincar\'e form. Note that $\mc{P}^r(C)$ is a variation of pure Hodge structures and 
its $H^{r,0}$ equals $\wedge^r \mc{H}^{1,0}(C)$ and is hence nonzero. Fiber by fiber this is local system of 
primitive cohomology groups considered in Section \ref{curves}.

\begin{remark}
For a smooth projective connected curve $X$ over the complex numbers, let $J(X)= H^0(X,\Omega^1)^*/H_1(X,\Bbb{Z})$ denote the  
Jacobian variety of $X$. Recall also that $J(X)$ is the $\operatorname{Pic}^0(X)$ of the curve $X$ (group of divisors of degree $0$) by the Abel-Jacobi theorem, and thus can be defined in any characteristic. Topologically $J(X)$ is a torus $(S^1)^{2g}$, and therefore $H^r(J(X))=\wedge^r H^1(J(X))$, further the Abel-Jacobi map $X\to J(X)=\operatorname{Pic}^0(X)$ given by $p\mapsto [p]-[p_0]$ gives an isomorphism $H^1(J(X))= H^1(X)$. Therefore the $r$-th cohomology of $J(X)$ equals $\wedge^r H^1(X)$.

Let $\pi': J(C)\to \mc{C}_{2g+1}$ be family of Jacobian varieties of $C$ corresponding to the family of curves $\pi:C\to\mc{C}_{2g+1}$. The cohomology
groups $H^r(\pi'_*\Bbb{Q}_{J(C)})$ are identified with $\wedge^r \mc{H}^1(C)$.
It is known that $J(C)$ is a family of principally polarized abelian varieties, with 
polarization given by  $\delta\in  \wedge^2 \mc{H}^1(C)= H^2(\pi'_*\Bbb{Q}_{J(C)})$.
The local system $\mc{P}^r(C)$ is identified with the  local system of $r$-th primitive 
cohomology groups of the family of polarized varieties $J(C)$.
\end{remark}

The monodromy group of the topological local
system $\mc{H}^1(C)$ on $\mc{C}_{2g+1}(\C)$ is Zariski dense in
the group $\mr{Sp}_{2g}$ (see, e.g., \cite[Theorem 1 (2)]{ACN}). Since $\mc{P}^r(C)$ corresponds to the $r$-th
fundamental representation of $\mr{Sp}_{2g}$ (see, e.g., \cite[Theorem
17.5]{fulton-harris}) the monodromy representation of
$\mc{P}^r(C)$ is absolutely irreducible.  We also obtain that the local systems
$\mc{P}^r(C)$ for $r=0,\dots,g$ are pairwise distinct irreducible monodromy representations.

Let $X_r = C\times_SC\times_S \dots \times_SC$, where there are $r$
factors and let $\pi_r:X_r \to S$ be the structure map. By the Kunneth
formula there is a natural injection
$\otimes^r \mc{H}^1(C) \to H^r(\pi_{r,*}\Q_{X_r})$ and the image of
$\wedge^r \mc{H}^1(C)$ lies   in the space of $\Sigma_r$-invariants
$H^r(\pi_{r,*}\Q_{X_r})^{\Sigma_r}$. Consequently, we have an
injection $\mc{P}^r(C) \to H^r(\pi_{r,*}\Q_{X_r})^{\Sigma_r}$.

\subsection{KZ equations}

Let $\mf{g}=\mf{sl}_2$ and $\kappa=2$. Let  $\vec{\vpi}^{2g+1} := (\vpi,\dots,\vpi)$ with $\vpi$ appearing
$2g+1$ times, where $\vpi:= \vpi_1$ is the fundamental dominant weight of $\mf{sl}_2$.
 Let $r\in\Bbb{Z}$ with $1\leq r\leq g$. 
 
 Let $\mc{KZ}_2$  be the local system on $\mc{C}_{2g+1}$ corresponding to the  
 dual of the KZ connection on $\Sing L^{\ox(2g+1)}[2g+1-2r]$  
 with $\ka =2$. It has the same rank as that of $\mc{P}^r(C)$, see \eqref{dim sing}.

\begin{proposition}\label{primitiveKZ}
  The  local system $\mc{KZ}_{2}$ on $\mc{C}_{2g+1}$ is isomorphic to
  the local system $\mc{P}^r(C)$, up to tensoring with a rank one local system on  $\mc{C}_{2g+1}$.
\end{proposition}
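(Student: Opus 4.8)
The plan is to prove Proposition~\ref{primitiveKZ} by comparing the two local systems through their monodromy representations, exploiting the fact that both factor through $\mr{Sp}_{2g}$ after passing to the Gauss-Manin side. First I would recall from Section~\ref{sec 3.1} the isomorphism $\mc I : \wedge^r H_1(C) \xrightarrow{\sim} \wedge^r \Sing L^{\ox(2g+1)}[2g-1]$ of Gauss-Manin and $\ka=2$ KZ connections coming from the $r=1$ determinant formula (Theorem~\ref{thm r=1 all sol}) applied factor-by-factor. This already identifies the monodromy of $\wedge^r\Sing L^{\ox(2g+1)}[2g-1]$ with the $r$-th wedge of the standard $\mr{Sp}_{2g}$-representation (Zariski-densely), so its irreducible constituents are governed by the plethysm $\wedge^r(\mr{std})$, which decomposes as $\mc P^r \oplus \mc P^{r-2} \oplus \mc P^{r-4}\oplus\cdots$ as a representation of $\mr{Sp}_{2g}$ — matching the decomposition~\eqref{dec}. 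The key dimension count $\dim\mc P^r = \binom{2g}{r}-\binom{2g}{r-2} = \dim\Sing L^{\ox(2g+1)}[2g+1-2r]$ from \eqref{dim sing} then tells us that if $\mc{KZ}_2$ (the dual KZ local system on the singular weight space $2g+1-2r$) is irreducible, it must be isomorphic to $\mc P^r(C)$ up to a rank-one twist.

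The main work, then, is to (i) establish irreducibility of $\mc{KZ}_2$, and (ii) actually exhibit the embedding of $\Sing L^{\ox(2g+1)}[2g+1-2r]$ into $\wedge^r\Sing L^{\ox(2g+1)}[2g-1]$ realizing it as the primitive summand. For (ii), I would use the symmetrization map: the differential form $\nu_{i_1,\dots,i_r}$ of \eqref{nu} is built from the antisymmetrized product of the $r=1$ forms $\eta^{(0)}_{i}$ by wedging in the Vandermonde factor $\prod_{i<j}(t_i-t_j)$, which is exactly the difference between the master functions $\Phi^{1/2}$ for general $r$ and the product of $r$ copies of the $r=1$ master function; comparing $\nu_{i_1,\dots,i_r}$ with $\mu_{i_1,\dots,i_r}$ of \eqref{mu} shows that $\mc T$ of Theorem~\ref{thm Iso} is the composite of $\mc I$ with a fixed $\mr{Sp}_{2g}$-equivariant projection $\wedge^r H_1(C)\to \mc P_r(C)$ — concretely, integrating $\nu_{i_1,\dots,i_r}$ over a product cycle $\ga_1\times\cdots\times\ga_r$ is, up to the sign and symmetrization bookkeeping in \eqref{nu}, a sum over $\si\in S_r$ of determinants of period integrals of the $\eta^{(\si(j)-1)}_{i}$, and the presence of the higher-order forms $\eta^{(k)}$ with $k\ge 1$ is what cuts down to the primitive part. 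This is essentially the contraction-with-$\delta$ that defines $\mc P^r$.

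For (i), irreducibility of $\mc{KZ}_2$, I would argue on the Gauss-Manin side: once $\mc T$ is shown to be a nonzero Gauss-Manin-to-KZ homomorphism whose restriction to $\mc P_r(C(z))$ is nonzero, and since $\mc P^r(C)$ is absolutely irreducible (it is the $r$-th fundamental representation of $\mr{Sp}_{2g}$, as recalled from \cite{ACN, fulton-harris}), Schur's lemma forces $\mc T|_{\mc P_r}$ to be an isomorphism onto its image; then the image, being a nonzero quotient of an irreducible local system, is all of $\mc{KZ}_2^\vee$ and is irreducible, hence so is $\mc{KZ}_2$. Chasing the rank-one ambiguity: the gauge factor $\prod_{i<j}(z_i-z_j)^{1/(2\ka)}$ in \eqref{tilde I} and any overall scalar normalization of the periods accounts precisely for the "up to tensoring with a rank one local system" in the statement, since the underlying $\mr{Sp}_{2g}$-representations on the associated graded are identified on the nose.

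\textbf{Main obstacle.} The hard part will be pinning down (ii) rigorously — i.e., that $\mc T$ is nonzero and that it kills exactly the non-primitive summands $\Delta\wedge(\cdots)$ rather than, say, vanishing identically or mixing summands. Nonvanishing can be checked by evaluating on a single explicit cycle (e.g. taking $\ga_1,\dots,\ga_r$ to be $r$ distinct standard $A$-cycles and computing the leading term of the resulting period determinant), and the vanishing on $\Delta\wedge \mc P_{r-2}$ follows because $\delta\in\wedge^2 H^1$ pairs the forms $\eta^{(0)}$ against each other trivially while the mixed forms $\eta^{(k)}$, $k\ge1$, span the complementary part — but making this bookkeeping precise, including the signs $(-1)^{|\si|}$ and the double symmetrization in \eqref{nu}, is where the real effort lies; everything else is representation theory of $\mr{Sp}_{2g}$ plus the already-cited determinant formula of \cite{V1}.
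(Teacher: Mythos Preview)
Your approach is genuinely different from the paper's, and the difference is not cosmetic: the paper does \emph{not} argue via the $\mr{Sp}_{2g}$-plethysm of $\wedge^r H_1(C)$ and Schur's lemma. Instead it realizes both $\mc{KZ}_2$ and $\mc P^r(C)$ inside the cohomology $H^r(\pi^0_{r,*}\Q_{X^0_r\setminus \Delta_r})^{\tau}$ of a common configuration space, and then uses Hodge theory to show the two images $I,J$ meet. Concretely, Deligne's theory of mixed Hodge structures implies that the global holomorphic $r$-forms $H^{r,0}(\mc P^r(C))$ inject into this open cohomology, and Looijenga's refinement of Schechtman--Varchenko identifies the KZ image with the image of a map \eqref{Looijenga} that contains the image of compactly supported cohomology (Lemma~\ref{compact}); since top holomorphic forms on a smooth proper variety always lift to compactly supported classes on any Zariski open, one gets $H^{r,0}\subset I\cap J$, hence $I\cap J\ne 0$. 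Irreducibility of $\mc P^r(C)$ then forces $I=J$ by a rank count.

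Your argument has a real gap at exactly the point you flag as the main obstacle, and your proposed fix does not close it. What you actually need is that $\mc T|_{\mc P_r}$ is nonzero (equivalently, that $\mc T$ is surjective, equivalently that $\mc T$ kills all non-primitive summands \emph{and} is nonzero). From nonvanishing of $\mc T$ alone you only get that the image is isomorphic to some nonempty sum $\bigoplus_{j\in S}\mc P_j$ with $S\subset\{r,r-2,\dots\}$, sitting inside the target of dimension $\dim\mc P_r$; nothing rules out $S=\{r-2\}$, say, with the target reducible. Your claimed ``$\mr{Sp}_{2g}$-equivariant projection'' reasoning is circular: the target $\Sing L^{\ox(2g+1)}[2g+1-2r]$ carries no a priori $\mr{Sp}_{2g}$-action --- identifying its monodromy with an $\mr{Sp}_{2g}$-representation is the content of the proposition. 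Your sketch that $\mc T$ kills $\Delta\wedge(\cdot)$ because ``$\delta$ pairs the forms $\eta^{(0)}$ trivially'' is not a proof; in the paper this vanishing (Corollary~\ref{cor TP}) is \emph{deduced from} Theorem~\ref{thm Iso}, which in turn rests on the proposition you are trying to prove. Likewise, surjectivity of $\mc T$ (all solutions hypergeometric, Corollary~\ref{cor asah}) is downstream of the proposition, and Theorem~\ref{thm alls} only applies for generic $\ka$, not $\ka=2$. The determinant formula of \cite{V1} you invoke gives completeness only for $r=1$. So either you supply an independent direct computation that $T(z)(\mc D(z)\wedge w)=0$ using the explicit formulas \eqref{J(z)} and \eqref{T(z)} --- which is possible in principle but is a substantial identity you have not carried out --- or you need the Hodge-theoretic input the paper uses.
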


\subsection{Proof of Proposition \ref{primitiveKZ}} 
\label{sec proof}

 The proof is an adaptation of the methods and proof in Section 12 of \cite{BFM} where  a family of hyperelliptic curves which are unramified over $\infty\in\Bbb{P}^1$ (and only a weight space of $0$ is considered at infinity) is considered in the basic geometric setting. There are some additional ingredients which we will point out at appropriate places.

Let
$\widehat{U}\to U\subseteq \mc{C}_{2g+1} \times
\Bbb{A}_{\msk}^{r}$ be the covering given by the tuples
$({z}, t_1,\dots,t_{r}, u)$ such that $u^2=\Phi(t,z)$ (see \eqref{Master}). This
covering has Galois group $\Sigma_{r}\times \mu_2$.

Let $C^0\subset C$ be the open subset of points on the affine part
(i.e., $t\neq \infty$) such that $t\neq z_i$ for all $i$ and, for any
$r>0$, let $X^0_r = C^0 \times_SC^0\times_S\dots \times_SC^0$ with $r$
factors and let $\Delta_r$ be the union of the diagonals $t_i = t_j$,
$1 \leq i < j \leq r$ in $X^0_r$.

There is an unramified Galois
covering $X^0_{r}\setminus \Delta_{r}\to\widehat{U}$ given by
$$\prod_{i=1}^{r} (y_i,t_i)\mapsto (t_1,\dots,t_{r},y_1\dots
y_{r}\prod_{1 \leq b < c \leq r} (t_b - t_c)^{-1}).$$ The
composition $X_0^{r} \setminus \Delta_{r}\to{U}$ is an unramified Galois
covering with Galois group $\Sigma_{r}\times \mu_2^{r}$. For any $r>0$,
let $\pi_{r}^0: X^0_{r} \setminus \Delta_r \to \mc{C}_{2g+1}$ be the structure map.
Keeping track of isotypical components, we get a map (not a priori an
injection)

\begin{equation}\label{e:mapfromkz2}
  \mc{KZ}_{2}\to  H^{r}(\pi^0_{r,*}\Q_{X^0_{r}\setminus\Delta_{r}})^{\tau}
\end{equation}
where $\tau: \Sigma_{r} \times \mu_2^{r}\to\mu_2$ is trivial on
$\Sigma_{r}$ and the product on the second (the triviality on the
$\Sigma_{r}$ factor is because of the factor $\prod (t_i-t_j)$ in $\Phi(t,z)$). Let $I$ be the image
of this map.

We have also a map of local systems
$\mc{P}^r(C)  \to
H^{r}(\pi^0_{r,*}\Q_{X^0_{r}-\Delta})^{\tau'} $ 
and we let its image be $J$; the invariance under
$\tau'$ uses that the hyperelliptic involution acts by multiplication
by $-1$ on $H^1(\pi_*\Q_C)$.
\begin{claimn}\label{Claime}
  The local system $I \cap J$ is nonzero.
\end{claimn}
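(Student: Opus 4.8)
The plan is to show that both $I$ and $J$ can be computed through the same cohomology group of a small, explicit auxiliary space, and that inside that group neither image can avoid a distinguished line. First I would pass from the open piece $X^0_r \setminus \Delta_r$ to a form that both $\mc{KZ}_2$ and $\mc{P}^r(C)$ naturally map into, following \cite[Section 12]{BFM}. The key point is that the master function $\Phi(t,z)^{1/2}$ and the hyperelliptic differentials $\eta^{(0)}_{i_1}$ are \emph{the same} integrands, up to the $\prod(t_i-t_j)$ factor that governs which $\Sigma_r$-isotypical component one lands in; the $\tau$-part for $\mc{KZ}_2$ and the $\tau'$-part for $\mc{P}^r(C)$ only differ by that sign character on $\Sigma_r$, which is accounted for by the Vandermonde factor. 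So after passing to the covering $\widehat{U}$ one should check that $I$ and $J$ are computed by the \emph{same} antiinvariant part of $H^r$ of the configuration-type space, i.e. that the two a priori different isotypical recipes produce the same subspace of $H^r(\pi^0_{r,*}\Q_{X^0_r\setminus\Delta_r})$.

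Next I would exhibit a single nonzero cohomology class lying in both $I$ and $J$ simultaneously. The natural candidate is the class of the form $\mu_{i_1,\dots,i_r}$ (equivalently $\nu_{i_1,\dots,i_r}$) of \eqref{mu}, for a generic choice of distinct indices $i_1,\dots,i_r$: by construction this is a cohomology class on $X^0_r\setminus\Delta_r$ that is visibly antiinvariant under the hyperelliptic involutions (each factor carries a $\Psi^{-1/2}$, picking up the sign $-1$) and antiinvariant under $\Sigma_r$ acting on the $t_i$ (the Vandermonde), hence it sits in the relevant $\tau$- and $\tau'$-eigenspaces. On the $\mc{KZ}_2$ side this is exactly the image under \eqref{e:mapfromkz2} of the corresponding integral pairing, which is nonzero because the hypergeometric solutions $N^\gamma$ of Theorem \ref{thm s} are nonzero; on the $\mc{P}^r(C)$ side it is, up to the covering-map identification, the image of a wedge of holomorphic differentials $w_{i_1}\wedge\dots\wedge w_{i_r}$, which is a nonzero primitive class since $H^{r,0}(\mc P^r(C)) = \wedge^r \mc H^{1,0}(C)$ is nonzero. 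Hence the intersection $I\cap J$ contains a nonzero class, which is what we want.

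The main obstacle, and the step I would spend the most care on, is the bookkeeping of isotypical components and the verification that the class I produce is genuinely nonzero \emph{in} $H^r$ rather than being a coboundary — i.e. that the maps out of $\mc{KZ}_2$ and $\mc{P}^r(C)$ do not kill it. For this I would use the known irreducibility of $\mc P^r(C)$ as an $\mr{Sp}_{2g}$-representation (recorded just above via \cite[Theorem 1(2)]{ACN}, \cite[Theorem 17.5]{fulton-harris}): since the map $\mc P^r(C)\to H^r(\pi^0_{r,*}\Q_{X^0_r\setminus\Delta_r})^{\tau'}$ has nonzero source of irreducible monodromy, it is either zero or injective, and one rules out zero by exhibiting the nonzero $H^{r,0}$-class; symmetrically $\mc{KZ}_2$ is nonzero and its image $I$ is nonzero because the hypergeometric period integrals do not all vanish (Theorem \ref{thm s} together with the nondegeneracy statements of Section \ref{Sol in C}). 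The genuinely delicate part is aligning the two ambient cohomology groups — $\tau$ versus $\tau'$ — which I expect to require the same covering-space manipulation as in \cite{BFM}, namely identifying $X^0_r\setminus\Delta_r \to \widehat U$ and tracking the Galois action of $\Sigma_r\times\mu_2^r$ carefully, so that ``the sign character composed with the $\mu_2^r$-product'' on one side literally matches ``the trivial-on-$\Sigma_r$, product-on-$\mu_2^r$'' character on the other. Once those two characters and the resulting eigenspaces are identified, the common nonzero class finishes the claim.
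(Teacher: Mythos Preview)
Your proposal has the right outline --- both $I$ and $J$ live in the same isotypical piece, and holomorphic top forms are the natural meeting place --- but it contains a genuine gap.

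You claim that the class of $\mu_{i_1,\dots,i_r}$ (equivalently $\nu_{i_1,\dots,i_r}$) is ``the image of a wedge of holomorphic differentials''. This is false: the one-forms $\eta^{(0)}_i=\Psi^{-1/2}\frac{dt}{t-z_i}$ have a pole of order $2$ at the branch point over $t=z_i$ on $C(z)$ (in the local coordinate $s$ with $s^2=t-z_i$ one finds $\eta^{(0)}_i\sim \mathrm{const}\cdot ds/s^2$). They are differentials of the second kind, not holomorphic; the holomorphic $1$-forms are the $\Psi^{-1/2}t^{k-1}dt$, $k=1,\dots,g$. So your candidate class, while in $I$ by construction of the \KZ/ map, is not visibly in $J$: it is neither an $(r,0)$-class nor a priori primitive. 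The nonvanishing of the hypergeometric periods $N^\gamma$ tells you the class is nonzero in $H^r$ of the open set, but says nothing about membership in $J=\mathrm{Im}(\mc P^r(C))$.

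The paper closes this gap from the other side. It starts from the genuinely holomorphic classes $H^{r,0}(\mc P^r(C))=\wedge^r H^{1,0}(C)\ne 0$, which lie in $J$ by Deligne's injectivity of $H^{r,0}$ under restriction to a Zariski open. The real work is then to show these classes lie in $I$: this is Lemma~\ref{compact}, which identifies the \KZ/ image via Looijenga's description as containing the image of compactly supported cohomology, together with the fact that global holomorphic top forms on a smooth proper variety lift to compactly supported classes on any Zariski open. This compactly-supported-cohomology step is the ingredient your proposal is missing; without it there is no mechanism forcing $I$ to meet $J$.
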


We first show that the claim implies the proposition and then prove
the claim. Since $J$ is nonzero by the claim and the monodromy
representation of $\mc{P}^r(C)$ is irreducible, we deduce that
$J \simeq \mc{P}^r(C)$ and then the claim further implies that
$I \cap J = J$. The rank equality of 
$\mc{KZ}_{2}$  and $ \mc{P}^r(C)$
implies that $I = J$ and the map \eqref{e:mapfromkz2} is
injective. The proposition follows.

\subsection{Proof of Claim \ref{Claime}}
So far modulo the rank verification the argument was entirely following the lines of Section 12 of \cite{BFM}. At this point instead of a residue computation, we follow a different method still involving global holomorphic forms of the top dimension.

Consider the space of top degree holomorphic differentials $$H^{r,0}(\pi_{r,*}\Q_{X_{r}})^{\tau'}= H^{r,0}(\pi_{r,*}\Q_{X_{r}})^{\Sigma_r}=H^{r,0}(\mc{P}^r(C))$$ where $X_r$ is as before. By results of Deligne \cite{hodge2}, $H^{r,0}(\pi_{r,*}\Q_{X_{r}})^{\tau'}$ maps injectively into $H^{g}(\pi^0_{r,*}\Q_{X^0_{g}-\Delta})^{\tau'}$. Since $r\leq g$, $H^{r,0}(\mc{P}^r(C))$ is nonzero.

To prove the claim it suffices  to show that $H^{r,0}(\pi_{r,*}\Q_{X_{r}})^{\tau'}\subseteq H^{g}(\pi^0_{r,*}\Q_{X^0_{g}-\Delta})^{\tau'}$ is in the image 
$I$ of the map \eqref{e:mapfromkz2}.  We can prove this statement fiberwise on $\mc{C}_{2g+1}$.

This follows from \cite{BF} diagram (4.5): To make the transition to the notation in that paper, we first note that there is a $\Sigma_r$ equivariant rank one local system $\mathcal{L}$ on $U$ such that there is a map \cite{SV1} (see Remark \ref{notation} for further explanation of the notation):
\begin{equation}\label{kzmap}
\mc{KZ}_{2}\to H^{r}(U,\ml)^{\chi}
\end{equation}
where $\chi$ is the sign character. Here we note that $H^{r}(U,\ml)^{\chi}$ is the same as the target $H^{r}(\pi^0_{r,*}\Q_{X^0_{r}\setminus\Delta_{r}})^{\tau}$ of the map \ref{e:mapfromkz2}, i.e., $H^{r}(\pi^0_{r,*}\Q_{X^0_{r}\setminus\Delta_{r}})^{\tau}$. We want the image of 
\eqref{kzmap} to contain $H^{r,0}(\pi_{r,*}\Q_{X_{r}})^{\tau'}$. We need the following consequence of \cite{L2} (as noted implicitly in \cite{BF}) which is valid in a more general context:
\begin{lemma}\label{compact}
The image of \eqref{kzmap} contains the image of the compactly supported cohomology $H^{r}_c(U,\ml)^{\chi}\to H^{r}(U,\ml)^{\chi}$.
\end{lemma}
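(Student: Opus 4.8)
The plan is to identify \eqref{kzmap} with the Schechtman--Varchenko hypergeometric map and to run a Hodge-theoretic comparison on the arrangement complement $U$. Recall that \eqref{kzmap} sends a class of $\mc{KZ}_2$ to the algebraic de Rham class in $H^{r}(U,\ml)^{\chi}$ of an explicit top-degree ``hypergeometric form'' assembled from the master function $\Phi(t,z)$; concretely, the image of \eqref{kzmap} is the span of the classes $[\nu_{i_1,\dots,i_r}]$ (equivalently the $[\mu_{i_1,\dots,i_r}]$ of \eqref{mu}) in $H^{r}(U,\ml)^{\chi}$. Working fiberwise over $\mc{C}_{2g+1}$ as in the paper, $U=U_z\subset\mathbb{A}^{r}_{\C}$ is smooth affine of pure dimension $r$, and, since with $\ka=2$ the exponents of $\Phi^{1/2}$ are integral along the diagonals $t_i=t_j$ and half-integral along the walls $t_j=z_a$, the local system $\ml$ has finite monodromy (of exponent dividing $2$); in particular $\ml$ is unitary and underlies a polarizable variation of Hodge structure of type $(0,0)$.

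The first step is purely formal. Since $U$ is affine of dimension $r$, $H^{k}(U,\ml)=0$ for $k>r$, so by Poincar\'e--Verdier duality $H^{k}_c(U,\ml)=0$ for $k<r$; by Deligne's mixed Hodge theory the weights on $H^{r}_c(U,\ml)$ are $\le r$ while those on $H^{r}(U,\ml)$ are $\ge r$, so the natural map $H^{r}_c(U,\ml)\to H^{r}(U,\ml)$ factors through $\gr^{W}_{r}$ and its image is exactly the lowest weight piece $W_{r}H^{r}(U,\ml)$, which is pure of weight $r$ (the ``interior cohomology''). All of this is $\Sigma_r$-equivariant and $\chi$-isotypic projection is exact, so $\im\big(H^{r}_c(U,\ml)^{\chi}\to H^{r}(U,\ml)^{\chi}\big)=\big(W_{r}H^{r}(U,\ml)\big)^{\chi}$; it therefore suffices to show that this pure $\chi$-isotypic piece is spanned by the classes $[\nu_{i_1,\dots,i_r}]$.

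For that I would fix a smooth compactification $j:U\hookrightarrow\bar U$ with $D=\bar U\setminus U$ a normal crossings divisor dominating the obvious one (closing up the walls $t_j=z_a$, the diagonals $t_i=t_j$, and the divisors $t_i=\infty$), take the Deligne canonical extension $\wt\ml$ of $\ml\otimes\Oc_U$ with residue eigenvalues in $[0,1)$, and invoke the logarithmic de Rham computation of the twisted cohomology of an arrangement complement (Deligne; Esnault--Schechtman--Viehweg; Timmerscheidt): $W_{r}H^{r}(U,\ml)$ is represented by classes of global top forms in $\Omega^{r}_{\bar U}(\log D)\otimes\wt\ml$, and by the explicit analysis carried out fiberwise in \cite{BF} (its diagram (4.5)), and in the generality we need in \cite{L2}, the $\chi$-isotypic part of this pure piece is spanned by the $[\nu_{i_1,\dots,i_r}]$. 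The consistency check here is that $\nu_{i_1,\dots,i_r}$ is, up to sign and symmetrization, a wedge $\eta^{(k_1)}_{i_1}\wedge\dots\wedge\eta^{(k_r)}_{i_r}$ with $0\le k_j\le r-1\le g-1$: each $\eta^{(k)}_{i}$ has a simple pole along $t=z_{i}$ matched by the residue $\tfrac12$ of $\wt\ml$ there, is holomorphic at $t=\infty$ on the hyperelliptic curve because $k\le g-1$, and the numerator factor $\prod(t_i-t_j)$ absorbs the resonance of $\ml$ along the diagonals, so these forms are genuine global sections of $\Omega^{r}_{\bar U}(\log D)\otimes\wt\ml$; the gauge factor $\prod(z_i-z_j)^{1/2\ka}$ of \eqref{tilde I} plays no role fiberwise in $t$. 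Granting that these forms span $\big(W_{r}H^{r}(U,\ml)\big)^{\chi}$, the lemma follows.

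The main obstacle is exactly the word ``span'' in the last step: for $\ka=2$ the system is resonant, so \eqref{kzmap} need not be an isomorphism onto $H^{r}(U,\ml)^{\chi}$, and one must show that whatever it misses has weight $>r$, equivalently that the hypergeometric logarithmic forms already exhaust the interior cohomology $\big(W_{r}H^{r}(U,\ml)\big)^{\chi}$. This is the Hodge-theoretic input extracted from \cite{L2} (the general form of what is implicit in \cite{BF}). One clean way to organize it is to pair the $\ka=2$ forms against the corresponding $\ka=-2$ forms $\bar\nu_{i_1,\dots,i_r}$ for the dual local system $\ml^{\vee}$ and use that the interior cohomology is the nondegenerate quotient of the period pairing $H^{r}_c(U,\ml)\times H^{r}(U,\ml^{\vee})\to\C$, together with the nonvanishing of the relevant hypergeometric determinant --- classical for $r=1$, cf. \cite{V1} and Theorems \ref{thm r=1 all sol}, \ref{thm r=1 all sol -2}. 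The remaining bookkeeping --- the choice of normal crossings model at $t_i=\infty$, and keeping the $\Sigma_r$-equivariance and the identification of the characters $\tau$, $\tau'$, $\chi$ straight --- is routine given the arrangement structure.
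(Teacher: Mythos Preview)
Your route is genuinely different from the paper's, and the gap is exactly where you flag it. You reduce the lemma to the claim that the hypergeometric classes $[\nu_{i_1,\dots,i_r}]$ \emph{span} the pure weight-$r$ piece $(W_rH^r(U,\ml))^{\chi}$, and then invoke \cite{L2} and \cite{BF} for that spanning. But that is not what those references give you. Looijenga's result in \cite{L2} (as recorded in \cite[Prop.~4.7]{BF}) does not describe the image of \eqref{kzmap} as ``the span of certain logarithmic forms that one then compares with $W_r$''; it identifies the KZ space $\mc{KZ}_2$ with the image of a map
\[
H^{r}(V',q_{!}\ml)^{\chi}\longrightarrow H^{r}(V,j_{!}\ml)^{\chi}
\]
between cohomologies of two explicit partial compactifications $U\hookrightarrow V\hookrightarrow V'$ determined by the residues of $d\log\Phi$. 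From that description the lemma is immediate: one has a commutative square
\[
\xymatrix{
 H^{r}_c(U,\ml)^{\chi}\ar[r]\ar[d] & H^{r}(V', q_!\ml)^{\chi}\ar[d] \\
 H^{r}(U,\ml)^{\chi} & H^{r}(V, j_!\ml)^{\chi}\ar[l]
}
\]
(pure functoriality of $j_!$ along the chain $U\subset V\subset V'\subset P$), so the image of the left vertical arrow lands inside the image of the right vertical arrow, which is the image of \eqref{kzmap}. No weight filtration and no spanning statement are needed. This is what the paper does.

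Your Hodge-theoretic reduction to $(W_rH^r)^{\chi}$ is correct as far as it goes (the finite-monodromy argument lets you run Deligne's weight bounds), but at that point you still need an upper bound on $\dim (W_rH^r)^{\chi}$ or a nondegeneracy argument to conclude ``span'', and neither the ESV comparison nor the period-pairing sketch you propose supplies it without already knowing the rank of the interior cohomology. In other words, your argument ends at the same place the paper begins, and the actual mechanism that closes the loop is Looijenga's cohomological identification of $\mc{KZ}_2$, not a spanning statement for logarithmic forms.
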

Given this, the desired equality now follows from the fact that global top degree holomorphic forms on a smooth proper variety lift to compactly supported cohomology classes on Zariski open subsets. Hence the image of $H^{r,0}(\pi_{r,*}\Q_{X_{r}})^{\tau'}$ in $H^{r}(\pi^0_{r,*}\Q_{X^0_{r}\setminus\Delta_{r}})^{\tau}$ lifts to compactly supported cohomology
of the fibers which equals $H^{r}_c(U,\ml)^{\chi}$. This tells us that  $H^{r,0}(\pi_{r,*}\Q_{X_{r}})^{\tau'}\subseteq J\subseteq H^{r}(U,\ml)^{\chi}$ is in the image of $H^{r}_c(U,\ml)^{\chi}\to H^{r}(U,\ml)^{\chi}$ (identified with $I$) as desired.

\subsection{Proof of  Lemma \ref{compact}}
By Looijenga's refinement \cite{L2} of the results of Schechtman-Varchenko \cite{SV2} (see Proposition 4.7 in \cite{BF}), the 
\KZ/ space $\mc{KZ}_{2}$ is identified with the image of a map
\begin{equation}\label{Looijenga}
H^M(V', q_!\mathcal{L})^{\chi}\to H^M(V, j_!\mathcal{L})^{\chi}.
\end{equation}
\begin{remark}\label{notation}
We explain some of the notation used above: Let $\mathcal{L}$ be the rank one $\Sigma_r$ equivariant  local system local system on $U$ given by the connection $d+d \log \Phi(t,z)$ on the trivial bundle, and $\chi:\Sigma_r\to\mu_2$ the sign character.

Let $P$ be a smooth projective compactification of $U$ such that the complement $P-U=\cup_{\alpha} E_{\alpha}$ is a divisor with simple normal crossings. Let $a_{\alpha}\in \Bbb{Q}$ be the residue of $d \log \Phi(t,z)$ over $E_{\alpha}$.

Let $V'=P-\cup' E_{\alpha}$ where  the union is over $\alpha$  such that $a_{\alpha}$ is not in $\Bbb{Z}$ or is an integer $>0$, and let $q:U\to V'$ be the inclusion. Similarly $V\subseteq V'$ is the open subset of $P$ given by $V=P-\cup' E_{\alpha}$ where the union is over $\alpha$ such that $a_{\alpha}$ is not in $\Bbb{Z}$ or is an integer $\geq 0$. Let $j:U\to V$ be the inclusion.  

\end{remark}

This image has a subquotient given by the image of $H^{r}_c(U,\ml)^{\chi}\to H^{r}(U,\ml)^{\chi}$ (see the bottom square in equation (4.7) of \cite{BF}),
\begin{equation}
\label{diagrammo}
\xymatrix{
 H^{r}_c(U,\ml)^{\chi}\ar[r]\ar[d] & H^M(V', q_!\mathcal{L}(a))^{\chi}\ar[d] 
 \\
 H^{r}(U,\ml)^{\chi} & H^M(V, j_!\mathcal{L}(a)^{\chi}\ar[l] 
 \\}
\end{equation}
The KZ space is identified with the image of the right vertical arrow. The image of this space  in $H^{r}(U,\ml)^{\chi}$ is the image of \eqref{kzmap}. It contains the  image of the left vertical arrow because of the above diagram.

\begin{remark} Proposition \ref{primitiveKZ} is valid also for $\kappa=-2$. One can see this by the same argument as above with $\Phi(t,z)$ replaced by its inverse. We could also appeal to duality because the KZ system for $\kappa=-2$ is dual to the system for $\kappa=2$, and the primitive cohomology groups are self dual since $\mc H^1(C)$ is self dual. 
\end{remark} 

\subsection{Proof of Theorem \ref{thm Iso}} 
\label{sec 4.6}

The local system $\Sing L^{\ox(2g+1)}[2g+1-2r]$ with $\ka=2$ is dual to  $\mc{KZ}_2$ in Proposition \ref{primitiveKZ}. Hence Proposition \ref{primitiveKZ} implies that the local system $\Sing L^{\ox(2g+1)}[2g+1-2r]$ 
 with $\ka=2$  is isomorphic to  the  dual of $\mc{P}^r(C)$ which
is $\mc{P}^r(C)$ by the self-duality of representations of $\mr{Sp}_{2g}$. The local system of homology groups of $C$ is isomorphic to the local system of the first cohomology groups by Poincar\'e duality.

Therefore to prove the first part of Theorem \ref{thm Iso}, we are reduced to showing that there is a unique up to scalars map
$$
\wedge^r \mc{H}^1(C) \to \mc{P}^r(C).
$$
The desired statement  now follows by Schur's lemma since $\wedge^r \mc{H}^1(C)$ is a direct sum corresponding to the decomposition of the $r$-th wedge product of the vector representation of the symplectic group:
$$
\wedge^r \mc{H}^1(C)=\mc{P}^r(C)\oplus \mc{P}^{r-2}(C)\oplus\dots\oplus{P}^{r_0}(C)
$$
where $r_0=0$ if $r$ is even and $1$ if $r$ is odd,
and the $\mc{P}^{i}(C), i=0,\dots,g,$ are pairwise distinct irreducible representations of the monodromy group.

We finally prove the explicit formula for $\mc{T}$. When  $\kappa=2$, the image of \eqref{Looijenga} maps to $H^M(V, \mathcal{L})^{\chi}$ which is equal to 
 $H^{r}(\pi^0_{r,*}\Q_{X^0_{r}})^{\tau}$ (compare with \eqref{e:mapfromkz2}) so that we may ignore the singularities on the diagonals. These maps are also consistent with connections. Now $X^0_{r}$ is a product of affine curves, so that their second cohomology groups are zero. Therefore $H^{r}(\pi^0_{r,*}\Q_{X^0_{r}})^{\tau}$ is fiber by fiber the tensor product of the first cohomologies of the curves.
 
 This implies that flat families of elements in $\wedge^r H_1(C^0)$ where $C^0=C-\{z_1,\dots,z_n,\infty\}$ give rise to solutions (possibly zero) to the KZ system on $\Sing L^{\ox(2g+1)}[2g+1-2r]$ for $\kappa=2$ by well-defined integrals over cycles. Because of Proposition \ref{primitiveKZ} (using that $I$ is contained in the image of   $\wedge^r \mc{H}^1(C)$, these solutions only depend upon the image of $\wedge^r H_1(C^0)$ in $\wedge^r H_1(C)$, and generate all solutions to the KZ system on $\Sing L^{\ox(2g+1)}[2g+1-2r]$. This gives the surjective map in Theorem \ref{thm Iso}. 
 
 Since the other summands $\mc P_i(C(z))$ for $i\neq r$ in \eqref{dec} are irreducible representations of the monodromy group different from  $\mc P_r(C(z))\cong \Sing L^{\ox(2g+1)}[2g+1-2r]$, the map $\mc{T}$ restricted to these other summands in \eqref{dec} is zero. 
 
 This concludes the proof of Theorem \ref{thm Iso}.

 \begin{remark}
 We may think of $H_1(C)$ as  the space
  parametrizing solutions to the KZ equations on $\Sing L^{\ox(2g+1)}[2g-1]$
  with $\ka=2$. Therefore we may want to ask for an explicit surjection of local systems of the form
 $$
 \wedge^r\Sing L^{\ox(2g+1)}[2g-1]\to \Sing L^{\ox(2g+1)}[2g+1-2r]
 $$
with $\ka=2$.
This is carried out in Theorem \ref{cor T(z)}.
 \end{remark}

\section{Maps $T(z)$ and $\bar T(z)$}
\label{sec 5}

\subsection{Cohomological relations}

Denote by $K$ the $(2g+1)\times (2g+1)$-matrix with all entries equal to 1. 
Let $Z=\on{diag}(z_1,\dots,z_{2g+1})$.   For $g=1$,
\bea
Z = 
\begin{bmatrix}
  z_1&0&0
  \\
 0&z_2&0
 \\
 0&0&z_3
\end{bmatrix} , \quad
KZ =
\begin{bmatrix}
  z_1  & z_2&z_3
\\  z_1  & z_2&z_3
\\  z_1  & z_2&z_3
\end{bmatrix} . 
\eea
Denote
\bean
\label{Ak}
A_k(z) = Z + \frac1{2k-2g-1}KZ.
\eean
Recall the notation \eqref{Peta},
\bean
\label{Peta t}
\Psi(t_1,z)^{-1/2} = \prod_{a=1}^{2g+1}(t_1-z_a)^{-1/2}, 
\qquad
\eta_{i}^{(k)}(t_1,z) 
=\Psi(t_1,z)^{-1/2}\frac{t_1^k dt_1}{t_1-z_i}.
\eean
The function $\Psi(t_1,z)^{-1/2}$ and differential form $\eta_{i}^{(k)}(t_1,z)$ are well-defined
rational  function and rational differential form on the curve $C(z)$. 

The next theorem describes the cohomological relations in $H^1(C(z))$
between the cohomology classes of the differential forms $\eta_{i}^{(k)}(t_1,z)$.
Denote by $\eta^{(k)}(t_1,z)$ the column $2g+1$-vector 
$(\eta_{1}^{(k)}(t_1,z), \dots, \eta_{2g+1}^{(k)}(t_1,z))^\intercal$. 
 
\begin{thm}
\label{thm coh}
The cohomology classes $\left[\eta^{(k)}\right]$ of the vectors $\eta^{(k)}$ 
of differential forms  satisfy the relations,
\bean
\label{coh}
\phantom{aaa}
\left[\eta^{(k)}\right] 
&=& 
A_k(z)A_{k-1}(z)\dots A_1(z) \left[\eta^{(0)}\right] , \qquad k\geq 1.
\eean
\end{thm}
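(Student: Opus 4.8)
```latex
\textbf{Plan of proof.} The statement is a recursion, so it suffices to prove the one-step relation
\[
\left[\eta^{(k)}\right] = A_k(z)\left[\eta^{(k-1)}\right], \qquad k\geq 1,
\]
and then iterate. The plan is to establish this by exhibiting, for each $i$, an explicit rational function $\phi_i^{(k)}(t_1,z)$ on $C(z)$ whose differential $d\phi_i^{(k)}$ expresses the difference between $t_1\eta_i^{(k-1)}(t_1,z)$ and the $(2g+1)$-vector of forms produced by the matrix $A_k(z)$ acting on $\eta^{(k-1)}$. Concretely, note first the trivial identity $\eta_i^{(k)}(t_1,z) = t_1\,\eta_i^{(k-1)}(t_1,z)$ as rational forms (not yet as cohomology classes), so the real content is to rewrite $t_1\eta_i^{(k-1)}$ modulo exact forms as a $\C$-linear combination of the $\eta_j^{(k-1)}$ with coefficients read off from $A_k(z) = Z + \frac{1}{2k-2g-1}KZ$; that is, with coefficient $z_i$ on the $i$-th entry plus $\frac{1}{2k-2g-1}$ times $\sum_a z_a$ times the all-ones contribution.

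\textbf{Key steps.} First I would compute $d\big(t_1^{\,k-1}\Psi(t_1,z)^{-1/2}\big)$. Using $\Psi = \prod_a(t_1-z_a)$ one gets
\[
d\big(t_1^{k-1}\Psi^{-1/2}\big) = \Big((k-1)\,t_1^{k-2} - \tfrac12\, t_1^{k-1}\sum_{a=1}^{2g+1}\frac{1}{t_1-z_a}\Big)\Psi^{-1/2}\,dt_1 ,
\]
which is an exact form on $C(z)$, hence zero in $H^1(C(z))$. This gives one relation among the $\eta_i^{(j)}$ after writing $\frac{t_1^{k-1}}{t_1-z_a} = t_1^{k-2} + z_a\frac{t_1^{k-2}}{t_1-z_a}$, i.e. it relates $\sum_a \eta_a^{(k-2)}$-type pieces. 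Second, and more to the point, I would compute $d\big(t_1^{\,k-1}\Psi^{-1/2}\cdot\frac{1}{\text{(nothing)}}\big)$ is not quite right; instead one should differentiate $t_1^{k-1}\Psi^{-1/2}$ has no $\frac{1}{t_1-z_i}$ factor, so the correct auxiliary function is simply $g_i^{(k)} := t_1^{k-1}\Psi^{-1/2}$ for the "diagonal-to-trace" exchange, together with the partial-fraction identity
\[
\frac{t_1^{k}}{t_1-z_i} = t_1^{k-1} + z_i\,\frac{t_1^{k-1}}{t_1-z_i},
\]
so that $\eta_i^{(k)} = z_i\,\eta_i^{(k-1)} + t_1^{k-1}\Psi^{-1/2}\,dt_1$. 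The remaining term $t_1^{k-1}\Psi^{-1/2}dt_1$ is independent of $i$; call its class $[\omega_{k-1}]$. Then $[\eta^{(k)}] = Z[\eta^{(k-1)}] + [\omega_{k-1}]\cdot\mathbf{1}$, where $\mathbf 1$ is the all-ones vector, so $[\omega_{k-1}]\cdot\mathbf 1 = \frac{1}{2g+1}\mathbf 1\,\mathbf 1^\intercal[\eta^{(k)}]$ structurally; the task is to re-express $[\omega_{k-1}]$ purely in terms of $[\eta^{(k-1)}]$.

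\textbf{The main obstacle.} The crux is exactly this last re-expression: showing $[\omega_{k-1}] = \frac{1}{2k-2g-1}\sum_{a=1}^{2g+1} z_a\,[\eta_a^{(k-1)}]$ in $H^1(C(z))$. I expect this to follow by differentiating the function $t_1^{k}\Psi^{-1/2}$ on $C(z)$: from the first computation above with $k$ replaced by $k+1$,
\[
0 = \Big(k\,t_1^{k-1} - \tfrac12 t_1^{k}\sum_{a}\frac{1}{t_1-z_a}\Big)\Psi^{-1/2}dt_1
= k\,\omega_{k-1} - \tfrac12\sum_a\big(\eta_a^{(k)}\big)\quad\text{(mod exact)},
\]
and then substituting $\eta_a^{(k)} = z_a\eta_a^{(k-1)} + \omega_{k-1}$ from the previous step gives $k\,[\omega_{k-1}] = \tfrac12\sum_a z_a[\eta_a^{(k-1)}] + \tfrac{2g+1}{2}[\omega_{k-1}]$, i.e. $(k - \tfrac{2g+1}{2})[\omega_{k-1}] = \tfrac12\sum_a z_a[\eta_a^{(k-1)}]$, which is precisely $[\omega_{k-1}] = \frac{1}{2k-2g-1}\sum_a z_a[\eta_a^{(k-1)}]$. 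Assembling, $[\eta^{(k)}] = \big(Z + \tfrac{1}{2k-2g-1}KZ\big)[\eta^{(k-1)}] = A_k(z)[\eta^{(k-1)}]$, and the theorem follows by induction on $k$. The one point requiring care is that $\Psi^{-1/2}$ and all the $t_1^m\Psi^{-1/2}$ are genuinely rational (single-valued) on the curve $C(z)$ of degree $2$ over the $t_1$-line — true because $y = \Psi^{1/2}$ is the coordinate making $\Psi^{-1/2} = y/\Psi$ rational — and that the forms have poles only at the $z_a$ and $\infty$, so the exact-form cancellations take place in $H^1$ of the relevant affine curve and map correctly to $H^1(C(z))$; I would note that $2k-2g-1 \neq 0$ since it is odd, so no division-by-zero issue arises.
```
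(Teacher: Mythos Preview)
Your argument is correct and is essentially the paper's own proof: both reduce to the one-step relation $[\eta^{(k)}]=A_k(z)[\eta^{(k-1)}]$, use the partial-fraction identity $\frac{t_1^k}{t_1-z_i}=t_1^{k-1}+z_i\frac{t_1^{k-1}}{t_1-z_i}$ to isolate the $i$-independent form $\omega_{k-1}=t_1^{k-1}\Psi^{-1/2}dt_1$, and then differentiate $t_1^{k}\Psi^{-1/2}$ to obtain $(k-\tfrac{2g+1}{2})[\omega_{k-1}]=\tfrac12\sum_a z_a[\eta_a^{(k-1)}]$. The only difference is presentational (the paper expands $\sum_a\eta_a^{(k)}$ directly rather than substituting $\eta_a^{(k)}=z_a\eta_a^{(k-1)}+\omega_{k-1}$ back in), and your remark that $2k-2g-1$ is odd hence nonzero is a nice sanity check the paper leaves implicit.
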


\begin{proof}
The theorem follows from the relation
\bean
\label{ind k}
\left[\eta^{(k)}\right] 
=  A_{k-1}(z) \left[\eta^{(k-1)}\right].
\eean
To prove \eqref{ind k}, notice that
\bean
\label{331}
\Psi(t_1,z)^{-1/2} \frac{t_1^k dt_1}{t_1-z_i} 
&=&
\Psi(t_1,z)^{-1/2}t_1^{k-1} dt_1 + z_i \Psi(t_1,z)^{-1/2}\frac{t_1^{k-1} dt_1}{t_1-z_i},
\\
\notag
d_{t_1} \!\left( \Psi(t_1,z)^{-1/2}t_1^{k}\right)
& =&
 k \Psi(t_1,z)^{-1/2}t_1^{k-1} dt_1 
 \\
 \notag
 &-&
 \frac12\Psi(t_1,z)^{-1/2}t_1^{k-1}\sum_{a=1}^{2g+1}(t_1-z_a+z_a)\frac{dt_1}{t_1-z_a}\,.
\eean
 Hence
\bean
\label{332}
\left(k-\frac {2g+1}2\right)\left[ \Psi(t_1,z)^{-1/2}t_1^{k-1} dt_1 \right] = 
\frac12 \sum_{a=1}^{2g+1} z_a \left[\Psi(t_1,z)^{-1/2}t_1^{k-1}\frac{dt_1}{t_1-z_a}\right]\,.
 \eean
 Combining \eqref{331} and \eqref{332} we obtain
\bean
\label{333}
\phantom{aaa}
\left[ \Psi(t_1,z)^{-1/2}\frac{t_1^k dt_1}{t_1-z_i}\right] 
&=&
z_i 
\left[ \Psi(t_1,z)^{-1/2}\frac{t_1^{k-1} dt_1}{t_1-z_i}\right]
 \\
 \notag
 & + &
 \frac1{2k-2g-1}\sum_{a=1}^n z_a\left[ \Psi(t_1,z)^{-1/2}\frac{t_1^{k-1} dt_1}{t_1-z_a} \right]\,
\eean
which is \eqref{ind k}.
\end{proof}

Denote 
\bean
\label{Tk}
T^{(k)}(z) = A_k(z)A_{k-1}(z)\dots A_1(z). 
 \eean
 This is a $(2g+1)\times (2g+1)$-matrix,  $T^{(k)}(z) = (T^{(k)}_{ij}(z))$. The entries $T^{(k)}_{ij}(z)$
 are homogeneous polynomials in $z$ of degree $k$. By Theorem  \ref{thm coh}, we have
 \bean
 \label{coh ij}
\left[\eta^{(k)}_i\right]=\sum_{a=1}^{2g+1} T^{(k)}_{ia}(z) \left[\eta^{(0)}_a\right],
\qquad i=1,\dots,2g+1.
\eean
Notice that for $k=0$, we have $T^{(0)}_{ia}(z)=\delta_{ia}$.
Applying these formulas, we obtain a formula, 
\bean
\label{TTT}
{}
\\
\notag
\left[\eta^{(b_1)}_{i_1}(t_1) \wedge \dots\wedge \eta^{(b_r)}_{i_r}(t_r)\right]
=\sum_{a_1,\dots,a_r=1}^{2g+1}
\prod_{j=1}^r T^{(b_j)}_{i_j,a_j}( z) 
\left[\eta^{(0)}_{a_1}(t_1) \wedge \dots\wedge \eta^{(0)}_{a_r}(t_r)\right].
\eean

\subsection{Map  $T(z)$}

For any flat families $\ga_1(z),\dots,\ga_r(z) \in H_1(C(z))$ and $1\leq i_1<$ \dots $<i_r\leq 2g+1$, denote
\bea
N_{i_1,\dots,i_r}^{\ga_1\times\dots\times\ga_r} (z)
=
 \int_{\ga_1(z)\times\dots\times\ga_r(z)} \nu_{i_1,\dots,i_r}\,,
\eea
cf. formula \eqref{Ng r}. 
These are coordinates of the hypergeometric solution 
\bean
\label{tn1}
\phantom{aaa}
&&
\\
\notag
 N^{\ga_1\times\dots\times\ga_r}(z) 
&=&
    \sum_{1\leq i_1<\dots <i_r\leq 2g+1} N^{\ga_1\times\dots\times\ga_r}_{i_1,\dots,i_r} (z) w_{\{i_1,\dots,i_r\}}\,
\eean
of the \KZ/ equations with values in $\Sing L^{\ox(2g+1)}[2g+1-2r]$ and $\ka=2$, see formula \eqref{int 2 r}.
By formula \eqref{nu}, we have
\bean
\label{nu n}
\nu_{ i_1,\dots,i_r}
&=& 
(-1)^{\frac{n(n-1)}2}
 \sum_{\si\in S_r} (-1)^{|\si|} 
\on{Sym}_{t}\left(
\eta^{(\si(1)-1)}_{i_1,z}(t_1) \wedge\dots\wedge
\eta^{(\si(r)-1)}_{i_r}(t_r,z) \right).
\eean
Applying formula \eqref{TTT} to the right-hand side  we obtain
\bean
\label{tn11}
\phantom{aaa}
&&
\\
\notag
N^{\ga_1\times\dots\times\ga_r}_{i_1,\dots,i_r} (z) 
&=& 
(-1)^{\frac{n(n-1)}2}
\sum_{\si\in S_r}(-1)^{|\si|}
\!\!
\sum_{a_1,\dots,a_r=1}^{2g+1}
\left(
\prod_{j=1}^r T^{(\si(j)-1)}_{i_j,a_j}(z)\right)
 M^{\ga_1\times\dots\times\ga_r}_{a_1,\dots,a_r} (z). 
\eean
Notice  that $M^{\ga_1\times\dots\times\ga_r}_{a_1,\dots,a_r} (z)$ are skew-symmetric functions of the indices
$a_1,\dots, a_r$.  Notice also that the products
$\prod_{j=1}^r T^{(\si(j)-1)}_{i_j,a_j}(z)$ are homogeneous polynomials in $z_1,\dots,z_{2g+1}$  of degree $\binom{r}{2}$.

For example, for $r=2$, we have
\bean
\label{tn r=2}
N^{\ga_1\times \ga_2}_{i_1,i_2} 
&:=&
 (z_{i_1}-z_{i_2}) M^{\ga_1\times\ga_2}_{i_1,i_2} (z)
 +\frac 1{1-2g}\sum_{a=1}^{2g+1} z_a\left(M^{\ga_1\times \ga_2}_{a,i_1} (z)
 + M^{\ga_1\times\ga_2}_{a,i_2} (z)\right).
 \eean

Formula \eqref{tn11} gives a linear expression for the solution $N^{\ga_1\times\dots\times\ga_r}(z)$ 
in terms of the solution 
 \bea
M^{\ga_1\times\dots\times\ga_r}(z) =  \sum_{1\leq i_1<\dots<i_r\leq 2g+1} M_{ i_1,\dots,i_r}^{\ga_1\times\dots\times\ga_r}(z)\, w^{ \{i_1,\dots,i_r\}}.
\eea

Define a map
\bean
\label{T(z)}
&&
{}
\\
\notag
T(z) 
&:&
 \wedge^r\Sing L^{\ox(2g+1)}[2g-1] \to \Sing L^{\ox(2g+1)}[2g+1-2r], \
(M_{ i_1,\dots,i_r}) \mapsto (N_{ i_1,\dots,i_r}),
\\
\notag
N_{ i_1,\dots,i_r} 
&=& 
(-1)^{\frac{n(n-1)}2}
\sum_{\si\in S_r}(-1)^{|\si|}
\!\!
\sum_{a_1,\dots,a_r=1}^{2g+1}
\left(
\prod_{j=1}^r T^{(\si(j)-1)}_{i_j,a_j}(z)\right)
 M_{a_1,\dots,a_r} . 
\eean
The map $T(z)$ is  a homomorphism of the \KZ/ connection on
$ \wedge^r\Sing L^{\ox(2g+1)}[2g-1]$ with $\ka=2$ to the \KZ/ connection on $\Sing L^{\ox(2g+1)}[2g+1-2r]$ with $\ka=2$, 
see  \eqref{tn11}.

Recall the isomorphism 
\bean
\label{mc I 1}
\phantom{aaa}
\mc I : \wedge^rH_1(C) \to \wedge^r \Sing L^{\ox(2g+1)}[2g-1],
\quad
\ga_1(z)\wedge \dots\wedge \ga_r(z) \mapsto M^{\ga_1\times\dots\times\ga_r}(z),
\eean
in \eqref{mc I}. 
Recall the homomorphism,
\bean
\label{T iso1}
\phantom{aaaaa}
\mc T : \wedge^r H_1(C) \to \Sing L^{\ox(2g+1)}[2g+1-2r],\quad
 \ga_1(z)\wedge\dots\wedge\ga_r(z) \to 
 N^{\ga_1\times\dots\times\ga_r}(z),
 \eean
 of Theorem \ref{thm Iso}. Clearly, we have
 \bean
 \label{TtT}
\mc T \,= \,T(z)\mc I.
\eean 
This formula and the uniqueness statement
in Theorem \ref{thm Iso} give us the following theorem.

\begin{thm}
\label{cor T(z)}

The map $T(z)$ defines a nonzero epimorphism
of the \KZ/ connection on
\\
$ \wedge^r\Sing L^{\ox(2g+1)}[2g-1]$ with $\ka=2$ to the \KZ/ connection on
 $\Sing L^{\ox(2g+1)}[2g+1-2r]$ with $\ka=2$.
A nonzero homomorphism 
of the \KZ/ connection on
$ \wedge^r\Sing L^{\ox(2g+1)}[2g-1]$ with $\ka=2$ to the \KZ/ connection on
 $\Sing L^{\ox(2g+1)}[2g+1-2r]$ with $\ka=2$ 
is unique up to a multiplicative constant.
\qed

\end{thm}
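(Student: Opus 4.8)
The plan is to deduce the theorem entirely from the factorization $\mc T = T(z)\,\mc I$ recorded in \eqref{TtT}, combined with Theorem \ref{thm Iso}. Recall that $\mc I$ of \eqref{mc I 1} is an \emph{isomorphism} of the Gauss--Manin connection on $\wedge^r H_1(C)$ onto the \KZ/ connection on $\wedge^r\Sing L^{\ox(2g+1)}[2g-1]$ with $\ka=2$, and that $\mc T$ of \eqref{T iso1} is a nonzero homomorphism of the Gauss--Manin connection to the \KZ/ connection on $\Sing L^{\ox(2g+1)}[2g+1-2r]$ with $\ka=2$ which, by Theorem \ref{thm Iso}, is surjective and is the unique such homomorphism up to a multiplicative constant.

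First I would check that \eqref{TtT} is an identity of maps defined on the whole space, not merely on decomposable cycles. Formula \eqref{tn11}, obtained by substituting the cohomological relations \eqref{TTT} into the integral representation \eqref{nu n} of $\nu_{i_1,\dots,i_r}$, says precisely that $\mc T(\ga_1(z)\wedge\dots\wedge\ga_r(z)) = T(z)\bigl(\mc I(\ga_1(z)\wedge\dots\wedge\ga_r(z))\bigr)$ for all flat families $\ga_1(z),\dots,\ga_r(z)\in H_1(C(z))$. Since $\mc I$ is an isomorphism of local systems and the decomposable tensors $\ga_1(z)\wedge\dots\wedge\ga_r(z)$ span $\wedge^r H_1(C(z))$ at each point, the vectors $\mc I(\ga_1(z)\wedge\dots\wedge\ga_r(z))$ span the fiber of $\wedge^r\Sing L^{\ox(2g+1)}[2g-1]$; hence the two linear maps $\mc T$ and $T(z)\,\mc I$ agree fiberwise, which is \eqref{TtT}.

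The first assertion now follows at once. The explicit polynomial formula \eqref{T(z)} shows, via \eqref{tn11}, that $T(z)$ intertwines the \KZ/ connections with $\ka=2$ on source and target; writing $T(z) = \mc T\circ\mc I^{-1}$ exhibits it as a composition of connection homomorphisms, it is surjective because $\mc T$ is surjective and $\mc I$ is bijective, and it is nonzero because $\mc T\ne 0$. For uniqueness, let $T'(z)$ be any nonzero homomorphism of the \KZ/ connection on $\wedge^r\Sing L^{\ox(2g+1)}[2g-1]$ with $\ka=2$ to the \KZ/ connection on $\Sing L^{\ox(2g+1)}[2g+1-2r]$ with $\ka=2$. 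Then $T'(z)\circ\mc I$ is a nonzero homomorphism of the Gauss--Manin connection to the \KZ/ connection on $\Sing L^{\ox(2g+1)}[2g+1-2r]$ with $\ka=2$, so by the uniqueness clause of Theorem \ref{thm Iso} there is a nonzero constant $c$ with $T'(z)\circ\mc I = c\,\mc T = c\,T(z)\circ\mc I$; composing with $\mc I^{-1}$ gives $T'(z)= c\,T(z)$.

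There is no serious obstacle: the analytic and representation-theoretic content has already been packaged into Theorem \ref{thm Iso} (via Proposition \ref{primitiveKZ} and Schur's lemma) and into the construction of the isomorphism $\mc I$. The one point deserving a moment's care is the passage from \eqref{tn11}, which a priori holds only on decomposable cycles, to the identity \eqref{TtT} of maps on the full fiber --- and that is exactly where the completeness of the hypergeometric solutions (Theorem \ref{thm r=1 all sol}, equivalently the bijectivity of $\mc I$) is used.
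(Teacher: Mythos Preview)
Your proof is correct and follows exactly the paper's approach: the paper simply states that the theorem follows from formula \eqref{TtT} ($\mc T = T(z)\,\mc I$) together with the uniqueness clause of Theorem \ref{thm Iso}, and you have spelled out this deduction carefully. Your additional paragraph checking that \eqref{TtT} extends from decomposable cycles to the whole fiber is a nice point of rigor that the paper leaves implicit.
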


 \begin{cor}
 \label{cor asah}
 All solutions over $\C$ of the \KZ/equations with values in $\Sing L^{\ox(2g+1)}[2g+1-2r]$ and $\ka=2$
are hypergeometric, that is, given by formula \eqref{tn1}. 
\qed
  \end{cor}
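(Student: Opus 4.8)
The plan is to deduce this directly from Theorem~\ref{cor T(z)} together with the already-established description of all solutions on $\wedge^r\Sing L^{\ox(2g+1)}[2g-1]$. Recall from Section~\ref{sec 3.1} that, by Theorem~\ref{thm r=1 all sol}, every solution over $\C$ of the \KZ/ equations with $\ka=2$ and values in $\wedge^r\Sing L^{\ox(2g+1)}[2g-1]$ is a $\C$-linear combination of the wedge-power hypergeometric solutions $M^{\ga_1\times\dots\times\ga_r}(z)$ of \eqref{sM}; equivalently, the isomorphism $\mc I$ of \eqref{mc I} identifies this solution space with the flat sections of the Gauss--Manin local system $\wedge^rH_1(C)$. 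On the other hand, by the very construction of $T(z)$ --- see formula \eqref{tn11} --- the map $T(z)$ of \eqref{T(z)} carries $M^{\ga_1\times\dots\times\ga_r}(z)$ to the hypergeometric solution $N^{\ga_1\times\dots\times\ga_r}(z)$ of \eqref{tn1}. So the whole task reduces to showing that $T(z)$ is surjective on solution spaces.

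This is immediate from Theorem~\ref{cor T(z)}, which asserts that $T(z)$ is an epimorphism of the \KZ/ connection on $\wedge^r\Sing L^{\ox(2g+1)}[2g-1]$ with $\ka=2$ onto the \KZ/ connection on $\Sing L^{\ox(2g+1)}[2g+1-2r]$ with $\ka=2$. A homomorphism of flat connections sends horizontal sections to horizontal sections, hence induces a linear map on the spaces of (germs of multivalued) solutions; and an epimorphism of flat connections is fiberwise surjective, so the induced map on solutions is surjective as well. Thus, given any solution $N(z)$ with values in $\Sing L^{\ox(2g+1)}[2g+1-2r]$ and $\ka=2$, I would choose a solution $\widetilde M(z)$ with values in $\wedge^r\Sing L^{\ox(2g+1)}[2g-1]$ and $\ka=2$ with $T(z)\widetilde M(z)=N(z)$, write $\widetilde M(z)=\sum_k c_k\,M^{\ga_1^k\times\dots\times\ga_r^k}(z)$ using the previous paragraph, and conclude $N(z)=\sum_k c_k\,N^{\ga_1^k\times\dots\times\ga_r^k}(z)$. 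This is of the form \eqref{tn1}, with the product cycle $\ga_1(z)\times\dots\times\ga_r(z)$ replaced by the $r$-cycle $\sum_k c_k\,\ga_1^k(z)\times\dots\times\ga_r^k(z)$ in the relevant twisted homology --- equivalently, by transporting through $\mc I$ the multilinear extension of $\ga_1(z)\wedge\dots\wedge\ga_r(z)\mapsto N^{\ga_1\times\dots\times\ga_r}(z)$ to all of $\wedge^rH_1(C(z))$.

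As a sanity check I would note that the span of the hypergeometric solutions equals $\mc T\bigl(\wedge^rH_1(C)\bigr)$, which by Theorem~\ref{thm Iso} is all of $\Sing L^{\ox(2g+1)}[2g+1-2r]$ and has dimension $\binom{2g}{r}-\binom{2g}{r-2}=\dim\Sing L^{\ox(2g+1)}[2g+1-2r]$ by \eqref{dim sing}. I do not expect any genuine obstacle here: the geometric substance lives entirely in Theorem~\ref{thm Iso} and Theorem~\ref{cor T(z)}, and the present corollary is only its retranslation from the language of local systems back to explicit hypergeometric integrals. The one point needing a word of care is notational --- whether ``given by formula \eqref{tn1}'' is taken literally (a single product cycle) or up to $\C$-linear combinations; since $T(z)$ is in general not injective for $r\ge 2$, the intended reading is the latter, and with it the corollary is precisely the surjectivity statement above.
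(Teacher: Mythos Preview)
Your proof is correct and follows exactly the approach the paper intends: the corollary is stated with a \qed immediately after Theorem~\ref{cor T(z)} and the identity $\mc T = T(z)\,\mc I$ in \eqref{TtT}, and your argument---surjectivity of $T(z)$ together with the description of all solutions on $\wedge^r\Sing L^{\ox(2g+1)}[2g-1]$ via $\mc I$---is precisely how the paper expects the reader to fill in the details. Your remark about reading ``given by formula \eqref{tn1}'' as allowing $\C$-linear combinations of product cycles is the correct interpretation.
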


 \begin{rem}
 Notice that the homomorphisms $\mc T$ and $\mc I$ in formula \eqref{TtT}
  are given by hypergeometric integrals, 
  which are quite nontrivial multi-valued complex analytic functions,  
while the map $T(z)$ is given by a matrix whose entries are homogeneous polynomials in $z$ of degree $\binom{r}{2}$ with coefficients in the field of rational numbers.

 \end{rem}
 
 \subsection{Poincar\'e element}
 \label{sec 5.3}

 Recall the Poincare element $\Delta(z) \in \wedge^2 H_1(C(z))$. 
 Denote $\mc D(z) = \mc I(\Delta(z))$ where
 $\mc I : \wedge^2 H_1(C(z)) \to \wedge^2\Sing L^{\ox(2g+1)}[2g-1]$ is the isomorphism in \eqref{mc  I 1}
 for $r=2$.

 \begin{lem}[{\cite[Theorem 4.11]{VV2}}]

 We have
 \bean
 \label{J(z)}
 \mc D(z) = \sum_{1\leq i_1<i_2\leq 2g+1} \frac{2}{z_{i_1}-z_{i_2}}
 \left(
 \frac1{D_{i_1}(z)} +  \frac1{D_{i_2}(z)}\right)
w^{\{i_1,i_2\}},
\eean
where $D_a(z) = \prod_{j\ne a}(z_a-z_j)$ and 
$\frac{2}{z_{i_1}-z_{i_2}}
 \left(
 \frac1{D_{i_1}(z)} +  \frac1{D_{i_2}(z)}\right)$
is the Poincar\'e pairing of the cohomological classes of
the  differential forms
$\nu_{i_1}$ and $\nu_{i_2}$ on the curve $C(z)$.

  \end{lem}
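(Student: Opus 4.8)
The lemma has two parts: the identification of the coefficient of $w^{\{i_1,i_2\}}$ in $\mc D(z)$ with the Poincar\'e pairing of the cohomology classes of $\nu_{i_1}$ and $\nu_{i_2}$ on $C(z)$, and the explicit evaluation of that pairing. The first part I would deduce by unwinding the definition of $\mc I$; the second is the residue computation of \cite[Theorem 4.11]{VV2}. Recall from \eqref{mu} and \eqref{sM} that for flat families $\ga_1(z),\ga_2(z)\in H_1(C(z))$ the coefficient of $w^{\{i_1,i_2\}}$ in $\mc I(\ga_1(z)\wedge\ga_2(z))=M^{\ga_1\times\ga_2}(z)$ is
\[
\int_{\ga_1(z)\times\ga_2(z)}\mu_{i_1,i_2}=\Big(\int_{\ga_1(z)}\nu_{i_1}\Big)\Big(\int_{\ga_2(z)}\nu_{i_2}\Big)-\Big(\int_{\ga_1(z)}\nu_{i_2}\Big)\Big(\int_{\ga_2(z)}\nu_{i_1}\Big),
\]
because $\mu_{i_1,i_2}=\on{Sym}_{t_1,t_2}\!\big(\eta^{(0)}_{i_1}(t_1)\wedge\eta^{(0)}_{i_2}(t_2)\big)$ and $\eta^{(0)}_i=\nu_i$. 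Writing $\langle[\nu_i],\ga\rangle:=\int_\ga\nu_i$ for the canonical period pairing $H^1(C(z))\times H_1(C(z))\to\C$, the right-hand side is $\langle[\nu_{i_1}]\wedge[\nu_{i_2}],\,\ga_1(z)\wedge\ga_2(z)\rangle$ for the induced pairing on $\wedge^2$. By bilinearity, for any $\xi\in\wedge^2 H_1(C(z))$ the coefficient of $w^{\{i_1,i_2\}}$ in $\mc I(\xi)$ is $\langle[\nu_{i_1}]\wedge[\nu_{i_2}],\xi\rangle$.

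Now take $\xi=\Delta(z)$. The Poincar\'e element $\Delta(z)\in\wedge^2 H_1(C(z))$ is, by definition, the cup-product form of $H^1(C(z))$ regarded as a bivector in $\wedge^2\big(H^1(C(z))\big)^*=\wedge^2 H_1(C(z))$; equivalently it is dual to the Poincar\'e form $\delta(z)\in\wedge^2 H^1(C(z))$. Hence $\langle\omega_1\wedge\omega_2,\Delta(z)\rangle=\langle\omega_1,\omega_2\rangle_{C(z)}$ for all $\omega_1,\omega_2\in H^1(C(z))$, and therefore the coefficient of $w^{\{i_1,i_2\}}$ in $\mc D(z)=\mc I(\Delta(z))$ equals $\langle[\nu_{i_1}],[\nu_{i_2}]\rangle_{C(z)}$, the Poincar\'e pairing of the classes of $\nu_{i_1}$ and $\nu_{i_2}$. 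This proves the interpretation of the coefficients, and in particular that $\mc D(z)$ is a flat section (consistent with $\Delta(z)$ being flat for the Gauss--Manin connection).

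It remains to evaluate $\langle[\nu_{i_1}],[\nu_{i_2}]\rangle_{C(z)}$ for $i_1\ne i_2$. On $C(z)$, written $y^2=\Psi(t_1,z)$, one has $\nu_i=\dfrac{dt_1}{y\,(t_1-z_i)}$; this is a differential of the second kind whose only pole is at the Weierstrass point $P_i$ over $t_1=z_i$ and which has no residue there, so the pairing is well defined on cohomology. I would apply the residue formula $\langle[\nu_{i_1}],[\nu_{i_2}]\rangle_{C(z)}=\sum_P\Res_P\!\big(F_{i_1}\,\nu_{i_2}\big)$, where $F_{i_1}$ is a local primitive of $\nu_{i_1}$ near the poles of the integrand, i.e.\ near $P_{i_1}$ and $P_{i_2}$. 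Using the local parameter $s$ with $s^2=t_1-z_a$ at $P_a$, the expansion $y^2=\Psi(t_1,z)=D_a(z)\,(t_1-z_a)+O\!\big((t_1-z_a)^2\big)$ with $D_a(z)=\Psi'(z_a,z)=\prod_{j\ne a}(z_a-z_j)$, and the fact that $\nu_{i_1}$ is regular at $P_{i_2}$ and $\nu_{i_2}$ is regular at $P_{i_1}$, one computes the principal part of a primitive of $\nu_{i_1}$ at $P_{i_1}$ and at $P_{i_2}$, multiplies by $\nu_{i_2}$, and reads off the residues; the two contributions add up to $\dfrac{2}{z_{i_1}-z_{i_2}}\Big(\dfrac1{D_{i_1}(z)}+\dfrac1{D_{i_2}(z)}\Big)$, while the remaining Weierstrass points and infinity contribute nothing. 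This is precisely \cite[Theorem 4.11]{VV2}.

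The main obstacle is this last step: keeping the local coordinates, the primitives and the signs straight at the two Weierstrass points, and verifying the vanishing of the other local contributions; the reduction of $\mc D(z)$ to the Poincar\'e pairings of the $\nu_i$ is routine once the definition of $\mc I$ is made explicit.
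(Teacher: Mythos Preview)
Your proposal is correct and aligned with the paper, which does not prove this lemma at all but simply cites \cite[Theorem~4.11]{VV2}. Your unwinding of the definition of $\mc I$ to identify the coefficients of $\mc D(z)$ with the Poincar\'e pairings $\langle[\nu_{i_1}],[\nu_{i_2}]\rangle_{C(z)}$ is the natural argument (and more than the paper writes down), and for the explicit evaluation you invoke the same residue computation from \cite{VV2} that the paper cites.
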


The image under $\mc I$ of the direct decomposition in \eqref{dec} becomes
\bean
\label{dec KZ}
\wedge^r \Sing L^{\ox(2g+1)} [2g-3]
&=&
\mc P^{\on{KZ}}_r(z) \oplus \left(\mc D(z)\wedge \mc P^{\on{KZ}}_{r-2}(z)\right)\oplus
\\
\notag
&\oplus& 
\Big(\mc D(z)\wedge \mc D(z)
\wedge
\mc P^{\on{KZ}}_{r-4}(z)\Big)\oplus \dots 
\eean
where $\mc P^{\on{KZ}}_a(z) := \mc I(\mc P_{a}(C(z))$.
We have the following corollary of Theorem \ref{thm Iso} and formula \eqref{TtT}.
 
 \begin{cor}
 \label{cor TP}
 
 The map
 \bean
 \label{T restr}
 T(z)\vert_{ \mc P_r^{\on{KZ}}(z) }\  : \   \mc P_r^{\on{KZ}}(z)   \to 
 \Sing L^{\ox(2g+1)}[2g+1-2r]
 \eean
 is an isomorphism of the \KZ/ connections  with $\ka=2$, and 
 $T(z)$ restricted to other direct summands in \eqref{dec KZ} equals zero.
 \qed

 \end{cor}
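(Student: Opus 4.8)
The plan is to derive Corollary~\ref{cor TP} as a purely formal consequence of Theorem~\ref{thm Iso}, transporting its conclusions along the isomorphism $\mc I$ of \eqref{mc I 1} by means of the factorization $\mc T = T(z)\,\mc I$ recorded in \eqref{TtT}. The first step is to observe that \eqref{dec} is a direct sum decomposition of \emph{local systems} for the Gauss--Manin connection on $\wedge^r H_1(C(z))$: each summand $\Delta(z)\wedge\dots\wedge\Delta(z)\wedge\mc P_{r-2j}(C(z))$ is a flat subbundle because $\Delta(z)$ is flat and $\mc P_{r-2j}(C(z))$ is a sub-local system. Since $\mc I$ is an isomorphism of the Gauss--Manin connection with the \KZ/ connection on $\wedge^r\Sing L^{\ox(2g+1)}[2g-1]$ with $\ka=2$, applying $\mc I$ summand by summand yields exactly \eqref{dec KZ}, a decomposition into \KZ/-invariant subbundles with $\mc P^{\on{KZ}}_a(z)=\mc I(\mc P_a(C(z)))$ and $\mc D(z)=\mc I(\Delta(z))$. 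In particular $\mc I$ restricts to an isomorphism $\mc P_r(C(z))\to\mc P^{\on{KZ}}_r(z)$ and carries each higher summand $\Delta(z)^{\wedge j}\wedge\mc P_{r-2j}(C(z))$, $j\ge 1$, isomorphically onto $\mc D(z)^{\wedge j}\wedge\mc P^{\on{KZ}}_{r-2j}(z)$.

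The second step is to combine this with Theorem~\ref{thm Iso} via \eqref{TtT}. From $\mc T = T(z)\,\mc I$ we get $T(z) = \mc T\circ\mc I^{-1}$ as a homomorphism of \KZ/ connections with $\ka=2$ (that $T(z)$ is such a homomorphism is already known, see the discussion after \eqref{T(z)}, so the claim to be proved is only the behavior on the summands of \eqref{dec KZ}). Restricting to $\mc P^{\on{KZ}}_r(z)$: $\mc I^{-1}$ maps it isomorphically onto $\mc P_r(C(z))$, and by Theorem~\ref{thm Iso} the restriction $\mc T|_{\mc P_r(C(z))}$ is an isomorphism onto $\Sing L^{\ox(2g+1)}[2g+1-2r]$; hence the composite $T(z)|_{\mc P^{\on{KZ}}_r(z)}$ is an isomorphism of the corresponding \KZ/ connections with $\ka=2$. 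On any other summand $\mc D(z)^{\wedge j}\wedge\mc P^{\on{KZ}}_{r-2j}(z)$ with $j\ge 1$, $\mc I^{-1}$ lands in $\Delta(z)^{\wedge j}\wedge\mc P_{r-2j}(C(z))$, on which $\mc T$ vanishes by Theorem~\ref{thm Iso}; therefore $T(z)$ vanishes on that summand as well.

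I do not anticipate a genuine obstacle, since every ingredient is already established. The only points requiring care are bookkeeping ones: (i) confirming that \eqref{dec} is truly a decomposition of flat subbundles, so that the connection-theoretic statements pass unchanged through $\mc I$ and $\mc T$; and (ii) checking that the $\mc P^{\on{KZ}}_a(z)$ used in \eqref{dec KZ} are defined by precisely the same recipe $\mc I(\mc P_a(C(z)))$ as the space $\mc P_r^{\on{KZ}}(z)$ appearing in the statement of the corollary, so that the two decompositions and their summands match verbatim. Once this is in place, the assertion is immediate, and it also re-proves that $T(z)$ is an epimorphism (hence recovers Theorem~\ref{cor T(z)}) as a byproduct.
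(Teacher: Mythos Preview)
Your proposal is correct and follows exactly the same approach as the paper: the corollary is stated with a \qed and introduced as a consequence of Theorem~\ref{thm Iso} together with formula~\eqref{TtT}, which is precisely the factorization $T(z)=\mc T\circ\mc I^{-1}$ argument you give. The only difference is that you spell out the bookkeeping (flatness of the summands, matching of the two decompositions under $\mc I$) that the paper leaves implicit.
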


 \subsection{Map $\bar T(z)$}
 
 Recall the map
 \bean
\label{TT(z)}
T(z) \  :\   \wedge^r\Sing L^{\ox(2g+1)}[2g-1] \to \Sing L^{\ox(2g+1)}[2g+1-2r]
\eean
in formula \eqref{T(z)}. Consider the dual map
\bean
\label{T^*}
 T(z)^*  :  \left(\Sing L^{\ox(2g+1)}[2g+1-2r]\right)^* \to \left(\wedge^r\Sing L^{\ox(2g+1)}[2g-1]\right)^* .
\eean
Recall that the \KZ/ connection on 
$\Sing L^{\ox(2g+1)}[2g+1-2r]$ with $\ka=-2$
is dual to the \KZ/ connection on $\Sing L^{\ox(2g+1)}[2g+1-2r]$ with $\ka=2$ under the Shapovalov form $S$,
while  the \KZ/ connection on $\wedge^r\Sing L^{\ox(2g+1)}[2g-1]$ with $\ka=-2$
is dual to the \KZ/ connection on $\wedge^r \Sing L^{\ox(2g+1)}[2g-1]$ with $\ka=2$ under 
the Shapovalov form $S^{\wedge^r}$.
Using    these     two    dualities in \eqref{T^*}      we      obtain       a        map
\bean
\label{bar tt}
\bar T(z)  \ :\   \Sing L^{\ox(2g+1)}[2g+1-2r] \to \wedge^r\Sing L^{\ox(2g+1)}[2g-1]
\eean
defined by the formula
\bean
\label{def bT} 
 S^{\wedge^r}(v, \bar T(z)w) = S(T(z)v, w), 
 \eean
 for all $ v \in  \wedge^r\Sing L^{\ox(2g+1)}[2g-1]$ and 
 $w\in  \Sing L^{\ox(2g+1)}[2g+1-2r]$.
 Clearly, the map $\bar T(z)$ is given by a matrix whose entries are homogeneous 
 polynomials in $z$ of degree $\binom{r}{2}$ with coefficients in the field of rational numbers.

 Theorem \ref{cor T(z)} and formula \eqref{def bT} imply the following theorem.

\begin{thm}
\label{thm 5.4}

The map $\bar T(z)$ in \eqref{bar tt} defines an embedding of the \KZ/ connection   on
\\
$\Sing L^{\ox(2g+1)}[2g+1-2r]$ with $\ka=-2$ to the \KZ/ connection on 
\\
$\wedge^r\Sing L^{\ox(2g+1)}[2g-1]$ with $\ka=-2$.
A nonzero homomorphism
\bea
\Sing L^{\ox(2g+1)}[2g+1-2r] \to \wedge^r\Sing L^{\ox(2g+1)}[2g-1]
\eea
of these connections is unique up to a multiplicative constant.
\qed
\end{thm}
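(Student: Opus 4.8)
The plan is to obtain Theorem \ref{thm 5.4} purely formally from Theorem \ref{cor T(z)}, the defining adjunction \eqref{def bT}, and the fact (Section \ref{sec 3.5} together with \eqref{do}) that the Shapovalov forms $S$ and $S^{\wedge^r}$ identify the $\ka=-2$ connections with the duals of the $\ka=2$ connections. Three things must be checked: that $\bar T(z)$ intertwines the two $\ka=-2$ connections, that it is injective, and that a nonzero intertwiner of this shape is unique up to a scalar.

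For flatness I would fix arbitrary local sections $v$ of $\wedge^r\Sing L^{\ox(2g+1)}[2g-1]$ and $w$ of $\Sing L^{\ox(2g+1)}[2g+1-2r]$ and differentiate the identity $S^{\wedge^r}(v,\bar T(z)w)=S(T(z)v,w)$ with respect to $z_i$. Applying the Leibniz-type rule \eqref{do} (and its $S^{\wedge^r}$-analogue) to both sides, and using that $T(z)$ commutes with $\nabla^{\on{KZ},2}_i$ by Theorem \ref{cor T(z)}, the two $\nabla^{\on{KZ},2}_i v$ contributions cancel, leaving
\[
S^{\wedge^r}\!\big(v,\ \nabla^{\on{KZ},-2}_i(\bar T(z)w)\big)=S^{\wedge^r}\!\big(v,\ \bar T(z)(\nabla^{\on{KZ},-2}_i w)\big)
\]
for every $v$; nondegeneracy of $S^{\wedge^r}$ then forces $\nabla^{\on{KZ},-2}_i\circ\bar T(z)=\bar T(z)\circ\nabla^{\on{KZ},-2}_i$.

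Injectivity is immediate from Theorem \ref{cor T(z)}: if $\bar T(z)w=0$ then $S(T(z)v,w)=0$ for all $v$, and since $T(z)$ is surjective the element $T(z)v$ ranges over all of $\Sing L^{\ox(2g+1)}[2g+1-2r]$, so $w=0$ by nondegeneracy of $S$. For uniqueness, the same adjunction read in the reverse direction (pass to transposes and use $S$, $S^{\wedge^r}$ to identify the $\ka=-2$ connections with the duals of the $\ka=2$ connections) produces a linear isomorphism between the space of homomorphisms $\Sing L^{\ox(2g+1)}[2g+1-2r]\to\wedge^r\Sing L^{\ox(2g+1)}[2g-1]$ of $\ka=-2$ connections and the space of homomorphisms $\wedge^r\Sing L^{\ox(2g+1)}[2g-1]\to\Sing L^{\ox(2g+1)}[2g+1-2r]$ of $\ka=2$ connections, which is one-dimensional by Theorem \ref{cor T(z)}. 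Alternatively one may invoke Schur's lemma directly: $\Sing L^{\ox(2g+1)}[2g+1-2r]$ with $\ka=-2$ is irreducible by Proposition \ref{primitiveKZ}, while $\wedge^r\Sing L^{\ox(2g+1)}[2g-1]$ with $\ka=-2$ decomposes via $\bar{\mc I}$ into pairwise non-isomorphic irreducible connections in which the primitive piece $\mc P_r$ occurs exactly once.

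I expect the only delicate point to be the bookkeeping in the flatness step: one must apply \eqref{do} with the $\ka=2$ connection in the first slot and the $\ka=-2$ connection in the second on each side, and in the correct order, so that the extraneous terms genuinely cancel rather than merely appear to; once that is set up, everything else is routine linear algebra over Theorem \ref{cor T(z)}.
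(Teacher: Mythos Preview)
Your proposal is correct and is exactly the paper's approach: the paper's proof is the single line ``Theorem \ref{cor T(z)} and formula \eqref{def bT} imply the following theorem,'' and you have faithfully unpacked what that sentence means. Your flatness, injectivity, and uniqueness arguments are precisely the routine linear-algebra consequences of the adjunction \eqref{def bT} combined with the surjectivity and uniqueness of $T(z)$ from Theorem \ref{cor T(z)} and the duality \eqref{do}.
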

 
 \subsection{Formula for $\bar T(z)$ for $r=2$}

For $r=2$ and $\ka = -2$, we have 
\bea
\Phi(t_1,t_2,z)^{-1/2} 
&=&
 \frac1{t_1-t_2} \Psi(t_1,z)^{1/2}\Psi(t_2,z)^{1/2},
 \\
 \bar \nu_{i_1,i_2} 
 &=&
  \Phi^{-1/2}(t_1,t_2,z) \left(\frac 1{(t_1-z_{i_1})(t_2-z_{i_2})}
+\frac 1{(t_2-z_{i_1})(t_1-z_{i_2})}\right)dt_1\wedge dt_2\,,
\\
 \bar \mu_{i_1,i_2} 
 &=&
 \Psi(t_1,z)^{1/2}\Psi(t_2,z)^{1/2} \left(\frac 1{(t_1-z_{i_1})(t_2-z_{i_2})} -
\frac 1{(t_2-z_{i_1})(t_1-z_{i_2})}\right)dt_1\wedge dt_2\,.
\eea
The problem is to express the cohomological classes of the differential forms $ \bar \mu_{i_1,i_2} $ 
in terms of the cohomological classes of the differential forms $ \bar \nu_{i_1,i_2}$.

\begin{prop}
\label{prop 5.4}

The cohomology  class of the form $\bar \mu_{i_1,i_2}$, $1\leq i_1<i_2\leq 2g+1$,
 equals the cohomology class of  the differential form
\bean
\label{bar munu}
(z_{i_1}-z_{i_2})\bar\nu_{i_1,i_2} 
+\frac 1{2g+1}\sum_{a\ne i_1} z_a \bar \nu_{a, i_1}
-\frac 1{2g+1}\sum_{a\ne i_2} z_a \bar \nu_{a, i_2}\,.
\eean
\end{prop}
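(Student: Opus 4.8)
Throughout, the statement should be read as an equality of de Rham classes of meromorphic $2$-forms on $C(z)\times C(z)$; concretely --- and this is how it is used downstream --- it asserts that the periods of the two forms over $\gamma_1(z)\times\gamma_2(z)$ agree for every pair of flat families of $1$-cycles $\gamma_1,\gamma_2$ on $C(z)$, which may be taken disjoint and with disjoint $t$-projections (choose the branch cuts so that the standard generating loops have this property), so that the polar loci --- the diagonal $\Delta$, the anti-diagonal $\bar\Delta=\{(P,\iota P)\}$, and the two points at infinity --- are avoided. The plan is therefore to realize the difference of the two forms as $d\omega$ for an explicit meromorphic $1$-form $\omega$ on $C(z)\times C(z)$ whose poles lie along $\Delta\cup\bar\Delta$ and at infinity.

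The first two steps are elementary. Clearing the common denominator $(t_1-z_{i_1})(t_1-z_{i_2})(t_2-z_{i_1})(t_2-z_{i_2})$ and using partial fractions, exactly as in the proof of Theorem \ref{thm coh}, one obtains the identity of rational $2$-forms
\begin{multline*}
\bar\mu_{i_1,i_2}-(z_{i_1}-z_{i_2})\bar\nu_{i_1,i_2}\ =\ -\,\Psi(t_1,z)^{1/2}\,\Psi(t_2,z)^{1/2}\,\frac{dt_1\wedge dt_2}{t_1-t_2}\\
\times\Big(\tfrac1{t_1-z_{i_1}}+\tfrac1{t_2-z_{i_1}}-\tfrac1{t_1-z_{i_2}}-\tfrac1{t_2-z_{i_2}}\Big)\ =:\ L_{i_1}-L_{i_2}.
\end{multline*}
Then, using $\sum_{a=1}^{2g+1}\tfrac{z_a}{t-z_a}=-(2g+1)+t\,\Psi'(t,z)/\Psi(t,z)$ --- the $\kappa=-2$ analogue of the computation behind \eqref{ind k} --- summation of the rational forms $\bar\nu_{a,j}$ weighted by $z_a$ gives $\sum_{a\ne j}z_a\,\bar\nu_{a,j}=(2g+1)\,L_j+B_j-z_j\,\bar\nu_{j,j}$, where $B_j$ collects the terms containing $t_1\Psi'(t_1,z)/\Psi(t_1,z)$ and $t_2\Psi'(t_2,z)/\Psi(t_2,z)$. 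Combining the two identities, the proposition becomes equivalent to the exactness, on $C(z)\times C(z)$, of $(B_{i_1}-z_{i_1}\bar\nu_{i_1,i_1})-(B_{i_2}-z_{i_2}\bar\nu_{i_2,i_2})$.

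The last step --- producing that primitive --- is the main point, and I would approach it via residues. The only poles of $B_j-z_j\bar\nu_{j,j}$ besides those at infinity are simple poles along $\Delta$ and $\bar\Delta$ (the apparent poles along $t_i=z_a$ are absorbed at the branch points), and, writing $\Psi(t,z)=(t-z_j)\prod_{a\ne j}(t-z_a)$, a short calculation yields $\operatorname{Res}_{\Delta}(B_j-z_j\bar\nu_{j,j})=2\,d\bigl(t\prod_{a\ne j}(t-z_a)\bigr)$, with the $\bar\Delta$-residue its negative; in the difference over $j=i_1,i_2$ both residues become $\pm 2(z_{i_1}-z_{i_2})\,d\bigl(t\prod_{a\ne i_1,i_2}(t-z_a)\bigr)$, the exterior derivative of a rational function on $C(z)$, hence cohomologically trivial. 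By the Gysin sequence the class of the difference then lies in the image of $H^2$ of $(C(z)\setminus\{\infty\})\times(C(z)\setminus\{\infty\})$, which is $H^1(C(z))\otimes H^1(C(z))$, and what remains is to show that the resulting class --- symmetric under interchange of the two factors and anti-invariant under each hyperelliptic involution, hence lying in $\wedge^2H^1(C(z))$ --- is a multiple of the Poincaré class $\delta$, equivalently that its primitive component vanishes; this can be checked by directly comparing leading Laurent coefficients at the two points at infinity, or, alternatively, deduced from the already-proved $\kappa=2$, $r=2$ formula \eqref{tn r=2} using the Shapovalov duality between the $\kappa=2$ and $\kappa=-2$ equations together with the uniqueness up to scalar of the intertwiner in Theorem \ref{thm Iso}. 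The real obstacle is exactly this final bookkeeping: the forms carry simultaneous poles along $\Delta$, along $\bar\Delta$, and of fairly high order at infinity, and keeping the explicit primitive $\omega$ under control while these are peeled off is the delicate part; everything preceding it is routine partial fractions and integration by parts, with the residue computation short and self-contained.
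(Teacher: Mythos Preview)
Your steps 1 and 2 are correct and coincide with the paper's computation \eqref{munu}: the identity $\bar\mu_{i_1,i_2}-(z_{i_1}-z_{i_2})\bar\nu_{i_1,i_2}=L_{i_1}-L_{i_2}$ with $L_j=-\bar\omega_j:=-\Phi^{-1/2}\bigl(\tfrac{1}{t_1-z_j}+\tfrac{1}{t_2-z_j}\bigr)\,dt_1\wedge dt_2$, and the partial-fraction identity $\sum_a z_a/(t-z_a)=-(2g+1)+t\,\Psi'/\Psi$. After these, what must be shown is exactly that $B_j-z_j\bar\nu_{j,j}$ is exact.

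Step 3 is where there is a genuine gap. Your residue/Gysin/symmetry route is not carried out, and the sketch contains slips: the forms in question are \emph{symmetric} under the swap $t_1\leftrightarrow t_2$ (as you say), which after the K\"unneth reduction places the class in $\mathrm{Sym}^2H^1(C)$, not $\wedge^2H^1(C)$, so the passage to ``a multiple of $\delta$'' is unjustified; and even if it were a multiple of $\delta$, a nonzero multiple of the Poincar\'e class is a nonzero cohomology class, so that conclusion would still not give exactness. Neither of your two proposed endgames (Laurent expansion at infinity, or transfer from the $\kappa=2$ formula \eqref{tn r=2} via Shapovalov duality and uniqueness in Theorem~\ref{thm Iso}) is actually executed, and you yourself flag this as the ``real obstacle''.

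The paper bypasses all of this with an explicit primitive. Set
\[
\bar\eta_b\;=\;\Phi(t_1,t_2,z)^{-1/2}\Bigl(t_1\,\frac{dt_2}{t_2-z_b}\;-\;t_2\,\frac{dt_1}{t_1-z_b}\Bigr).
\]
Using $\dfrac{d\Phi}{\Phi}=2\,\dfrac{dt_1-dt_2}{t_1-t_2}-\sum_{a}\Bigl(\dfrac{dt_1}{t_1-z_a}+\dfrac{dt_2}{t_2-z_a}\Bigr)$ and two one-line wedge identities, one finds
\[
d\bar\eta_b\;=\;\frac{2g+1}{2}\,\bar\omega_b\;+\;\frac12\sum_{a\ne b}z_a\,\bar\nu_{a,b},
\]
so that $[\bar\omega_b]=-\tfrac{1}{2g+1}\sum_{a\ne b}z_a[\bar\nu_{a,b}]$ for each $b$ separately, not only for the difference over $i_1,i_2$. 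In your notation this says precisely $B_b-z_b\bar\nu_{b,b}=2\,d\bar\eta_b$; no residues, Gysin sequences, or Hodge-theoretic arguments are needed.
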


\begin{proof}
We have
\bean
\label{munu}
\phantom{aaa}
 \bar \mu_{i_1,i_2} 
&=&
 \Psi(t_1,z)^{1/2}\Psi(t_2,z)^{1/2}
 \left(\frac {1}{(t_1-z_{i_1})(t_2-z_{i_2})} -
\frac {1}{(t_2-z_{i_1})(t_1-z_{i_2})}\right)dt_1\wedge dt_2
\\
\notag
&=&
 \Phi(t_1,t_2,z)^{-1/2}
 \left(\frac {t_1-t_2}{(t_1-z_{i_1})(t_2-z_{i_2})} +
\frac {t_2-t_1}{(t_2-z_{i_1})(t_1-z_{i_2})}\right)dt_1\wedge dt_2
\\
\notag
&=&
(z_{i_1}-z_{i_2}) \,\bar \nu_{i_1,i_2} + 
\\
\notag
&+&
 \Phi(t_1,t_2,z)^{-1/2}
 \left(\frac {1}{t_1-z_{i_2}} + \frac {1}{t_2-z_{i_2}}
 \  -  \
\frac {1}{t_1-z_{i_1}} - \frac {1}{t_2-z_{i_1}}
\right) dt_1\wedge dt_2\,.
\eean
The proposition follows from the following lemma. 
Denote
\bea
\bar \om_b \,:=\,  \Phi(t_1,t_2,z)^{-1/2}
 \left(\frac {1}{t_1-z_{b}} + \frac {1}{t_2-z_{b}}
 \right)dt_1\wedge dt_2\,.
\eea

\begin{lem}
The cohomology class of the form $\bar\om_b$ equals the cohomology class of the form
\\
$-\frac 1{2g+1} \sum _{a\ne b} z_a\bar \nu_{a,b} \,.$

\end{lem}

\begin{proof} 
Denote
\bea
\bar \eta_b := \Phi(t_1,t_2,z)^{-1/2}\left(
 t_1 \frac{dt_2}{t_2-z_b} - 
 t_2\frac{dt_1}{t_1-z_b}\right).
\eea 
The differential of $\bar\eta_b$ is given by the formula
\bea
&&
d \bar \eta_b 
=
\bar \om_b  + \Phi(t_1,t_2,z)^{-1/2}
\left(-\frac{dt_1-dt_2}{t_1-t_2}
+ \frac 12 \sum_{a=1}^{2g+1}
\left(\frac{dt_1}{t_1-z_a} + \frac{dt_2}{t_2-z_a}\right)\right)
\\
&&
\phantom{aaaaaaaaqqqaaaaaaaaa}
\wedge
\left(t_1 \frac{dt_2}{t_2-z_b} - t_2\frac{dt_1}{t_1-z_b}\right).
\eea
Notice that 
\bea
&&
\Phi(t_1,t_2,z)^{-1/2}\left(\frac{dt_1}{t_1-z_a} + \frac{dt_2}{t_2-z_a}\right)
\wedge
\left(t_1 \frac{dt_2}{t_2-z_b} - t_2\frac{dt_1}{t_1-z_b}\right)
=
\bar \om_b +z_a\bar \nu_{a,b}\,,
\\
&&
\Phi(t_1,t_2,z)^{-1/2}\frac{dt_1-dt_2}{t_1-t_2}
\wedge
\left(t_1 \frac{dt_2}{t_2-z_b} - t_2\frac{dt_1}{t_1-z_b}\right) = \bar\om_b + \frac 1{(t_1-z_b)(t_2-z_b)}
= \bar\om_b + \frac{z_b}2\bar \nu_{b,b}\,.
\eea
These identities imply the lemma. 
\end{proof}
\end{proof}

Fix $z^0=(z_1^0,\dots,z_{2g+1}^0)$ with distinct coordinates.
Fix non-intersecting oriented loops $c_1$, $c_2$ on the curve $C(z^0)$ which define distinct nonzero homology classes
$\ga_1(z^0), \ga_2(z^0)\in H_1(C(z^0))$. Extend the classes $\ga_1(z^0), \ga_2(z^0)$ to families 
$\ga_1(z), \ga_2(z) \in H_1(C(z))$ flat under the Gauss-Manin connection.
Then the vector
\bean
\label{sM bar -2}
\bar M^{\ga_1\times\ga_2}(z) =    \sum_{1\leq i_1<i_2\leq 2g+1} 
\bar M^{\ga_1\times\ga_2}_{i_1,i_2} (z) \,w^{\{i_1,i_2\}}
\eean
is a nonzero multi-valued solution of the \KZ/ equations with values in   
$\wedge^2\Sing L^{\ox(2g+1)}[2g-1] $ and $\ka= -2$.

The product $c_1\times c_2$ of loops defines a two-dimensional homology class $\ga(z^0)$
on the graph of the two-valued function  $\Psi(t_1,t_2,z)^{-1/2}$. Extend $\ga(z^0)$ to a family
$\ga(z)$ of two-dimensional homology classes flat with respect to the Gauss-Manin connection. 
Then the vector
\bean
\label{sN bar -2}
\bar N^{\ga}(z) =    \sum_{1\leq i_1<i_2\leq 2g+1} 
\bar N^{\ga}_{i_1,i_2} (z) \,w_{\{i_1,i_r\}}
\eean
is a nonzero multi-valued solution of the \KZ/ equations with values in   
$\Sing L^{\ox(2g+1)}[2g-3] $ and $\ka= -2$.

By Proposition \ref{prop 5.4}, we have
\bean
\label{bar MTN}
&&
\\
\notag
&&
\bar M_{i_1,i_2}^{\ga_1\times\ga_2}(z)
=(z_{i_1}-z_{i_2})\bar N_{i_1,i_2}^{\ga}(z) 
+\frac 1{2g+1}\sum_{a\ne i_1} z_a \bar N_{a, i_1}^{\ga}(z)
-\frac 1{2g+1}\sum_{a\ne i_2} z_a \bar N_{a, i_2}^{\ga}(z)\,.
\eean
This is a linear expression of the solution 
$\bar M^{\ga_1\times\ga_2}(z)$ in terms of the solution
$\bar N^{\ga}(z)$.

\vsk.2>

The monodromy of the \KZ/ equations on  $\Sing L^{\ox(2g+1)}[2g-3] $ with $\ka=-2$ is irreducible
by Theorem \ref{thm Iso}.  Thus, analytic continuation of the solution 
$\bar N^{\ga}(z)$ generates the whole space  of 
all multi-valued solutions of the \KZ/ equations on  $\Sing L^{\ox(2g+1)}[2g-3]$ with $\ka=-2$.
Therefore, formula \eqref{bar MTN} well-defines a nonzero morphism
\bean
\label{bar TT}
\tilde T(z) 
&:&
  \Sing L^{\ox(2g+1)}[2g-3] \to \wedge^2 \Sing L^{\ox(2g+1)}[2g-1], \ 
(\bar N_{i_1,i_2}) \mapsto (\bar M_{i_1,i_2}),
\\
\notag
\bar M_{i_1,i_2}
&=&
(z_{i_1}-z_{i_2})\bar N_{i_1,i_2} 
+\frac 1{2g+1}\sum_{a\ne i_1} z_a \bar N_{a, i_1}
-\frac 1{2g+1}\sum_{a\ne i_2} z_a \bar N_{a, i_2}\,,
\eean
of the \KZ/ connection on $  \Sing L^{\ox(2g+1)}[2g-3]$ with $\ka=-2$ to the \KZ/ connection on 
\\
 $ \wedge^2 \Sing L^{\ox(2g+1)}[2g-1]$ with $\ka=-2$. Here $(\bar N_{i_1,i_2})$ are coordinates of vectors in 
 \\
$\Sing L^{\ox(2g+1)}[2g-3]$ and 
$(\bar M_{i_1,i_2})$ are coordinates of vectors in 
$\wedge^2 \Sing L^{\ox(2g+1)}[2g-1]$.

\begin{thm}
\label{thm  tilde T}

For $r=2$, the maps $\bar T(z)$ and
 $\tilde T(z)$ are equal  up to a nonzero multiplicative constant.
\end{thm}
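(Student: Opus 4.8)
The plan is to deduce the statement directly from the uniqueness assertion in Theorem~\ref{thm 5.4}, so that no computation is required. By Theorem~\ref{thm 5.4} with $r=2$, the map $\bar T(z)$ of \eqref{bar tt} is a nonzero homomorphism of the \KZ/ connection on $\Sing L^{\ox(2g+1)}[2g-3]$ with $\ka=-2$ to the \KZ/ connection on $\wedge^2\Sing L^{\ox(2g+1)}[2g-1]$ with $\ka=-2$, and any nonzero homomorphism of these two connections is unique up to a multiplicative constant. Hence it suffices to check that $\tilde T(z)$ of \eqref{bar TT} is likewise a nonzero homomorphism of the same pair of connections; the proportionality $\bar T(z)=c\,\tilde T(z)$ for some $c\in\C^\times$ then follows at once.

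That $\tilde T(z)$ has this property is exactly the content of the paragraph preceding the theorem, so it may simply be quoted; for completeness I recall the reasoning. Formula \eqref{bar TT} defines a $\C$-linear family of maps $(\bar N_{i_1,i_2})\mapsto(\bar M_{i_1,i_2})$ whose matrix entries are polynomials in $z$. By Proposition~\ref{prop 5.4}, equivalently by \eqref{bar MTN}, this family carries the hypergeometric solution $\bar N^{\ga}(z)$ of the $\ka=-2$ equations on $\Sing L^{\ox(2g+1)}[2g-3]$ to the solution $\bar M^{\ga_1\times\ga_2}(z)$ of the $\ka=-2$ equations on $\wedge^2\Sing L^{\ox(2g+1)}[2g-1]$; in particular $\tilde T(z)$ is not identically zero, since $\bar M^{\ga_1\times\ga_2}(z)$ is not. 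By Theorem~\ref{thm Iso} the monodromy of the source connection is irreducible, so the span of the values of the solutions $\bar N^{\ga}(z)$ and their analytic continuations is the whole fiber $\Sing L^{\ox(2g+1)}[2g-3]$ at every point; consequently the polynomial family $\tilde T(z)$ maps that fiber into $\wedge^2\Sing L^{\ox(2g+1)}[2g-1]$ and, taking flat sections to flat sections, intertwines the two \KZ/ connections with $\ka=-2$.

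Combining the two observations, $\bar T(z)$ and $\tilde T(z)$ are both nonzero homomorphisms of the \KZ/ connection on $\Sing L^{\ox(2g+1)}[2g-3]$ with $\ka=-2$ to the \KZ/ connection on $\wedge^2\Sing L^{\ox(2g+1)}[2g-1]$ with $\ka=-2$; by the uniqueness part of Theorem~\ref{thm 5.4}, which ultimately rests on Schur's lemma and the fact that $\Sing L^{\ox(2g+1)}[2g-3]$ is an irreducible summand of multiplicity one in $\wedge^2\Sing L^{\ox(2g+1)}[2g-1]$, they differ by a nonzero scalar. This is the asserted equality up to a constant.

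I do not expect a genuine obstacle: the one point deserving care is the passage from ``$\tilde T(z)$ relates the particular solutions $\bar N^{\ga}$ and $\bar M^{\ga_1\times\ga_2}$'' to ``$\tilde T(z)$ is a morphism of the finite-dimensional \KZ/ connections'', which is taken care of by the irreducibility and spanning argument. If in addition one wanted to pin down the constant $c$, one could instead verify the defining relation \eqref{def bT}, namely $S^{\wedge^2}(v,\tilde T(z)w)=c\,S(T(z)v,w)$ for all $v\in\wedge^2\Sing L^{\ox(2g+1)}[2g-1]$ and $w\in\Sing L^{\ox(2g+1)}[2g-3]$, directly from the explicit formulas \eqref{tn r=2} for $T(z)$ and \eqref{bar TT} for $\tilde T(z)$ and the combinatorics of the tensor Shapovalov form; this is a finite but longer computation made unnecessary by the uniqueness argument.
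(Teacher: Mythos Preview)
Your proof is correct and follows exactly the paper's approach: the paper's own proof is the single sentence ``This statement is a corollary of Theorem~\ref{thm 5.4},'' and you have simply unpacked why, namely that both $\bar T(z)$ and $\tilde T(z)$ are nonzero morphisms of the same pair of connections, so uniqueness forces proportionality.
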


\begin{proof}
This statement is a corollary of Theorem \ref{thm 5.4}.
\end{proof}

 \subsection{Formula for $\bar T(z)$ for arbitrary $r$}
 
 In this section,  we define a map $\tilde T(z)$ for any $r$. 
 Recall
\bea
\Phi(t,z)
= 
\prod_{1 \leq i < j \leq r}\!\!  (t_i-t_j)^{2}
\prod_{a=1}^{n} \prod_{i=1}^{r} (t_i-z_a)^{-1}.
\eea
Let $q$ be a nonnegative integer such that
 $0\leq q\leq r$.  Let $a=(a_1\geq a_2\geq \dots \geq a_q\geq 0)$ be a non-increasing sequence
 of nonnegative integers.
Let  $J=\{1\leq j_{q+1}<i_{q+2}<\dots<j_{r}\leq 2g+1\}$.
 
 Denote 
 \bea
 \beta_{q, a,J} = \Phi(t,z)^{-1/2}\on{Ant}_{t_1,\dots,t_r}
 \left(t_1^{a_1}\dots t_q^{a_q} \prod_{i=q+1}^r (t_i-z_{j_i})^{-1} 
 dt_1\wedge \dots \wedge dt_r\right).
  \eea
 Define  the degree of $\beta_{q, a,J}$ by the formula
 \bea 
 \deg \beta_{q,a,J} = a_1+\dots + a_q + q.
 \eea
 The forms of minimal degree 0 are exactly the forms
 $\bar \nu_{i_1,\dots, i_r}$ defined in \eqref{nu -}.
 If $\deg \beta_{q,a,J}>0$, then $q>0$ and $a_1\geq 0$.

 \vsk.2>
 
 Consider two differential forms:
 $\beta_{q,a,J}$ and  $\beta_{q',a',J'}$.
 We say that the form
 $\beta_{q',a',J'}$ is lexicographically smaller than the form
  $\beta_{q,a,J}$ if $\deg  \beta_{q',a',J'} <
   \deg \beta_{q,a,J}$ or    $\deg  \beta_{q',a',J'} =
   \deg \beta_{q,a,J}$ and the sequence $a'$ is lexicographically smaller than
   the sequence $a$.
   
   \vsk.2>

 If $f(z_1, \dots,z_{2g+1})$ is a homogeneous polynomial in 
 $z_1,\dots, z_{2g+1}$ of degree $m$, then we say that the product
 $f(z_1, \dots,z_{2g+1}) \beta_{q, a,J}$ is a homogeneous differential $r$-form of
 degree $m+ \deg \beta_{q, a,J} $.

\vsk.2>

For a differential form $\beta_{q, a,J}$ with $\deg \beta_{q, a,J}>0$, we define
 \bea
 \alpha_{q, a,J} = \Phi(t,z)^{-1/2}\on{Ant}_{t_1,\dots,t_r}
 \left(t_1^{a_1+1}t_2^{a_2}\dots t_q^{a_q} \prod_{i=q+1}^r (t_i-z_{j_i})^{-1} dt_2\wedge \dots 
 \wedge dt_r\right).
  \eea

\begin{prop}
\label{prop rel}
Let $\deg \beta_{q,a,J}>0$. Then
\bean
\label{REL}
d \alpha_{q, a,J} =  
\frac{2g+5+2a_1-2r}2\beta_{q,a,J}
+ \sum_{(q',a',J')} 
f_{q',a',J'} (z) \beta_{q',a',J'}
  \eean
where the summation is over $(q',a',J')$ 
such that  $\beta_{q',a',J'} (z) $  is lexicographically smaller than
$\beta_{q,a,J} (z)$. Here  $f_{q',a',J'} (z) $ are homogeneous polynomials in $z$
with half-integer coefficients,
and for all nonzero summands $f_{q',a',J'} (z) \beta_{q',a',J'}$ we have
$\deg (f_{q',a',J'} (z) \beta_{q',a',J'}) = \deg \beta_{q,a,J}$.

\end{prop}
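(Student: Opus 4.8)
The plan is to compute $d\alpha_{q,a,J}$ essentially by hand, as a differential form, and then recognize the right-hand side. First I would use that $d$ commutes with $\Ant_{t_1,\dots,t_r}$ (each summand is a pull-back along a coordinate permutation) and that $\Phi(t,z)^{-1/2}\,dt_1\wedge\dots\wedge dt_r$ is $S_r$-invariant (the sign $\Phi^{-1/2}$ picks up under a permutation cancels the one picked up by the top form). Differentiating $\alpha_{q,a,J}$ term by term and keeping only the $\partial/\partial t_1$-component (all others wedge to zero against $dt_2\wedge\dots\wedge dt_r$), one obtains, with $R:=t_2^{a_2}\cdots t_q^{a_q}\prod_{i=q+1}^{r}(t_i-z_{j_i})^{-1}$,
\[
d\alpha_{q,a,J}=\Phi(t,z)^{-1/2}\,\Sym_{t_1,\dots,t_r}(Q)\,dt_1\wedge\dots\wedge dt_r,
\]
\[
Q=(a_1+1)\,t_1^{a_1}R-t_1^{a_1+1}R\sum_{k=2}^{r}\frac1{t_1-t_k}+\frac12\,t_1^{a_1+1}R\sum_{a=1}^{2g+1}\frac1{t_1-z_a},
\]
using $d_{t_1}\log\Phi^{-1/2}=-\sum_{k\ge2}(t_1-t_k)^{-1}+\frac12\sum_{a}(t_1-z_a)^{-1}$.

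Next I would check that $\Sym_{t}(Q)$ has no pole along any diagonal $t_i=t_j$: although the individual terms $t_1^{a_1+1}R/(t_1-t_k)$ are singular there, the residue contributed by the permutations with $\sigma(1)=i$ cancels, after restriction to $\{t_i=t_j\}$, the residue contributed by those with $\sigma(1)=j$ (here one uses that $R$ does not involve $t_1$). Hence $\Phi^{-1/2}\Sym_t(Q)$ has only simple poles along the diagonals, exactly as each $\beta_{q',a',J'}$ does, and after expanding it is manifestly a finite combination of the forms $\beta_{q',a',J'}$; in particular $d\alpha_{q,a,J}$ lies in the span of the $\beta$'s.

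Then I would isolate the coefficient of the top form $\beta_{q,a,J}$. Applying the partial-fraction identities $\frac{t_1^{a_1+1}}{t_1-z_a}=\sum_{m=0}^{a_1}z_a^m t_1^{a_1-m}+z_a^{a_1+1}\frac1{t_1-z_a}$ and, for the diagonal terms, $\Sym_t\!\big(\tfrac{t_1^{a_1+1}R}{t_1-t_k}\big)=\Sym_t\!\big(\tfrac{t_1^{a_1+1}R-t_k^{a_1+1}((1k)\cdot R)}{2(t_1-t_k)}\big)$, whose numerator is divisible by $t_1-t_k$ and thus kills the pole, one rewrites $\Sym_t(Q)$ as $\Sym_t$ of honest monomial/simple-$z$-pole patterns. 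The coefficient of the leading pattern $t_1^{a_1}R$ receives $a_1+1$ from $d_{t_1}t_1^{a_1+1}$, $\tfrac{2g+1}{2}$ from the $m=0$ part of the $\sum_a$-term, and $-(r-1)$ from the $\sum_{k\ge2}$-term, which add up to $\tfrac{2g+5+2a_1-2r}{2}$. Every other pattern produced is either of strictly smaller degree $a_1'+\dots+a_{q'}'+q'$, or of the same degree but with a lexicographically smaller (after re-sorting into non-increasing order) exponent sequence — a remainder in each partial-fraction step either trades a unit of $t$-degree for a factor $z_a$ or pushes $t$-degree down to a strictly lower leading exponent — and it is multiplied by a homogeneous polynomial in $z$ of exactly the complementary degree, with coefficients in $\tfrac12\Z$ (the only denominators introduced come from the $\tfrac12$ in $d\log\Psi^{1/2}$). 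This is precisely \eqref{REL}.

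The hardest part is the combinatorial bookkeeping of the last step: tracking how the exponent multisets get re-sorted, verifying carefully that no remainder term is lexicographically $\ge\beta_{q,a,J}$ (in particular keeping control when $a$ has repeated entries), and confirming the degree-matching of the polynomial coefficients — together with the diagonal-pole cancellation of the second step, on which the very assertion that $d\alpha_{q,a,J}$ lies in the span of the $\beta$'s rests.
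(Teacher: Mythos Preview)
Your approach is essentially the same as the paper's. The paper writes $d\alpha_{q,a,J}=(a_1+1)\beta_{q,a,J}-\tfrac12\tfrac{d\Phi}{\Phi}\wedge\alpha_{q,a,J}$, expands $\tfrac{d\Phi}{\Phi}$, and then lists exactly the three partial-fraction identities you invoke (for $\tfrac{t_1^{a_1+1}t_j^{a_j}}{t_1-t_j}$, for $\tfrac{t_1^{a_1+1}}{(t_1-t_j)(t_j-z_{j_i})}$, and for $\tfrac{t_1^{a_1+1}}{t_1-z_b}$), leaving the remaining bookkeeping to the reader; your write-up just makes the pole-cancellation along the diagonals and the extraction of the leading coefficient more explicit.

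One caution on your leading-coefficient count: the contribution you quote as ``$-(r-1)$ from the $\sum_{k\ge2}$-term'' comes out cleanly only when each diagonal piece contributes $-1$, and that relies on both the $m=0$ and the $m=a_1-a_k$ ends of the expansion $\tfrac12\sum_{m=0}^{a_1-a_k}t_1^{a_1-m}t_k^{a_k+m}R'$ landing in the $S_r$-orbit of $t_1^{a_1}R$. When $a_1=a_k$ these two ends coincide and the $k$-th diagonal term contributes $-\tfrac12$ rather than $-1$, so the stated coefficient $\tfrac{2g+5+2a_1-2r}{2}$ is literally correct only under the (generically satisfied) hypothesis $a_1>a_2$. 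You flag exactly this as ``the hardest part'', and the paper's sketch is no more detailed on this point; it does not affect the downstream corollaries, whose content is that the leading coefficient is nonzero with controlled denominator.
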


\begin{proof} We have
\bea
d \alpha_{q, a,J} =  (a_1+1) \beta_{q, a,J} - \frac12 \frac{d\Phi}{\Phi}\wedge
\alpha_{q, a,J}
\eea
where
\bea
\frac{d\Phi}{\Phi} = 2\sum_{1\leq i< j\leq r} \frac {dt_i-dt_j}{t_i-t_j}
-\sum_{i=1}^r\sum_{a=1}^{2g+1} \frac{dt_i}{t_i-z_a}.
\eea
Calculating the product $\frac{d\Phi}{\Phi}\wedge
\alpha_{q, a,J}$ we obtain the proposition. Here are the main elements of the calculation:
\bea 
\frac{t_1^{a_1+1}t_j^{a_j}}{t_1-t_j} 
&=&
t_1^{a_1} t_j^{a_j}
+ t_1^{a_1-1} t_j^{a_j+1}+\dots
+ t_1^{a_j} t_j^{a_1}+
\frac{t_j^{a_1+1}t_i^{a_j}}{t_1-t_j} \,,
\\
\frac{t_1^{a_1+1}}{(t_1-t_j)(t_j-z_{j_i})}
&=&
\frac{t_1^{a_1} 
+ t_1^{a_1-1} t_j+\dots
+  t_j^{a_1}}{t_j-z_{j_i}}+
\\
&+&
\frac{t_j^{a_1}+t_j^{a_1-1}z_{j_i}  +\dots + z_{j_i}^ {a_1} }{t_1-t_j}
+ \frac{z_{j_i}^ {a_1+1}}{(t_1-t_j)(t_j-z_{j_i})},
\eea
\bea
\frac{t_1^{a_1+1}}{t_1-z_b} 
&=&
t_1^{a_1} + z_b t_1^{a_1-1} +\dots + z_b^{a_1} + \frac{z_b^{a_1+1}}{t_1-z_b}\,.
\eea
\end{proof}

\begin{cor}
\label{cor REL}
The cohomological class $\left[\beta_{q,a,J}\right]$ of the form $\beta_{q,a,J}$ equals
\bean
\label{Rel 1}
-
\frac 2{2g+5+2a_1-2r} \sum_{(q',a',J')} 
f_{q',a',J'} (z) [\beta_{q',a',J'}].
\eean

\end{cor}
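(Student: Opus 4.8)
The plan is to deduce Corollary \ref{cor REL} directly from Proposition \ref{prop rel} by passing to de Rham cohomology. First I would observe that $\alpha_{q,a,J}$ is, like all forms appearing in this section, a well-defined rational differential $(r-1)$-form of the twisted type $\Phi(t,z)^{-1/2}\cdot(\text{rational})$, so its class is a coboundary in the complex computing the relevant twisted cohomology; hence $[d\alpha_{q,a,J}]=0$. Applying the bracket $[\,\cdot\,]$ to both sides of \eqref{REL} therefore gives
\[
0 \;=\; \frac{2g+5+2a_1-2r}{2}\,[\beta_{q,a,J}] \;+\; \sum_{(q',a',J')} f_{q',a',J'}(z)\,[\beta_{q',a',J'}],
\]
where the sum runs over the lexicographically smaller triples $(q',a',J')$ as in Proposition \ref{prop rel}.

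Next I would note that the scalar $2g+5+2a_1-2r = 2(g+a_1-r)+5$ is odd, hence nonzero; this uses only that $g$, $a_1$, $r$ are integers, while the hypothesis $\deg\beta_{q,a,J}>0$, inherited from Proposition \ref{prop rel}, is what guarantees $\alpha_{q,a,J}$ is defined. Dividing the displayed identity by $\tfrac12(2g+5+2a_1-2r)$ and solving for $[\beta_{q,a,J}]$ yields exactly \eqref{Rel 1}.

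There is essentially no obstacle here beyond bookkeeping: the content is entirely contained in Proposition \ref{prop rel}, and the only point requiring a word of care is the non-vanishing of the denominator, which is immediate from the parity observation above. I would close with the remark that, since each $f_{q',a',J'}(z)$ has half-integer coefficients and one multiplies by $-2/(2g+5+2a_1-2r)$, the resulting expression is a linear combination of the classes $[\beta_{q',a',J'}]$ with coefficients that are homogeneous polynomials in $z$ having rational coefficients (with denominators dividing $2g+5+2a_1-2r$), and that the degree assertion in Proposition \ref{prop rel} shows each such term is a homogeneous $r$-form of degree $\deg\beta_{q,a,J}$, consistent with the claimed shape of the reduction.
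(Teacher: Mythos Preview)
Your argument is correct and is exactly the approach implicit in the paper, which states the corollary without proof as an immediate consequence of Proposition~\ref{prop rel}: pass \eqref{REL} to cohomology so that $[d\alpha_{q,a,J}]=0$, and divide by the nonzero (indeed odd) scalar $\tfrac12(2g+5+2a_1-2r)$. Your extra remarks about the denominators and homogeneity are consistent with what the paper records explicitly only in the subsequent Corollary~\ref{cor REL 2}.
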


Repeatedly applying Corollary \ref{cor REL} we obtain the following statement.

\begin{cor}
\label{cor REL 2}
Let $\deg \,\beta_{q,a,J}>0$. 
Then the cohomological class $\left[\beta_{q,a,J}\right]$ of the form $\beta_{q,a,J}$ can be 
presented as a linear combination 
\bean
\label{Rel 2}
\sum_{1\leq i_1<\dots< i_r\leq 2g+1} 
h_{i_1,\dots,i_r} (z) [\bar \nu_{i_1,\dots,i_r}]
\eean
of the cohomological classes $[\bar \nu_{i_1,\dots,i_r}]$
of the forms $\bar \nu_{i_1,\dots,i_r}$ defined in 
\eqref{nu -}.
Here every $h_{i_1,\dots,i_r} (z) $ is a homogeneous polynomial in $z$ of degree 
$\deg \beta_{q,a,J}$ or equals zero. 
If $h_{i_1,\dots,i_r} (z) $ is nonzero, then its coefficients are rational numbers.
 In reduced form, the denominators of the coefficients are not divisible by any prime integer $p> 2g+5+a_1-2r$.

\end{cor}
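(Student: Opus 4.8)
\medskip

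\noindent\textbf{Proof plan.} The proof is a strong induction on the lexicographic order on the forms $\beta_{q,a,J}$ introduced just before Proposition \ref{prop rel}. The first thing to record is that this order is well founded below any fixed degree: for each $m$ there are only finitely many $\beta_{q,a,J}$ with $\deg\beta_{q,a,J}=m$ (since $0\le q\le r$, the entries satisfy $0\le a_q\le\dots\le a_1\le m$, and $J$ ranges over a finite set), and the forms of degree $0$ are exactly the $\bar\nu_{i_1,\dots,i_r}$ of \eqref{nu -}. Thus the induction is legitimate, and the ``bottom'' of the recursion is precisely the system of forms $\bar\nu_{i_1,\dots,i_r}$ in which we want to expand.

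For the inductive step, suppose $\deg\beta_{q,a,J}=m>0$ and apply Corollary \ref{cor REL}, which writes $[\beta_{q,a,J}]$ as $-\tfrac{2}{2g+5+2a_1-2r}$ times a sum $\sum_{(q',a',J')}f_{q',a',J'}(z)\,[\beta_{q',a',J'}]$ over lexicographically smaller triples, with $\deg\bigl(f_{q',a',J'}(z)\beta_{q',a',J'}\bigr)=m$ for every nonzero term. I would split the sum into two groups. For a term with $\deg\beta_{q',a',J'}=0$ the form $\beta_{q',a',J'}$ is one of the $\bar\nu_{i_1,\dots,i_r}$ (for the increasing rearrangement of $J'$), so it is already of the required shape and contributes $-\tfrac{2}{2g+5+2a_1-2r}f_{q',a',J'}(z)$ to $h_{i_1,\dots,i_r}(z)$. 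For a term with $\deg\beta_{q',a',J'}>0$ the induction hypothesis applies, since $\beta_{q',a',J'}$ is lexicographically smaller; substituting its expansion $[\beta_{q',a',J'}]=\sum h'_{i_1,\dots,i_r}(z)[\bar\nu_{i_1,\dots,i_r}]$ and collecting the finitely many contributions yields the desired presentation of $[\beta_{q,a,J}]$.

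Homogeneity of the resulting $h_{i_1,\dots,i_r}(z)$ is a degree count along the recursion: in the first group $\deg f_{q',a',J'}(z)=m$, so its contribution is homogeneous of degree $m$ (or zero); in the second group $\deg f_{q',a',J'}(z)=m-\deg\beta_{q',a',J'}$, while by induction $h'_{i_1,\dots,i_r}(z)$ is homogeneous of degree $\deg\beta_{q',a',J'}$ (or zero), so the product is again homogeneous of degree $m$ (or zero). Summation preserves this, giving $\deg h_{i_1,\dots,i_r}(z)=m=\deg\beta_{q,a,J}$ (or $h_{i_1,\dots,i_r}(z)=0$). Rationality of the coefficients is immediate, since the $f_{q',a',J'}$ have half-integer coefficients by Proposition \ref{prop rel}, the prefactor $-\tfrac{2}{2g+5+2a_1-2r}$ is rational, and the $h'$'s are rational by induction.

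The delicate point — and the step I expect to be the main obstacle — is the control of the primes dividing the denominators. The factor $2$ in the numerator of $-\tfrac{2}{2g+5+2a_1-2r}$ exactly cancels the single power of $2$ allowed in the denominators of the half-integer coefficients of the $f_{q',a',J'}$, so the only new prime a single application of Corollary \ref{cor REL} can introduce into a denominator is a prime dividing the integer $2g+5+2a_1-2r$. To obtain the asserted bound one must track the first parts of all forms $\beta_{q',a',J'}$ occurring anywhere in the recursion tree, using that $\deg\bigl(f_{q',a',J'}(z)\beta_{q',a',J'}\bigr)=\deg\beta_{q,a,J}$, that every step strictly lowers the lexicographic rank, and the explicit rewriting identities in the proof of Proposition \ref{prop rel} (which bound how large the $t$-exponents, hence the parameters ``$a_1$'', can become before the degree must drop). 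Assembling these constraints shows that no prime exceeding the stated bound can survive in a reduced denominator, which finishes the proof.
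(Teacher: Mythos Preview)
Your proposal is correct and takes essentially the same approach as the paper: the paper's entire proof is the single sentence ``Repeatedly applying Corollary \ref{cor REL} we obtain the following statement,'' and your lexicographic induction is precisely the formal unpacking of that line, with the homogeneity and rationality tracked correctly along the recursion. You also rightly isolate the denominator bound as the only point needing care, and the mechanism you point to---that the rewriting identities in the proof of Proposition \ref{prop rel} keep every leading exponent $a_1'$ in the recursion tree bounded by the original $a_1$, so the only odd prime a step can introduce divides some $2g+5+2a_1'-2r$ with $a_1'\le a_1$---is exactly what the repeated application of Corollary \ref{cor REL} encodes.
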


 For $1\leq i_1,\dots ,i_r \leq 2g+1$, recall 
\bean
\label{mu bar 2}
\bar \mu_{i_1,\dots,i_r} 
&=&
\on{Sym}_{t_1,\dots,t_r}
\!
 \left(\prod_{j=1}^r \frac{\Psi(t_j,z)^{1/2}}{t_j-z_{i_j}}
  dt_1\wedge\dots\wedge dt_r\right).
 \eean
 Similarly to \eqref{munu}, we may write
\bea
\bar \mu_{i_1,\dots,i_r} 
=
\Phi(t,z)^{-1/2} 
\on{Ant}_{t_1,\dots,t_r}
\!
 \left( \frac{\prod_{a<b}(t_a-t_b)}{\prod_{j=1}^r(t_j-z_{i_j})}
  dt_1\wedge\dots\wedge dt_r\right).
 \eea
Applying Corollary \ref{cor REL 2} to this differential form 
we obtain the following statement.

\begin{prop}
\label{prop sum +2}
The cohomological class of the form $\bar \mu_{i_1,\dots,i_r}$
can be written as a sum 
\bean
\label{sum +2}
\sum_{1\leq a_1<\dots < a_r\leq 2g+1} h_{a_1,\dots,a_r}^{i_1,\dots,i_r}(z)
[\bar \nu_{a_1,\dots,a_r}]
\eean
of the cohomological classes of the forms $\bar \nu_{a_1,\dots,a_r}$.
Here every $ h_{a_1,\dots,a_r}^{i_1,\dots,i_r}(z)$ is a homogeneous polynomial in $z$ of degree 
$\binom{r}{2}$ or equals zero. If $ h_{a_1,\dots,a_r}^{i_1,\dots,i_r}(z)$
 is nonzero, then its coefficients are rational numbers.
 In reduced form the denominators of the coefficients are not divisible by any prime integer 
 $p> 2g+4-r$.
The differential forms $\bar \nu_{i_1,\dots,i_r}$ are defined in 
\eqref{nu -}.
\qed

\end{prop}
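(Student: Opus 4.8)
The plan is to express the cohomology class $[\bar\mu_{i_1,\dots,i_r}]$ as a $\Z[z]$-linear combination of the classes $[\beta_{q,a,J}]$ and then feed each term with $\deg\beta_{q,a,J}>0$ into Corollary~\ref{cor REL 2}. Starting from the presentation $\bar\mu_{i_1,\dots,i_r}=\Phi(t,z)^{-1/2}\on{Ant}_{t_1,\dots,t_r}\bigl(\prod_{1\le a<b\le r}(t_a-t_b)\big/\prod_{j=1}^r(t_j-z_{i_j})\,dt_1\wedge\dots\wedge dt_r\bigr)$ obtained just above, I would first expand the Vandermonde determinant, $\prod_{1\le a<b\le r}(t_a-t_b)=\sum_{\pi\in S_r}(-1)^{|\pi|}\prod_{j=1}^r t_j^{\pi(j)-1}$, which realizes $\bar\mu_{i_1,\dots,i_r}$ as a $\Z$-linear combination of forms $\Phi(t,z)^{-1/2}\on{Ant}_{t}\bigl(\prod_{j=1}^r t_j^{c_j}(t_j-z_{i_j})^{-1}\,dt_1\wedge\dots\wedge dt_r\bigr)$ with $\{c_1,\dots,c_r\}=\{0,1,\dots,r-1\}$.

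The second step normalizes each of these. Applying the polynomial identity $t^k/(t-z)=\sum_{m=0}^{k-1}z^{\,k-1-m}t^m+z^k/(t-z)$ in each variable $t_j$ and expanding over the independent binary choices ``keep a power $t_j^{m_j}$ with $0\le m_j\le c_j-1$'' versus ``keep the simple pole $(t_j-z_{i_j})^{-1}$'', one writes $\bar\mu_{i_1,\dots,i_r}$ as a $\Z$-linear combination of terms $(-1)^{|\pi|}P(z)\,\Phi(t,z)^{-1/2}\on{Ant}_{t}\bigl(\prod_{j\in S}t_j^{m_j}\prod_{j\notin S}(t_j-z_{i_j})^{-1}\,dt_1\wedge\dots\wedge dt_r\bigr)$ with $P(z)$ a monomial in $z_{i_1},\dots,z_{i_r}$; since the variable with $c_j=0$ can only keep its pole we have $|S|\le r-1$, and every surviving power satisfies $m_j\le c_j-1\le r-2$. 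Because $\on{Ant}_{t}(f\,dt_1\wedge\dots\wedge dt_r)=\on{Sym}_{t}(f)\,dt_1\wedge\dots\wedge dt_r$ is unchanged under relabeling $t_1,\dots,t_r$ and under permuting the exponents among the power-carrying variables and the $z_{i_j}$ among the pole-carrying variables, each such form is exactly a generator $\beta_{q,a,J}$ with $q=|S|$, $a$ the non-increasing rearrangement of $(m_j)_{j\in S}$ and $J$ the increasing rearrangement of $(i_j)_{j\notin S}$; in particular every $\beta_{q,a,J}$ that occurs has $a_1\le r-1$. Finally, in the grading underlying the degree conventions of Proposition~\ref{prop rel} (in which $t_j$ and $z_a$ have weight one and each simple pole $(t_j-z)^{-1}$ has weight $-1$, so that $\deg(f(z)\beta_{q,a,J})=\deg f+\deg\beta_{q,a,J}$) the form $\bar\mu_{i_1,\dots,i_r}$, built from $\prod_{a<b}(t_a-t_b)\big/\prod_j(t_j-z_{i_j})$, has degree $\binom{r}{2}$, and the power-reduction identity is homogeneous; collecting terms therefore gives an identity of differential forms
\[
\bar\mu_{i_1,\dots,i_r}=\sum_{(q,a,J):\,a_1\le r-1} g^{\,i_1,\dots,i_r}_{q,a,J}(z)\,\beta_{q,a,J},
\]
with each $g^{\,i_1,\dots,i_r}_{q,a,J}(z)\in\Z[z]$ homogeneous of degree $\binom{r}{2}-\deg\beta_{q,a,J}\ge 0$.

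Now pass to cohomology classes. The summands with $\deg\beta_{q,a,J}=0$ are precisely the forms $\bar\nu_{a_1,\dots,a_r}$ of \eqref{nu -}; for the summands with $\deg\beta_{q,a,J}>0$, Corollary~\ref{cor REL 2} gives $[\beta_{q,a,J}]=\sum_{1\le b_1<\dots<b_r\le 2g+1}h_{b_1,\dots,b_r}(z)\,[\bar\nu_{b_1,\dots,b_r}]$ with each $h_{b_1,\dots,b_r}(z)$ homogeneous of degree $\deg\beta_{q,a,J}$ (or zero), with rational coefficients whose reduced denominators are divisible by no prime $p>2g+5+a_1-2r$. Multiplying each such expansion by $g^{\,i_1,\dots,i_r}_{q,a,J}(z)$ and summing yields $[\bar\mu_{i_1,\dots,i_r}]=\sum_{1\le a_1<\dots<a_r\le 2g+1}h^{\,i_1,\dots,i_r}_{a_1,\dots,a_r}(z)\,[\bar\nu_{a_1,\dots,a_r}]$, in which each coefficient is a finite sum of products of an integer-coefficient homogeneous polynomial of degree $\binom{r}{2}-\deg\beta$ with a rational-coefficient homogeneous polynomial of degree $\deg\beta$; hence $h^{\,i_1,\dots,i_r}_{a_1,\dots,a_r}(z)$ is homogeneous of degree $\binom{r}{2}$ or zero, with rational coefficients. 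Since multiplication by an integral polynomial and formation of sums cannot introduce into a reduced denominator a prime absent from the summands, and every $\beta_{q,a,J}$ entering has $a_1\le r-1$, no prime $p>2g+5+(r-1)-2r=2g+4-r$ divides any reduced denominator. This is exactly the assertion of Proposition~\ref{prop sum +2}.

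I expect the only real obstacle to be the combinatorial bookkeeping in the middle step: matching a general monomial-over-product form, after relabeling of variables, to the standard generator $\beta_{q,a,J}$, and tracking both the degree identity $\deg g^{\,i_1,\dots,i_r}_{q,a,J}=\binom{r}{2}-\deg\beta_{q,a,J}$ and the bound $a_1\le r-1$. Termination of the reduction that expresses each $[\beta_{q,a,J}]$ in terms of the $[\bar\nu]$'s is already handled inside Proposition~\ref{prop rel} and Corollary~\ref{cor REL 2} (the lexicographic order on $(\deg\beta_{q,a,J},a)$ strictly decreases at each step), and no idea beyond Corollary~\ref{cor REL 2} enters; everything else is formal.
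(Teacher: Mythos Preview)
Your proposal is correct and follows essentially the same approach as the paper. The paper's own proof is the single sentence ``Applying Corollary~\ref{cor REL 2} to this differential form we obtain the following statement,'' leaving the decomposition of $\bar\mu_{i_1,\dots,i_r}$ into a combination of the standard generators $\beta_{q,a,J}$ implicit; your Vandermonde expansion followed by the power-reduction identity $t^k/(t-z)=\sum_{m=0}^{k-1}z^{k-1-m}t^m+z^k/(t-z)$ is exactly the bookkeeping needed to make that step explicit, and your tracking of the maximal exponent $a_1$ to recover the denominator bound $2g+4-r$ is the intended use of Corollary~\ref{cor REL 2}. One minor remark: your own analysis in fact gives $a_1\le r-2$ (since $m_j\le c_j-1\le r-2$), which via Corollary~\ref{cor REL 2} yields the slightly sharper bound $p>2g+3-r$; you then relax this to $a_1\le r-1$ without comment, which is harmless but unnecessary.
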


Similarly to \eqref{bar TT}, we obtain a  well-defined 
nonzero morphism
\bean
\label{bar TTT}
&&
\\
\notag
\tilde T(z) 
&:&
  \Sing L^{\ox(2g+1)}[2g+1-2r] \to \wedge^r \Sing L^{\ox(2g+1)}[2g-1], \ 
(\bar N_{i_1,\dots, i_r}) \mapsto (\bar M_{i_1,\dots, i_r}),
\\
\notag
\bar M_{i_1,\dots, i_r}
&=&
\sum_{1\leq a_1<\dots < a_r\leq 2g+1} h_{a_1,\dots,a_r}^{i_1,\dots,i_r}(z)
\,
\bar N_{a_1,\dots,a_r},
\eean
of the \KZ/ connection on $  \Sing L^{\ox(2g+1)}[2g+1-2r]$ with $\ka=-2$ to the \KZ/ connection on 
\\
 $ \wedge^r \Sing L^{\ox(2g+1)}[2g-1]$ with $\ka=-2$. Here $(\bar N_{i_1,\dots,i_r})$ 
 are coordinates of vectors in 
 \\
$\Sing L^{\ox(2g+1)}[2g+1-2r]$ and 
$(\bar M_{i_1,\dots, i_r})$ are coordinates of vectors in 
$\wedge^r \Sing L^{\ox(2g+1)}[2g-1]$.

\begin{thm}
\label{thm  tilde T}

For $r\geq 2$, the maps $\bar T(z)$ and
 $\tilde T(z)$ are equal  up to a nonzero multiplicative constant.
\end{thm}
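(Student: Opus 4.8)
The plan is to reduce the statement to the uniqueness clause of Theorem \ref{thm 5.4}. That theorem asserts that a nonzero homomorphism from the \KZ/ connection on $\Sing L^{\ox(2g+1)}[2g+1-2r]$ with $\ka=-2$ to the \KZ/ connection on $\wedge^r\Sing L^{\ox(2g+1)}[2g-1]$ with $\ka=-2$ is unique up to a nonzero multiplicative constant. So it suffices to check that $\bar T(z)$ and $\tilde T(z)$ are each nonzero homomorphisms of exactly this pair of connections. For $\bar T(z)$ this is precisely the content of Theorem \ref{thm 5.4}: it is an embedding, hence nonzero, and by its defining property \eqref{def bT} together with the Shapovalov dualities recalled in Section \ref{sec 3.5} (and \eqref{do}) it intertwines the two $\ka=-2$ connections. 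Thus the whole task is to establish the same properties for $\tilde T(z)$.

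For this I would unpack the construction preceding \eqref{bar TTT}. By Theorem \ref{thm s}, for any flat families $\ga_1(z),\dots,\ga_r(z)\in H_1(C(z))$ the vector $\bar M^{\ga_1\times\dots\times\ga_r}(z)$ of periods of the forms $\bar\mu_{i_1,\dots,i_r}$ in \eqref{mu bar 2} is a solution of the \KZ/ equations with $\ka=-2$ and values in $\wedge^r\Sing L^{\ox(2g+1)}[2g-1]$, while for a flat $r$-dimensional cycle $\ga(z)$ the vector $\bar N^\ga(z)$ of periods of the forms $\bar\nu_{i_1,\dots,i_r}$ in \eqref{nu -} is a solution with $\ka=-2$ and values in $\Sing L^{\ox(2g+1)}[2g+1-2r]$. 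Proposition \ref{prop sum +2} provides a family of cohomological identities $[\bar\mu_{i_1,\dots,i_r}]=\sum_{a_1<\dots<a_r}h^{i_1,\dots,i_r}_{a_1,\dots,a_r}(z)[\bar\nu_{a_1,\dots,a_r}]$ in $\wedge^r H^1(C(z))$, with coefficients polynomial in $z$; these hold uniformly in $z$ inside the Gauss--Manin local system, so integrating them over $\ga_1(z)\times\dots\times\ga_r(z)$ produces the linear relation $\bar M_{i_1,\dots,i_r}(z)=\sum h^{i_1,\dots,i_r}_{a_1,\dots,a_r}(z)\,\bar N_{a_1,\dots,a_r}(z)$ between the coordinates of solutions. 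Since, by Theorem \ref{thm Iso} (and the Shapovalov duality between $\ka=2$ and $\ka=-2$), the monodromy of the \KZ/ connection on $\Sing L^{\ox(2g+1)}[2g+1-2r]$ with $\ka=-2$ is irreducible, the analytic continuations of one solution $\bar N^\ga(z)$ span all solutions; hence the map $(\bar N_{i_1,\dots,i_r})\mapsto(\bar M_{i_1,\dots,i_r})$ of \eqref{bar TTT} is independent of all choices and carries solutions to solutions, i.e. is a homomorphism of the $\ka=-2$ connections. It is nonzero because if every $h^{i_1,\dots,i_r}_{a_1,\dots,a_r}(z)$ vanished identically, then every $\bar\mu_{i_1,\dots,i_r}$ would be cohomologically trivial and $\bar M^{\ga_1\times\dots\times\ga_r}(z)$ would vanish for all cycles, contradicting that $\bar{\mc I}$ in \eqref{mc bar I} is an isomorphism. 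Combining this with the uniqueness in Theorem \ref{thm 5.4} yields $\tilde T(z)=c\,\bar T(z)$ for some $c\in\C^\times$.

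The only real obstacle is the middle step: confirming that $\tilde T(z)$ is a genuine morphism of connections rather than just a $z$-dependent linear map. This rests on the fact that the relations of Proposition \ref{prop sum +2} are valid uniformly in $z$ (so they survive integration over flat cycles, using that cohomology classes are flat for Gauss--Manin) and on the irreducibility of the source monodromy (so one solution propagates to all of them). Both ingredients are already in place, and the general-$r$ case introduces nothing new beyond the reduction of higher-degree classes $[\beta_{q,a,J}]$ to the $[\bar\nu_{a_1,\dots,a_r}]$ effected by Corollary \ref{cor REL 2} and Proposition \ref{prop sum +2}; the argument is literally the one used for $r=2$ after \eqref{munu}.
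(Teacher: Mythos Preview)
Your proposal is correct and follows essentially the same approach as the paper. The paper's proof is a single line invoking Theorem \ref{thm 5.4}, relying on the fact (stated just before \eqref{bar TTT}) that $\tilde T(z)$ is a well-defined nonzero morphism of the $\ka=-2$ connections; you have simply made explicit the justification of that fact (via Proposition \ref{prop sum +2}, flat integration, irreducibility of monodromy, and the isomorphism $\bar{\mc I}$), which the paper treats as having been established by analogy with the $r=2$ case.
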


\begin{proof}
This statement is a corollary of Theorem \ref{thm 5.4}.
\end{proof}

 \begin{cor}
 \label{cor asah -2}
 All solutions over $\C$ of the \KZ/equations with values in $\Sing L^{\ox(2g+1)}[2g+1-2r]$ and $\ka=-2$
are hypergeometric, that is, given by formula \eqref{tn1}. 
\qed
  \end{cor}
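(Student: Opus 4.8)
The plan is to follow the pattern of the proof of Corollary~\ref{cor asah}, but since $\bar T(z)$ is an \emph{embedding} rather than an epimorphism, the surjectivity argument available for $\ka=2$ must be replaced by an irreducibility-plus-nonvanishing argument. In fact most of the needed ingredients are already assembled in Section~\ref{sec 5}: the construction of $\tilde T(z)$ in \eqref{bar TT} and \eqref{bar TTT} already invokes irreducibility of the $\ka=-2$ connection to conclude that analytic continuation of a hypergeometric solution generates the full solution space, so the proof is largely a matter of recording this.

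Concretely, I would proceed in three steps. Step one: the monodromy of the \KZ/ connection on $\Sing L^{\ox(2g+1)}[2g+1-2r]$ with $\ka=-2$ is absolutely irreducible. By the Remark following Lemma~\ref{compact}, Proposition~\ref{primitiveKZ} holds verbatim for $\ka=-2$, so this connection is isomorphic, up to twist by a rank one local system, to $\mc P^r(C)$, which is the $r$-th fundamental representation of $\mr{Sp}_{2g}$ and hence absolutely irreducible; equivalently, by \eqref{do} this connection is dual under the Shapovalov form to the $\ka=2$ connection, whose irreducibility is Theorem~\ref{thm Iso}. Step two: produce one nonzero hypergeometric solution $\bar N^\ga(z)$ of the form \eqref{int -2 r}, taking $\ga$ to be (a lift of) a product $\ga_1\times\dots\times\ga_r$ of one-cycles on $C(z)$. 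Here I would use that $\tilde T(z)$ is injective, since it equals $\bar T(z)$ up to a nonzero scalar (Theorem~\ref{thm tilde T}) and $\bar T(z)$ is an embedding (Theorem~\ref{thm 5.4}): integrating the cohomological identity of Proposition~\ref{prop sum +2} over the product of flat families $\ga_1(z),\dots,\ga_r(z)\in H_1(C(z))$ gives the identity of solutions $\bar M^{\ga_1\times\dots\times\ga_r}(z)=\tilde T(z)\bigl(\bar N^{\ga}(z)\bigr)$; choosing $\ga_1(z^0)\wedge\dots\wedge\ga_r(z^0)\ne 0$ in $\wedge^r H_1(C(z^0))$ (possible since $\dim H_1=2g\ge r$), the left side is nonzero by the isomorphism \eqref{mc bar I}, so $\bar N^\ga(z)\ne 0$.

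Step three: the span $W$ of all hypergeometric solutions $\bar N^\ga(z)$ is a subrepresentation of the solution space $V$, because monodromy carries flat families of twisted cycles to flat families of twisted cycles; by Step~one and Step~two, $0\ne W$, hence $W=V$. Finally, since $\bar N^{\ga_1\times\dots\times\ga_r}(z)$ depends multilinearly and alternatingly on $\ga_1,\dots,\ga_r$, the assignment $\ga_1\wedge\dots\wedge\ga_r\mapsto \bar N^{\ga_1\times\dots\times\ga_r}(z)$ extends to a linear surjection $\wedge^r H_1(C)\otimes_{\Z}\C\to W=V$, so every solution equals $\bar N^\ga(z)$ for a flat family of $r$-cycles $\ga$, which is the $\ka=-2$ analogue of \eqref{tn1}. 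The only genuine content beyond bookkeeping is the nonvanishing in Step~two, for which the point to verify is that Proposition~\ref{prop sum +2} is an honest identity of cohomology classes on $C(z)^{\times r}$ away from the diagonals (so that it integrates to the stated identity of solutions) and that $\tilde T(z)$ is fiberwise injective — both already established. I also expect that the irreducibility input is essential and cannot be dropped: as recalled after Theorem~\ref{thm alls}, for non-generic $\ka$ the hypergeometric solutions may span only a proper subspace, so the argument genuinely uses that $\ka=-2$ lands in the irreducible case.
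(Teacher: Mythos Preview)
Your argument is correct and is essentially the one the paper has in mind: the corollary carries only a \qed, but the ingredients you invoke—irreducibility of the $\ka=-2$ monodromy (stated explicitly in the paragraph preceding \eqref{bar TTT} for $r=2$ and ``similarly'' for general $r$), the existence of a nonzero hypergeometric solution $\bar N^\ga$, and the injectivity of $\tilde T(z)$ via Theorem~\ref{thm tilde T}—are exactly those already assembled in the text to show that $\tilde T(z)$ is well-defined and nonzero. You have simply made explicit what the paper leaves to the reader; in particular your Step~two (nonvanishing via $\tilde T(z)$ applied to a nonzero wedge $\bar M^{\ga_1\times\cdots\times\ga_r}$) is precisely the mechanism behind the paper's construction of $\tilde T(z)$ as a ``nonzero morphism''. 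One cosmetic point: the reference to \eqref{tn1} in the statement is a typo for \eqref{int -2 r}, as you correctly interpret.
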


\section{$p$-hypergeometric solutions for $\ka=2$}
\label{sec 6}

\subsection{$p$-integrals}

Let $P(t_1,\dots, t_r)$ be a polynomial with coefficients in a field $\K$ of characteristic $p$. 
Given positive integers
$\ell_1,\dots,\ell_r$ denote by $\int_{\ell_1,\dots,\ell_r}P(t_1,\dots,t_r)$ 
the coefficient of $t_1^{p\ell_1-1}\dots t_r^{p\ell_r-1}$ in
$P(t_1,\dots,t_r)$, see \cite{V5}.  For $i=1,\dots,r$ we have
\bea
\int_{\ell_1,\dots,\ell_r}\frac{\der P}{\der t_i}(t_1,\dots,t_r) = 0.
\eea

In this paper,  we consider polynomials $P(t_1,\dots, t_r, z_1,\dots,z_{2g+1}) \in 
\K[t_1,\dots, t_r, z_1,$ \dots, $z_{2g+1}]$. In that case
$\int_{\ell_1,\dots,\ell_r} P(t_1,\dots, t_r, z_1,\dots,z_{2g+1})$ denotes the coefficient
of  $t_1^{p\ell_1-1}\dots t_r^{p\ell_r-1}$  in $P(t_1,\dots, t_r, z_1,\dots,z_{2g+1})$.

\subsection{$p$-hypergeometric solutions in $\Sing L^{\ox(2g+1)}[2g+1-2r]$ for $\ka=2$}

Let $p$ be an odd prime and $\K$ a field of characteristic $p$. 
In this section we remind the construction of $p$-hypergeometric solutions of the \KZ/ equations 
over the field $\K$.

Recall $\Psi(x,z)=\prod_{a=1}^{2g+1}(x-z_a)$. 
For $1\leq i_1,\dots ,i_r \leq 2g+1$, define 
\bean
\label{nu p}
\nu_{i_1,\dots,i_r} ^{(p)} 
&=&
 \on{Ant}_{t_1,\dots,t_r}
  \!
 \left(
\left(\prod_{1\leq i<j\leq r}(t_i-t_j)\right)
 \prod_{j=1}^r \frac{\Psi(t_j,z)^{(p-1)/2}}{t_j-z_{i_j}}
 \right) ,
\eean
cf. \eqref{nu}.
These are polynomials in $t,z$ with coefficients in $\K$.  
For positive integers $\ell_1,\dots, \ell_r$  denote
\bean
\label{N p}
N_{i_1,\dots,i_r}^{\ell_1,\dots, \ell_r} 
=
 \int_{\ell_1,\dots, \ell_r} \nu_{i_1,\dots,i_r}^{(p)}\,,
\eean
cf. \eqref{Ng r}.
These are polynomials in $z$ with coefficients in $\F_p\subset\K$.  

 Notice that  
$N_{i_1,\dots,i_r}^{\ell_1,\dots,\ell_r}$  are   skew-symmetric with respect to permutations of
the indices $\ell_1,\dots,\ell_r$.

 \begin{lem}
 We have $N_{i_1,\dots,i_r}^{\ell_1,\dots,\ell_r}=0$ 
 if at least one of $\ell_1, \dots,\ell_r$
is greater than $g$. 
\qed
 
 \end{lem}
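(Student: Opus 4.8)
The plan is a degree count in the integration variables $t_1,\dots,t_r$. Since $p$ is odd, $(p-1)/2\ge 1$, and because $(t_j-z_{i_j})$ divides $\Psi(t_j,z)=\prod_{a=1}^{2g+1}(t_j-z_a)$, each factor $\Psi(t_j,z)^{(p-1)/2}/(t_j-z_{i_j})$ is a genuine polynomial in $t_j$, of degree $(2g+1)(p-1)/2-1$. Hence $\nu^{(p)}_{i_1,\dots,i_r}\in\K[t_1,\dots,t_r,z_1,\dots,z_{2g+1}]$ and $N^{\ell_1,\dots,\ell_r}_{i_1,\dots,i_r}$ is the honest coefficient of $t_1^{p\ell_1-1}\cdots t_r^{p\ell_r-1}$ in it.

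First I would bound $\deg_{t_j}\nu^{(p)}_{i_1,\dots,i_r}$. In the expression under $\on{Ant}_{t_1,\dots,t_r}$ the Vandermonde $\prod_{1\le i<j\le r}(t_i-t_j)$ has degree $r-1$ in each $t_j$, the factor $\Psi(t_j,z)^{(p-1)/2}/(t_j-z_{i_j})$ has degree $(2g+1)(p-1)/2-1$ in $t_j$, and no other factor involves $t_j$; since antisymmetrization only permutes the variables, for every $j$
\begin{equation*}
\deg_{t_j}\nu^{(p)}_{i_1,\dots,i_r}\ \le\ (r-1)+\frac{(2g+1)(p-1)}{2}-1\ =\ r-2+\frac{(2g+1)(p-1)}{2}.
\end{equation*}

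Now suppose $\ell_j>g$, i.e.\ $\ell_j\ge g+1$, and recall that $1\le r\le g$ since $\Sing L^{\ox(2g+1)}[2g+1-2r]$ is nonzero only in this range. The monomial $t_j^{p\ell_j-1}$ that must be extracted has $t_j$-degree at least $p(g+1)-1$, and clearing denominators by a factor of $2$ one computes
\begin{equation*}
\big(p(g+1)-1\big)-\Big(r-2+\tfrac{(2g+1)(p-1)}{2}\Big)\ =\ \frac{p-2(r-g)+3}{2}\ \ge\ \frac{p+3}{2}\ >\ 0,
\end{equation*}
using $r\le g$. Hence $p\ell_j-1>\deg_{t_j}\nu^{(p)}_{i_1,\dots,i_r}$, so the coefficient of $t_1^{p\ell_1-1}\cdots t_r^{p\ell_r-1}$ in $\nu^{(p)}_{i_1,\dots,i_r}$ is zero, i.e.\ $N^{\ell_1,\dots,\ell_r}_{i_1,\dots,i_r}=0$. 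I expect no real obstacle here; the only points needing care are verifying that the rational factor is a polynomial (which is where oddness of $p$ enters) and the bookkeeping of the degrees.
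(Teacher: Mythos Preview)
Your degree-count argument is correct and is exactly the reasoning the paper has in mind; the lemma is marked \qed\ without proof because this bound is immediate. The bookkeeping checks out: $\deg_{t_j}\nu^{(p)}_{i_1,\dots,i_r}\le r-2+(2g+1)(p-1)/2$, and for $\ell_j\ge g+1$ with $r\le g$ this falls short of $p\ell_j-1$ by $(p+2g-2r+3)/2\ge(p+3)/2>0$.
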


Recall the standard basis 
$w_{\{i_1,\dots,i_r\}}$, $ 1\leq i_1<\dots<i_r\leq 2g+1$, of $L^{\ox(2g+1)}[2g+1-2r]$.

\begin{thm}
[{\cite[Theorem 2.4]{SV2}}]
\label{thm KZ p}

For any positive integers $\ell_1,\dots,\ell_r$, the vector 
\bea
N^{\ell_1,\dots,\ell_r}(z) =    \sum_{1\leq i_1<\dots<i_r\leq 2g+1} N^{\ell_1,\dots,\ell_r}_{i_1,\dots,i_r} (z)
\, w_{\{i_1,\dots,i_r\}}
\eea
is a solution in characteristic $p$ of the \KZ/ equations 
\eqref{KZ +2} with $\ka=2$ and values  in 
\\
$\Sing L^{\ox(2g+1)}[2g+1-2r]$.

\end{thm}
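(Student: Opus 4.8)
The plan is to run the Schechtman--Varchenko construction that produces the complex hypergeometric solutions of Theorem \ref{thm s}, now reading every identity modulo $p$. The multivalued factor $\Phi(t,z)^{1/2}$ gets replaced by the genuine polynomial $\prod_{1\le i<j\le r}(t_i-t_j)\prod_{j=1}^{r}\Psi(t_j,z)^{(p-1)/2}$ --- this is legitimate because $-\tfrac12\equiv\tfrac{p-1}{2}\pmod p$ for odd $p$, so that $\nu^{(p)}_{i_1,\dots,i_r}$ of \eqref{nu p} is precisely the reduction mod $p$ of the coefficient form of $\nu_{i_1,\dots,i_r}$ --- and the vanishing statement ``$\int_{\gamma(z)}d(\Phi^{1/2}\eta)=0$ because $\gamma$ is a cycle'' gets replaced by the identity $\int_{\ell_1,\dots,\ell_r}\partial_{t_i}P=0$, valid for every polynomial $P$ over $\K$. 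I would prove the two assertions --- that $N^{\ell_1,\dots,\ell_r}(z)$ lies in $\Sing L^{\ox(2g+1)}[2g+1-2r]$, and that it solves \eqref{KZ +2} --- in turn, in each case massaging the relevant expression into a sum $\sum_i\partial_{t_i}(\text{polynomial})$ and concluding that its $p$-integral vanishes.

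For the membership in $\Sing$, I would use that $\sum_{|J|=r}c_Jw_J$ is killed by $e$ if and only if $\sum_{j\notin K}c_{K\cup\{j\}}=0$ for every $(r-1)$-element subset $K=\{i_1,\dots,i_{r-1}\}$. Since $\nu^{(p)}_{i_1,\dots,i_r}$ is skew-symmetric in its lower indices, the terms with $j\in K$ drop out, so the quantity in question equals $\int_{\ell_1,\dots,\ell_r}\sum_{j=1}^{2g+1}\nu^{(p)}_{i_1,\dots,i_{r-1},j}$. Here $\sum_{j=1}^{2g+1}\Psi(t_r,z)^{(p-1)/2}/(t_r-z_j)=\Psi(t_r,z)^{(p-3)/2}\,\partial_{t_r}\Psi(t_r,z)=\tfrac{2}{p-1}\,\partial_{t_r}\!\bigl(\Psi(t_r,z)^{(p-1)/2}\bigr)$ with $\tfrac{2}{p-1}\equiv-2\pmod p$; substituting this and shifting the derivative off the Vandermonde factor by the Leibniz rule --- exactly the manipulation used over $\C$ --- rewrites $\sum_{j}\nu^{(p)}_{i_1,\dots,i_{r-1},j}$ as $\sum_i\partial_{t_i}(\text{polynomial})$, so its $p$-integral is $0$.

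For the \KZ/ equations, I would apply $\nabla^{\on{KZ},2}_i$ to $N^{\ell_1,\dots,\ell_r}(z)$. Since $\partial_{z_i}$ commutes with extracting the coefficient of $t_1^{p\ell_1-1}\cdots t_r^{p\ell_r-1}$, and $\partial_{z_i}\Psi(t,z)^{(p-1)/2}=\tfrac12\,\Psi(t,z)^{(p-1)/2}/(t-z_i)$ in characteristic $p$ (a polynomial, since $(p-1)/2\ge1$), the same computation that over $\C$ converts $\nabla^{\on{KZ},2}_iN^{\gamma}$ into $\int_{\gamma}d(\Phi^{1/2}\eta_i)$ will convert $\nabla^{\on{KZ},2}_iN^{\ell_1,\dots,\ell_r}$ into $\int_{\ell_1,\dots,\ell_r}\sum_j\partial_{t_j}(\text{polynomial})=0$. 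The one thing to check is that every rational coefficient occurring in those complex identities --- all built out of $1/\ka=1/2$ and halves --- has denominator prime to $p$, which holds automatically for odd $p$, so the mod-$p$ reduction is valid.

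The hard part will be purely combinatorial: tracking the antisymmetrization alongside the correction terms that appear whenever $\partial_{t_i}$ (equivalently, over $\C$, the logarithmic derivative $d\Phi/\Phi$) strikes the Vandermonde $\prod_{i<j}(t_i-t_j)$, and checking that these corrections genuinely reassemble into a sum of $t$-total derivatives with no denominator divisible by $p$. This is where the argument hews most closely to \cite{SV2}, and where essentially all of the (routine) bookkeeping resides.
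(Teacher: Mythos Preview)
Your proposal is correct and follows exactly the strategy of the cited reference \cite{SV2}: reduce the complex Schechtman--Varchenko identities modulo $p$, replacing the multivalued master function by its polynomial avatar $\prod(t_i-t_j)\prod\Psi(t_j,z)^{(p-1)/2}$ and replacing ``integral of an exact form over a cycle vanishes'' by $\int_{\ell_1,\dots,\ell_r}\partial_{t_i}P=0$. The present paper does not reprove this theorem --- it simply quotes it from \cite[Theorem~2.4]{SV2} --- so there is nothing further to compare.
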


\begin{thm}
[{\cite[Theorem 3.5]{V6}}]
\label{thm lin ind}
The solutions 
$N^{\ell_1,\dots, \ell_r}(z)$, 
$1\leq \ell_1<\dots<\ell_r\leq g$,
are linearly independent over the field  $\K(z)$ if $p>2g-2r+3$.

\end{thm}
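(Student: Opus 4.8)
The statement asserts linear independence over $\K(z)$ of the $p$-hypergeometric solutions $N^{\ell_1,\dots,\ell_r}(z)$ for $1\le \ell_1<\dots<\ell_r\le g$, under the hypothesis $p>2g-2r+3$. The natural approach is to reduce to the case $r=1$ — where linear independence of $N^{\ell_1}(z),\dots,N^{g}(z)$ should already be known — and then propagate it to the wedge powers via a determinant/leading-term argument. First I would recall that the solutions $N^{\ell_1,\dots,\ell_r}(z)$ are, up to reordering, the coefficients of $\nu^{(p)}_{i_1,\dots,i_r}$, which by \eqref{nu p} is an antisymmetrization of a product $\prod_j \Psi(t_j,z)^{(p-1)/2}/(t_j-z_{i_j})$ times the Vandermonde $\prod_{i<j}(t_i-t_j)$. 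The key structural observation is that $\int_{\ell_1,\dots,\ell_r}$ applied to such an antisymmetrized product factors, after expanding the Vandermonde, into a sum of products of one-variable $p$-integrals; concretely one expects a Cauchy–Binet / exterior-algebra identity expressing $N^{\ell_1,\dots,\ell_r}_{i_1,\dots,i_r}(z)$ as (a signed sum over permutations of) products of the matrix entries of the $r=1$ solutions $N^{\ell}_i(z)$ together with the matrix $T^{(k)}(z)$ governing the cohomological relations \eqref{coh ij}. In other words, $N^{\ell_1,\dots,\ell_r}(z)$ should be, up to a nonzero scalar, the image under the Satake map $T(z)$ of the wedge $N^{\ell_1}(z)\wedge\dots\wedge N^{\ell_r}(z)$ — but that precise statement is Theorem~\ref{thm p-map}, which appears only later, so for a self-contained argument I would instead argue directly.

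**Main steps.** Step 1: establish the one-variable case, namely that $N^{1}(z),\dots,N^{g}(z)$ are linearly independent over $\K(z)$ for $p$ in the stated range; this follows from the characteristic-$p$ analog of the determinant formula of \cite{V1} (cf.\ Theorem~\ref{thm r=1 all sol}) together with the non-vanishing of the relevant determinant modulo $p$, which is where the bound on $p$ enters — the reduction modulo $p$ of the period determinant has a denominator whose prime factors are bounded by roughly $2g-2r+3$. Step 2: suppose a nontrivial $\K(z)$-linear relation $\sum c_{\ell_1,\dots,\ell_r}(z) N^{\ell_1,\dots,\ell_r}(z)=0$ holds; clearing denominators, assume the $c_{\ell_1,\dots,\ell_r}$ are polynomials with no common factor. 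Step 3: extract from this relation, by specializing or by comparing coefficients of suitable monomials in the auxiliary variables, a nontrivial relation among the $r$-fold wedges $N^{\ell_1}(z)\wedge\dots\wedge N^{\ell_r}(z)$ inside $\wedge^r(\text{the } 2g\text{-dimensional solution space})$; since the $N^{\ell}(z)$ are independent by Step 1, their $\binom{g}{r}$ wedges are independent in the exterior power, a contradiction. The cleanest way to implement Step 3 is to use the explicit polynomial formula for the Satake map $T(z)$ from \eqref{T(z)} together with the fact (Corollary~\ref{cor TP}, Theorem~\ref{cor T(z)}) that $T(z)$ restricted to the primitive part is an isomorphism over $\C$ and — crucially — reduces faithfully mod $p$ for $p$ in range; then $T(z)\big(N^{\ell_1}(z)\wedge\dots\wedge N^{\ell_r}(z)\big)$ being a nonzero multiple of $N^{\ell_1,\dots,\ell_r}(z)$ transports the relation back to the wedge space where independence is known.

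**The obstacle.** The hard part is controlling the primes that can appear in denominators. The matrices $A_k(z)=Z+\tfrac{1}{2k-2g-1}KZ$ in \eqref{Ak} have $2k-2g-1$ in the denominator, so the cohomological relations \eqref{coh ij}, and hence the polynomial entries of $T(z)$ and the formula \eqref{tn11}, involve denominators that are products of odd integers of absolute value at most $2g-1$; combined with the reduction of the one-variable period determinant of \cite{V1}, which contributes factors up to about $2g-2r+3$, one must check that $p>2g-2r+3$ indeed suffices to keep every relevant object invertible and every reduction faithful. The bookkeeping of exactly which $k$-range of matrices $A_k$ enters (only $k=0,\dots,r-1$ appears in \eqref{T(z)}, since $\sigma(j)-1\le r-1$) is what pins the bound down to $2g-2r+3$ rather than something weaker. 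I expect the rest — the Cauchy–Binet expansion and the exterior-algebra independence — to be routine once this denominator analysis is in place; indeed the cited source \cite[Theorem~3.5]{V6} presumably carries out precisely this estimate, so in the write-up I would quote that result and merely indicate the reduction to the $r=1$ case and the wedge-power argument.
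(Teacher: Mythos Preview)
The paper does not prove this statement; it is quoted from \cite[Theorem~3.5]{V6} and used as a black box. So there is no internal proof to compare against.

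Your proposed route, however, cannot recover the stated bound $p>2g-2r+3$. The reduction-to-$r=1$ step already requires the $r=1$ case of the theorem, whose bound is $p>2g+1$; no subsequent wedge or Satake argument can relax this. Observe that the bound $p>2g-2r+3$ becomes \emph{weaker} as $r$ increases --- for $r=g$ it reads $p>3$ --- and this behaviour is incompatible with any argument that bootstraps from $r=1$. The sharp bound must therefore come from a direct $r$-variable mechanism (a determinant or leading-coefficient computation for the $r$-dimensional $p$-integrals themselves), which is presumably what \cite{V6} carries out.

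Your denominator analysis is also off. The matrices $A_k$ for $k=1,\dots,r-1$ carry denominators $2g-1,\,2g-3,\,\dots,\,2g-2r+3$, so reducing $T(z)$ modulo $p$ requires $p\nmid\prod_{i=1}^{r-1}(2g+1-2i)$ (cf.\ the hypothesis of Theorem~\ref{thm p-map}); the largest such factor is $2g-1$, not $2g-2r+3$, so the condition $p>2g-2r+3$ alone does not make $T(z)$ available. Moreover, the paper only asserts that $T(z)\vert_{\mc P_r^{\on{KZ}}(z)}$ reduces to an isomorphism modulo $p$ ``for all prime $p$ with finitely many exceptions'' (Section~\ref{sec 5.6}), with no explicit list, so your Step~3 would need additional work even to yield a weakened conclusion.
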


For $r=1$, this theorem is proved in \cite[Theorem 3.1]{V4}.

The solutions  $N^{\ell_1,\dots, \ell_r}(z)$, $1\leq \ell_1<\dots<\ell_r\leq g$, are called the $p$-hypergeometric solutions.

\subsection{$p$-hypergeometric solutions for $\ka =2$ and $r=1$}  

In that case, previous formulas take the following form.
For $i_1=1,\dots, 2g+1$ and $\ell_1=1,\dots,g$,  we have
\bean
\label{nu r=1 p}
\nu_{ i_1}^{(p)}(t_1)
=
\frac{\Psi(t_1, z)^{(p-1)/2}}{t_1-z_{i_1}}\, ,
\qquad
N^{\ell_1}_{i_1} = \int_{\ell_1} \nu_{ i_1}^{(p)}(t_1)\,.
\eean
By Theorem \ref{thm KZ p},  the  $L^{\otimes n}[2g-1]$-valued
function
\bean
\label{int 2 r=1 p}
N^{\ell_1}(z) =  \sum_{i_1=1}^{2g+1} N_{ i_1}^{\ell_1}(z)\, w_{ \{i_1\}}\,
\eean
 takes values in
 $ \Sing L^{\otimes (2g+1)}[2g-1]$
and satisfies the \KZ/ equations \eqref{KZ +2} with $\ka=2$.

\subsection{$p$-hypergeometric solutions for wedge-power and $\ka=2$}

The \KZ/ equations on $L^{\ox(2g+1)}[2g-1]$ with $\ka=2$ induce a system of differential equations
with values in $\wedge^r L^{\ox(2g+1)}[2g-1]$ with the invariant subspace
\bean
\label{subs}
\wedge^r \Sing L^{\ox(2g+1)}[2g-1] \subset \wedge^r L^{\ox(2g+1)}[2g-1].
\eean
The $p$-hypergeometric solutions  look as follows.  
  For $1\leq i_1,\dots ,i_r \leq 2g+1$, define the polynomials
\bean
\label{mu p}
\mu_{i_1,\dots,i_r} ^{(p)}
&=&
 \on{Ant}_{t_1,\dots,t_r}
 \left(\prod_{j=1}^r \frac{\Psi(t_j,z)^{(p-1)/2}}{t_j-z_{i_j}}
 \right) \,,
 \eean
cf. \eqref{mu}. For positive integers $\ell_1,\dots, \ell_r$  denote
\bea
M_{i_1,\dots,i_r}^{\ell_1,\dots, \ell_r} 
=
 \int_{\ell_1,\dots, \ell_r} \mu_{i_1,\dots,i_r}^{(p)}\,,
\eea
cf. \eqref{Ng r}.
These are polynomials in $z$ with coefficients in $\K$.  
\vsk.2>

Notice that $\mu_{i_1,\dots,i_r}^{(p)}$ 
is skew-symmetric with respect to permutations of
the indices $i_1,\dots,i_r$.
Hence $M_{i_1,\dots,i_r}^{\ell_1,\dots,\ell_r}$ 
is skew-symmetric with respect to permutations of $i_1,\dots,i_r$.
Notice also that  $M_{i_1,\dots,i_r}^{\ell_1,\dots,\ell_r}$ 
is    skew-symmetric with respect to permutations of
the indices $\ell_1,\dots,\ell_r$.

Recall the basis
$w^{\{i_1,\dots,i_r\}}$, $1\leq i_1<\dots < i_r \leq 2g+1$, in $\wedge^r L^{\ox(2g+1)}[2g-1]$.
By Theorem \ref{thm KZ p},  for any positive integers $\ell_1,\dots,\ell_r$, the vector 
\bean
\label{sM p}
M^{\ell_1,\dots,\ell_r}(z) =    \sum_{1\leq i_1,<\dots <i_r\leq 2g+1} M^{\ell_1,\dots,\ell_r}_{i_1,\dots, i_r} (z)\, 
w^{\{i_1,\dots, i_r\}}
\eean
is a solution of the \KZ/ equations in characteristic $p$ with $\ka=2$ and values  in
\\
 $\wedge^r \Sing L^{\ox(2g+1)}[2g-1]$.

\subsection{Comparison of solutions in characteristic $p$ for $\ka=2$}

The product $\prod_{j=1}^r T^{(\si(j)-1)}_{i_j,a_j}(z)$ in formula \eqref{tn1} can be reduced modulo $p$ if
 $p$ does not divide  $\prod_{i=1}^{r-1}(2g+1-2i)$, see the denominator in formula \eqref{Ak}.

\begin{thm}
\label{thm p-map}
Assume that  $p$ does not divide  $\prod_{i=1}^{r-1}(2g+1-2i)$. Then for 
 any positive integers $\ell_1,\dots,\ell_r$, we have
\bean
\label{tn2 r}
N^{\ell_1,\dots, \ell_r}_{i_1,\dots,i_r} (z)
=
(-1)^{\frac{n(n-1)}2}
 \sum_{\si\in S_r}(-1)^{|\si|}
\!\!
\sum_{a_1,\dots,a_r=1}^{2g+1}
\prod_{j=1}^r T^{(\si(j)-1)}_{i_j,a_j}(z) \, M^{\ell_1,\dots, \ell_r} _{a_1,\dots,a_r} (z). 
 \eean

\end{thm}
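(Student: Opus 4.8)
The plan is to run the complex derivation of \eqref{tn11} verbatim in characteristic $p$, with the de Rham cohomology of the hyperelliptic curve $C(z)$ replaced by the quotient $\K[t_1,z]/\der_{t_1}\K[t_1,z]$ and integration of forms over cycles replaced by the $p$-integral $\int_{\ell_1,\dots,\ell_r}$, which kills $t$-partial derivatives. Set $\eta^{(k),(p)}_i(t_1,z):=\Psi(t_1,z)^{(p-1)/2}t_1^k/(t_1-z_i)$; since $p$ is odd this is the genuine polynomial $(t_1-z_i)^{(p-3)/2}\prod_{a\ne i}(t_1-z_a)^{(p-1)/2}t_1^k$, and $\mu^{(p)}_{a_1,\dots,a_r}=\on{Ant}_t\bigl(\prod_j\eta^{(0),(p)}_{a_j}(t_j)\bigr)$, so $M^{\ell_1,\dots,\ell_r}_{a_1,\dots,a_r}(z)=\int_{\ell_1,\dots,\ell_r}\on{Ant}_t\bigl(\prod_j\eta^{(0),(p)}_{a_j}(t_j)\bigr)$.

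First I would establish the characteristic-$p$ analog of Theorem \ref{thm coh}: if $2m-2g-1\not\equiv 0\pmod p$ for $m=1,\dots,k$, then
\[
\eta^{(k),(p)}_i(t_1,z)\ \equiv\ \sum_{a=1}^{2g+1}T^{(k)}_{ia}(z)\,\eta^{(0),(p)}_a(t_1,z)\pmod{\der_{t_1}\K[t_1,z]},
\]
where $T^{(k)}_{ia}(z)$ is the reduction mod $p$ of the polynomial of Theorem \ref{thm coh}, well defined under this condition since the entries of $A_m$ in \eqref{Ak} lie in $\Z[\tfrac1{2m-2g-1}][z]$. The proof would copy \eqref{331}--\eqref{333}: using $\tfrac{p-1}2\equiv-\tfrac12$ and the polynomial identity $\Psi^{(p-3)/2}\der_{t_1}\Psi=\sum_a\Psi^{(p-1)/2}/(t_1-z_a)$ one finds $\der_{t_1}\bigl(\Psi^{(p-1)/2}t_1^k\bigr)=\tfrac{2k-2g-1}{2}\Psi^{(p-1)/2}t_1^{k-1}-\tfrac12\sum_a z_a\eta^{(k-1),(p)}_a$, hence $\Psi^{(p-1)/2}t_1^{k-1}\equiv\tfrac1{2k-2g-1}\sum_a z_a\eta^{(k-1),(p)}_a$ modulo $\der_{t_1}\K[t_1,z]$; combined with $\eta^{(k),(p)}_i=\Psi^{(p-1)/2}t_1^{k-1}+z_i\eta^{(k-1),(p)}_i$ this gives $\eta^{(k),(p)}_i\equiv\sum_c(A_k)_{ic}\eta^{(k-1),(p)}_c$, and iterating down to $k=0$ yields the claim with $T^{(k)}=A_kA_{k-1}\cdots A_1$. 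Since in \eqref{tn2 r} all exponents are $\le r-1$, the running denominators are $2m-2g-1$, $1\le m\le r-1$, and the hypothesis $p\nmid\prod_{i=1}^{r-1}(2g+1-2i)=\pm\prod_{m=1}^{r-1}(2m-2g-1)$ is exactly what makes all the needed $T^{(k)}$ reducible mod $p$.

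Next I would expand $\nu^{(p)}_{i_1,\dots,i_r}$ of \eqref{nu p}: writing the Vandermonde $\prod_{i<j}(t_i-t_j)$ as a signed sum over $S_r$ of monomials $\prod_j t_j^{\sigma(j)-1}$ and using $\on{Ant}_t(V\,H)=V\on{Sym}_t(H)$ for antisymmetric $V$ — the same manipulation as in \eqref{nu}--\eqref{nu n} with the $dt_j$ deleted and $^{-1/2}$ replaced by $^{(p-1)/2}$ — gives $\nu^{(p)}_{i_1,\dots,i_r}=(-1)^{\frac{n(n-1)}2}\sum_{\sigma}(-1)^{|\sigma|}\on{Ant}_t\bigl(\prod_j\eta^{(\sigma(j)-1),(p)}_{i_j}(t_j)\bigr)$, the sign being the same combinatorial factor as in \eqref{nu n}. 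Applying Step 1 to each factor (the other factors being polynomials in the remaining variables, so each substitution changes the product by an element of $\sum_j\der_{t_j}\K[t_1,\dots,t_r,z]$) reduces all exponents to $0$:
\[
\prod_{j=1}^r\eta^{(\sigma(j)-1),(p)}_{i_j}(t_j)\ \equiv\ \sum_{a_1,\dots,a_r}\Bigl(\prod_{j=1}^rT^{(\sigma(j)-1)}_{i_j,a_j}(z)\Bigr)\prod_{j=1}^r\eta^{(0),(p)}_{a_j}(t_j)\pmod{\textstyle\sum_j\der_{t_j}\K[t,z]}.
\]
Since $\on{Ant}_t$ preserves $\sum_j\der_{t_j}\K[t,z]$ (because $\on{Ant}_t(\der_{t_1}Q)=\sum_m\der_{t_m}\bigl(\sum_{\pi:\pi(1)=m}(-1)^{|\pi|}Q(t_\pi)\bigr)$) and $\int_{\ell_1,\dots,\ell_r}$ annihilates it, applying $\on{Ant}_t$ and then $\int_{\ell_1,\dots,\ell_r}$ term by term in $\sigma$, and using $N^{\ell_1,\dots,\ell_r}_{i_1,\dots,i_r}(z)=\int_{\ell_1,\dots,\ell_r}\nu^{(p)}_{i_1,\dots,i_r}$ together with the formula for $M^{\ell_1,\dots,\ell_r}_{a_1,\dots,a_r}(z)$ above, yields exactly \eqref{tn2 r}.

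The main obstacle will be Step 1: the replacement of the curve's cohomology by $\K[t_1,z]/\der_{t_1}\K[t_1,z]$. One must verify that $\Psi^{(p-1)/2}t_1^k/(t_1-z_i)$ really is a polynomial, that the computation \eqref{331}--\eqref{333} carries over verbatim with $d_{t_1}$ replaced by $\der_{t_1}$ and ``exact form'' by ``a $t_1$-derivative of a polynomial'', and — crucially — that the denominators $2m-2g-1$ produced along the way are units in $\K$, which is precisely the stated hypothesis on $p$. Everything else (the Vandermonde combinatorics and the precise overall sign in Step 2, and the bookkeeping that error terms remain in $\sum_j\der_{t_j}\K[t,z]$ through multiplication by $t$-independent coefficients and antisymmetrization) is routine and formally identical to the complex case of Section \ref{sec 5}.
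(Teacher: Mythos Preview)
Your proposal is correct and follows exactly the paper's approach: the paper's proof is the single line ``The proof follows from formula \eqref{TTT}'', and what you have written is precisely the unpacking of that line in characteristic $p$, replacing the de Rham relation \eqref{333} by its polynomial analog modulo $\partial_{t_1}\K[t_1,z]$ and integration over cycles by the $p$-integral. Your identification of the hypothesis on $p$ with invertibility of the denominators $2m-2g-1$ for $1\le m\le r-1$, and your bookkeeping that $\on{Ant}_t$ preserves $\sum_j\partial_{t_j}\K[t,z]$, are exactly the points one has to check; there is nothing more to the argument.
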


\begin{proof}
The proof follows from formula \eqref{TTT}.
\end{proof}

Theorem \ref{thm p-map} expresses  the $r$-dimensional $p$-hypergeometric solutions 
$N^{\ell_1,\dots, \ell_r}(z)$ in terms of $r$-fold wedge-products 
$M^{\ell_1,\dots, \ell_r} _{a_1,\dots,a_r} (z)$ of one-dimensional 
$p$-hypergeometric solutions.

\vsk.2>

For example, for $r=2$, we have
\bean
\label{tn r=2}
N^{\ell_1, \ell_2}_{i_1,i_2} 
&:=&
 (z_{i_1}-z_{i_2}) M^{\ell_1, \ell_2}_{i_1,i_2} (z)
 +\frac 1{1-2g}\sum_{a=1}^{2g+1} z_a(M^{\ell_1, \ell_2}_{a,i_1} (z)
 + M^{\ell_1, \ell_2}_{a,i_2} (z)).
 \eean

\subsection{Reduction of $T(z)$ modulo $p$}
\label{sec 5.6}

The map 
 \bean
 \label{T restr p}
 T(z) \vert_{ \mc P_r^{\on{KZ}}(z)}  \ : \  \mc P_r^{\on{KZ}}(z)  \to 
 \Sing L^{\ox(2g+1)}[2g+1-2r]
 \eean
 defines an isomorphism over the field $\C$ between the \KZ/ equations  with $\ka=2$ and values in
$\mc P_r^{\on{KZ}}(z)$ and  the KZ equations  with $\ka=2$ and values in
$\Sing L^{\ox(2g+1)}[2g+1-2r]$. 
This map can be reduced modulo $p$, if $p$ does not divide 
$\prod_{i=1}^{r-1}(2g+1-2i)$.
It is clear that the reduction modulo $p$ remains an isomorphism for all prime $p$ with finitely many exceptions.

\begin{cor}
\label{cor main T}  

For all prime $p$ with finitely many exceptions, all isomorphism-type characteristics of the \KZ/
 differential equations on $\mc P_r^{\on{KZ}}(z)$
and $ \Sing L^{\ox(2g+1)}[2g+1-2r]$ are identified by $ T(z)\vert_{ \mc P_r^{\on{KZ}}(z)}$. 

\end{cor}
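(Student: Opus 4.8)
The plan is to upgrade the complex-analytic isomorphism of Corollary \ref{cor TP} to an isomorphism of differential modules in characteristic $p$ by a spreading-out argument, and then to observe that any such isomorphism transports every quantity that depends only on the isomorphism class of the connection. In fact the corollary is essentially a formal consequence of the last two paragraphs of Section \ref{sec 5.6}: once one knows that $T(z)\vert_{\mc P_r^{\on{KZ}}(z)}$ reduces modulo $p$ to an isomorphism for all but finitely many $p$, each invariant referenced after the statement (dimension of the solution space, $p$-curvature, \dots) agrees on the two sides. So the real task is to make the spreading-out precise.

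First I would record that all the data are defined over the ring $R:=\Z[1/(2N)]$, where $N:=\prod_{i=1}^{r-1}(2g+1-2i)$. Indeed, the \KZ/ connection $\nabla^{\on{KZ},2}$ and its $r$-th wedge power on $\wedge^r\Sing L^{\ox(2g+1)}[2g-1]$ have connection matrices with entries in $R[z][\prod_{i<j}(z_i-z_j)^{-1}]$ (here $2$ must be invertible since $\ka=2$); the matrix of $T(z)$ has entries that are homogeneous polynomials in $z$ of degree $\binom r2$ with coefficients in $R$, by \eqref{Ak}, \eqref{Tk} and \eqref{T(z)}; and the Poincar\'e element $\mc D(z)$ of \eqref{J(z)}, hence the splitting \eqref{dec KZ} and the subbundle $\mc P_r^{\on{KZ}}(z)=\mc I(\mc P_r(C(z)))$, is defined over $R[z][\prod_{i<j}(z_i-z_j)^{-1}]$ as well. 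Consequently, for every prime $p\nmid 2N$ one obtains a morphism $\overline{T(z)}$ of the reduced \KZ/ connections over a nonempty open subscheme of $\mathbb A^{2g+1}_{\F_p}$, together with the reduced subbundle $\overline{\mc P_r^{\on{KZ}}(z)}$.

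Next I would show that $\overline{T(z)}\vert_{\overline{\mc P_r^{\on{KZ}}(z)}}$ remains an isomorphism for all but finitely many $p$. Being an isomorphism is equivalent to the non-vanishing of a single determinant: choose an $R$-rational frame of $\mc P_r^{\on{KZ}}(z)$ (for instance the image under $\mc I$ of an $R$-basis of the primitive sublattice of $\wedge^r H_1$) together with a frame of $\Sing L^{\ox(2g+1)}[2g+1-2r]$, and let $d(z)\in R[z][\prod_{i<j}(z_i-z_j)^{-1}]$ be the determinant of the resulting square matrix for $T(z)\vert_{\mc P_r^{\on{KZ}}(z)}$. By Corollary \ref{cor TP} it is nonzero over $\C$, hence nonzero over $\Q$, so only finitely many primes divide the numerator of $d(z)$; for all remaining $p$ the reduction $\bar d(z)$ is a nonzero rational function, and $\overline{T(z)}\vert_{\overline{\mc P_r^{\on{KZ}}(z)}}$ is an isomorphism of differential modules over $\F_p(z)$ (equivalently, over a nonempty open subscheme of $\mathbb A^{2g+1}_{\F_p}$). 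An isomorphism of differential modules intertwines the $p$-curvature operators, hence identifies their kernels, ranks and spectra, and by Cartier descent \cite[Theorem 5.1]{K1} the dimensions of the solution spaces in characteristic $p$; this is exactly the assertion.

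The step I expect to require the most care is the reduction of the subbundle $\mc P_r^{\on{KZ}}(z)$ itself: one must check that the Poincar\'e element $\mc D(z)$ and the direct-sum decomposition \eqref{dec KZ} genuinely descend to $R$ and remain a direct sum modulo $p$. Since \eqref{dec KZ} is the $\mc I$-image of the symplectic (Lefschetz-type) decomposition of $\wedge^r H_1$, this reduces to the non-vanishing modulo $p$ of the Gram-type determinants controlling that decomposition, which fails for only finitely many $p$; hence it merely enlarges the finite exceptional set and poses no genuine obstruction.
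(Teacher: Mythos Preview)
Your proposal is correct and follows exactly the route the paper takes: the paper's justification is the short paragraph preceding the corollary, which simply notes that $T(z)$ has polynomial entries with rational coefficients (denominators dividing $\prod_{i=1}^{r-1}(2g+1-2i)$), so it reduces modulo $p$, and that ``it is clear that the reduction modulo $p$ remains an isomorphism for all prime $p$ with finitely many exceptions.'' Your write-up spells out the spreading-out and determinant argument that the paper compresses into that one sentence, and your closing caveat about reducing the Lefschetz-type decomposition \eqref{dec KZ} is a detail the paper leaves implicit.
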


For example, the  solution space of these two systems of differential equations have the same dimension, also,
the collections of the $p$-curvature operators of these systems of differential equations are isomorphic under
$ T(z)\vert_{ \mc P_r^{\on{KZ}}(z) }$, see Section \ref{sec 8} below. Notice that
in general,  it is easier to study the \KZ/
equations on  $\mc P_r^{\on{KZ}}(z)$ due to available explicit formulas.

\section{$p$-hypergeometric solutions for $\ka=-2$}
\label{sec 7}

\subsection{$p$-hypergeometric solutions in $\Sing L^{\ox(2g+1)}[2g+1-2r]$ for $\ka=-2$}

For $n=2g+1$ and $\ka=-2$, the \KZ/ equations are
\bean
\label{KZ -2}
\qquad
\left(\der_m \,+\, \frac 1 2\,\sum_{j\ne m}\frac{ P^{(m,j)}-1}{z_m-z_j}\right) 
I(z_1,\dots,z_{2g+1}) =0, 
\qquad  m = 1, \dots , 2g+1.
\eean
Let $p$ be an odd prime and $\K$ a field of characteristic $p$. 
In this section we remind the construction of $p$-hypergeometric solutions of the \KZ/ equations 
 over the field $\K$.

Recall $\Psi(x,z)=\prod_{a=1}^{2g+1}(x-z_a)$. 
For $1\leq i_1,\dots ,i_r \leq 2g+1$, define 
\bean
\label{bar nu p}
\bar\nu_{i_1,\dots,i_r} ^{(p)} 
&=&
 \on{Sym}_{t_1,\dots,t_r}
 \!
  \left(
\left(\prod_{1\leq i<j\leq r}(t_i-t_j)^{p-1}\right)
 \prod_{j=1}^r \frac{\Psi(t_j,z)^{(p+1)/2}}{t_j-z_{i_j}}
 \right) ,
\eean
cf. \eqref{nu p}.
These are polynomials in $t,z$ with coefficients in $\K$.  
For positive integers $\ell_1,\dots, \ell_r$  denote
\bea
\bar N_{i_1,\dots,i_r}^{\ell_1,\dots, \ell_r} 
=
 \int_{\ell_1,\dots, \ell_r} \bar \nu_{i_1,\dots,i_r}^{(p)}\,,
\eea
cf. \eqref{N p}.
These are polynomials in $z$ with coefficients in $\F_p\subset\K$.  

\begin{thm}
[{\cite[Theorem 2.4]{SV2}}]
\label{thm KZ p -2}

Let $1\leq r\leq g$. Then for any positive integers $\ell_1,\dots,\ell_r$, the vector 
\bean
\label{bar N}
\bar N^{\ell_1,\dots,\ell_r}(z) =    \sum_{1\leq i_1<\dots<i_r\leq 2g+1} \bar N^{\ell_1,\dots,\ell_r}_{i_1,\dots,i_r} (z)
\, w_{\{i_1,\dots,i_r\}}
\eean
is a solution in characteristic $p$ of the \KZ/ equations 
\eqref{KZ -2} with $\ka=-2$ and values  in 
\\
$\Sing L^{\ox(2g+1)}[2g+1-2r]$.

\end{thm}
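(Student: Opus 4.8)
I would organize the proof of this statement, due to \cite[Theorem 2.4]{SV2}, so that it reduces to the complex computation of \cite{SV1}, with the coefficient-extraction operator $\int_{\ell_1,\dots,\ell_r}$ taking the place of integration over a cycle. Membership of $\bar N^{\ell_1,\dots,\ell_r}(z)$ in the weight space $L^{\ox(2g+1)}[2g+1-2r]$ is automatic from the shape of the basis vectors $w_{\{i_1,\dots,i_r\}}$, so two things must be checked: that $e$ annihilates $\bar N^{\ell_1,\dots,\ell_r}(z)$, and that it solves \eqref{KZ -2}. The first step is to introduce the master polynomial
\[
\bar\Phi_p(t,z)\ =\ \prod_{1\le i<j\le r}(t_i-t_j)^{p-1}\ \prod_{i=1}^r\Psi(t_i,z)^{(p+1)/2},
\]
and to record two facts. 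Since $p$ is odd, $\bar\Phi_p$ is symmetric in $t_1,\dots,t_r$, whence $\bar\nu^{(p)}_{i_1,\dots,i_r}=\bar\Phi_p\cdot\Sym_t\!\big(\prod_{j=1}^r(t_j-z_{i_j})^{-1}\big)$, a polynomial because $(p+1)/2\ge1$; thus $\bar\nu^{(p)}$ is the exact analogue of the complex form $\bar\nu$ of \eqref{nu -} with the transcendental factor $\Phi(t,z)^{-1/2}$ replaced by $\bar\Phi_p$. Moreover, as rational functions with $\F_p$-coefficients,
\[
\frac{\partial_{t_j}\bar\Phi_p}{\bar\Phi_p}\ \equiv\ -\sum_{i\ne j}\frac1{t_j-t_i}+\frac12\sum_{a=1}^{2g+1}\frac1{t_j-z_a},
\qquad
\frac{\partial_{z_m}\bar\Phi_p}{\bar\Phi_p}\ \equiv\ -\frac12\sum_{i=1}^r\frac1{t_i-z_m},
\]
which are precisely the logarithmic derivatives of $\Phi(t,z)^{-1/2}$ used in \cite{SV1} for $\ka=-2$ (one uses $p-1\equiv-1$ and $(p+1)/2\equiv\tfrac12$ mod $p$).

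The second step is to transport the \cite{SV1} computation. Over $\C$, writing $\omega=\sum_{1\le i_1<\dots<i_r\le2g+1}\bar\nu_{i_1,\dots,i_r}\,w_{\{i_1,\dots,i_r\}}$, the singular-vector property and the KZ equations with $\ka=-2$ for $\omega$ are proved by producing identities $e\cdot\omega=\sum_j\partial_{t_j}(\Phi^{-1/2}G_j)$ and $\nabla^{\on{KZ},-2}_m\omega=\sum_j\partial_{t_j}(\Phi^{-1/2}H_j)$, with $G_j,H_j$ rational functions (a top differential form being identified with its coefficient function). Expanding via $\partial_{t_j}(\Phi^{-1/2}f)=\Phi^{-1/2}\big(\partial_{t_j}f+f\,\partial_{t_j}\log\Phi^{-1/2}\big)$ and cancelling the common factor $\Phi^{-1/2}$ turns each identity into one among rational functions in $t,z$ whose numerical coefficients have denominators that are units mod $p$ for $p$ odd; reducing mod $p$ and invoking the congruences above, the same relations hold verbatim with $\bar\Phi_p$ in place of $\Phi^{-1/2}$. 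Multiplying back by $\bar\Phi_p$ to clear denominators (legitimate, since $\bar\Phi_p$ times each rational function that occurs is a polynomial) yields, with $\omega^{(p)}:=\sum_{1\le i_1<\dots<i_r\le2g+1}\bar\nu^{(p)}_{i_1,\dots,i_r}\,w_{\{i_1,\dots,i_r\}}$,
\[
e\cdot\omega^{(p)}=\sum_j\partial_{t_j}(\text{polynomial}),\qquad \nabla^{\on{KZ},-2}_m\omega^{(p)}=\sum_j\partial_{t_j}(\text{polynomial})\quad(m=1,\dots,2g+1),
\]
over $\K$. Since coefficient extraction in $t$ commutes with $\partial_{z_m}$ and with the action on the $\sll_2$-factor, applying $\int_{\ell_1,\dots,\ell_r}$ to these relations and using $\int_{\ell_1,\dots,\ell_r}\partial_{t_j}P=0$ for polynomials $P$ (Section \ref{sec 6}) gives $e\cdot\bar N^{\ell_1,\dots,\ell_r}(z)=0$ and $\nabla^{\on{KZ},-2}_m\bar N^{\ell_1,\dots,\ell_r}(z)=0$ for all $m$, which is the claim.

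The conceptual content is light; the work is in the bookkeeping and in making the reduction rigorous. The main obstacle is to confirm that every numerical constant in the \cite{SV1} identities --- the factor $1/\ka=-\tfrac12$ and the purely combinatorial coefficients --- together with every denominator cleared en route, is a unit in $\F_p$; this is precisely where ``$p$ odd'' is used, and nothing sharper is needed for this basic solution property (the finer bounds, e.g.\ $p>2g-2r+3$ in Theorem \ref{thm lin ind}, belong to linear independence, not here). A subsidiary point is to reconcile conventions: $\bar\nu^{(p)}$ is built with $\on{Sym}_t$ and the \emph{symmetric} factor $\prod(t_i-t_j)^{p-1}$, whereas the complex $\bar\nu$ carries the \emph{alternating} factor $\prod(t_i-t_j)^{-1}$ inside $\Phi^{-1/2}$; the two are compatible because $(t_i-t_j)^{p-1}=(t_i-t_j)^{-1}(t_i-t_j)^p$ and $p$ is odd, so parities agree and the logarithmic-derivative congruence above is exactly the identity needed. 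As an alternative that avoids quoting \cite{SV1}, one could differentiate \eqref{bar nu p} directly, the only nontrivial input being $\partial_{z_m}\Psi(t_j,z)^{(p+1)/2}\equiv-\tfrac12\,\Psi(t_j,z)^{(p+1)/2}(t_j-z_m)^{-1}$, and then recognize the result --- after symmetrization and coefficient extraction --- as the action of $-\tfrac12\sum_{l\ne m}(P^{(m,l)}-1)/(z_m-z_l)$ up to a $\partial_{t_j}$-exact remainder; this is self-contained but lengthens the explicit calculation.
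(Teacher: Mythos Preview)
Your proposal is correct and is essentially the argument of \cite{SV2} that the paper cites; the paper itself does not give a proof of this theorem but simply attributes it to \cite[Theorem 2.4]{SV2}. Your sketch --- matching the logarithmic derivatives of $\bar\Phi_p$ with those of $\Phi^{-1/2}$ modulo $p$, transporting the rational-function identities from \cite{SV1}, and then killing $\partial_{t_j}$-exact terms via $\int_{\ell_1,\dots,\ell_r}$ --- is exactly the mechanism behind that reference, so there is nothing further to compare.
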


Notice that these solutions are symmetric with respect to permutations of the indices $\ell_1,\dots, \ell_r$.

\smallskip

For $r=1$, these objects take the following form.
For $i_1=1,\dots, 2g+1$ and any positive integer $\ell_1$,  we have
\bean
\label{nu r=1 p}
\bar \nu_{ i_1}^{(p)}(t_1)
=
\frac{\Psi(t_1, z)^{(p+1)/2}}{t_1-z_{i_1}}\, ,
\qquad
\bar N^{\ell_1}_{i_1} = \int_{\ell_1} \bar \nu_{ i_1}^{(p)}(t_1)\,.
\eean
By Theorem \ref{thm KZ p -2},  the  $L^{\otimes (2g+1)}[2g-1]$-valued
function
\bean
\label{int 2 r=1 p}
\bar N^{\ell_1}(z) =  \sum_{i_1=1}^{2g+1} \bar N_{ i_1}^{\ell_1}(z)\, w_{ \{i_1\}}\,
\eean
 takes values in
 $ \Sing L^{\otimes (2g+1)}[2g-1]$
and satisfies the \KZ/ equations \eqref{KZ +2} with $\ka=-2$
 in characteristic $p$.

\vsk.2>

For degree reason, it is easy to see that $\bar N^{\ell_1}(z) $ is the zero solution if $\ell_1>g$.

\begin{thm}
\cite[Theorem 3.2]{SlV}
\label{thm lin ind -2}
If $p> 2g+1$, then the  solutions $\bar N^{\ell_1}(z) $, $\ell_1=1,\dots,g$, are 
linearly independent over the field
$\K(z)$.

\end{thm}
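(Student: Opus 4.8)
The plan is to produce a single explicit specialization of the parameters $z=(z_1,\dots,z_{2g+1})$ at which the $g$ vectors $\bar N^{\ell_1}(z)$ become linearly independent over $\bar\F_p$. Since each coordinate $\bar N^{\ell_1}_{i_1}(z)$ is a polynomial with coefficients in $\F_p$, linear independence at one point forces some $g\times g$ minor of the $(2g+1)\times g$ matrix $\big(\bar N^{\ell_1}_{i_1}(z)\big)$ to be a nonzero polynomial, and hence the columns $\bar N^{\ell_1}(z)$, $\ell_1=1,\dots,g$, are linearly independent over $\K(z)$. So the whole problem is reduced to a computation at one well-chosen point.

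For the specialization I would take $z_a=\zeta^a$ for $a=1,\dots,2g+1$, where $\zeta$ is a primitive $(2g+1)$-th root of unity in $\bar\F_p$; such a $\zeta$ exists because $p>2g+1$ forces $p\nmid 2g+1$, and this choice makes the $z_a$ pairwise distinct. Then $\Psi(x,z)=x^{2g+1}-1$, and using $\tfrac{x^{2g+1}-1}{x-\zeta^{i_1}}=\sum_{k=0}^{2g}\zeta^{i_1 k}x^{2g-k}$ together with the binomial theorem one gets
\[
\frac{\Psi(t_1,z)^{(p+1)/2}}{t_1-z_{i_1}}=\sum_{j=0}^{(p-1)/2}\ \sum_{k=0}^{2g}\binom{(p-1)/2}{j}(-1)^{(p-1)/2-j}\,\zeta^{i_1 k}\,t_1^{(2g+1)j+2g-k}.
\]
Since $0\le k\le 2g<2g+1$, distinct pairs $(j,k)$ give distinct exponents, so extracting the coefficient $\bar N^{\ell_1}_{i_1}$ of $t_1^{p\ell_1-1}$ reduces to solving $(2g+1)(j+1)=p\ell_1+k$ with $0\le k\le 2g$ and $0\le j\le (p-1)/2$.

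Next I would check that for each $\ell_1\in\{1,\dots,g\}$ this equation has a unique admissible solution $(j_{\ell_1},k_{\ell_1})$ — namely $j_{\ell_1}=\lfloor p\ell_1/(2g+1)\rfloor$ and $k_{\ell_1}=(2g+1)-\big(p\ell_1\bmod(2g+1)\big)$ — and that the hypotheses keep it nondegenerate: the interval $[\,p\ell_1/(2g+1),\,(p\ell_1+2g)/(2g+1)\,]$ has length $<1$ and contains exactly one integer $j+1$; one has $k_{\ell_1}\in\{1,\dots,2g\}$ (here $0<\ell_1<2g+1$ gives $(2g+1)\nmid\ell_1$, which rules out $k_{\ell_1}=0$); $j_{\ell_1}\le\lfloor pg/(2g+1)\rfloor<p/2$, so $0\le j_{\ell_1}\le (p-1)/2$; the values $k_{\ell_1}\equiv -p\ell_1\pmod{2g+1}$ are distinct for distinct $\ell_1$ because $p$ is invertible modulo $2g+1$; and $\binom{(p-1)/2}{j_{\ell_1}}$, being a ratio of products of positive integers $<p$, is nonzero in $\F_p$. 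Hence at the specialization
\[
\bar N^{\ell_1}(z)\big|_{z_a=\zeta^a}=c_{\ell_1}\sum_{i_1=1}^{2g+1}\zeta^{i_1 k_{\ell_1}}\,w_{\{i_1\}},\qquad c_{\ell_1}:=(-1)^{(p-1)/2-j_{\ell_1}}\binom{(p-1)/2}{j_{\ell_1}}\neq 0,
\]
and linear independence of these $g$ vectors follows from the invertibility of the full Vandermonde matrix $\big(\zeta^{i_1 k}\big)_{1\le i_1\le 2g+1,\ 0\le k\le 2g}$ (determinant $\prod_{a<b}(\zeta^b-\zeta^a)\neq 0$), since the columns indexed by the distinct $k_{\ell_1}$ are then independent over $\bar\F_p$. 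This realizes the argument of \cite[Theorem 3.2]{SlV}.

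The main obstacle is the bookkeeping in the third step: one must pin down exactly how $p>2g+1$ prevents the extracted coefficients from degenerating — that $k_{\ell_1}$ never vanishes (which would force $\bar N^{\ell_1}=0$ and contradict $\bar N^{\ell_1}\in\Sing L^{\ox(2g+1)}[2g-1]$), that the $k_{\ell_1}$ remain pairwise distinct, and that $j_{\ell_1}$ stays in the range where $\binom{(p-1)/2}{j_{\ell_1}}\not\equiv 0\pmod p$. Once these elementary arithmetic facts are in place, everything else is a Vandermonde computation combined with the "nonzero at a point $\Rightarrow$ nonzero as a polynomial" principle, so there is no deeper difficulty.
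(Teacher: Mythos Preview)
Your argument is correct. The paper does not prove this statement itself; it simply cites \cite[Theorem~3.2]{SlV}, so there is nothing in the paper to compare against. What you have written is a clean, self-contained proof: the specialization $z_a=\zeta^a$ with $\zeta$ a primitive $(2g+1)$-th root of unity collapses $\Psi$ to $t^{2g+1}-1$, the unique solution $(j_{\ell_1},k_{\ell_1})$ of $(2g+1)(j+1)=p\ell_1+k$ with $0\le k\le 2g$ is exactly as you describe, the conditions $k_{\ell_1}\ne 0$, the $k_{\ell_1}$ pairwise distinct, and $0\le j_{\ell_1}\le (p-1)/2$ all follow from $p>2g+1$ via the arithmetic you indicate, and the Vandermonde step is immediate since a polynomial of degree $\le 2g$ cannot vanish at all $2g+1$ roots of unity without being zero. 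The passage from linear independence over $\bar\F_p$ at a single point to linear independence over $\K(z)$ is also fine, since the coordinates lie in $\F_p[z]\subset\K[z]$ and a minor nonzero at one $\bar\F_p$-point is a nonzero polynomial.
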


The solutions $\bar N^{\ell_1}(z) $, $\ell_1=1,\dots,g$, are called the $p$-hypergeometric solutions.

\begin{thm} [{Orthogonality, \cite[Corollary 1.10]{VV1}}]
\label{thm ort} 

If $p>2g+1$, then the  $p$-hypergeometric solutions 
$N^{\ell}(z)$, $\ell=1,\dots,g$, 
of the \KZ/ equations on $\Sing L^{\ox(2g+1)}[2g-1]$ with $\ka=2$
and the  $p$-hypergeometric solutions 
$\bar N^{m}(z)$, $m=1,\dots,g$, 
of the \KZ/ equations on $\Sing L^{\ox(2g+1)}[2g-1]$ with $\ka=-2$ are orthogonal
under the Shapovalov form,
\bean
\label{ort}
S\big(N^{\ell}(z), \bar N^{m}(z)\big) = 0,
\eean
for all positive integers $\ell,m$.

\end{thm}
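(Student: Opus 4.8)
The plan is to identify $S\bigl(N^{\ell}(z),\bar N^{m}(z)\bigr)$ with a single monomial coefficient of an explicit two--variable polynomial and then to extract that coefficient using the formal identity $\frac1{s-t}=\sum_{k\ge0}t^ks^{-k-1}$ together with the congruences $p\ell-k\equiv-k$ and $pm+k+1\equiv k+1\pmod p$.

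\emph{Reduction to a coefficient.} Write $f(x)=\Psi(x,z)^{(p-1)/2}$ and $g(x)=\Psi(x,z)^{(p+1)/2}$. Since $S(w_{\{i\}},w_{\{j\}})=\delta_{ij}$, and $N^{\ell}_{i_1}(z)$, $\bar N^{m}_{i_1}(z)$ are the coefficients of $t^{p\ell-1}$ in $f(t)/(t-z_{i_1})$ and of $s^{pm-1}$ in $g(s)/(s-z_{i_1})$ respectively (these are polynomials because $p\ge3$), we obtain
\[
S\bigl(N^{\ell}(z),\bar N^{m}(z)\bigr)=\sum_{i_1=1}^{2g+1}N^{\ell}_{i_1}(z)\,\bar N^{m}_{i_1}(z)=\bigl[t^{p\ell-1}s^{pm-1}\bigr]\,\Phi(t,s),\qquad \Phi(t,s):=\sum_{i_1=1}^{2g+1}\frac{f(t)}{t-z_{i_1}}\cdot\frac{g(s)}{s-z_{i_1}}.
\]

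\emph{Simplification and extraction.} Using $\sum_{i_1}\frac1{(t-z_{i_1})(s-z_{i_1})}=\frac1{s-t}\bigl(\frac{\Psi'(t,z)}{\Psi(t,z)}-\frac{\Psi'(s,z)}{\Psi(s,z)}\bigr)$ (with $\Psi'$ the derivative in the first argument) and the characteristic--$p$ identities $\Psi^{(p-3)/2}\Psi'=-2\,(\Psi^{(p-1)/2})'$ and $\Psi^{(p-1)/2}\Psi'=2\,(\Psi^{(p+1)/2})'$, one gets
\[
\Phi(t,s)=\frac{-2\bigl(f'(t)g(s)+f(t)g'(s)\bigr)}{s-t}.
\]
The numerator and $\Phi$ are polynomials, so $\Phi$ is the unique element of $\K[z][t]((s^{-1}))$ equal to this quotient, and the coefficient of any $t^as^b$ with $b\ge0$ may be read off from $\frac1{s-t}=\sum_{k\ge0}t^ks^{-k-1}$. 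Writing $f=\sum_jf_jt^j$, $g=\sum_jg_js^j$, this yields the finite sum
\[
S\bigl(N^{\ell}(z),\bar N^{m}(z)\bigr)=-2\sum_{k\ge0}\Bigl((p\ell-k)\,f_{p\ell-k}\,g_{pm+k}+(pm+k+1)\,f_{p\ell-1-k}\,g_{pm+k+1}\Bigr).
\]
Reducing the coefficients modulo $p$ kills the $k=0$ term of the first sum, and the reindexing $j=k+1$ turns the second sum into $\sum_{j\ge1}j\,f_{p\ell-j}\,g_{pm+j}$, which exactly cancels $\sum_{k\ge1}k\,f_{p\ell-k}\,g_{pm+k}$. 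Hence $S\bigl(N^{\ell}(z),\bar N^{m}(z)\bigr)=0$.

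\emph{Comments.} The vanishing is consistent with the duality \eqref{do}: since $\nabla^{\on{KZ},2}_iN^{\ell}=0$ and $\nabla^{\on{KZ},-2}_i\bar N^{m}=0$, we have $\der_i\,S\bigl(N^{\ell}(z),\bar N^{m}(z)\bigr)=0$ for all $i$, so the pairing lies in $\K[z_1^p,\dots,z_{2g+1}^p]$ and, assigning $t$ and each $z_a$ degree $1$, is homogeneous of degree $p(2g+1-\ell-m)$; this forces vanishing already when $\ell+m\ge2g+2$ (in particular when $\ell>g$ or $m>g$, where one of the solutions is itself zero), so the content lies in the remaining range, handled above. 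The only technical subtlety is that reading a coefficient of a nonnegative power of $s$ in the polynomial $\Phi$ off the Laurent expansion of $1/(s-t)$ is legitimate because that expansion is an identity in $\K[z][t]((s^{-1}))$ and $\Phi\in\K[z][t,s]$. I expect the one nontrivial observation to be the partial--fraction plus characteristic--$p$ derivative identities that put $\Phi$ into the displayed form, after which the congruences force the cancellation; note the argument works for every odd prime $p$, not only $p>2g+1$.
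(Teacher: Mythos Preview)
The paper does not prove this statement; it is quoted as \cite[Corollary~1.10]{VV1} and used as input later (e.g.\ in Lemma~\ref{lem v-flat} and Theorem~\ref{thm ort r=2}). So there is no ``paper's own proof'' to compare against.

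Your argument is correct. The partial-fraction identity together with the characteristic-$p$ relations $\Psi^{(p-3)/2}\Psi'=-2f'$ and $\Psi^{(p-1)/2}\Psi'=2g'$ indeed gives
\[
\Phi(t,s)=\frac{-2\bigl(f'(t)g(s)+f(t)g'(s)\bigr)}{s-t},
\]
and the divisibility by $s-t$ is guaranteed since at $s=t$ the numerator becomes $-2\,\partial_t\bigl(\Psi(t)^{p}\bigr)=0$ in characteristic $p$; this confirms that reading coefficients from the Laurent expansion in $\K[z][t]((s^{-1}))$ is legitimate. The telescoping after the reindexing $j=k+1$ then kills the sum exactly as you wrote. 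Your closing remark is also right: the computation never uses $p>2g+1$, only that $p$ is odd so that $(p\pm1)/2$ are nonnegative integers; the hypothesis $p>2g+1$ in the statement is there to ensure the $p$-hypergeometric solutions with $1\le\ell,m\le g$ are actually nonzero and linearly independent (Theorems~\ref{thm lin ind} and~\ref{thm lin ind -2}), not for the orthogonality itself.
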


\subsection{$p$-hypergeometric solutions for wedge-power and $\ka=-2$}

The \KZ/ equations on $L^{\ox(2g+1)}[2g-1]$ with $\ka=-2$ induce a system of differential equations
with values in $\wedge^r L^{\ox(2g+1)}[2g-1]$ with the invariant subspace
\bean
\label{subs}
\wedge^r \Sing L^{\ox(2g+1)}[2g-1] \subset \wedge^r L^{\ox(2g+1)}[2g-1].
\eean
We consider the case  $1\leq r\leq g$.
The $p$-hypergeometric solutions  look as follows.  
  For $1\leq i_1,\dots ,i_r \leq 2g+1$, define the polynomials
\bean
\label{bar mu p}
\bar \mu_{i_1,\dots,i_r} ^{(p)}
&=&
 \on{Ant}_{t_1,\dots,t_r}
 \left(\prod_{j=1}^r \frac{\Psi(t_j,z)^{(p+1)/2}}{t_j-z_{i_j}}
 \right) \,,
 \eean
cf. \eqref{mu}. For positive integers $\ell_1,\dots, \ell_r$  denote
\bea
\bar M_{i_1,\dots,i_r}^{\ell_1,\dots, \ell_r} 
=
\int_{\ell_1,\dots, \ell_r}  \bar  \mu_{i_1,\dots,i_r}^{(p)}\,,
\eea
cf. \eqref{Ng r}.
These are polynomials in $z$ with coefficients in $\K$.  
\vsk.2>

Notice that $\bar \mu_{i_1,\dots,i_r}^{(p)}$ 
is skew-symmetric with respect to permutations of the indices $i_1,\dots,i_r$.
Hence $\bar M_{i_1,\dots,i_r}^{\ell_1,\dots,\ell_r}$ 
is skew-symmetric with respect to permutations of $i_1,\dots,i_r$.
Notice also that  $\bar M_{i_1,\dots,i_r}^{\ell_1,\dots,\ell_r}$ 
is  skew-symmetric with respect to permutations of
the indices $\ell_1,\dots,\ell_r$.
 
\vsk.2>

Recall the basis
$w^{\{i_1,\dots,i_r\}}$, $1\leq i_1<\dots < i_r \leq 2g+1$, in $\wedge^r L^{\ox(2g+1)}[2g-1]$.
By Theorem \ref{thm KZ p -2},  for any positive integers $\ell_1,\dots,\ell_r$, the vector 
\bean
\label{sM p}
\bar M^{\ell_1,\dots,\ell_r}(z) =    \sum_{1\leq i_1,<\dots <i_r\leq 2g+1} \bar M^{\ell_1,\dots,\ell_r}_{i_1,\dots, i_r} (z)\, 
w^{\{i_1,\dots, i_r\}}
\eean
is a solution of the \KZ/ equations in characteristic $p$ with $\ka=-22$ and values  in
\\
 $\wedge^r \Sing L^{\ox(2g+1)}[2g-1]$.
 By Theorem \ref{thm lin ind -2}, the solutions 
$\bar M^{\ell_1,\dots,\ell_r}(z)$, $1\leq \ell_1 <\dots < \ell_r\leq g$, are linearly
 independent over the field $\K(z)$.

\subsection{Renumbering $p$-hypergeometric solutions $\bar M^{\ell_1,\ell_2}(z)$} 

Recall that 
\bean
\label{2 mu p}
\phantom{aaa}
\bar \mu_{i_1,i_2} ^{(p)}
&=&
\Psi(t_1,z)^{(p+1)/2}\Psi(t_2,z)^{(p+1)/2}\left(\frac 1{(t_1-z_{i_1})(t_2-z_{i_2})} 
-
\frac 1{(t_2-z_{i_1})(t_1-z_{i_2})}\right),
\\
\notag
\bar M^{\ell_1,\ell_2}(z)
&=&
\sum_{1\leq i_1<i_2\leq 2g+1} \bar M_{i_1,i_2} ^{\ell_1,\ell_2}(z) w^{\{i_1,i_2\}},
\qquad
\bar M_{i_1,i_2} ^{\ell_1,\ell_2}(z) 
=
 \int_{\ell_1,\ell_2} \bar \mu_{i_1,i_2} ^{(p)}\,.
 \eean
 We have $\bar M^{\ell_1,\ell_2}(z) = - \bar M^{\ell_2,\ell_1}(z)$.  
If $1\leq\ell_1<\ell_2$, then $\bar M^{\ell_1,\ell_2}(z) \ne 0$ if an only if $\ell_2\leq g$. The solutions
$\bar M^{\ell_1,\ell_2}(z)$, $1\leq \ell_1<\ell_2\leq g$ are linearly independent over $\K(z)$.
 
Denote
\bean
\label{t mu p}
{}
\\
\notag
\tilde \mu_{i_1,i_2} ^{(p)}
&=&
(t_1^p-t_2^p)
\Psi(t_1,z)^{(p+1)/2}\Psi(t_2,z)^{(p+1)/2}\left(\frac 1{(t_1-z_{i_1})(t_2-z_{i_2})} 
-
\frac 1{(t_2-z_{i_1})(t_1-z_{i_2})}\right),
\\
\notag
\tilde M^{\ell_1,\ell_2}(z)
&=&
\sum_{1\leq i_1<i_2\leq 2g+1} \tilde M_{i_1,i_2} ^{\ell_1,\ell_2}(z) w^{\{i_1,i_2\}},
\qquad
\tilde M_{i_1,i_2} ^{\ell_1,\ell_2}(z) 
=
 \int_{\ell_1,\ell_2} \tilde \mu_{i_1,i_2} ^{(p)}\,.
 \eean
Clearly, we have 
$\tilde M^{\ell_1,\ell_2}(z) =  \tilde M^{\ell_2,\ell_1}(z)$ and
\bean
\label{tb}
\tilde M^{\ell_1,\ell_2}(z)  = \bar M^{\ell_1-1,\ell_2}(z) - \bar M^{\ell_1,\ell_2-1}(z).
\eean

\begin{lem}
\label{lem tilde M}  For any positive integers $\ell_1,\ell_2$, the vector 
$\tilde M^{\ell_1,\ell_2}(z)$ is a solution in characteristic $p$ of the \KZ/ equations with values in 
$ \wedge^2\Sing L^{\ox(2g+1)}[2g-1]$ and $\ka=-2$. 
These solutions have the following properties:
\begin{enumerate}

\item[$\on{(i)}$]

$ \tilde M^{\ell_1,\ell_2+1}(z)+ \tilde  M^{\ell_1-1,\ell_2+2}(z)+ \dots + \tilde  M^{1, \ell_1+\ell_2}(z)
  =- \bar M^{\ell_1,\ell_2}(z).$

\item[$\on{(ii)}$]

For $1\leq \ell $, we have
\bean
\label{tb3}
 &&
 \tilde M^{\ell,\ell+1}(z)+ \tilde  M^{\ell-1,\ell+2}(z)+ \dots + \tilde  M^{1, 2 \ell}(z) = 0,
 \\
 \notag
 &&
 \tilde M^{\ell,\ell}(z)+ 2(\tilde  M^{\ell-1,\ell+2}(z)+ \dots + \tilde  M^{1, 2 \ell-1}(z)) = 0.
 \eean

\item[$\on{(iii)}$]

 We have $\tilde M^{\ell_1,\ell_2}(z)=0$ if $\ell_1> g+1$ or  $\ell_1> g+1$, also, 
 \bean
 \label{boundary}
  \tilde M^{1,1}(z)=\tilde M^{1,2}(z)=0,
  \qquad
\tilde M^{g,g+1}(z)=\tilde M^{g+1,g+1}(z)=0. 
\eean
\qed 

\end{enumerate}

\end{lem}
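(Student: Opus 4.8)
The plan is to reduce every assertion in the lemma to formula \eqref{tb}, $\tilde M^{\ell_1,\ell_2}(z)=\bar M^{\ell_1-1,\ell_2}(z)-\bar M^{\ell_1,\ell_2-1}(z)$, together with the already-established properties of the solutions $\bar M^{\ell_1,\ell_2}$. So first I would collect the elementary inputs: formula \eqref{tb} itself, which is immediate from $\tilde\mu^{(p)}_{i_1,i_2}=(t_1^p-t_2^p)\,\bar\mu^{(p)}_{i_1,i_2}$ by extracting the coefficient of $t_1^{p\ell_1-1}t_2^{p\ell_2-1}$; the convention $\bar M^{\ell_1,\ell_2}=0$ whenever $\ell_1$ or $\ell_2$ equals $0$, since it is then the coefficient of a negative power of $t_i$ in a polynomial; the skew-symmetry $\bar M^{\ell_1,\ell_2}=-\bar M^{\ell_2,\ell_1}$ in the upper indices, noted before the lemma; and the vanishing $\bar M^{\ell_1,\ell_2}=0$ as soon as $\ell_1>g$ or $\ell_2>g$, which is the two-variable analogue of the $r=1$ degree estimate recalled before Theorem \ref{thm lin ind -2} (valid for $p>2g+1$) and is proved by comparing $p\ell_i-1$ with the $t_i$-degree of $\bar\mu^{(p)}_{i_1,i_2}$.

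With these in hand, the first assertion of the lemma follows immediately: by \eqref{tb}, $\tilde M^{\ell_1,\ell_2}(z)$ is a difference of two terms, each of which is either $0$ or a $\bar M^{a,b}(z)$ with $a,b\geq 1$; the latter are, by Theorem \ref{thm KZ p -2}, solutions in characteristic $p$ of the \KZ/ equations with $\ka=-2$ valued in $\wedge^2\Sing L^{\ox(2g+1)}[2g-1]$, and such solutions form a linear space. For part (i), I would substitute \eqref{tb} into each summand on the left-hand side: setting $a_j:=\bar M^{\ell_1-j,\ell_2+j}(z)$, the $j$-th summand $\tilde M^{\ell_1-j,\ell_2+1+j}(z)$ equals $a_{j+1}-a_j$, so the sum telescopes to $a_{\ell_1}-a_0=\bar M^{0,\ell_1+\ell_2}(z)-\bar M^{\ell_1,\ell_2}(z)=-\bar M^{\ell_1,\ell_2}(z)$. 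Part (ii) is then formal: the first identity is part (i) specialized to $\ell_1=\ell_2=\ell$ together with $\bar M^{\ell,\ell}=0$ (skew-symmetry), and the second is obtained by applying (i) at $(\ell_1,\ell_2)=(\ell-1,\ell)$, so that the parenthesized sum there equals $-\bar M^{\ell-1,\ell}$, and combining this with $\tilde M^{\ell,\ell}=\bar M^{\ell-1,\ell}-\bar M^{\ell,\ell-1}=2\bar M^{\ell-1,\ell}$ (skew-symmetry again). Finally part (iii) is read straight off \eqref{tb}: if $\ell_1>g+1$ then both $\bar M^{\ell_1-1,\ell_2}$ and $\bar M^{\ell_1,\ell_2-1}$ have first upper index $>g$ and hence vanish; $\tilde M^{1,1}=\bar M^{0,1}-\bar M^{1,0}=0$ and $\tilde M^{1,2}=\bar M^{0,2}-\bar M^{1,1}=0$ (using $\bar M^{1,1}=0$); and $\tilde M^{g+1,g+1}=2\bar M^{g,g+1}=0$, $\tilde M^{g,g+1}=\bar M^{g-1,g+1}-\bar M^{g,g}=0$, since the upper index $g+1$ exceeds $g$ and $\bar M^{g,g}=0$.

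The only ingredient that is not pure bookkeeping is the degree-based vanishing $\bar M^{\ell_1,\ell_2}=0$ for $\ell_i>g$, which underlies all of part (iii) and the $\bar M^{\ell,\ell}=0$-type simplifications; I would isolate this as a preliminary observation, proved exactly as in the $r=1$ case. Once that is in place, the telescoping computations in (i)--(iii) are routine, the main care being to keep the index ranges in the displayed sums straight (for instance the parenthesized sum in the second identity of (ii) should be read as $\tilde M^{\ell-1,\ell+1}(z)+\tilde M^{\ell-2,\ell+2}(z)+\dots+\tilde M^{1,2\ell-1}(z)$, which is exactly what the application of (i) at $(\ell-1,\ell)$ produces).
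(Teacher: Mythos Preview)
Your proof is correct and matches the paper's intended argument: the lemma is stated with a bare \qed, so all claims are meant as direct consequences of \eqref{tb} together with the skew-symmetry and degree-vanishing of $\bar M^{\ell_1,\ell_2}$, exactly as you work out. You also correctly caught the typo in the second identity of (ii)---the parenthesized sum should start at $\tilde M^{\ell-1,\ell+1}$, as the example $\tilde M^{2,2}+2\tilde M^{1,3}=0$ given just after the lemma confirms.
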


For example,   $\tilde M^{1,3}(z) = - \bar M^{1,2}(z)$,
$\tilde M^{2,2}(z)= 2\bar M^{1,2}(z)$, and $\tilde M^{2,2}(z) + 2\tilde M^{1,3}(z) = 0$.

\begin{cor}
\label{cor t vs b}

The vectors $\tilde M^{\ell_1, \ell_2}(z)$ with positive integers $\ell_1$ and $\ell_2$
generate a $\K(z)$-vector space of dimension $\binom{g}{2}$. The vectors
$ \tilde  M^{\ell_1,\ell_2+1}(z)$, $ 1\leq \ell_1<\ell_2\leq g$, 
form a basis of that space.
\qed
\end{cor}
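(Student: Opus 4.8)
The plan is to identify the $\K(z)$-span of the vectors $\tilde M^{\ell_1,\ell_2}(z)$ with $V:=\mathrm{Span}_{\K(z)}\{\bar M^{a,b}(z):1\le a<b\le g\}$, and then to show that the $\binom{g}{2}$ vectors $\tilde M^{\ell_1,\ell_2+1}(z)$, $1\le\ell_1<\ell_2\le g$, already span $V$. Since the solutions $\bar M^{a,b}(z)$ with $1\le a<b\le g$ are linearly independent over $\K(z)$ — the wedge-power analogue of Theorem~\ref{thm lin ind -2} recorded in Section~\ref{sec 7} — we have $\dim V=\binom{g}{2}$, so any spanning family of cardinality $\binom{g}{2}$ is automatically a basis; this yields both assertions of the corollary at once.

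The first step is the inclusion $\mathrm{Span}\{\tilde M^{\ell_1,\ell_2}(z)\}\subseteq V$. By formula~\eqref{tb}, $\tilde M^{\ell_1,\ell_2}(z)=\bar M^{\ell_1-1,\ell_2}(z)-\bar M^{\ell_1,\ell_2-1}(z)$ for all positive integers $\ell_1,\ell_2$, with the convention $\bar M^{0,\ell}(z)=0$ (that quantity is the coefficient of $t_1^{-1}$ in a polynomial). Each superscript pair appearing on the right either has a zero entry, or two equal entries (so the vector vanishes by skew-symmetry, $p$ being odd), or has an entry exceeding $g$ (so the vector vanishes), or, after possibly swapping the two superscripts via $\bar M^{c,d}=-\bar M^{d,c}$, lies in $\{\bar M^{a,b}(z):1\le a<b\le g\}$. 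In every case the term lies in $V$, hence so does $\tilde M^{\ell_1,\ell_2}(z)$; in particular $\mathrm{Span}\{\tilde M^{\ell_1,\ell_2+1}(z):1\le\ell_1<\ell_2\le g\}\subseteq V$.

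The second step is the reverse inclusion $V\subseteq\mathrm{Span}\{\tilde M^{\ell_1,\ell_2+1}(z):1\le\ell_1<\ell_2\le g\}$, for which I would use Lemma~\ref{lem tilde M}(i): for $1\le a<b\le g$,
\[
-\bar M^{a,b}(z)=\sum_{j=0}^{a-1}\tilde M^{a-j,\,b+1+j}(z).
\]
Writing the $j$-th term as $\tilde M^{\ell_1,\ell_2+1}(z)$ with $\ell_1=a-j\in\{1,\dots,a\}$ and $\ell_2=b+j$, the inequality $\ell_1<\ell_2$ always holds since $a<b$. If $b+j\le g$ the term is one of our $\binom{g}{2}$ vectors; if $b+j>g$ its second index $b+1+j$ exceeds $g+1$, so the term vanishes by Lemma~\ref{lem tilde M}(iii). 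Hence $\bar M^{a,b}(z)$ is, up to sign, a sum of the vectors $\tilde M^{\ell_1,\ell_2+1}(z)$ with $1\le\ell_1<\ell_2\le g$, which gives the inclusion. Combining the two steps, $V=\mathrm{Span}\{\tilde M^{\ell_1,\ell_2+1}(z):1\le\ell_1<\ell_2\le g\}=\mathrm{Span}\{\tilde M^{\ell_1,\ell_2}(z)\}$ has dimension $\binom{g}{2}$, and the $\binom{g}{2}$ vectors $\tilde M^{\ell_1,\ell_2+1}(z)$ spanning it form a basis.

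I expect the only delicate point is the index bookkeeping in the displayed identity: one must verify that the terms falling outside the admissible range $1\le\ell_1<\ell_2\le g$ are precisely those annihilated by the boundary vanishing of Lemma~\ref{lem tilde M}(iii), and that none of the surviving terms is accidentally removed by the special vanishings $\tilde M^{1,1}=\tilde M^{1,2}=0$ or $\tilde M^{g,g+1}=\tilde M^{g+1,g+1}=0$ — which is ensured by $1\le a-j$ and $a-j\le a\le g-1$ (so the first index never reaches $g$ or $g+1$, and the pair is never $(1,1)$ or $(1,2)$ once $g\ge 2$).
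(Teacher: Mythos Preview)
Your argument is correct and is precisely the unpacking the paper leaves implicit (the corollary is marked \qed\ with no proof given). You use exactly the ingredients the paper has set up: formula~\eqref{tb} for the inclusion into $V$, Lemma~\ref{lem tilde M}(i) for the reverse inclusion, Lemma~\ref{lem tilde M}(iii) to kill the out-of-range terms, and the linear independence of the $\bar M^{a,b}(z)$ recorded just before this subsection to pin down $\dim V=\binom{g}{2}$. The index bookkeeping in your final paragraph is accurate; in particular the surviving terms $\tilde M^{a-j,\,b+1+j}$ with $b+j\le g$ always satisfy $1\le a-j<b+j\le g$ and never hit the special pairs $(1,1),(1,2),(g,g+1),(g+1,g+1)$.
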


\subsection{Renumbering $p$-hypergeometric solutions $\bar M^{\ell_1,\dots, \ell_r}(z)$}

Denote
\bean
\label{ti mu p}
{}
\\
\notag
\tilde \mu_{i_1,\dots, i_r} ^{(p)}
&=&
\prod_{1\leq i<j\leq r}(t_i^p-t_j^p)
\on{Ant}_{t_1,\dots, t_r}
\left(\frac {\Psi(t_i,z)^{(p+1)/2}}{t_i-z_{i_j}} 
\right),
\\
\notag
\tilde M^{\ell_1,\dots,\ell_r}(z)
&=&
\sum_{1\leq i_1<\dots<i_r\leq 2g+1} \tilde M_{i_1,\dots, i_r} ^{\ell_1,\dots,\ell_r}(z) \,
w^{\{i_1,\dots, i_r\}},
\qquad
\tilde M_{i_1,\dots, i_r} ^{\ell_1,\dots,\ell_r}(z) 
=
 \int_{\ell_1,\dots,\ell_r} \tilde \mu_{i_1,\dots,i_r} ^{(p)}\,.
 \eean
Clearly, the vectors $\tilde M^{\ell_1,\dots,\ell_r}(z)$ are symmetric with respect to the permulations
of $\ell_1, \dots,\ell_r$.

Let $\si\in S_r$ and $\si=(\si_1,\dots, \si_r)$. Then
\bean
\label{bar ti}
\tilde M^{\ell_1,\dots,\ell_r}(z) = (-1)^{r(r-1)/2}\sum_{\si\in S_r} (-1)^{|\si|} \bar 
M^{\ell_1+1-\si_1,\dots,\ell_r+1-\si_r}(z).
\eean
For example, for $r=2$ we have formula \eqref{tb}.

\begin{lem}
\label{lem tild M}  For any positive integers $\ell_1,\dots, \ell_r$, the vector 
$\tilde M^{\ell_1,\dots, \ell_r}(z)$ is a solution in characteristic $p$ of the \KZ/ equations with values in 
$ \wedge^r\Sing L^{\ox(2g+1)}[2g-1]$ and $\ka=-2$. 
\qed

\end{lem}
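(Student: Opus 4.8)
The plan is to read the lemma off from formula \eqref{bar ti}, which exhibits $\tilde M^{\ell_1,\dots,\ell_r}(z)$ as a finite $\F_p$-linear combination of the vectors $\bar M^{m_1,\dots,m_r}(z)$. Since the \KZ/ equations are linear, it then suffices to know that every vector occurring on the right of \eqref{bar ti} is a $\wedge^r\Sing L^{\ox(2g+1)}[2g-1]$-valued solution of the \KZ/ equations with $\ka=-2$, including the degenerate terms in which some index $m_j$ fails to be a positive integer.

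First I would record why \eqref{bar ti} holds, which is the only computational point. Expanding the Vandermonde-type factor in \eqref{ti mu p} in the variables $t_i^p$ gives
\[
\prod_{1\le i<j\le r}(t_i^p-t_j^p)=(-1)^{r(r-1)/2}\sum_{\si\in S_r}(-1)^{|\si|}\,t_1^{\,p(\si_1-1)}\cdots t_r^{\,p(\si_r-1)},
\]
so that $\tilde\mu_{i_1,\dots,i_r}^{(p)}=(-1)^{r(r-1)/2}\sum_{\si\in S_r}(-1)^{|\si|}\,t_1^{\,p(\si_1-1)}\cdots t_r^{\,p(\si_r-1)}\,\bar\mu_{i_1,\dots,i_r}^{(p)}$ by \eqref{bar mu p}. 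Applying the coefficient-extraction functional and using the translation rule $\int_{\ell_1,\dots,\ell_r}\big(t_1^{pk_1}\cdots t_r^{pk_r}P\big)=\int_{\ell_1-k_1,\dots,\ell_r-k_r}P$, valid for any polynomial $P$, yields \eqref{bar ti}. Here one must note that, because $\bar\mu_{i_1,\dots,i_r}^{(p)}$ is a polynomial in $t_1,\dots,t_r$, the functional $\int_{m_1,\dots,m_r}$ returns $0$ as soon as some $m_j\le 0$ (there is no monomial of negative degree), so the summands of \eqref{bar ti} with $\ell_j+1-\si_j\le 0$ for some $j$ simply drop out and the right-hand side is unambiguous.

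It then remains to observe that each summand $\bar M^{m_1,\dots,m_r}(z)$ on the right of \eqref{bar ti} is a $\wedge^r\Sing L^{\ox(2g+1)}[2g-1]$-valued solution of the \KZ/ equations with $\ka=-2$: when all $m_j\ge 1$ this is the content of Theorem \ref{thm KZ p -2} applied to the wedge-power solutions $\bar M^{m_1,\dots,m_r}(z)$, and otherwise $\bar M^{m_1,\dots,m_r}(z)=0$, which is trivially such a solution. Since the coefficients $(-1)^{r(r-1)/2}(-1)^{|\si|}$ lie in $\F_p$ and the \KZ/ equations are $\F_p$-linear, $\tilde M^{\ell_1,\dots,\ell_r}(z)$ is again a $\wedge^r\Sing L^{\ox(2g+1)}[2g-1]$-valued solution, proving the lemma. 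The only points demanding attention are the sign bookkeeping in the Vandermonde expansion and the convention for non-positive indices; there is no genuine obstacle here. Alternatively, one could rerun the Schechtman--Varchenko argument directly, showing that applying the \KZ/ operator $\nabla^{\on{KZ},-2}_m$ to $\tilde M^{\ell_1,\dots,\ell_r}$ produces the $p$-integral of a sum of derivatives $\partial/\partial t_i$; but the deduction from \eqref{bar ti} and linearity is much shorter.
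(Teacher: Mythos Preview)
Your proposal is correct and is exactly the argument the paper has in mind: the lemma carries only a \qed because it follows immediately from formula \eqref{bar ti} expressing $\tilde M^{\ell_1,\dots,\ell_r}(z)$ as an $\F_p$-linear combination of the $\bar M^{m_1,\dots,m_r}(z)$, each of which is already known (see the display \eqref{sM p} and the sentence preceding it) to be a $\wedge^r\Sing L^{\ox(2g+1)}[2g-1]$-valued solution with $\ka=-2$, with the non-positive-index terms vanishing. Your derivation of \eqref{bar ti} via the Vandermonde expansion and the shift rule for $p$-integrals is correct and more detailed than anything the paper spells out.
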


\begin{lem}
\label{cor ti vs b}

The vectors $\tilde M^{\ell_1, \dots, \ell_r}(z)$ with positive integers $\ell_1,\dots,\ell_r$
generate a $\K(z)$-vector space of dimension $\binom{g}{r}$. 
\end{lem}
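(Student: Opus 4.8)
\emph{Proof strategy.} The plan is to compare the family $\{\tilde M^{\ell_1,\dots,\ell_r}(z)\}$ with the wedge‑power $p$‑hypergeometric solutions $\{\bar M^{m_1,\dots,m_r}(z)\}$ through the renumbering identity \eqref{bar ti}, reducing everything to a combinatorial triangularity lemma.

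First I would record that $\bar M^{m_1,\dots,m_r}(z)$ is simply the exterior product of one‑dimensional $p$‑hypergeometric solutions. Indeed, since $\bar\mu^{(p)}_{i_1,\dots,i_r}=\on{Ant}_{t_1,\dots,t_r}\!\bigl(\prod_{j=1}^r \Psi(t_j,z)^{(p+1)/2}/(t_j-z_{i_j})\bigr)$, extracting the coefficient of $t_1^{p\ell_1-1}\cdots t_r^{p\ell_r-1}$ turns the antisymmetrization into a determinant, so $\bar M^{\ell_1,\dots,\ell_r}_{i_1,\dots,i_r}(z)=\det\bigl(\bar N^{\ell_b}_{i_a}(z)\bigr)_{a,b=1}^r$, i.e. $\bar M^{\ell_1,\dots,\ell_r}(z)=\bar N^{\ell_1}(z)\wedge\cdots\wedge\bar N^{\ell_r}(z)$ in $\wedge^r\!\bigl(\Sing L^{\ox(2g+1)}[2g-1]\ox\K(z)\bigr)$, with $\bar N^{m}(z)=0$ for $m\le0$ and, for degree reasons, for $m>g$. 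By Theorem \ref{thm lin ind -2} the vectors $\bar N^1(z),\dots,\bar N^g(z)$ are $\K(z)$‑linearly independent, so the $\K(z)$‑span $W$ of all $\bar M^{m_1,\dots,m_r}(z)$ with $m_i\ge1$ equals $\wedge^r$ of their span and has dimension $\binom gr$, with basis $\{\bar M^{m_1,\dots,m_r}(z):1\le m_1<\dots<m_r\le g\}$. By \eqref{bar ti} each $\tilde M^{\ell_1,\dots,\ell_r}(z)$ is a $\Z$‑linear combination of the $\bar M$'s (out‑of‑range indices giving zero terms), hence the span $U$ of all the $\tilde M^{\ell_1,\dots,\ell_r}(z)$ is contained in $W$ and $\dim_{\K(z)}U\le\binom gr$.

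It remains to exhibit $\binom gr$ linearly independent vectors in $U$; I claim the vectors $\tilde M^{\,\ell_1,\,\ell_2+1,\,\ell_3+2,\dots,\ell_r+(r-1)}(z)$, $1\le\ell_1<\dots<\ell_r\le g$, work. Substituting $L_j=\ell_j+(j-1)$ into \eqref{bar ti} gives $\tilde M^{L_1,\dots,L_r}(z)=(-1)^{r(r-1)/2}\sum_{\si\in S_r}(-1)^{|\si|}\bar M^{\,\ell_1+1-\si_1,\ \ell_2+2-\si_2,\dots,\ell_r+r-\si_r}(z)$, whose $\si=\on{id}$ term is $\pm\bar M^{\ell_1,\dots,\ell_r}(z)$. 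The crux is that for $\si\ne\on{id}$ the corresponding term is either zero or equals $\pm\bar M^{\vec m}(z)$ for an $r$‑subset $\vec m=\{\ell_j+j-\si_j\}\subset\{1,\dots,g\}$ with $\vec m\prec\{\ell_1,\dots,\ell_r\}$ in the total order $\prec$ defined by $\{a_1>\dots>a_r\}\prec\{b_1>\dots>b_r\}$ when $a_i>b_i$ at the first index $i$ where they differ. To see this, set $\mu_j=\ell_j+j$, so that $\mu_1<\dots<\mu_r$ and $\{\ell_j\}=\{\mu_j-j\}$; then $\max_j(\mu_j-\si_j)\ge\mu_r-\si_r\ge\mu_r-r=\max_j(\mu_j-j)$, and equality forces $\si_r=r$, after which the same estimate is iterated on $\mu_1<\dots<\mu_{r-1}$, so a strict inequality must occur at some stage unless $\si=\on{id}$. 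Consequently the $\binom gr\times\binom gr$ matrix expressing these $\tilde M$'s in the $\bar M$‑basis of $W$ is triangular with $\pm1$ on the diagonal relative to $\prec$, hence invertible; the chosen $\tilde M$'s are therefore $\K(z)$‑linearly independent, $U=W$, and $\dim_{\K(z)}U=\binom gr$.

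The only nonroutine ingredient is the triangularity lemma of the last paragraph — choosing the order $\prec$ and verifying that every off‑diagonal term of \eqref{bar ti} either vanishes or is $\prec$‑subordinate to the diagonal one. For $r=2$ this degenerates to the telescoping identity already recorded in Lemma \ref{lem tilde M}(i), which could instead be generalized directly to all $r$ to yield the reverse inclusion $W\subseteq U$.
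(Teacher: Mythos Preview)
Your proof is correct and follows essentially the same line as the paper's. Both arguments hinge on the same observation: substituting $L_j=\ell_j+(j-1)$ into \eqref{bar ti} makes the $\sigma=\on{id}$ term equal to $\pm\bar M^{\ell_1,\dots,\ell_r}(z)$, and all remaining terms are either zero or subordinate with respect to a suitable total order on $r$-subsets of $\{1,\dots,g\}$. You order by decreasing sequences and compare largest elements first; the paper orders by increasing sequences and compares smallest elements first (so the off-diagonal terms have first coordinate $\le\ell_1$, second $\le\ell_2$, etc., with at least one strict inequality). Either choice gives a unitriangular transition matrix, and the rest is the linear independence of the $\bar M^{m_1,\dots,m_r}(z)$ for $1\le m_1<\dots<m_r\le g$, which you (correctly) deduce from the wedge identity $\bar M^{\vec m}=\bar N^{m_1}\wedge\cdots\wedge\bar N^{m_r}$ together with Theorem~\ref{thm lin ind -2}, and which the paper quotes just before the lemma. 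Your final remark is on the nose: the paper's proof is precisely the ``reverse inclusion $W\subseteq U$'' viewpoint you mention, carried out inductively for $r=3$ and declared similar for larger $r$.
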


\begin{proof}
For $r=2$, the lemma follows from Lemma \ref{lem tilde M}. We give a proof for $r=3$. 
For  $r>3$ the proof is similar.

The goal is  to express an arbitrary $\bar M^{\ell_1,\ell_2,\ell_3}(z)$, $1\leq \ell_1<\ell_2<\ell_3\leq g$ as a linear 
combination of $\tilde  M^{a_1,a_2,a_3}(z)$. We use  the lexicographical order on the set of
triples $(\ell_1,\ell_2,\ell_3)$. The lexicographically minimal triple is $(1,2,3)$ and 
$\bar M^{1,2,3}(z) = \tilde M^{1,3,5}(z)$. 

Assume that the set of all triples is partitioned into two subsets $X$ and $Y$ where for 
every $(\ell_1,\ell_2,\ell_3)\in X$ the polynomial $\bar M^{\ell_1,\ell_2,\ell_3}(z)$
is already expressed as a linear combination of the polynomials $\tilde M^{a_1,a_2,a_3}(z)$
and for
every $(\ell_1,\ell_2,\ell_3)\in Y$ the polynomial $\bar M^{\ell_1,\ell_2,\ell_3}(z)$
is not yet expressed as a linear combination of the polynomials $\tilde M^{a_1,a_2,a_3}(z)$.

Choose the lexicographically minimal triple 
 $(\ell_1,\ell_2,\ell_3)\in Y$  and consider
 the polynomial $\bar M^{\ell_1,\ell_2,\ell_3}(z)$. Then
 \bean
 \label{lexi}
 \tilde M^{\ell_1,\ell_2+1,\ell_3+2}(z)
&=&
 \bar M^{\ell_1,\ell_2,\ell_3}(z)
- \bar M^{\ell_1,\ell_2-1,\ell_3+1}(z)
-\bar M^{\ell_1-1,\ell_2+1,\ell_3}(z)
\\
\notag
&+&
 \bar M^{\ell_1-1,\ell_2-1,\ell_3}(z)
+ \bar M^{\ell_1-2,\ell_2+1,\ell_3+1}(z)
-\bar M^{\ell_1-2,\ell_2,\ell_3+2}(z).
\eean
The last five polynomials $\bar M^{\ell_1,\ell_2-1,\ell_3+1}(z)$,
$\bar M^{\ell_1-1,\ell_2+1,\ell_3}(z)$,
$ \bar M^{\ell_1-1,\ell_2-1,\ell_3}(z)$,
$\bar M^{\ell_1-2,\ell_2+1,\ell_3+1}(z)$,
$\bar M^{\ell_1-2,\ell_2,\ell_3+2}(z)$ are either zero or correspond to solutions with triples in
$X$. Hence formula \eqref{lexi} provides an expression for 
$ \bar M^{\ell_1,\ell_2,\ell_3}(z)$ in terms of $\tilde  M^{a_1,a_2,a_3}(z)$. This proves the lemma
for $r=3$.
\end{proof}

\subsection{Reduction of $\bar T(z)$ modulo $p$}
\label{sec 7.3}

Recall that the map  $\bar T(z)$ in \eqref{bar tt},
\bean
\label{bar ttt} 
\bar T(z)  :  \Sing L^{\ox(2g+1)}[2g+1-2r] \to \wedge^r\Sing L^{\ox(2g+1)}[2g-1]
\eean
defines an embedding
over the field $\C$ of the \KZ/ connection  with $\ka=-2$ and values in
  $\Sing L^{\ox(2g+1)}[2g+1-2r]$ to   the \KZ/ connection  with 
 $\ka=-2$ and   values in
$\wedge^r \Sing L^{\ox(2g+1)}[2g-1]$.

The map $\bar T(z)$ is defined by formula  \eqref{def bT} in terms of $T(z)$.
Since $T(z)$  can be reduced modulo $p$
for all $p$ with finitely many exceptions,  and the reduction 
for  all $p$ with finitely many exceptions keeps the properties of
  $T(z)$ over complex numbers, we obtain the following corollary.

\begin{cor}
\label{cor bar red}  

The reduction modulo $p$ of the map  $\bar T(z)$ is well-defined for 
all $p$ with finitely many exceptions, and the reduction is an embedding in characteristic $p$ 
of the \KZ/ connection  with $\ka=-2$ and values in
  $\Sing L^{\ox(2g+1)}[2g+1-2r]$ to   the \KZ/ connection  with 
 $\ka=-2$ and   values in
$\wedge^r \Sing L^{\ox(2g+1)}[2g-1]$. 

\end{cor}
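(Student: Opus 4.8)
The plan is to deduce the statement from the mod-$p$ reduction properties of $T(z)$ recorded in Section~\ref{sec 5.6}, using that $\bar T(z)$ is produced from $T(z)$ by the purely algebraic recipe \eqref{def bT}, namely that $\bar T(z)$ is the adjoint of $T(z)$ with respect to the Shapovalov forms $S$ and $S^{\wedge^r}$. First I would recall that, by \eqref{T(z)} and the shape of the matrices $A_k(z)$ in \eqref{Ak}, the matrix of $T(z)$ in the monomial bases has entries that are homogeneous polynomials in $z$ of degree $\binom{r}{2}$ whose reduced-form coefficients have denominators dividing $\prod_{i=1}^{r-1}(2g+1-2i)$. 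Hence $T(z)$ reduces modulo every odd prime $p$ not dividing this product, and, by the discussion around Corollary~\ref{cor main T}, for all but finitely many such $p$ the reduced map is a morphism of the mod-$p$ \KZ/ connections with $\ka=2$; indeed the intertwining relation behind \eqref{tn11} and the compatibility with passing to singular vectors are identities of polynomials over $\Q$ whose cleared denominators divide a power of $2\prod_{i=1}^{r-1}(2g+1-2i)\cdot\prod_{i<j}(z_i-z_j)$, and any such identity survives reduction mod $p$ once $p$ avoids the cleared denominator.

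Next I would bring in the Shapovalov forms. In the monomial bases the tensor form is $\delta_{I,J}$, see \eqref{Sha}, so the Gram matrices of $S$ on $\Sing L^{\ox(2g+1)}[2g+1-2r]$ and of $S^{\wedge^r}$ on $\wedge^r\Sing L^{\ox(2g+1)}[2g-1]$ can be taken with integer entries, and over $\C$ their determinants $d_1,d_2$ are nonzero because these restricted forms are nondegenerate (Section~\ref{sec 3.5}). Thus for every prime $p\nmid d_1d_2$ these Gram operators are invertible modulo $p$, and since the matrix of $\bar T(z)$ is recovered from that of $T(z)$ by multiplying on one side by the Gram matrix of $S$ and on the other by the inverse of the Gram matrix of $S^{\wedge^r}$, the matrix of $\bar T(z)$ also reduces modulo $p$ for all $p$ outside the finite set of primes dividing $2\,d_1d_2\prod_{i=1}^{r-1}(2g+1-2i)$. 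That the reduced $\bar T(z)$ is again a morphism of the mod-$p$ \KZ/ connections with $\ka=-2$ is then automatic: the defining relation \eqref{def bT} and the duality \eqref{do} of the \KZ/ connections with $\ka=\pm2$ under the Shapovalov forms are once more polynomial identities over $\Q$ with controlled denominators, hence they persist mod $p$.

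It remains to see that the reduced $\bar T(z)$ is an \emph{embedding}, i.e.\ injective, for all but finitely many $p$. Over $\C$, Theorem~\ref{thm 5.4} asserts that $\bar T(z)$ is an embedding, so as a matrix over $\C(z)$ it has rank $d:=\dim\Sing L^{\ox(2g+1)}[2g+1-2r]$; pick a $d\times d$ minor of that matrix that is a nonzero element of $\Q(z)$ and write it as $a(z)/b(z)$ with $a,b\in\Z[z]$, $a\neq 0$ and $b$ dividing a power of the cleared denominator above. The content of $a$ is divisible by only finitely many primes, and for every $p$ outside that finite set and outside the finite sets already excluded, the corresponding minor of the reduced matrix equals the nonzero reduction of $a(z)/b(z)$; hence the reduced $\bar T(z)$ has full rank $d$ and is injective. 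The only step requiring real care — and the one I would write out in full — is this bookkeeping of exceptional primes: one must check that each genericity hypothesis used (non-vanishing of $\prod_{i=1}^{r-1}(2g+1-2i)$, of $d_1d_2$, and of the chosen minor) excludes only finitely many $p$, which reduces to the elementary fact that a nonzero polynomial over $\Q$, once rescaled to have coprime integer coefficients, has nonzero reduction over $\F_p$ for every prime $p$.
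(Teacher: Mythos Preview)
Your proposal is correct and takes essentially the same approach as the paper: both deduce the corollary from the mod-$p$ reduction of $T(z)$ via the defining adjoint relation \eqref{def bT} with respect to the Shapovalov forms. The paper's own justification is the single sentence preceding the corollary, and your argument simply fills in the bookkeeping (integrality of the Gram matrices, the minor argument for injectivity) that the paper leaves implicit.
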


Recall also that the map \eqref{bar TTT},
\bean
\label{bar TTTT}
&&
\\
\notag
\tilde T(z) 
&:&
  \Sing L^{\ox(2g+1)}[2g+1-2r] \to \wedge^r \Sing L^{\ox(2g+1)}[2g-1], \ 
(\bar N_{i_1,\dots, i_r}) \mapsto (\bar M_{i_1,\dots, i_r}),
\\
\notag
\bar M_{i_1,\dots, i_r}
&=&
\sum_{1\leq a_1<\dots < a_r\leq 2g+1} h_{a_1,\dots,a_r}^{i_1,\dots,i_r}(z)
[\bar N_{a_1,\dots,a_r}],
\eean
is proportional to $T(z)$ and 
defines an embedding  over the field $\C$ of the \KZ/ connection  with $\ka=-2$ and values in
  $\Sing L^{\ox(2g+1)}[2g+1-2r]$ to   the \KZ/ connection  with 
 $\ka=-2$ and   values in
$\wedge^r \Sing L^{\ox(2g+1)}[2g-1]$. 

It is clear that this map can be reduced modulo $p$ for all $p$ with finitely many exceptions.
It is also clear that the reduction modulo $p$ remains an embedding for all prime $p$ with finitely many exceptions.

\subsection{Comparison of solutions in characteristic $p$ for $\ka=-2$}

\begin{thm}
\label{thm p-map -2}
Assume that  $p$ is such that $\tilde T(z)$ can be reduced modulo $p$. Then for any positive
integers
 $\ell_1,\dots,\ell_r$,
we have
\bean
\label{p map -2}
\tilde M_{i_1,\dots, i_r}^{\ell_1,\dots,\ell_r}(z)
&=&
\sum_{1\leq a_1<\dots < a_r\leq 2g+1} h_{a_1,\dots,a_r}^{i_1,\dots,i_r}(z)\,
\bar N_{a_1,\dots,a_r}^{\ell_1,\dots,\ell_r}\,.
\eean

\end{thm}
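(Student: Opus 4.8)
The plan is to prove \eqref{p map -2} by reducing modulo $p$ the cohomological computation behind Proposition~\ref{prop sum +2} (the computation that produced the polynomials $h_{a_1,\dots,a_r}^{i_1,\dots,i_r}(z)$). Recall that over $\C$ the identity $[\bar\mu_{i_1,\dots,i_r}]=\sum_{a_1<\dots<a_r} h_{a_1,\dots,a_r}^{i_1,\dots,i_r}(z)\,[\bar\nu_{a_1,\dots,a_r}]$ is obtained by iterating Proposition~\ref{prop rel}, whose content is that $d\alpha_{q,a,J}$ equals $\tfrac{2g+5+2a_1-2r}{2}\,\beta_{q,a,J}$ plus a $\Q$-linear combination, with homogeneous polynomial coefficients $f_{q',a',J'}(z)$, of lexicographically smaller forms $\beta_{q',a',J'}$. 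All the forms $\beta_{q,a,J}$, $\alpha_{q,a,J}$, $\bar\nu$ and $\bar\mu$ carry the common analytic prefactor $\Phi(t,z)^{-1/2}=\prod_{a<b}(t_a-t_b)^{-1}\prod_i\Psi(t_i,z)^{1/2}$, so after dividing it out each relation of Proposition~\ref{prop rel} is an identity among rational functions of $t_1,\dots,t_r$. The first step is to replace this analytic prefactor by its polynomial characteristic-$p$ avatar
\begin{equation*}
\Phi^{[p]}(t,z):=\prod_{1\le a<b\le r}(t_a-t_b)^{\,p-1}\ \prod_{i=1}^{r}\Psi(t_i,z)^{\,(p+1)/2},
\end{equation*}
legitimate since $p-1\equiv-1$ and $(p+1)/2\equiv 1/2\pmod p$, and to observe that its logarithmic $t_1$-derivative reduces correctly:
\begin{equation*}
\frac{1}{\Phi^{[p]}}\frac{\partial\Phi^{[p]}}{\partial t_1}=(p-1)\sum_{j>1}\frac{1}{t_1-t_j}+\frac{p+1}{2}\sum_{c=1}^{2g+1}\frac{1}{t_1-z_c}\ \equiv\ -\sum_{j>1}\frac{1}{t_1-t_j}+\frac12\sum_{c=1}^{2g+1}\frac{1}{t_1-z_c}\pmod p ,
\end{equation*}
which is exactly $-\tfrac12\,\partial_{t_1}\!\log\Phi$ modulo $p$, i.e.\ the factor governing the variation in the proof of Proposition~\ref{prop rel}.

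Granting this, I would run the iteration of Proposition~\ref{prop rel} verbatim in characteristic $p$, with $d$ replaced by $\partial/\partial t_1$ and $\Phi(t,z)^{-1/2}$ replaced by $\Phi^{[p]}(t,z)$: each step produces the same constant $\tfrac{2g+5+2a_1-2r}{2}$ and the same coefficient polynomials $f_{q',a',J'}(z)$, now read in $\F_p[z]$, and the purely combinatorial partial-fraction identities at the end of that proof (for $t_1^{a_1+1}t_j^{a_j}/(t_1-t_j)$, etc.) have integer coefficients, hence hold verbatim over $\F_p$. The standing hypothesis that $\tilde T(z)$ reduces mod $p$ is precisely the statement that all the constants $\tfrac{2g+5+2a_1-2r}{2}$ occurring in the iteration are units in $\F_p$ (equivalently, that the denominators of the $h_{a_1,\dots,a_r}^{i_1,\dots,i_r}(z)$ are prime to $p$). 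Iterating down to the minimal forms, the outcome is a polynomial identity in $\F_p[t_1,\dots,t_r,z_1,\dots,z_{2g+1}]$ of the form
\begin{equation*}
\tilde\mu^{(p)}_{i_1,\dots,i_r}\ -\!\!\sum_{1\le a_1<\dots<a_r\le 2g+1}\!\! h_{a_1,\dots,a_r}^{i_1,\dots,i_r}(z)\,\bar\nu^{(p)}_{a_1,\dots,a_r}\ =\ \sum_{k=1}^{r}\frac{\partial Q_k}{\partial t_k}(t,z)
\end{equation*}
for suitable polynomials $Q_k$. Here one must check that it is $\tilde\mu^{(p)}_{i_1,\dots,i_r}$, and not $\bar\mu^{(p)}_{i_1,\dots,i_r}$, that appears: by \eqref{mu bar}, \eqref{mu bar 2} the form $\bar\mu_{i_1,\dots,i_r}$ equals $\Phi(t,z)^{-1/2}\cdot\prod_{a<b}(t_a-t_b)$ times the symmetrization of $\prod_j(t_j-z_{i_j})^{-1}\,dt_1\wedge\dots\wedge dt_r$, so it carries one explicit factor $\prod_{a<b}(t_a-t_b)$ beyond the master normalization used for $\bar\nu$; since in characteristic $p$ one has $\Phi^{[p]}\cdot\prod_{a<b}(t_a-t_b)=\prod_{a<b}(t_a-t_b)^{p}\,\prod_i\Psi(t_i,z)^{(p+1)/2}=\prod_{a<b}(t_a^{p}-t_b^{p})\,\prod_i\Psi(t_i,z)^{(p+1)/2}$, the char-$p$ avatar of $\bar\mu_{i_1,\dots,i_r}$ automatically acquires the Frobenius Vandermonde factor $\prod_{a<b}(t_a^{p}-t_b^{p})$ of \eqref{ti mu p}, hence equals $\tilde\mu^{(p)}_{i_1,\dots,i_r}$.

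Finally I would apply the $p$-integral $\int_{\ell_1,\dots,\ell_r}$ to both sides of the last identity. Since $\int_{\ell_1,\dots,\ell_r}\partial Q_k/\partial t_k=0$ for every $k$ and every polynomial $Q_k$, the right-hand side dies, and by linearity together with the definitions $\tilde M^{\ell_1,\dots,\ell_r}_{i_1,\dots,i_r}(z)=\int_{\ell_1,\dots,\ell_r}\tilde\mu^{(p)}_{i_1,\dots,i_r}$ and $\bar N^{\ell_1,\dots,\ell_r}_{a_1,\dots,a_r}=\int_{\ell_1,\dots,\ell_r}\bar\nu^{(p)}_{a_1,\dots,a_r}$ one obtains \eqref{p map -2}; this is consistent with \eqref{bar ti}, which one recovers directly by expanding the Vandermonde $\prod_{a<b}(t_a^{p}-t_b^{p})$ and using that multiplication by $t_k^{p}$ implements the shift $\ell_k\mapsto\ell_k-1$ under $\int_{\ell_1,\dots,\ell_r}$. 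I expect the main obstacle to be the second step: one must verify that the iteration of Proposition~\ref{prop rel} descends to $\F_p$ \emph{term by term with no spurious error terms}, and that the Frobenius Vandermonde factor is matched correctly; once the logarithmic-derivative congruence above is in hand, the remainder of the argument is formal.
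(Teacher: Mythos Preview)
Your proposal is correct and follows essentially the same approach as the paper's proof. The paper's argument is more terse: it rewrites $\tilde\mu^{(p)}_{i_1,\dots,i_r}$ using the Frobenius identity $\prod(t_i^p-t_j^p)=\prod(t_i-t_j)^p$ in characteristic $p$ to recognize it as the char-$p$ avatar of $\bar\mu_{i_1,\dots,i_r}$ (exactly your Vandermonde matching), then simply says ``continuing like in the proof of Proposition~\ref{prop sum +2}'' and takes the $p$-integral---whereas you spell out the logarithmic-derivative congruence and the explicit $\sum_k\partial Q_k/\partial t_k$ form of the remainder, which is the content the paper leaves implicit.
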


\begin{proof}
In characteristic $p$, we have  
\bea
\tilde \mu_{i_1,\dots, i_r} ^{(p)}
&=&
\prod_{1\leq i<j\leq r}(t_i^p-t_j^p)
\on{Ant}_{t_1,\dots, t_r}
\left(\frac {\Psi(t_i,z)^{(p+1)/2}}{t_i-z_{i_j}} 
\right)
\\
&=&
\prod_{1\leq i<j\leq r}(t_i-t_j)^{p-1}
\on{Sym}_{t_1,\dots, t_r}
\left(\prod_{1\leq i<j\leq r}(t_i-t_j)\frac {\Psi(t_i,z)^{(p+1)/2}}{t_i-z_{i_j}} 
\right)
\eea
Then continuing like in the proof of Proposition \ref{prop sum +2} we conclude that the $p$-integral of 
$\tilde \mu_{i_1,\dots, i_r} ^{(p)}$ associated with positive integers $\ell_1, \dots, \ell_r$  is given by the right-hand side in formula 
\eqref{p map -2}.
\end{proof}

\begin{cor}
\label{cor t vs b N}
Assume that $p$ is such that $\bar T(z)$ is an embedding. Then 
the $p$-hypergeometric solutions  $\bar N^{\ell_1, \dots, \ell_r}(z)$ 
generate a $\K(z)$-vector space of dimension $\binom{g}{r}$.
\end{cor}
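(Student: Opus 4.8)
The plan is to transport the dimension count of Lemma~\ref{cor ti vs b} through the embedding $\bar T(z)$. First I would pass from $\bar T(z)$ to the explicit map $\tilde T(z)$ of \eqref{bar TTT}. By Theorem~\ref{thm tilde T} these differ by a nonzero rational constant $c$, and since only finitely many primes divide the numerator or denominator of $c$, for all but finitely many $p$ the reductions modulo $p$ of $\bar T(z)$ and $\tilde T(z)$ are still proportional through a nonzero element of $\F_p$. Enlarging if necessary the finite exceptional set already present in Corollary~\ref{cor bar red} and Proposition~\ref{prop sum +2}, we may thus assume that the hypothesis ``$\bar T(z)$ reduces modulo $p$ to an embedding'' forces $\tilde T(z)$ to reduce to an embedding too, i.e.\ to an injective $\K(z)$-linear map $\Sing L^{\ox(2g+1)}[2g+1-2r]\otimes\K(z)\hookrightarrow\wedge^r\Sing L^{\ox(2g+1)}[2g-1]\otimes\K(z)$.

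Next I would invoke Theorem~\ref{thm p-map -2}: collecting coordinates, it says that $\tilde M^{\ell_1,\dots,\ell_r}(z)=\tilde T(z)\bigl(\bar N^{\ell_1,\dots,\ell_r}(z)\bigr)$ for all positive integers $\ell_1,\dots,\ell_r$, that is, $\tilde T(z)$ carries the $p$-hypergeometric solution $\bar N^{\ell_1,\dots,\ell_r}(z)$ on $\Sing L^{\ox(2g+1)}[2g+1-2r]$ to the renumbered solution $\tilde M^{\ell_1,\dots,\ell_r}(z)$ of \eqref{ti mu p} on $\wedge^r\Sing L^{\ox(2g+1)}[2g-1]$ (both with $\ka=-2$). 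Since an injective linear map restricts to an isomorphism between spans, the $\K(z)$-span of $\{\bar N^{\ell_1,\dots,\ell_r}(z):\ell_1,\dots,\ell_r\ge1\}$ is mapped isomorphically onto the $\K(z)$-span of $\{\tilde M^{\ell_1,\dots,\ell_r}(z):\ell_1,\dots,\ell_r\ge1\}$, which by Lemma~\ref{cor ti vs b} has dimension $\binom{g}{r}$. Hence the former span also has dimension $\binom{g}{r}$, which is the claim; in particular the infinitely many solutions $\bar N^{\ell_1,\dots,\ell_r}(z)$ span only a $\binom{g}{r}$-dimensional space, mirroring the behavior of the $\tilde M^{\ell_1,\dots,\ell_r}(z)$.

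The content sits entirely in Theorem~\ref{thm p-map -2} and Lemma~\ref{cor ti vs b}, so the argument is formal. The only point that needs care --- and the main obstacle --- is the bookkeeping in the first step: checking that, outside a finite set of primes, the two a priori distinct morphisms $\bar T(z)$ and $\tilde T(z)$ really have proportional reductions with invertible constant, so that the hypothesis on $\bar T(z)$ genuinely transfers to $\tilde T(z)$ (which is all Theorem~\ref{thm p-map -2} actually requires). Everything downstream is immediate.
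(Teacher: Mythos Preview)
Your proof is correct and follows exactly the approach the paper intends: combine Theorem~\ref{thm p-map -2}, which gives $\tilde T(z)(\bar N^{\ell_1,\dots,\ell_r})=\tilde M^{\ell_1,\dots,\ell_r}$, with Lemma~\ref{cor ti vs b} on the span of the $\tilde M$'s, and use injectivity of the embedding to transfer the dimension count. The paper states the corollary without proof precisely because it is immediate from these two ingredients; your extra care about the constant relating $\bar T(z)$ and $\tilde T(z)$ modulo $p$ is appropriate (and the paper tacitly assumes this identification elsewhere, e.g.\ in the proof of Theorem~\ref{thm ort r=2}).
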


For example, for $g=2$, the vectors $\bar N^{\ell_1, \ell_2}(z)$ generate a one-dimensional space 
$\K(z)$-vector space with a basis vector $\bar N^{1,3}(z)$.  
We also have
\bea
  \bar N^{1,1}(z)=\bar N^{1,2}(z)=0,
  \quad
\bar N^{2,2 }(z)+2\bar N^{1, 3}(z)=0,
\quad
\bar N^{2,3}(z)=\bar N^{3,3}(z)=0.
\eea

\subsection{Orthogonal relations on $\Sing L^{\ox(2g+1)}[2g+1-2r]$}
\label{sec ort}

Recall the orthogonal relations of Theorem \ref{thm ort},
\bean
\label{ort n}
S\big(N^{\ell_1}(z), \bar N^{\ell_2}(z)\big) = 0, \qquad \ell_1,\ell_2 =1, \dots, g,
\eean
where $S$ is the Shapovalov form on $\Sing L^{\ox(2g+1)}[2g-1]$  and 
$N^{\ell_1}(z), \bar N^{\ell_2}(z)$ are
\\
$\Sing L^{\ox(2g+1)}[2g-1]$-valued  $p$-hypergeometric solutions of the \KZ/equations
with $\ka=2$ and $-2$, respectively.
This implies
\bea
S^{\wedge^r} (M^{\ell_1,\dots, \ell_r}(z), \bar M^{m_1,\dots, m_r}(z))=0
\eea
for all positive integers $\ell_1,\dots, \ell_r$ and $m_1,\dots,m_r$.

\begin{thm}
\label{thm ort r=2}

For all prime $p$ with finitely many exceptions, we have 
\bean
\label{ort n2}
S\big(N^{\ell_1,\dots, \ell_r}(z), \bar N^{m_1,\dots, m_r}(z)\big)=0,
\eean
for all positive integers $\ell_1,\dots, \ell_r$ and $m_1,\dots,m_r$.
 Here 
 $S$ is the Shapovalov form on
 \\
  $\Sing L^{\ox(2g+1)}[2g+1-2r]$  and 
$N^{\ell_1,\dots, \ell_r}(z), \bar N^{m_1,\dots, m_r}(z)$ are
$\Sing L^{\ox(2g+1)}[2g+1-2r]$-valued  $p$-hypergeometric solutions of the \KZ/equations
with $\ka=2$ and $-2$, respectively.

\end{thm}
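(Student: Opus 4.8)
The plan is to transport the Shapovalov pairing from $\Sing L^{\ox(2g+1)}[2g+1-2r]$ up to the wedge space $\wedge^r\Sing L^{\ox(2g+1)}[2g-1]$ via the Satake-type maps $T(z)$ and $\bar T(z)$, and then to invoke the orthogonality of the wedge-power $p$-hypergeometric solutions already recorded just before the theorem. First I would fix a prime $p$ outside the finite exceptional set consisting of the primes dividing $\prod_{i=1}^{r-1}(2g+1-2i)$ (so that $T(z)$, and hence by Section~\ref{sec 7.3} also $\bar T(z)$ and $\tilde T(z)$, reduce modulo $p$ to homomorphisms of the pertinent \KZ/ connections, as in Sections~\ref{sec 5.6} and~\ref{sec 7.3}), together with the finitely many primes for which the comparison formulas of Theorems~\ref{thm p-map} and~\ref{thm p-map -2} break down, and the primes $p\le 2g+1$ (so that Theorem~\ref{thm ort}, used at the end, applies).

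Next I would observe that $N^{\ell_1,\dots,\ell_r}(z)=T(z)\big(M^{\ell_1,\dots,\ell_r}(z)\big)$, which follows by comparing formula~\eqref{tn2 r} of Theorem~\ref{thm p-map} with the definition~\eqref{T(z)} of $T(z)$; here $M^{\ell_1,\dots,\ell_r}(z)$ is the $\ka=2$ wedge-power $p$-hypergeometric solution of \S\ref{sec 6}. Applying the adjointness~\eqref{def bT}, namely $S(T(z)v,w)=S^{\wedge^r}(v,\bar T(z)w)$, with $v=M^{\ell_1,\dots,\ell_r}(z)$ and $w=\bar N^{m_1,\dots,m_r}(z)$, then gives
\[
S\big(N^{\ell_1,\dots,\ell_r}(z),\,\bar N^{m_1,\dots,m_r}(z)\big)=S^{\wedge^r}\big(M^{\ell_1,\dots,\ell_r}(z),\,\bar T(z)\,\bar N^{m_1,\dots,m_r}(z)\big).
\]
By Theorem~\ref{thm  tilde T} one has $\bar T(z)=c\,\tilde T(z)$ for a fixed nonzero rational constant $c$, and by Theorem~\ref{thm p-map -2}, $\tilde T(z)\big(\bar N^{m_1,\dots,m_r}(z)\big)=\tilde M^{m_1,\dots,m_r}(z)$; hence $\bar T(z)\bar N^{m_1,\dots,m_r}(z)=c\,\tilde M^{m_1,\dots,m_r}(z)$ in characteristic $p$ (if $c\equiv0\bmod p$ the right-hand side vanishes and the theorem is immediate, so one may assume $c\not\equiv0$). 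Formula~\eqref{bar ti} expresses $\tilde M^{m_1,\dots,m_r}(z)$ as a $\Z$-linear combination of the $\ka=-2$ wedge-power $p$-hypergeometric solutions $\bar M^{m'_1,\dots,m'_r}(z)$ (with the convention that any such vector having a nonpositive superscript is zero).

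To finish, I would invoke the orthogonality displayed immediately above the theorem, $S^{\wedge^r}\big(M^{\ell_1,\dots,\ell_r}(z),\bar M^{m'_1,\dots,m'_r}(z)\big)=0$ for all positive integers, which is the determinant built from the $r=1$ relations of Theorem~\ref{thm ort}; by bilinearity this yields $S^{\wedge^r}\big(M^{\ell_1,\dots,\ell_r}(z),\tilde M^{m_1,\dots,m_r}(z)\big)=0$, whence $S\big(N^{\ell_1,\dots,\ell_r}(z),\bar N^{m_1,\dots,m_r}(z)\big)=0$, as required. The main obstacle I anticipate is not a single computation but the bookkeeping required to confirm that the identity $N^{\ell_1,\dots,\ell_r}=T(z)M^{\ell_1,\dots,\ell_r}$, the adjointness~\eqref{def bT}, the proportionality $\bar T(z)=c\,\tilde T(z)$, and the comparison formula of Theorem~\ref{thm p-map -2} all survive reduction modulo $p$ simultaneously outside one common finite exceptional set; since each ingredient is defined over $\Q$ with homogeneous polynomial matrix entries, this should go through, but it is the step deserving the most care.
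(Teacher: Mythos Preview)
Your proposal is correct and follows essentially the same route as the paper's own proof: write $N^{\ell_1,\dots,\ell_r}=T(z)M^{\ell_1,\dots,\ell_r}$, pass to $S^{\wedge^r}$ via the adjointness \eqref{def bT}, replace $\bar T(z)$ by $\on{const}\cdot\tilde T(z)$ and then $\tilde T(z)\bar N^{m_1,\dots,m_r}$ by $\tilde M^{m_1,\dots,m_r}$, and conclude from the wedge-power orthogonality $S^{\wedge^r}(M,\bar M)=0$. Your version is in fact more explicit than the paper's about the finite set of excluded primes and about the passage from $\tilde M$ to $\bar M$ via \eqref{bar ti}.
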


\begin{proof}  

Recall that $\bar T(z) = \on{const}\tilde  T(z)$ by Theorem \ref{thm tilde T}. Hence
\bea
&&
S\big(N^{\ell_1, \dots, \ell_r}(z), \bar N^{m_1,\dots,m_r}(z)\big) =
S\big(T(z) M^{\ell_1, \dots, \ell_r}(z), \bar N^{m_1,\dots, m_r}(z)\big)
\\
&&
\phantom{aaa}
= S^{\wedge^r}\big(M^{\ell_1, \dots, \ell_r}(z), \bar T(z) \bar N^{m_1,\dots,m_r}(z)\big)
= 
\on{const} S^{\wedge^r}\big(M^{\ell_1, \dots, \ell_r}(z), \tilde T(z) \tilde N^{m_1,\dots,m_r}(z)\big)
\\
&&
\phantom{aaa}
= 
\on{const}S^{\wedge^r}\big(M^{\ell_1,\dots, \ell_r}(z), \tilde M^{m_1,\dots,m_r}(z)\big) = 0.
\eea
\end{proof}

\section{$p$-curvature for $\ka=\pm 2$}
\label{sec 8}

\subsection{Definition of $p$-curvature} 

(\cite[Section 5]{K1}, also see, for example, \cite[Section 2.4]{EV}.) 
Let $\K$ be a field of characteristic $p$. Let $\mathcal V$ be a $\K$-vector bundle over base $X$. 
Let $\nabla$ be a flat connection on  $\mathcal V$.
 If $x_1,...,x_n$ are local coordinates on $X$, then we have commuting operators $\nabla_1,...,\nabla_n$ of covariant partial derivatives with respect to these coordinates, which act on the space $\Gamma_{\rm rat}(\mathcal V)$ of rational sections of 
 $\mathcal V$ and determine $\nabla$.  
The operators
$$
C_i=C_i(\nabla):=\nabla_i^p\ :\  \Gamma_{\rm rat}(\mathcal V)\to \Gamma_{\rm rat}(\mathcal V),
\qquad i=1,\dots,n,
$$ 
are $\K(X)$-linear and called  the  $p$-curvature operators of $\nabla$. We have $[\nabla_i, C_j] =[C_i,C_j]=0$ for all $i,j$.  

\vsk.2>
An application of $p$-curvature is the computation of the space of local flat sections
 of $\nabla$.
It is known that the intersection of kernels $\cap _{i=1}^n \on{ker} C_i$ of the $p$-curvature operators
coincides with the space generated by flat sections of $\nabla$,  see the Cartier descent theorem
\cite[Theorem 5.1]{K1}.

\medskip

In this section we shall discuss the $p$-curvature of the \KZ/ equations with $\ka=2$ on 
the spaces $\Sing L^{\ox(2g+1)}[2g-1]$, $\wedge^r \Sing L^{\ox(2g+1)}[2g-1]$, and 
$\Sing L^{\ox(2g+1)}[2g+1-2r]$. The $p$-curvature operators on 
$\Sing L^{\ox(2g+1)}[2g-1]$ are described 
in \cite{VV1}. Using this information we describe  the $p$-curvature on
$\wedge^r \Sing L^{\ox(2g+1)}[2g-1]$.  Finally, using the isomorphism $T(z)$, we 
obtain information on the $p$-curvature on $\Sing L^{\ox(2g+1)}[2g+1-2r]$.

\subsection{$p$-curvature on $\Sing L^{\ox(2g+1)}[2g-1]$}
\label{sec 8.2}

Assume that $p>2g+1$. Let $\K$ be a field of characteristic $p$. 
 Denote  $D_a(z) = \prod_{j\ne a} (z_a-z_j).$
Recall  the  $p$-hypergeometric solutions 
$N^{\ell}(z)$, $\ell=1,\dots,g$, 
of the \KZ/ equations on 
$\Sing L^{\ox(2g+1)}[2g-1]$ with $\ka=2$
and the  $p$-hypergeometric solutions 
$\bar N^{\ell}(z)$, $\ell=1,\dots,g$, 
of the \KZ/ equations on $\Sing L^{\ox(2g+1)}[2g-1]$ with $\ka=-2$.

We have coordinates $z=(z_1,\dots,z_{2g+1})$ on the base of the \KZ/ connection and hence 
we have
the corresponding $p$-curvature operators $C_a(z)$, $a=1,\dots, 2g+1$.

\begin{thm}
[{\cite[Formula (5.14)]{VV1}}] 
\label{thm cu r=1}
For $a=1,\dots, 2g+1$, the $p$-curvature operators of the \KZ/ equations on
$\Sing L^{\ox(2g+1)}[2g-1]$ with $\ka=2$ are given by the formula
\bean
\label{cu r=1 +2}
C_a(v)\, =\, \frac{-1}{2 D_a(z)^p}
\,S\!\left(\sum_{m=1}^g z_a^{p(m-1)} \bar N^m(z), v\right) 
\sum_{\ell=1}^g z_a^{p(\ell-1)} N^\ell(z)\,,
\eean
where $v\in\Sing L^{\ox(2g+1)}[2g-1]$ and $S$ is the Shapovalov form.
\end{thm}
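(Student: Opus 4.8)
The plan is to combine three ingredients: the Cartier descent theorem \cite[Theorem 5.1]{K1}, the Shapovalov self-duality \eqref{do} of the $\kappa=\pm2$ connections, and the orthogonality relations of Theorem \ref{thm ort}. As a preliminary reduction, recall that $C_a=(\nabla^{\on{KZ},2}_a)^p$ is $\K(z)$-linear and commutes with every $\nabla^{\on{KZ},2}_b$, so it is a rational $\on{End}$-valued function. Since $\der_a^{\,p}=0$ as a derivation of $\K(z)$, iterating \eqref{do} $p$ times gives $S(C_a^{\kappa=2}x,y)+S(x,C_a^{\kappa=-2}y)=0$, i.e.\ $C_a^{\kappa=2}$ and $-C_a^{\kappa=-2}$ are adjoint with respect to $S$; hence the asserted formula for $\kappa=2$ is equivalent to the sign-flipped one for $\kappa=-2$, and I may work with whichever is convenient. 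Next, note that $\Psi_a(z):=\sum_{\ell=1}^g z_a^{p(\ell-1)}N^\ell(z)$ is itself a flat section of the $\kappa=2$ connection (each $z_a^{p}$ is a $p$-th power, killed by every $\der_b$), and likewise $\bar\Psi_a(z):=\sum_{m=1}^g z_a^{p(m-1)}\bar N^m(z)$ is flat for $\kappa=-2$.

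\textbf{Localizing $C_a$.} Using the adjointness above, $\on{im}\,C_a^{\kappa=2}=(\ker C_a^{\kappa=-2})^{\perp_S}$. Since $\bar N^1,\dots,\bar N^g$ are $\K(z)$-independent flat sections for $\kappa=-2$ (Theorem \ref{thm lin ind -2}), $\ker C_a^{\kappa=-2}\supseteq\K(z)\text{-span}\{\bar N^m\}$, so by the orthogonality $S(N^\ell,\bar N^m)=0$ of Theorem \ref{thm ort} and the equality $\dim_{\K(z)}\langle\bar N^m\rangle=\dim_{\K(z)}\langle N^\ell\rangle=g$ one gets $\on{im}\,C_a^{\kappa=2}\subseteq\K(z)\text{-span}\{N^\ell\}$; symmetrically $\ker C_a^{\kappa=2}\supseteq\K(z)\text{-span}\{N^\ell\}$. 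Thus $C_a$ factors through a $\K(z)$-linear map $\phi_a\colon \mathcal V/\langle N^\ell\rangle\to\langle N^\ell\rangle$ of $g$-dimensional spaces (and in particular $C_a^2=0$, $\on{im}\,C_a\subseteq\bigcap_b\ker C_b=$ span of flat sections by Cartier descent). It remains to show $\on{rank}\,\phi_a\le1$, to identify its image with the line $\K(z)\Psi_a$, and to pin down the functional and the scalar.

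\textbf{The main obstacle: rank one.} This is where the real work lies. I would restrict $\nabla^{\on{KZ},2}_a$ to the $z_a$-line with the remaining $z_j$ frozen generically: a rank-$2g$ connection, regular singular at the $2g$ points $z_a=z_j$ with residue $-\tfrac12(P^{(a,j)}-1)$, whose eigenvalues are $0$ and $1$, hence in $\F_p$. After the gauge transformation of the subsection ``Gauge transformation'' (clearing $\prod_{j\ne a}(z_a-z_j)^{1/2}$) one lands on a connection of hyperelliptic/Gauss--Manin type. Applying the Jacobson identity $(\der+c/x)^p=\der^p+(c^p-c)/x^p$ after block-diagonalizing each residue shows the local $p$-curvature contribution at every finite $z_a=z_j$ vanishes; the whole of $C_a$ is therefore concentrated at $z_a=\infty$, where the residue is $\tfrac12\sum_{b\ne a}(P^{(a,b)}-1)$ on $\Sing L^{\ox(2g+1)}[2g-1]$, and a direct computation of this residue (a Gaudin-type operator) exhibits a one-dimensional obstruction, giving $\on{rank}\,C_a\le1$. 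I expect this residue/Jacobson computation at infinity to be the hardest part.

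\textbf{Identifying the data.} Granting rank one, $\on{im}\,C_a$ is a $\nabla$-stable line inside $\K(z)\text{-span}\{N^\ell\}$, hence equals $\K(z)\cdot\bigl(\sum_\ell f_\ell N^\ell\bigr)$ with $\der_b f_\ell=0$ for all $b$, i.e.\ with $f_\ell\in\K(z^p)$; computing the leading behaviour of $C_a$ as $z_a\to\infty$ forces $f_\ell=z_a^{p(\ell-1)}$, so $\on{im}\,C_a=\K(z)\Psi_a$. The functional $v\mapsto (\text{coefficient of }\Psi_a\text{ in }C_a v)$ must vanish on $\ker C_a\supseteq\K(z)\text{-span}\{N^\ell\}$; by orthogonality $S(\bar\Psi_a,N^\ell)=0$, so the functional is proportional to $S(\bar\Psi_a,-)$. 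Finally, the scalar $-1/(2D_a(z)^p)$ is determined by the Jacobson local analysis at any one $z_a=z_j$: the simple pole of the residue $-\tfrac12(P^{(a,j)}-1)$ there produces exactly the factor $\prod_{j\ne a}(z_a-z_j)^{-p}=D_a(z)^{-p}$ together with the constant $-\tfrac12$; equivalently one checks the identity on a single explicit vector. Assembling these steps yields the stated formula \eqref{cu r=1 +2}.
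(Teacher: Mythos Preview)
The paper does not prove this theorem: it is simply quoted from \cite[Formula (5.14)]{VV1}, and the surrounding text only unpacks consequences of the formula. Judging from Remark~\ref{rem Katz} in the appendix, the proof in \cite{VV1} proceeds via the Katz $p$-curvature formula \cite[Theorem 3.2]{K2}, which expresses $C_a$ as a composition (Cartier)$\circ$(Kodaira--Spencer)$\circ$(Cartier${}^{-1}$); since the Kodaira--Spencer map on $H^1(C(z))$ visibly has rank one (formulae \eqref{dmg}--\eqref{ommu} make this explicit), the rank, the image line, the kernel hyperplane, and the scalar all drop out simultaneously. Your approach is quite different and attempts to avoid the Cartier--Katz machinery.

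Your reduction in the first two paragraphs is clean and correct: the adjointness $S(C_a^{\kappa=2}x,y)=-S(x,C_a^{\kappa=-2}y)$, together with the orthogonality of Theorem~\ref{thm ort} and the dimension count, does pin $C_a$ down to a $\K(z)$-linear map $\phi_a:\langle N^\ell\rangle^{\!\perp}\!/\langle N^\ell\rangle\to\langle N^\ell\rangle$ between $g$-dimensional spaces. The identification of the $\nabla$-stable image line with a $\K(z^p)$-combination of the $N^\ell$ is also fine.

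The genuine gap is your ``rank one'' step. The Jacobson identity $(\der+c/x)^p=\der^p+(c^p-c)/x^p$ holds for scalar $c$, but the residues $-\tfrac12(P^{(a,j)}-1)$ at the various $z_a=z_j$ are non-commuting matrices, so you cannot ``block-diagonalize each residue'' one at a time: any gauge that diagonalizes one residue will in general destroy the simple-pole form at the others, and $p$-curvature is a global object with no additive decomposition into ``local contributions'' of the kind you invoke. The claim that $C_a$ is ``concentrated at $z_a=\infty$'' is therefore not well-defined as stated. Likewise, the assertions that the leading behaviour forces $f_\ell=z_a^{p(\ell-1)}$ and that the local Jacobson analysis yields the scalar $-1/(2D_a(z)^p)$ are not supported by the argument given. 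To make your route work you would need either a genuine global computation of $(\nabla_a)^p$ on $\Sing L^{\ox(2g+1)}[2g-1]$, or to import the Katz formula after all---at which point the \cite{VV1} argument is shorter.
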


This formula shows that $C_a(z)$ is an operator of rank 1 whose image is generated by
the vector $\sum_{\ell=1}^g z_a^{p(\ell-1)} N^\ell(z)$ and whose kernel is the orthogonal hyperplane in 
$\Sing L^{\ox(2g+1)}[2g-1]$  to the vector 
$\sum_{m=1}^g z_a^{p(m-1)} \bar N^m(z)$ under the Shapovalov form.

To simplify calculations we define the normalized $p$-curvature operators by the formula
\bean
\label{red p-cu}
\tilde C_a(z)(v) \,=\,
S\!\left(\sum_{m=1}^g z_a^{p(m-1)} \bar N^m(z), v\right) 
\sum_{\ell=1}^g z_a^{p(\ell-1)} N^\ell(z)\,.
\eean
This operator has the same image and kernel as $C_a(z)$. The operator $\tilde C_a(z)$ is a homogeneous polynomial in $z$ of degree  $(2g-1)p$.

Denote 
\bean
\label{V}
V=\Sing L^{\ox(2g+1)}[2g-1].
\eean
This is a $2g$-dimensional vector space over $\K$.  To study the $p$-curvature operators
we fix $z=(z_1,\dots,z_n)$ with distinct coordinates and introduce on $V$ a basis depending on $z$, the basis consisting of vectors
$v_1,\dots, v_g$, $w_1,\dots,w_g$ and associated coordinates $x_1,\dots,x_g$, $y_1,\dots,y_g$ such that
\bean
\label{pgb}
x_i (v_j)=\delta_{ij}, \quad x_i(y_j)=0, \quad y_i(v_j)=0,\quad y_i(w_j)=\delta_{ij}\,.
\eean
Namely, we define 
\bea
w_{i}= N^i(z),  \qquad x_i= S(\cdot, \bar N_i(z)), \qquad i=1,\dots, g.
\eea
Then we choose vectors $v_1, \dots, v_g$  with the property
\bea
x_i(v_j)=\dl_{ij}\,,\qquad i,j=1,\dots,g.
\eea
The vectors $v_1,\dots, v_g$ are defined uniquely up to addition of linear combinations of $w_{1}\dots, w_{g}.$
The vectors $v_1,\dots,v_{g},$  $w_1,\dots,w_g$ form a basis of $V$. 
Finally, using conditions \eqref{pgb} we uniquely determine  the associated coordinates 
$y_{1},\dots,  y_{g}$.
We call this basis   preliminary good.

\vsk.2>

\vsk.2>
Denote $V_{0,1} = \langle w_{i}\mid i=1,\dots,g\rangle\subset V$.

\begin{lem}
\label{lem v-flat}
The subspace $V_{0,1}$ is  \KZ/-invariant.
The projections of the vectors $v_1,\dots, v_g$  to  $V/V_{0,1}$ do not depend 
on the choice of a preliminary good basis of $V$.
 The vectors $v_1,\dots, v_g$ project to a \KZ/-flat basis of $V/V_{0,1}$.

\end{lem}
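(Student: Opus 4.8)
The plan is to establish the three assertions in order, using only the explicit rank-one formula for the $p$-curvature operators (Theorem \ref{thm cu r=1}), the orthogonality relations (Theorem \ref{thm ort}), and the Cartier descent theorem. First I would show that $V_{0,1} = \langle N^1(z),\dots,N^g(z)\rangle$ is $\nabla^{\mathrm{KZ},2}$-invariant. This is essentially definitional: each $N^\ell(z)$ is, by Theorem \ref{thm KZ p}, a solution of the \KZ/ equations with $\ka=2$, so $\nabla^{\mathrm{KZ},2}_a N^\ell(z) = 0$ for all $a$; hence for any rational function $f$ the section $\nabla^{\mathrm{KZ},2}_a(f N^\ell(z)) = (\der_a f)\,N^\ell(z) \in V_{0,1}$. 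Thus $V_{0,1}$ is preserved by all the covariant derivatives, i.e. it is a \KZ/-invariant sub-local-system (over the field $\K(z)$), and moreover its sections are spanned by flat sections, so $V_{0,1}$ is itself a trivial/constant sub-connection.

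Next I would prove that the images of $v_1,\dots,v_g$ in $V/V_{0,1}$ are independent of the choice of preliminary good basis. By construction the only freedom in choosing the $v_j$ is the addition of an arbitrary $\K(z)$-linear combination of $w_1,\dots,w_g$ (since the conditions $x_i(v_j)=\delta_{ij}$ determine $v_j$ modulo $\ker(x_1)\cap\dots\cap\ker(x_g) = V_{0,1}$, using that $x_1,\dots,x_g,y_1,\dots,y_g$ is a basis of $V^*$ and $V_{0,1}=\langle w_i\rangle$ is exactly the common kernel of the $x_i$ — this last identity follows from $x_i(w_j)=0$, which holds because $x_i = S(\cdot,\bar N_i(z))$ and $S(N^j(z),\bar N_i(z))=0$ by Theorem \ref{thm ort}). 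Consequently $v_j \bmod V_{0,1}$ is canonically determined, which is the second claim. (One should also note that the functionals $x_i$ themselves are canonically determined, being defined directly from the fixed solutions $\bar N^i(z)$ and $S$; it is only the $v_j$ that involve a choice, and that choice lives precisely in $V_{0,1}$.)

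Finally, to see that $v_1,\dots,v_g$ project to a \KZ/-flat basis of $V/V_{0,1}$, I would compute $\nabla^{\mathrm{KZ},2}_a v_j$ and show it lies in $V_{0,1}$. Write $\nabla^{\mathrm{KZ},2}_a v_j = \der_a v_j - \tfrac12 H_a(z) v_j$ where we now think of $v_j = v_j(z)$ as a (rational) section; the point is to show this is an element of $V_{0,1}$, i.e. that applying each coordinate functional $x_i$ gives zero modulo nothing — more precisely that $x_i(\nabla^{\mathrm{KZ},2}_a v_j) = 0$ for all $i$. Using the duality \eqref{do} between $\nabla^{\mathrm{KZ},2}$ and $\nabla^{\mathrm{KZ},-2}$ under $S$, for $x_i(\cdot) = S(\cdot,\bar N^i(z))$ we get
\[
x_i(\nabla^{\mathrm{KZ},2}_a v_j) = \der_a S(v_j,\bar N^i(z)) - S(v_j, \nabla^{\mathrm{KZ},-2}_a \bar N^i(z)) = \der_a(\delta_{ij}) - 0 = 0,
\]
since $S(v_j,\bar N^i(z)) = x_i(v_j) = \delta_{ij}$ is constant and $\bar N^i(z)$ is $\nabla^{\mathrm{KZ},-2}$-flat by Theorem \ref{thm KZ p -2}. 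Hence $\nabla^{\mathrm{KZ},2}_a v_j \in \bigcap_i \ker x_i = V_{0,1}$, so the induced sections $\bar v_j$ on $V/V_{0,1}$ satisfy $\nabla^{\mathrm{KZ},2}_a \bar v_j = 0$; and they form a basis of the $g$-dimensional quotient because $v_1,\dots,v_g,w_1,\dots,w_g$ is a basis of $V$. The main subtlety to watch is keeping straight what is canonical versus what involves a choice — in particular verifying cleanly that the common kernel of the $x_i$ is exactly $V_{0,1}$ (which is where the orthogonality Theorem \ref{thm ort} enters, together with nondegeneracy of $S$ on $\Sing L^{\ox(2g+1)}[2g-1]$ and linear independence of the $N^\ell(z)$ and of the $\bar N^m(z)$) — but once that is in place the flatness computation is the short duality argument above.
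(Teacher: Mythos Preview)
Your proposal is correct and follows essentially the same approach as the paper: both argue that $V_{0,1}$ is invariant because it is spanned by flat sections $w_i=N^i(z)$, and both prove flatness of $\bar v_j$ in $V/V_{0,1}$ by differentiating $\delta_{ij}=S(v_j,\bar N^i(z))$ via the duality \eqref{do}, using that $\bar N^i(z)$ is $\nabla^{\mathrm{KZ},-2}$-flat, and then invoking the orthogonality relations \eqref{ort} to identify $\bigcap_i\ker x_i$ with $V_{0,1}$. Your opening remark that you will use the rank-one $p$-curvature formula and Cartier descent is a red herring---neither enters the argument, and you should drop that sentence.
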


\begin{proof} The first two statements are trivial since $V_{0,1}$ is generated by solutions $w_1,\dots,w_g$. 
To prove the second statement we differentiate the identity 
$\delta_{ij}= x_j(v_i) = S(v_i, \bar N^j)$, where $i,j=1,\dots,g$. We obtain
\bea
0=\der_a S(v_i, \bar N^j) = S(\nabla_a^{\on{KZ},\ka= 2} v_i, \bar N^j)
+ S(v_i, \nabla_a^{\on{KZ}, \ka=-2}  \bar N^j) =
S(\nabla_a^{\on{KZ}, \ka=2} v_i, \bar N^j).
\eea
By the orthogonality relations \eqref{ort}, the subspace $V_{0,1}$ 
is the annihilator of the subspace $\langle \bar N^j(z)\mid j=1,\dots,g\rangle$ under the Shapovalov form.
Hence $\nabla_a^{\on{KZ},\ka= 2} v_i \in V^{0,1}$ and the  lemma follows.
\end{proof}

For $a=1,\dots,2g+1$, define $2g+1$ vectors of $V$  and $2g+1$ linear functions on $V$:
\bea
K_a=w_{1}+z_a^p w_{2} + \dots + z_a^{p(g-1)}w_{g}\,,
\qquad  
L_a=x_1+z_a^p x_2+ \dots + z_a^{p(g-1)} x_g\,.
\eea
We have
\bean
\label{KL span}
\langle K_a\mid a
=1,\dots,2g+1\rangle 
&=&
 \langle w_1,\dots,w_g\rangle,
\\
\notag
\langle L_a\mid a=1,\dots,2g+1\rangle
& =&
 \langle x_1,\dots,x_g\rangle.
\eean
Formula \eqref{red p-cu} takes the following form,
\bean
\label{atv}
\tilde C_a(z)(v) = L_a(v) K_a\,.
\eean
It follows from \eqref{atv}, that $\cap_{a=1}^{2g+1} \on{ker} \tilde C_a(z) = \langle w_1,\dots,w_g\rangle$ has dimension $g$. 
Hence we obtain the following corollary.

\begin{cor}
[{\cite[Theorem 1.8]{VV1}}]
\label{cor cu r=1}

All solutions in characteristic $p$ of the \KZ/ equations on 
$\Sing L^{\ox (2g+1)}[2g-1]$ 
with $\ka=2$ are generated by the $p$-hypergeometric solutions  $N^\ell(z)$, $\ell=1,\dots,g$.

\end{cor}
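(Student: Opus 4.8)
The plan is to read off the statement from the explicit description of the $p$-curvature operators in Theorem~\ref{thm cu r=1}, exactly as recorded in formula~\eqref{atv}. Recall that the Cartier descent theorem \cite[Theorem~5.1]{K1} identifies the space of solutions in characteristic $p$ of the \KZ/ equations on $V = \Sing L^{\ox(2g+1)}[2g-1]$ with $\bigcap_{a=1}^{2g+1} \ker C_a(z)$, where $C_a(z)$ are the $p$-curvature operators. Since each normalized operator $\tilde C_a(z)$ has the same kernel as $C_a(z)$, it suffices to compute $\bigcap_{a=1}^{2g+1} \ker \tilde C_a(z)$.

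The main (and essentially only) step is the computation of this intersection. By formula~\eqref{atv}, $\tilde C_a(z)(v) = L_a(v) K_a$, so $v \in \ker \tilde C_a(z)$ if and only if $L_a(v) = 0$ (using $K_a \ne 0$, which holds since $p > 2g+1$ forces the $K_a$ to be nonzero; indeed the Vandermonde-type relations \eqref{KL span} show the $K_a$ span all of $\langle w_1,\dots,w_g\rangle$). Hence
\[
\bigcap_{a=1}^{2g+1} \ker \tilde C_a(z) = \bigcap_{a=1}^{2g+1} \ker L_a = \{\, v \in V \mid L_a(v) = 0 \text{ for all } a \,\}.
\]
By \eqref{KL span}, the functionals $L_1,\dots,L_{2g+1}$ span $\langle x_1,\dots,x_g\rangle$, so their common zero locus is the common zero locus of $x_1,\dots,x_g$. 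By the defining property \eqref{pgb} of the preliminary good basis, this common zero locus is exactly $\langle w_1,\dots,w_g\rangle = V_{0,1}$, which is $g$-dimensional.

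It remains to identify $V_{0,1}$ with the span of the $p$-hypergeometric solutions and conclude. By construction $w_i = N^i(z)$, so $V_{0,1} = \langle N^1(z),\dots,N^g(z)\rangle$ is precisely the space generated by the $p$-hypergeometric solutions; by Theorem~\ref{thm lin ind} (the case $r=1$, cf.\ \cite[Theorem~3.1]{V4}) these are linearly independent over $\K(z)$ when $p > 2g+1$, consistent with $\dim V_{0,1} = g$. Therefore the space of solutions in characteristic $p$ equals the span of the $p$-hypergeometric solutions, which is the assertion of the corollary. I do not anticipate any genuine obstacle here: all the hard work (the explicit $p$-curvature formula, the orthogonality relations, linear independence) has already been carried out, and the corollary is a direct bookkeeping consequence of Cartier descent applied to \eqref{atv}.
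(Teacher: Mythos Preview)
Your proposal is correct and follows exactly the paper's approach: the paper derives the corollary in one line from formula~\eqref{atv} by observing that $\cap_{a=1}^{2g+1}\ker\tilde C_a(z)=\langle w_1,\dots,w_g\rangle$, and you have simply spelled out the details of that computation (via \eqref{KL span} and Cartier descent). There is nothing to add.
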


\subsection{$p$-curvature operators on $\wedge^r\Sing L^{\ox(2g+1)}[2g-1]$}

Fix $z=(z_1,\dots,z_{2g+1})$ such that $z_1^p,\dots,z_{2g+1}^p$ are distinct.  
 Let $v_1,\dots,v_g$, $w_1,\dots, w_g$ be a preliminary good basis of $V$.
For any $r$, the normalized $p$-curvature operator $\tilde C_a(z)$ is defined on $\wedge^rV$ by the formula
\bea
\tilde C_a (u_1\wedge u_2\wedge \dots \wedge u_r) = \tilde C_a(u_1)\wedge \dots \wedge u_r
+ \dots + u_1\wedge \dots \wedge u_{r-1}\wedge \tilde C_a(u_r).
\eea
Clearly we have
\bean
\label{CC=0}
\tilde C_a(z) \tilde C_a(z) =0, \qquad a=1,\dots,2g+1,
\eean
 for any $r$.

Let $r\leq g$.  It is convenient to introduce the decomposition
\bea
\wedge^rV = \oplus_{k=0}^r V_{k, r-k},
\eea
where $V_{r,k}$ is generated by vectors of the form
$v_{i_1}\wedge\dots\wedge v_{i_k} \wedge
w_{j_1}\wedge\dots\wedge w_{j_{r-k}}$. We have 
$\tilde C_a(z)(V_{k,r-k})\subset V_{k-1,r-k+1}$. 
In particular this means that if $v= \sum_{k=0}^r v_{k,r-k}$ is such that $ v_{k,r-k} \in  V_{k,r-k}$ for all $k$ and
$\tilde C_a(z)(v)=0$, then $\tilde C_a(z)(v^{k,r-k})=0$ for every $k$.

We also introduce a filtration on $\wedge^r V$,
\bea
 V_0 \subset V_1 \subset \dots\subset V_r= \wedge^r V,
\eea
where $V_k = V_{k,r-k} \oplus V_{k-1,r-k+1}\oplus \dots\oplus V_{0,k}$.
By Lemma \ref{lem v-flat}, this filtration does not depend on the choice of a preliminary good basis of $V$.
The projections to $V_k/V_{k-1}$ of the vectors
$v_{i_1}\wedge\dots\wedge v_{i_k} \wedge
w_{j_1}\wedge\dots\wedge w_{j_{r-k}}$ also do not depend on the choice of a preliminary good basis.
The filtration is invariant with respect to the \KZ/ connection on $\wedge ^rV$ with $\ka=2$.
The projections to $V_k/V_{k-1}$ of the vectors 
$v_{i_1}\wedge\dots\wedge v_{i_k} \wedge
w_{j_1}\wedge\dots\wedge w_{j_{r-k}}$ are flat sections of the bundle with fiber $V_k/V_{k-1}$.

\subsection{Kernel of $p$-curvature on $\wedge^2 V$}

Define
\bea
\mc K_2:=\cap_{a=1}^{2g+1} \on{ker}(\tilde C_a : \wedge^2 V\to \wedge^2 V) .
\eea
Let $v_1,\dots,v_{g},$  $w_1,\dots,w_g$ be a preliminary good basis on $V$ with  associated coordinates 
$x_1,\dots,x_{g},$  $y_1,\dots,y_g$.

\begin{thm}
\label{thm int c}

We have
\bean
\label{int ok}
\mc K_2= \langle \delta, \ w_i\wedge w_j \mid 1\leq i<j \leq g\rangle
\eean
where $\delta =v_{1}\wedge w_1+\dots + v_g\wedge w_g$\,.

\end{thm}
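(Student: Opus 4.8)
The plan is to exploit the tri-graded decomposition $\wedge^2 V = V_{2,0}\oplus V_{1,1}\oplus V_{0,2}$ fixed above, together with the property $\tilde C_a(V_{k,2-k})\subset V_{k-1,3-k}$. Since $\tilde C_a$ applied to a graded element lands in a single (shifted) summand, an element $u = u_{2,0}+u_{1,1}+u_{0,2}$ lies in $\mc K_2$ if and only if $\tilde C_a u_{2,0}=0$ and $\tilde C_a u_{1,1}=0$ for every $a$ (the component $u_{0,2}$ being automatically annihilated). So I would reduce the theorem to three claims: $V_{0,2}\subseteq\mc K_2$; $\bigcap_a\on{ker}(\tilde C_a|_{V_{2,0}})=0$; and $\bigcap_a\on{ker}(\tilde C_a|_{V_{1,1}})=\langle\delta\rangle$. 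First I would dispatch the inclusion ``$\supseteq$'': from $\tilde C_a(v)=L_a(v)K_a$ in \eqref{atv} and $L_a(w_i)=0$ (by \eqref{pgb}, since $L_a$ is a linear combination of $x_1,\dots,x_g$) one gets $\tilde C_a(w_i\wedge w_j)=0$ at once, and $\tilde C_a(\delta)=\sum_i L_a(v_i)\,K_a\wedge w_i=\sum_{i,m} z_a^{p(i-1)}z_a^{p(m-1)}\,w_m\wedge w_i$, which vanishes because the coefficient array is symmetric in $(i,m)$ while $w_m\wedge w_i$ is antisymmetric. Hence $\delta,\,w_i\wedge w_j\in\mc K_2$.

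For the two ``$\subseteq$'' statements I would expand $\tilde C_a$ on a general element using \eqref{atv}, $L_a(v_i)=z_a^{p(i-1)}$, $L_a(w_i)=0$, and $K_a=\sum_m z_a^{p(m-1)}w_m$. For $u=\sum_{i<j}c_{ij}\,v_i\wedge v_j\in V_{2,0}$ (with $c$ extended antisymmetrically), a direct computation shows that the coefficient of $v_k\wedge w_m$ in $\tilde C_a u$ equals $z_a^{p(m-1)}\sum_l c_{kl}\,z_a^{p(l-1)}$. Taking $m=1$, the polynomial $Z\mapsto\sum_l c_{kl}Z^{l-1}$ has degree at most $g-1$ yet vanishes at the $2g+1$ distinct numbers $z_1^p,\dots,z_{2g+1}^p$, so it is identically zero; thus all $c_{kl}=0$ and $u=0$. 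For $u=\sum_{i,j}d_{ij}\,v_i\wedge w_j\in V_{1,1}$, set $P_j(Z)=\sum_i d_{ij}Z^{i-1}$; the coefficient of $w_m\wedge w_j$ (for $m<j$) in $\tilde C_a u$ is $\sum_i z_a^{p(i-1)}\bigl(d_{ij}\,z_a^{p(m-1)}-d_{im}\,z_a^{p(j-1)}\bigr)$, a polynomial in $Z=z_a^p$ of degree at most $2g-2<2g+1$. Its vanishing at the $2g+1$ points forces the polynomial identity $Z^{m-1}P_j(Z)=Z^{j-1}P_m(Z)$ for all $1\le m<j\le g$. Specializing $m=1$ gives $P_j=Z^{j-1}P_1$; comparing degrees at $j=g$ forces $P_1$ to be a constant $c$, whence $d_{ij}=c\,\delta_{ij}$ and $u=c\sum_i v_i\wedge w_i=c\,\delta$.

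Combining the three claims yields $\mc K_2=0\oplus\langle\delta\rangle\oplus V_{0,2}=\langle\delta,\,w_i\wedge w_j\mid 1\le i<j\le g\rangle$, which is \eqref{int ok}. I would also record, as a consistency check, that although $\delta$ itself depends on the choice of preliminary good basis, replacing $v_i$ by $v_i+\sum_k e_{ik}w_k$ alters $\delta$ only by an element of $V_{0,2}=\langle w_i\wedge w_j\rangle$, so the right-hand side of \eqref{int ok} is unambiguous; this is consistent with Lemma \ref{lem v-flat}.

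The hard part here is not conceptual but combinatorial: carrying out the two wedge-product expansions with the correct antisymmetrization signs and, above all, keeping track of the degrees in $Z=z_a^p$ so that they stay strictly below $2g+1$, the number of available evaluation points. Everything else — the grading shift of $\tilde C_a$, its rank-one shape, and the vanishing of $L_a$ on the $w_i$ — is formal and already in place from the preceding subsections.
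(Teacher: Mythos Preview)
Your proof is correct and follows essentially the same strategy as the paper's: reduce to the graded pieces via $\tilde C_a(V_{k,2-k})\subset V_{k-1,3-k}$, then show $\mc K_2\cap V_{2,0}=0$ and $\mc K_2\cap V_{1,1}=\langle\delta\rangle$. Your organization is slightly cleaner---you use a uniform polynomial-vanishing argument in $Z=z_a^p$ at the $2g+1$ distinct points, whereas the paper works out the coefficient systems by hand and invokes only $g+1$ values of $a$ at a time---but the content is the same.
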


\begin{proof} Recall  
$\wedge^2V = V_{2,0}\oplus V_{1,1}\oplus V_{0,2}$.
It is clear that  $V_{0,2} \subset \mc K_2$.

\begin{lem}
\label{lem 6.5}

We have
$\mc K_2\cap V_{1,1} =  \langle \delta\rangle $ and 
$\mc K_2\cap V_{2,0} = 0$.

\end{lem}

The lemma implies the theorem.

\medskip
\noindent
{\it Proof of lemma.}
Let $x\in V_{1,1}$,
$x = \sum_{i=1}^gv_i\wedge \sum_{j=1}^{g} c_{ij}w_j$\,.
Then  $\tilde C_a x = K_a \wedge \sum_{i,j=1}^g z_a^{(i-1)p} c_{ij}w_j$\,.
We have $x\in \mc K_2$ if and only if the vector
$x_a:= \sum_{i,j=1}^g z_a^{(i-1)p} c_{ij}w_j$
is proportional to the vector $K_a=w_{1}+z_a^p w_{2}+\dots + z_a^{p(g-1)}w_{g}$ for $a=1,\dots,2g+1$.
That is, for $i=1,\dots, g-1$, we  have the equations
\bea
z_a^p (c_{1,i} + z_a^p c_{2, i}+\dots +z_a^{p(g-1)}c_{g, i} )
&=&
c_{1, i+1} + z_a^p c_{2, i+1}+\dots + z_a^{p(g-1)}c_{g, i+1}.
\eea
This equation can be written as 
\bea
-c_{1, i+1} + z_a^p(c_{1,i} - c_{2, i+1}) + \dots 
+ 
z_a^{p(g-1)}(c_{g-1, i} - c_{g, i+1})+ z_a^{pg} c_{g, i}=0.
\eea
Given $i$, the system of $g+1$ equations,
\bea
-c_{1, i+1} + z_a^p(c_{1,i} - c_{2, i+1}) + \dots 
+ 
z_a^{p(g-1)}(c_{g-1, i} - c_{g, i+1})+ z_a^{pg} c_{g, i}=0, \quad a=1,\dots, g+1,
\eea
 implies that
\bea
c_{1, i+1} =0,\ \ c_{1,i} = c_{2, i+1}, \   \dots , \ 
c_{g-1, i} = c_{g, i+1}, \ \ c_{g, i}=0.
\eea
It is easy to see that the union of these systems of equations for $i=1,\dots,g$ implies that
\bea
x = \epsilon\, (v_1\wedge w_1 + v_2\wedge w_2 +\dots +  v_g\wedge w_g)
\eea
where $\epsilon$ is a scalar parameter. This proves the first equality in Lemma \ref{lem 6.5}. 

Let  $y = \sum_{1\leq i<j\leq g} c_{ij} v_i\wedge v_j \in V_{2,0}$.
Then  $\tilde C_a(y)=K_a\wedge 
 \sum_{1\leq i<j\leq g} c_{ij} (z_a^{p(i-1)} v_j- z_a^{p(j-1)}v_i )$.
If $ \tilde C_a(y)=0$, then
\bea
-z_a^{p} c_{1,2} - z_a^{2p} c_{1,3} -\dots- z_a^{p(g-1)} c_{1,g}
&=& 0,
\\
c_{1,2} - z_a^{2p} c_{2,3} -\dots- z_a^{p(g-1)} c_{2,g}
&=& 0,
\\
&\dots &
\\
 c_{1,g} + z_a^{p} c_{2,g} +\dots +z_a^{p(g-2)} c_{g-1,g}
&=& 0.
\eea
The system consisting of the  first equation for $a=1,\dots, g-1$ gives
$c_{1,2} =c_{1,3}=\dots=c_{1,g}=0$. Then the system consisting of the second equation 
for $a=1,\dots, g-1$ gives
$c_{2,3}=\dots= c_{2,g}=0$, and so on.  This proves the second equality in Lemma \ref{lem 6.5}.
\end{proof}

\subsection{Poincare pairing and good bases}
\label{sec 8.5}

Fix $z=(z_1,\dots,z_{2g+1})$ such that $z_1^p,\dots,z_{2g+1}^p$ are distinct.  
Let $v_1,\dots,v_{g},$  $w_1,\dots,w_g$ be a preliminary good basis of $V$.
Let $\mc D(z) \in \wedge^2 V$ be the element corresponding to Poincare pairing.
The element $\mc D(z)$ has the following properties.
\begin{enumerate}

\item[(i)]
The element $\mc D(z)$ defines a non-degenerate skew-symmetric form on the space $V^*$ dual to $V$.

\item[(ii)]  

 We have $\tilde C_a(\mc D(z))=0$, $a=1,\dots,2g+1$, since $\mc D(z)$ is invariant with respect to
the Gauss-Manin connection.

\item[(iii)]   
The kernel $\mc P_2^{\on{KZ}}(z)$ of the map 
 \bea
 (\mc D(z)\wedge)^{g-1}\ :\  \wedge^2V\to \wedge^{2g}V
  \eea
  of multiplication by $( \mc D(z)\wedge)^{g-1}$ is invariant under each $p$-curvature operator $\tilde C_a$,
  since the kernel of this map is preserved by the Gauss-Manin connection.

\end{enumerate}

\begin{lem}
Up to proportionality, the element $\mc D(z)$ equals 
$ \delta + \sum_{1\leq i<j\leq g} c_{ij}w_i\wedge w_j$ with suitable coefficients $c_{ij}$.

\end{lem}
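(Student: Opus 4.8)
The plan is to combine the two facts already recorded about $\mc D(z)$ in Section \ref{sec 8.5} — that $\tilde C_a(\mc D(z)) = 0$ for all $a$ (property (ii)) and that $\mc D(z)$ is a non-degenerate skew form on $V^*$ (property (i)) — with the explicit description of the common kernel $\mc K_2$ in Theorem \ref{thm int c}.

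First I would observe that, since $\tilde C_a(\mc D(z)) = 0$ for $a = 1,\dots,2g+1$, the element $\mc D(z)$ belongs to $\mc K_2 = \bigcap_{a=1}^{2g+1}\ker(\tilde C_a\colon \wedge^2 V\to\wedge^2 V)$. By Theorem \ref{thm int c} we have $\mc K_2 = \langle \delta,\ w_i\wedge w_j \mid 1\leq i<j\leq g\rangle$, where $\delta = v_1\wedge w_1+\dots+v_g\wedge w_g$, so
$$
\mc D(z) = c\,\delta + \sum_{1\leq i<j\leq g} c_{ij}\, w_i\wedge w_j
$$
for some scalars $c, c_{ij}\in\K$.

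The remaining point is to show $c\neq 0$. Here I would use that $w_1,\dots,w_g$ span the $g$-dimensional subspace $V_{0,1}\subset V$: if $c$ were $0$, then $\mc D(z)$ would lie in $\wedge^2 V_{0,1}$, and hence, viewed as a bilinear form on $V^*$, would factor through the projection $V^*\to V_{0,1}^*$ and so have rank at most $g < 2g = \dim V$. This contradicts the non-degeneracy of $\mc D(z)$. Therefore $c\neq 0$, and dividing by $c$ puts $\mc D(z)$, up to proportionality, into the asserted form $\delta + \sum_{i<j}(c_{ij}/c)\,w_i\wedge w_j$.

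I do not expect a genuine obstacle: everything reduces to statements already in hand, and the only substantive ingredient is the elementary observation that a $2$-vector supported on a $g$-dimensional subspace cannot define a non-degenerate pairing on a $2g$-dimensional space. (It is also worth noting, though not needed, that replacing the $v_i$ by $v_i + \sum_k a_{ik}w_k$ changes $\delta$ only by an element of $\wedge^2 V_{0,1}$, so the normal form above is consistent across all preliminary good bases.)
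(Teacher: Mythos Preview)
Your proof is correct and follows essentially the same approach as the paper: use property (ii) together with Theorem \ref{thm int c} to place $\mc D(z)$ in $\langle \delta,\, w_i\wedge w_j\rangle$, then use property (i) to rule out the degenerate case $c=0$. Your version is slightly more explicit about why an element of $\wedge^2 V_{0,1}$ is degenerate, but the argument is the same.
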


\begin{proof} 

Property (ii) implies that $\mc D(z) \in  \langle \delta, w_i\wedge w_j \mid 1\leq i<j \leq g\rangle$. Property
(i) implies that $\mc D(z) $ is not proportional to a linear combination 
$\sum_{1\leq i<j\leq g} c_{ij}w_{i}\wedge w_{j}$\,.
\end{proof}

Let $\mc D(z)$ be proportional to
$\delta + \sum_{1\leq i<j\leq g} c_{ij}w_i\wedge w_j$ for some coefficients $c_{ij}$. Define a new preliminary good 
basis $\bar v_1,\dots, \bar v_{g},$  $\bar w_1,\dots, \bar w_g$ of $V$ by the formulas
$v_i= \bar v_i+\sum_{j=i+1}^{g} c_{ij} w_j$,  $w_i=\bar w_i$\,, $ i=1,\dots,g$.
Then
\bean
\label{nd}
\delta + \sum_{1\leq i<j\leq g} c_{ij}w_i\wedge w_j = \sum_{i=1}^g \bar v_i\wedge \bar w_i\,.
\eean
We say that a basis $v_1,\dots,v_{g},$  $w_1,\dots,w_g$ of $V$
 is good if it is preliminary good
and $\mc D(z)$ is proportional to $\sum_{i=1}^g v_i\wedge w_{i}$\,.
Good bases exist since the above basis $\bar v_1,\dots,\bar v_{g},$  $\bar w_1,\dots,\bar w_g$  is good.
A good basis is not unique.

Let $v_1,\dots,v_{g},$  $w_1,\dots,w_g$ be a good basis of  $V$.
Define the decomposition $\mc P_2^{\on{KZ}} = \mc P_{2,0} \oplus  \mc P_{1,1}\oplus \mc P_{0,2}$,
$\mc P_{i,j} = \mc P_{2}^{\on{KZ}}(z) \cap V_{i,j}$.

\begin{lem}
\label{lem g3k}
We have $\mc P_{2,0} = V_{2,0}$,
$\mc P_{0,2} = V_{0,2}$. The space $\mc P_{1,1}$ is generated by the elements
$v_i\wedge w_j$ with $i\ne j$,    and by the elements
$\sum_{i=1}^g c_{i} v_i\wedge w_i$ such that 
$\sum_{i=1}^g c_{i} = 0$.
\qed

\end{lem}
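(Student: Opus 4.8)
The plan is to work with a fixed good basis $v_1,\dots,v_g,w_1,\dots,w_g$ of $V$, so that $\mc D(z)$ is proportional to $\dl:=\sum_{i=1}^g v_i\wedge w_i$, and to compute the kernel $\mc P_2^{\on{KZ}}(z)$ of multiplication by $(\mc D(z)\wedge)^{g-1}\colon\wedge^2V\to\wedge^{2g}V$ directly in the graded pieces $V_{2,0},V_{1,1},V_{0,2}$. Since $\mc D(z)\wedge\cdot$ is homogeneous of bidegree $(+1,+1)$ for the $V_{i,j}$-grading on $\wedge^\bullet V$ (because $\dl$ itself is a sum of $(1,1)$-terms), the kernel splits as a direct sum over the graded pieces, which gives the decomposition $\mc P_2^{\on{KZ}}=\mc P_{2,0}\oplus\mc P_{1,1}\oplus\mc P_{0,2}$ with $\mc P_{i,j}=\mc P_2^{\on{KZ}}(z)\cap V_{i,j}$ as in the statement.

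First I would treat $V_{0,2}$ and $V_{2,0}$. For $V_{0,2}=\langle w_i\wedge w_j\rangle$: wedging any such element with $(\dl)^{g-1}$ lands in $\wedge^{2g}V$, but any monomial appearing uses at least $2(g-1)+2=2g$ vectors with at least $g+1$ of them among the $w$'s (since $w_i\wedge w_j$ contributes two $w$'s and each $\dl$-factor contributes one more), forcing a repeated $w$; hence the product is zero and $\mc P_{0,2}=V_{0,2}$. The same counting argument applied to $V_{2,0}=\langle v_i\wedge v_j\rangle$: wedging $v_i\wedge v_j$ with $(\dl)^{g-1}$ produces monomials involving $g+1$ of the $v$'s (the two from $v_i\wedge v_j$ and one $v$ from each of the $g-1$ copies of $\dl$), again forcing a repeated $v$, so $\mc P_{2,0}=V_{2,0}$. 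This step is pure bookkeeping with wedge monomials and I do not expect it to cause trouble.

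The substantive step is computing $\mc P_{1,1}$. Write a general element of $V_{1,1}$ as $x=\sum_{i,j}c_{ij}\,v_i\wedge w_j$. One computes $x\wedge(\dl)^{g-1}\in\wedge^{2g}V$. A monomial in this product uses $g-1$ of the pairs $\{v_k,w_k\}$ coming from the $\dl$-factors, together with $v_i\wedge w_j$ from $x$; for the result to be nonzero the index $i$ must not already appear among the $v$'s used and $j$ must not appear among the $w$'s used, so the $g-1$ pairs from $\dl$ must be chosen from the $g-1$ indices outside $\{i\}$ on the $v$-side and from the $g-1$ indices outside $\{j\}$ on the $w$-side simultaneously. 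Sorting out which $(i,j)$ and which choice of the $g-1$ pairs are compatible and collecting the coefficient of the (up to sign unique) top form $v_1\wedge\dots\wedge v_g\wedge w_1\wedge\dots\wedge w_g$, one finds that the off-diagonal terms $c_{ij}v_i\wedge w_j$ with $i\ne j$ all contribute $0$ (there is no way to fill in the missing $v_i$ and the missing $w_j$ using $g-1$ diagonal pairs while hitting all $2g$ indices), while each diagonal term $c_{ii}v_i\wedge w_i$ contributes $\pm c_{ii}$ to the top form with a common sign. Hence $x\wedge(\dl)^{g-1}=0$ iff $\sum_i c_{ii}=0$, with the $c_{ij}$, $i\ne j$, free. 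This gives exactly the description of $\mc P_{1,1}$ claimed: it is spanned by the $v_i\wedge w_j$ with $i\ne j$ together with the diagonal combinations $\sum_i c_i\,v_i\wedge w_i$ with $\sum_i c_i=0$.

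The main obstacle is the careful sign and index bookkeeping in the $V_{1,1}$ computation — verifying that every off-diagonal monomial really annihilates $(\dl)^{g-1}$ and that the diagonal monomials contribute with a single common sign, so that the vanishing condition is exactly the linear equation $\sum_i c_{ii}=0$. Once that is in hand, combining the three graded computations yields the lemma. $\qed$
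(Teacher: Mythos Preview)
Your proof is correct and is the natural argument; the paper does not give a proof of this lemma (it is marked with a bare $\qed$). One small simplification: once you observe that $\dl\in V_{1,1}$ so that $(\dl\wedge)^{g-1}$ has bidegree $(g-1,g-1)$, you immediately get that $V_{2,0}$ maps to $V_{g+1,g-1}=0$ and $V_{0,2}$ maps to $V_{g-1,g+1}=0$, so $\mc P_{2,0}=V_{2,0}$ and $\mc P_{0,2}=V_{0,2}$ follow without the separate monomial-repetition argument. Also note that your computation for $V_{1,1}$ uses that $(g-1)!\ne 0$; this is fine here since the ambient assumption in this section is $p>2g+1$.
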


\begin{thm}
\label{thm 6.9}

We have 
\bean
\label{2.0}
\sum_{a=1}^{2g+1}\tilde C_a(\mc P_{2,0})=\mc P_{1,1}, \qquad
\sum_{a=1}^{2g+1} \tilde C_a(\mc P_{1,1})=\mc P_{0,2}.
\eean

\end{thm}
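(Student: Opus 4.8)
The plan is to establish the two surjectivity statements in \eqref{2.0} by explicit linear algebra using a good basis, exactly in the style of the proof of Theorem \ref{thm int c}. Fix $z=(z_1,\dots,z_{2g+1})$ with $z_1^p,\dots,z_{2g+1}^p$ distinct and a good basis $v_1,\dots,v_g,w_1,\dots,w_g$ of $V$, so that $\mc D(z)$ is proportional to $\sum_i v_i\wedge w_i$. First I would record the two facts we need as inputs: (a) $\tilde C_a(v)=L_a(v)K_a$ on $V$, where $K_a=\sum_{m=1}^g z_a^{p(m-1)}w_m$ and $L_a=\sum_{m=1}^g z_a^{p(m-1)}x_m$, with the $K_a$ spanning $V_{0,1}=\langle w_1,\dots,w_g\rangle$ and the $L_a$ spanning $\langle x_1,\dots,x_g\rangle$ (formulas \eqref{atv}, \eqref{KL span}); and (b) the description of $\mc P_{2,0}$, $\mc P_{1,1}$, $\mc P_{0,2}$ from Lemma \ref{lem g3k}, together with the fact that $\tilde C_a$ shifts the bidegree down by one, i.e. $\tilde C_a(V_{k,r-k})\subset V_{k-1,r-k+1}$, so that $\tilde C_a(\mc P_{2,0})\subset V_{1,1}$ and $\tilde C_a(\mc P_{1,1})\subset V_{0,2}$. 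The inclusions ``$\subseteq$'' in \eqref{2.0} then reduce to showing each image lands inside $\mc P_{1,1}$ (resp.\ $\mc P_{0,2}$); this is immediate from property (iii) in Section \ref{sec 8.5}, since $\mc P_2^{\on{KZ}}(z)$ is $\tilde C_a$-invariant and $V_{0,2}=\mc P_{0,2}$.

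For the reverse inclusions I would argue as follows. For the second equality, $\mc P_{0,2}=V_{0,2}$ is spanned by the $w_i\wedge w_j$, $1\le i<j\le g$. Applying $\tilde C_a$ to $v_i\wedge w_j\in\mc P_{1,1}$ (with $i\ne j$) gives $K_a\wedge w_j=\sum_{m}z_a^{p(m-1)}w_m\wedge w_j$; letting $a$ range and using that the Vandermonde-type matrix $(z_a^{p(m-1)})$ has rank $g$ (because $z_1^p,\dots,z_{2g+1}^p$ are distinct), the span of these over all admissible $a$ is all of $\langle w_m\wedge w_j: m\ne j\rangle$. Ranging over $j$ recovers every $w_i\wedge w_j$, so $\sum_a\tilde C_a(\mc P_{1,1})=V_{0,2}=\mc P_{0,2}$. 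For the first equality, $\mc P_{2,0}=V_{2,0}=\langle v_i\wedge v_j:1\le i<j\le g\rangle$, and $\tilde C_a(v_i\wedge v_j)=K_a\wedge(z_a^{p(i-1)}v_j-z_a^{p(j-1)}v_i)$. I would show these elements, as $a$ varies, span $\mc P_{1,1}$: the elements of the form $K_a\wedge v_j$ for varying $a$ already span $\langle w_m\wedge v_j:m=1,\dots,g\rangle$ by the rank-$g$ Vandermonde argument, and then one checks that the constraint imposed by using $v_i\wedge v_j$ (rather than $v_i$ alone, which is not available inside $\wedge^2 V$) cuts out precisely the subspace $\mc P_{1,1}$ described in Lemma \ref{lem g3k}, namely all $v_i\wedge w_j$ with $i\ne j$ together with the ``traceless diagonal'' combinations $\sum_i c_i v_i\wedge w_i$ with $\sum_i c_i=0$. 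Concretely: $\tilde C_a(v_i\wedge v_j)$ has $v_i\wedge w_\bullet$-component $-z_a^{p(j-1)}\sum_m z_a^{p(m-1)}(v_i\wedge w_m)$ and a symmetric $v_j\wedge w_\bullet$-component, so the diagonal $v_i\wedge w_i$-coefficients occur with an automatic cancellation that forces the sum of diagonal coefficients to vanish; this is exactly the condition $\sum_i c_i=0$.

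The main obstacle I anticipate is the first equality $\sum_a\tilde C_a(\mc P_{2,0})=\mc P_{1,1}$: one must verify that the image is neither too small (it does surject onto all the off-diagonal $v_i\wedge w_j$ and onto the full traceless diagonal part) nor too big (it genuinely misses the $\mc D(z)$-direction $\delta=\sum_i v_i\wedge w_i$, consistent with $\tilde C_a(\mc D(z))=0$). The cleanest route is to set $u_a:=\sum_{m=1}^g z_a^{p(m-1)}v_m\in V$ and note that $\tilde C_a(v_i\wedge v_j)$ is, up to sign, the bidegree-$(1,1)$ part of $K_a\wedge$ applied to the coefficient extraction of $v_i\wedge u_a+\dots$; a short computation then identifies the total span over all $a$ with the kernel of the ``wedge with $\mc D(z)$'' map restricted to $V_{1,1}$, which is $\mc P_{1,1}$ by definition of $\mc P_2^{\on{KZ}}(z)$ and Lemma \ref{lem g3k}. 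An equivalent and perhaps more conceptual finish: $\mc P_2^{\on{KZ}}(z)$ carries the $p$-curvature representation isomorphic (via $T(z)$, Corollary \ref{cor TP}) to that on $\Sing L^{\ox(2g+1)}[2g-3]$, the filtration $0\subset \mc P_{0,2}\subset \mc P_{0,2}\oplus\mc P_{1,1}\subset\mc P_2^{\on{KZ}}(z)$ is $\tilde C_a$-stable with $\tilde C_a$ strictly decreasing the filtration degree, and a rank count (each $\tilde C_a$ has rank one on $V$, hence on the relevant graded pieces the images exhaust the successive quotients once $z_1^p,\dots,z_{2g+1}^p$ are distinct) yields the two surjections simultaneously.
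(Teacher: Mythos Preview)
Your treatment of the containments $\tilde C_a(\mc P_{2,0})\subset\mc P_{1,1}$ and $\tilde C_a(\mc P_{1,1})\subset\mc P_{0,2}$ is fine, and your argument for the second surjectivity is correct and in fact more direct than the paper's: you apply $\tilde C_a$ to $v_i\wedge w_j\in\mc P_{1,1}$ (with $i\ne j$) and use a Vandermonde argument on the $K_a\wedge w_j$, whereas the paper instead composes two $p$-curvature operators on $V_{2,0}$, computing $\tilde C_a\tilde C_b(v_i\wedge v_j)=(z_b^{p(i-1)}z_a^{p(j-1)}-z_a^{p(i-1)}z_b^{p(j-1)})\,K_b\wedge K_a$ and observing that the $K_b\wedge K_a$ span $V_{0,2}$.

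The gap is in your first surjectivity $\sum_a\tilde C_a(\mc P_{2,0})=\mc P_{1,1}$. You correctly write $\tilde C_a(v_i\wedge v_j)=K_a\wedge(z_a^{p(i-1)}v_j-z_a^{p(j-1)}v_i)$ and correctly observe the traceless diagonal property, which confirms the image lies in $\mc P_{1,1}$. But you do not prove the image equals $\mc P_{1,1}$. The sentence ``the elements of the form $K_a\wedge v_j$ for varying $a$ already span $\langle w_m\wedge v_j\rangle$ \dots\ and then one checks that the constraint \dots\ cuts out precisely $\mc P_{1,1}$'' is not an argument: the elements $K_a\wedge v_j$ are not individually in the image, only the coupled combinations $K_a\wedge(z_a^{p(i-1)}v_j-z_a^{p(j-1)}v_i)$ are, and it is exactly this coupling that needs to be analyzed. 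Your alternative ``rank count'' is also not a proof: that each $\tilde C_a$ has rank one on $V$ does not by itself force the combined images on the graded pieces to exhaust anything.

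The paper closes this gap by an explicit spanning computation. It expands
\[
\tilde C_a(v_i\wedge v_{i+1})=-v_{i+1}\wedge w_1+z_a^{p}(v_i\wedge w_1-v_{i+1}\wedge w_2)+\dots+z_a^{pg}\,v_i\wedge w_g,
\]
a polynomial of degree $g$ in $z_a^p$, so that as $a$ ranges over $g+1$ values with distinct $z_a^p$ the images span
\[
\langle\, v_{i+1}\wedge w_1,\; v_i\wedge w_1-v_{i+1}\wedge w_2,\;\dots,\; v_i\wedge w_{g-1}-v_{i+1}\wedge w_g,\; v_i\wedge w_g\,\rangle,
\]
and then checks that the union of these subspaces over $i=1,\dots,g-1$ spans all of $\mc P_{1,1}$ as described in Lemma~\ref{lem g3k}. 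To complete your write-up you need either this explicit computation or a genuine dimension/spanning argument, not the heuristic you currently have.
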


\begin{proof}

 We have
\bea
&&
\tilde C_a(v_1\wedge v_2)= (w_1 +z_a^{p} w_2+ \dots + z_a^{p(g-1)}w_g)\wedge (-z_a^{p}v_{1} + v_2) =
\\
&&
- v_2\wedge w_1+ z_a^{p}(v_1\wedge w_1 - v_2\wedge w_{2})
+ z_a^{2p}(v_1\wedge w_2 - v_2\wedge w_3)
+ \dots +z_a^{pg} v_1\wedge w_g\,.
\eea
Clearly, these vectors for $a=1,\dots, g+1$ span the subspace
\bea
\langle v_2\wedge w_1,\ v_1\wedge w_1 - v_2\wedge w_2, \
v_1\wedge w_2 - v_2\wedge w_3, \ \dots, \ v_1\wedge w_g \rangle.
\eea
Similarly, we show that the vectors $\tilde C_a(v_i\wedge v_{i+1})$ with $i=2,\dots, g-1$ span the subspace
\bea
\langle v_{i+1}\wedge w_1,\ v_i\wedge w_1 - v_{i+1}\wedge w_2, \
v_i\wedge w_2 - v_{i+1}\wedge w_3, \ \dots, \ v_i\wedge w_g\rangle.
\eea
Clearly these subspaces span $\mc P_{1,1}$.

To show the second  equality in \eqref{2.0} we prove a stronger statement,
\bean
\label{2002}
\sum_{a\ne b}  \tilde C_{a} \tilde C_{b} (V_{2,0}) = V_{0,2}.
\eean
Indeed, for $1\leq i<j\leq g$, we have
\bea
\tilde C_a \tilde C_b (v_i\wedge v_j) = (z_b^{p(i-1)}z_a^{p(j-1)}
-z_a^{p(i-1)}z_b^{p(j-1)})\, K_b\wedge K_a\,.
\eea
It is easy to see that the elements $ K_b\wedge K_a$ span $\mc V_{0,2}$.
\end{proof}

\begin{thm}
\label{thm r=2 all GM sol}
Let $p> 2g+1$. Then all solutions of the \KZ/ equations in characteristic 
$p$ with values in $\mc P_{2}^{\on{KZ}}(z)\subset \wedge ^2 V$ and $\ka=2$
are generated by
the $p$-hypergeometric solutions $M^{\ell_1,\ell_2}(z)$, $1\leq \ell_1< \ell_2\leq g$.
\end{thm}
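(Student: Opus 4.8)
The plan is to determine the space of characteristic-$p$ solutions valued in $\mc P_2^{\on{KZ}}(z)$ via the Cartier descent theorem and then identify it with the span of the $p$-hypergeometric solutions $M^{\ell_1,\ell_2}(z)$; this runs parallel to the proof of Corollary \ref{cor cu r=1} in the case $r=1$. First I would record that $\mc P_2^{\on{KZ}}(z)\subset\wedge^2 V$ is invariant under the \KZ/ connection with $\ka=2$ and under every normalized $p$-curvature operator $\tilde C_a$, $a=1,\dots,2g+1$, by property (iii) of Section \ref{sec 8.5}. Hence, by the Cartier descent theorem \cite[Theorem 5.1]{K1} applied to the invariant subbundle, the solution space in question equals
\[
\bigcap_{a=1}^{2g+1}\ker\bigl(\tilde C_a|_{\mc P_2^{\on{KZ}}(z)}\bigr)=\Bigl(\bigcap_{a=1}^{2g+1}\ker\tilde C_a\Bigr)\cap\mc P_2^{\on{KZ}}(z)=\mc K_2\cap\mc P_2^{\on{KZ}}(z),
\]
where $\mc K_2$ is the subspace of $\wedge^2 V$ computed in Theorem \ref{thm int c} (recall that $\tilde C_a$ differs from the genuine $p$-curvature on $\wedge^2 V$ only by the invertible scalar $-2D_a(z)^p$, so the kernels agree).

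The heart of the argument is the computation $\mc K_2\cap\mc P_2^{\on{KZ}}(z)=V_{0,2}$, which I would carry out in a good basis $v_1,\dots,v_g,w_1,\dots,w_g$ of $V$, so that $w_i=N^i(z)$ and $\mc D(z)$ is proportional to $\delta=\sum_{i=1}^g v_i\wedge w_i$. By Theorem \ref{thm int c}, $\mc K_2=\langle\delta\rangle\oplus V_{0,2}$; by Lemma \ref{lem g3k}, $V_{0,2}=\mc P_{0,2}\subseteq\mc P_2^{\on{KZ}}(z)$. So the only thing to check is whether the line $\langle\delta\rangle$ meets $\mc P_2^{\on{KZ}}(z)$, and it does not: since $\mc D(z)\propto\delta$, the product $\delta\wedge(\mc D(z)\wedge)^{g-1}$ is a nonzero scalar multiple of the top exterior power $v_1\wedge w_1\wedge\dots\wedge v_g\wedge w_g$, because $g!\ne 0$ in $\K$ when $p>2g+1$; hence $\delta\notin\ker\bigl((\mc D(z)\wedge)^{g-1}\bigr)=\mc P_2^{\on{KZ}}(z)$. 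Writing any $x\in\mc K_2\cap\mc P_2^{\on{KZ}}(z)$ as $x=c\delta+y$ with $y\in V_{0,2}\subseteq\mc P_2^{\on{KZ}}(z)$ then forces $c\delta=x-y\in\mc P_2^{\on{KZ}}(z)$, so $c=0$ and $x\in V_{0,2}$. Thus $\mc K_2\cap\mc P_2^{\on{KZ}}(z)=V_{0,2}$, of dimension $\binom g2$.

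It remains to identify $V_{0,2}$ with the span of the $p$-hypergeometric solutions. Comparing the coefficient of $t_1^{p\ell_1-1}t_2^{p\ell_2-1}$ in the $p$-integral defining $M^{\ell_1,\ell_2}_{i_1,i_2}(z)$ (see \eqref{mu p}) gives $M^{\ell_1,\ell_2}_{i_1,i_2}(z)=N^{\ell_1}_{i_1}(z)N^{\ell_2}_{i_2}(z)-N^{\ell_1}_{i_2}(z)N^{\ell_2}_{i_1}(z)$, that is, $M^{\ell_1,\ell_2}(z)=N^{\ell_1}(z)\wedge N^{\ell_2}(z)=w_{\ell_1}\wedge w_{\ell_2}$ in the good basis. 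Since the $N^\ell(z)$, $\ell=1,\dots,g$, are linearly independent over $\K(z)$ by Theorem \ref{thm lin ind} (for $r=1$ this is precisely the hypothesis $p>2g+1$), the vectors $M^{\ell_1,\ell_2}(z)=w_{\ell_1}\wedge w_{\ell_2}$ with $1\le\ell_1<\ell_2\le g$ form a basis of $V_{0,2}$. Combining the three steps shows that the solution space valued in $\mc P_2^{\on{KZ}}(z)$ is exactly $V_{0,2}$, spanned by those $p$-hypergeometric solutions. I expect the only genuinely new input beyond the already-established Theorem \ref{thm int c} and Lemma \ref{lem g3k} to be the non-membership $\mc D(z)\notin\mc P_2^{\on{KZ}}(z)$ — which is where $p>2g+1$ enters, through $g!\ne 0$ — together with the bookkeeping identifying $M^{\ell_1,\ell_2}(z)$ with $w_{\ell_1}\wedge w_{\ell_2}$; there is no hard estimate to overcome here.
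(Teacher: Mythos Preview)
Your proof is correct and follows essentially the same line as the paper's: identify the solution space with $\mc K_2\cap\mc P_2^{\on{KZ}}(z)$ via Cartier descent, compute this intersection to be $V_{0,2}$, and match $V_{0,2}$ with the span of the $M^{\ell_1,\ell_2}(z)$. The paper's proof is terser and simply asserts $\mc K_2\cap\mc P_2^{\on{KZ}}(z)=V_{0,2}$ from Theorem~\ref{thm int c}; you supply the missing step $\delta\notin\mc P_2^{\on{KZ}}(z)$ via $g!\ne 0$, and you derive the linear independence of the $M^{\ell_1,\ell_2}(z)$ from the $r=1$ case through the identity $M^{\ell_1,\ell_2}(z)=N^{\ell_1}(z)\wedge N^{\ell_2}(z)$, whereas the paper cites Theorem~\ref{thm lin ind} directly for $r=2$.
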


\begin{proof}

On the one hand, these $p$-hypergeometric solutions are linearly
 independent over the field $\K(z)$ by Theorem \ref{thm lin ind} for $r=2$,
hence they  span the space $V_{0,2}$
of dimension $\binom{g}{2}$. On the other hand,  $\mc K_2 \cap 
\mc P_2^{\on{KZ}}(z) = V_{0,2}$ is the same space by Theorem \ref{thm int c}. This proves the theorem.
\end{proof}

\begin{thm}
\label{thm r=2 all sol}

For all prime $p$ with finitely many exceptions, 
 all solutions of the \KZ/ equations in characteristic $p$ with values in $\Sing L^{\ox(2g+1)}[2g-3]$ and $\ka=2$
are generated by
the $p$-hypergeometric solutions $N^{\ell_1,\ell_2}(z)$, $1\leq \ell_1< \ell_2\leq g$.

\end{thm}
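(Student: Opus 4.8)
The plan is to transport the conclusion of Theorem \ref{thm r=2 all GM sol} --- which describes the solutions in characteristic $p$ of the \KZ/ equations with values in the primitive subspace $\mc P_2^{\on{KZ}}(z)\subset\wedge^2 V$, where $V=\Sing L^{\ox(2g+1)}[2g-1]$ --- across the Satake-type map $T(z)$ to the \KZ/ equations with values in $\Sing L^{\ox(2g+1)}[2g-3]$. By Corollary \ref{cor main T} (see Section \ref{sec 5.6}), for all primes $p$ outside a finite set the restriction $T(z)\vert_{\mc P_2^{\on{KZ}}(z)}\colon \mc P_2^{\on{KZ}}(z)\to\Sing L^{\ox(2g+1)}[2g-3]$ reduces modulo $p$ to an isomorphism of the \KZ/ connections with $\ka=2$; the denominators that must be inverted are controlled by the factor $\prod_{i=1}^{r-1}(2g+1-2i)=2g-1$ (for $r=2$) appearing in \eqref{Ak}, and the reduced matrix stays invertible for all but finitely many further $p$ because its determinant is a nonzero polynomial in $z$ with rational coefficients.

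A horizontal isomorphism of flat connections induces a bijection of their spaces of flat local sections; hence, in characteristic $p$, the space of solutions of the \KZ/ equations with values in $\Sing L^{\ox(2g+1)}[2g-3]$ and $\ka=2$ is exactly the $T(z)$-image of the space of solutions with values in $\mc P_2^{\on{KZ}}(z)$ and $\ka=2$. Moreover, by Theorem \ref{thm p-map} in the case $r=2$ (formula \eqref{tn2 r}), $T(z)$ sends the wedge $p$-hypergeometric solution $M^{\ell_1,\ell_2}(z)$ to the $p$-hypergeometric solution $N^{\ell_1,\ell_2}(z)$ up to the scalar $(-1)^{n(n-1)/2}$; so $T(z)$ carries the family $\{M^{\ell_1,\ell_2}(z)\mid 1\le\ell_1<\ell_2\le g\}$ onto the family $\{N^{\ell_1,\ell_2}(z)\mid 1\le\ell_1<\ell_2\le g\}$.

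Combining these, Theorem \ref{thm r=2 all GM sol} (valid for $p>2g+1$) says that $\{M^{\ell_1,\ell_2}(z)\mid 1\le\ell_1<\ell_2\le g\}$ generates the whole solution space with values in $\mc P_2^{\on{KZ}}(z)$ and $\ka=2$ --- concretely it spans $V_{0,2}=\mc K_2\cap\mc P_2^{\on{KZ}}(z)$, which the Cartier descent theorem identifies with that solution space. Pushing forward by the isomorphism $T(z)\vert_{\mc P_2^{\on{KZ}}(z)}$ and using the previous paragraph, $\{N^{\ell_1,\ell_2}(z)\mid 1\le\ell_1<\ell_2\le g\}$ generates the whole solution space in characteristic $p$ with values in $\Sing L^{\ox(2g+1)}[2g-3]$ and $\ka=2$; as these solutions are linearly independent over $\K(z)$ by Theorem \ref{thm lin ind} (also for $p>2g+1$), they form a basis of a $\binom{g}{2}$-dimensional space. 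The excluded primes are the finitely many from Section \ref{sec 5.6} together with those $\le 2g+1$, a finite set.

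There is no genuinely new computation here: every ingredient is already in place, and the only point requiring care is the uniformity in $p$ --- namely, checking that $T(z)\vert_{\mc P_2^{\on{KZ}}(z)}$ reduces to an honest isomorphism, not merely a nonzero morphism, for all but finitely many primes. This is exactly the content of Corollary \ref{cor main T}, and it rests on the elementary observation that an isomorphism of local systems defined over $\Q$ by a matrix of polynomials with rational entries remains an isomorphism modulo $p$ for all $p$ outside the finite set of primes dividing the entry denominators or the (nonzero, rational-coefficient) determinant polynomial.
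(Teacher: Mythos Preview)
Your proof is correct and follows essentially the same route as the paper: use Corollary \ref{cor main T} to reduce $T(z)\vert_{\mc P_2^{\on{KZ}}(z)}$ to an isomorphism of connections in characteristic $p$, and transport Theorem \ref{thm r=2 all GM sol} across it. The only cosmetic difference is that you explicitly push the generators $M^{\ell_1,\ell_2}(z)$ forward to $N^{\ell_1,\ell_2}(z)$ via Theorem \ref{thm p-map}, whereas the paper simply matches dimensions (the isomorphism forces both solution spaces to have dimension $\binom{g}{2}$, and Theorem \ref{thm lin ind} gives $\binom{g}{2}$ independent $N^{\ell_1,\ell_2}$); both arguments are equivalent and equally short.
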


\begin{proof}
On the one hand, under these assumptions on $p$, the map
$ T(z)\vert_{ \mc P_2^{\on{KZ}}(z)} :  \mc P_2^{\on{KZ}}(z)   \to 
 \Sing L^{\ox(2g+1)}[2g-3]$ is an isomorphism in characteristic $p$ between the \KZ/ equations  with $\ka=2$ and
  values in
$\mc P_2^{\on{KZ}}(z)$ and  the KZ equations  with $\ka=2$ and values in
$\Sing L^{\ox(2g+1)}[2g-3]$. Hence these two systems of equations have 
the same dimension $\binom{g}{2}$ of the space of solutions.
On the other hand, the $p$-hypergeometric solutions $N^{\ell_1,\ell_2}(z)$, $1\leq \ell_1< \ell_2\leq g$, 
span the space  of dimension $\binom{g}{2}$.
This proves the theorem.
\end{proof}

\subsection{Solutions in $\Sing L^{\ox(2g+1)}[2g-3]$ in characteristic $p$ for $\ka=-2$}
\label{sec 8.6}

Let $C_a^-(z)$, $a=1,\dots,2g+1,$ denote the $p$-curvature operators of the \KZ/ equations with values in 
\\
$\Sing L^{\ox(2g+1)}[2g-3]$ and $\ka=-2$.  
Fix $z$ with distinct coordinates.  Recall the $p$-curvature operators 
$C_a(z)\vert_{\mc P_2^{\on{KZ}}(z)} : \mc P_2^{\on{KZ}}(z)\to \mc P_2^{\on{KZ}}(z)$ 
for $\ka=2$ of Section \ref{sec 8.2}.

For all prime $p$ with finitely many exceptions, the space
$\Sing L^{\ox(2g+1)}[2g-3]$ with operators $C_a^-(z)$ is isomorphic to the space $(\mc P_2^{\on{KZ}}(z))^*$ 
(dual to the space $\mc P_2^{\on{KZ}}(z)$) 
with  operators $- \left(C_a(z)\vert_{\mc P_2^{\on{KZ}}(z)}\right)^*$, dual to the operators 
$ C_a(z)\vert_{\mc P_2^{\on{KZ}}(z)}$ with coefficient $-1$.
 This follows from formula \eqref{do} and  the isomorphism
$ T(z)\vert_{\mc P_2^{\on{KZ}}(z) } : \mc P_2^{\on{KZ}}(z)
  \to  \Sing L^{\ox(2g+1)}[2g-3]$ for $\ka=2$ in characteristic $p$. 

\begin{lem}

The space 
\bea
\mc K^{-}:=\cap_{a=1}^{2g+1} \on{ker}\left(-\left(C_a(z)\vert_{\mc P_2^{\on{KZ}}(z)}\right)^*
: (\mc P_2^{\on{KZ}}(z))^*  \to  (\mc P_2^{\on{KZ}}(z))^*\right)
\eea
 has dimension $\binom{g}{2}$.

\end{lem}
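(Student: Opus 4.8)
The plan is to reduce the statement to elementary linear algebra on $\mc P := \mc P_2^{\on{KZ}}(z)$, combined with the structural facts already proved (we keep the standing assumption $p>2g+1$ of Section~\ref{sec 8.2}). For a finite-dimensional $\K(z)$-space $W$ and $\K(z)$-linear operators $S_a\colon W\to W$ one has, from $\ker\bigl(\phi\mapsto\phi\circ S_a\bigr)=(\operatorname{im}S_a)^{\perp}$,
\[
\bigcap_a\ker\bigl(-(S_a)^{*}\bigr)=\bigcap_a\ker\bigl((S_a)^{*}\bigr)=\Bigl(\sum_a\operatorname{im}S_a\Bigr)^{\!\perp}\subset W^{*},
\]
hence $\dim\bigcap_a\ker\bigl(-(S_a)^{*}\bigr)=\dim W-\dim\sum_a\operatorname{im}S_a$. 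Taking $W=\mc P$ and $S_a=C_a(z)\vert_{\mc P}$ gives $\dim\mc K^{-}=\dim\mc P-\dim\sum_{a=1}^{2g+1}\operatorname{im}\bigl(C_a(z)\vert_{\mc P}\bigr)$. Since $\tilde C_a(z)=-2D_a(z)^{p}\,C_a(z)$ on all of $\wedge^{2}V$ (the derivation extension of a fixed scalar multiple is the same scalar multiple, and $D_a(z)^{p}\in\K(z)^{\times}$), one may replace $C_a(z)$ here by the normalized operator $\tilde C_a(z)$ without changing the images.

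Next I would evaluate $\sum_a\operatorname{im}\bigl(\tilde C_a(z)\vert_{\mc P}\bigr)$ using the graded decomposition $\mc P=\mc P_{2,0}\oplus\mc P_{1,1}\oplus\mc P_{0,2}$ from Section~\ref{sec 8.5} and the fact that $\tilde C_a(z)$ lowers the $v$-degree by one, i.e.\ $\tilde C_a(z)(V_{k,2-k})\subset V_{k-1,3-k}$. The summand $\mc P_{0,2}=V_{0,2}\subset\mc K_2$ is annihilated by every $\tilde C_a(z)$, because $\tilde C_a(z)(w_i)=L_a(w_i)K_a=0$. Hence, by linearity together with Theorem~\ref{thm 6.9},
\[
\sum_{a=1}^{2g+1}\operatorname{im}\bigl(\tilde C_a(z)\vert_{\mc P}\bigr)=\Bigl(\sum_a\tilde C_a(z)(\mc P_{2,0})\Bigr)+\Bigl(\sum_a\tilde C_a(z)(\mc P_{1,1})\Bigr)=\mc P_{1,1}\oplus\mc P_{0,2},
\]
the last sum being direct since $\mc P_{1,1}\subset V_{1,1}$ and $\mc P_{0,2}\subset V_{0,2}$ lie in complementary graded summands of $\wedge^{2}V$.

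It remains to insert dimensions. From $\dim V=2g$ and Lemma~\ref{lem g3k} (with $\mc P_{1,1}$ described as the hyperplane $\sum_i c_i=0$ inside the $g^{2}$-dimensional $V_{1,1}$) one gets $\dim\mc P_{2,0}=\dim\mc P_{0,2}=\binom{g}{2}$ and $\dim\mc P_{1,1}=g^{2}-1$, so $\dim\mc P=2\binom{g}{2}+g^{2}-1=2g^{2}-g-1$ (consistent with $\dim\mc P^{2}=\binom{2g}{2}-\binom{2g}{0}$). Therefore
\[
\dim\mc K^{-}=(2g^{2}-g-1)-\bigl((g^{2}-1)+\binom{g}{2}\bigr)=g^{2}-g-\binom{g}{2}=\binom{g}{2}.
\]

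I do not expect a real obstacle here: the argument rests entirely on Theorem~\ref{thm 6.9}, Lemma~\ref{lem g3k} and the identity $\bigcap_a\ker(S_a^{*})=(\sum_a\operatorname{im}S_a)^{\perp}$. The two points that need a moment of care are that the graded behaviour of $\tilde C_a(z)$ makes the sum of images direct, and that passing between $C_a(z)$ and $\tilde C_a(z)$ (and the harmless overall sign) changes neither kernels nor images; both are immediate. As a sanity check, $\binom{g}{2}$ is also forced by self-duality: the $\ka=-2$ system on $\mc P$ is dual to the $\ka=2$ system on $\mc P$, and the latter is self-dual since $\mc P_2(C(z))$ is a self-dual $\mr{Sp}_{2g}$-representation, while by Theorem~\ref{thm int c} its intersection of $p$-curvature kernels, $\mc K_2\cap\mc P=V_{0,2}$, has dimension $\binom{g}{2}$.
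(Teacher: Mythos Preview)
Your argument is correct and follows the same approach as the paper: identify $\mc K^{-}$ with the annihilator of $\sum_{a}\operatorname{im}\bigl(C_a(z)\vert_{\mc P_2^{\on{KZ}}(z)}\bigr)$, then invoke Theorem~\ref{thm 6.9} to see that this sum equals $\mc P_{1,1}\oplus\mc P_{0,2}$, whence the annihilator has dimension $\dim\mc P_{2,0}=\binom{g}{2}$. The paper compresses this into two sentences; you have simply written out the supporting linear algebra (the passage from $C_a$ to $\tilde C_a$, the vanishing on $\mc P_{0,2}$, and the explicit dimension count) in full.
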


\begin{proof}

The space $\mc K^-$ is the annihilator of the subspace 
$\sum_{a=1}^{2g+1} \on{Im}(C_a(z) : \mc P_2^{\on{KZ}}(z)\to \mc P_2^{\on{KZ}}(z))$.
By Theorem \ref{thm 6.9}, the annihilator has dimension of the space $\mc P_{2,0}$ which is $\binom{g}{2}$.
\end{proof}

\begin{cor}
\label{cor dim 2}

For all prime $p$ with finitely many exceptions, the space of solutions in characteristic $p$ of the \KZ/ equations with values 
in $\Sing L^{\ox(2g+1)}[2g-3]$ and $\ka=-2$ is of dimension $\binom{g}{2}$.

\end{cor}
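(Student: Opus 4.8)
The plan is to deduce the corollary immediately by chaining together three facts already in place: the Cartier descent theorem, the identification of the $p$-curvature data on $\Sing L^{\ox(2g+1)}[2g-3]$ with $\ka=-2$ carried out in Section \ref{sec 8.6}, and the dimension count in the Lemma preceding the corollary. No new computation is needed; the argument is purely formal given the earlier results.

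\textbf{Steps.} First I would invoke the Cartier descent theorem \cite[Theorem 5.1]{K1}: the space of solutions in characteristic $p$ of the \KZ/ equations with values in $\Sing L^{\ox(2g+1)}[2g-3]$ and $\ka=-2$ coincides with $\bigcap_{a=1}^{2g+1}\ker C_a^-(z)$, so that the dimension of the solution space over $\K(z)$ equals $\dim_{\K(z)}\bigcap_{a=1}^{2g+1}\ker C_a^-(z)$. Second, I would use the isomorphism described in Section \ref{sec 8.6}: for all prime $p$ with finitely many exceptions the reduced map $T(z)\vert_{\mc P_2^{\on{KZ}}(z)}$ is an isomorphism of \KZ/ connections with $\ka=2$, and combining it with the Shapovalov duality \eqref{do} between the $\ka=2$ and $\ka=-2$ connections one gets that the pair $(\Sing L^{\ox(2g+1)}[2g-3],\{C_a^-(z)\}_a)$ is isomorphic to $((\mc P_2^{\on{KZ}}(z))^*,\{-(C_a(z)\vert_{\mc P_2^{\on{KZ}}(z)})^*\}_a)$; since $p$-curvature is functorial under morphisms of connections, this isomorphism carries $\bigcap_a\ker C_a^-(z)$ onto $\mc K^-$. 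Third, I would apply the preceding Lemma, which gives $\dim \mc K^- = \binom{g}{2}$, and conclude.

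\textbf{Main obstacle.} There is essentially no obstacle at the level of the corollary itself; the real content has been pushed into the Lemma, which rests on Theorem \ref{thm 6.9} (the computations $\sum_a\tilde C_a(\mc P_{2,0})=\mc P_{1,1}$ and $\sum_a\tilde C_a(\mc P_{1,1})=\mc P_{0,2}$), and into Section \ref{sec 8.6}. The only points requiring care are: (i) checking that the isomorphism of connections induced by $T(z)\vert_{\mc P_2^{\on{KZ}}(z)}$ together with Shapovalov duality genuinely intertwines the $p$-curvature operators (which follows since $C_a = (\nabla_a^{\on{KZ}})^p$ is built from the connection and both maps are morphisms of connections), and (ii) tracking the ``finitely many exceptions'' bookkeeping, i.e.\ that the reductions modulo $p$ of $T(z)$ (hence of $\bar T(z)$) remain isomorphisms/embeddings, which is exactly the content of Corollary \ref{cor main T} and Section \ref{sec 7.3}. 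Once these are granted, the dimension equality $\binom{g}{2}$ is immediate.

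\begin{proof}
By the Cartier descent theorem \cite[Theorem 5.1]{K1}, the space of solutions in characteristic $p$ of the \KZ/ equations with values in $\Sing L^{\ox(2g+1)}[2g-3]$ and $\ka=-2$ coincides with $\bigcap_{a=1}^{2g+1}\ker C_a^-(z)$, so its dimension over $\K(z)$ equals $\dim_{\K(z)}\bigcap_{a=1}^{2g+1}\ker C_a^-(z)$. By the discussion in Section \ref{sec 8.6}, for all prime $p$ with finitely many exceptions the pair $(\Sing L^{\ox(2g+1)}[2g-3],\{C_a^-(z)\}_a)$ is isomorphic to $((\mc P_2^{\on{KZ}}(z))^*,\{-(C_a(z)\vert_{\mc P_2^{\on{KZ}}(z)})^*\}_a)$, and this isomorphism identifies $\bigcap_{a=1}^{2g+1}\ker C_a^-(z)$ with
\bea
\mc K^- \,=\, \bigcap_{a=1}^{2g+1}\on{ker}\!\left(-\left(C_a(z)\vert_{\mc P_2^{\on{KZ}}(z)}\right)^*\right).
\eea
By the preceding Lemma, $\dim \mc K^- = \binom{g}{2}$, which proves the corollary.
\end{proof}
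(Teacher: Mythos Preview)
Your proof is correct and follows exactly the approach implicit in the paper: the corollary is stated there without explicit proof, being an immediate consequence of the preceding Lemma together with the Cartier descent theorem and the identification of $p$-curvature data established in Section \ref{sec 8.6}. You have simply made explicit the chain of reasoning the paper leaves to the reader.
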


\begin{thm}
\label{conj -2}
In characteristic $p$, all solutions of the \KZ/ equations with values 
in 
\\
$\Sing L^{\ox(2g+1)}[2g-3]$ and $\ka=-2$ are generated by the $p$-hypergeometric solutions
$\bar N^{\ell_1,\ell_2}(z) $, see formula \eqref{bar N}.

\end{thm}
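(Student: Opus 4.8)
The plan is to prove the theorem by comparing the dimension of the full solution space with the dimension of the span of the $p$-hypergeometric solutions, so that the inclusion of the latter into the former is forced to be an equality. First I would invoke Corollary \ref{cor dim 2}: for all prime $p$ with finitely many exceptions, the $\K(z)$-vector space of solutions in characteristic $p$ of the \KZ/ equations on $\Sing L^{\ox(2g+1)}[2g-3]$ with $\ka=-2$ has dimension exactly $\binom{g}{2}$. Next I would record that the vectors $\bar N^{\ell_1,\ell_2}(z)$ are genuine solutions of these equations lying in the correct weight space: this is the $r=2$ instance of Theorem \ref{thm KZ p -2}, since $\Sing L^{\ox(2g+1)}[2g+1-4]=\Sing L^{\ox(2g+1)}[2g-3]$.

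The core step is then to show that these particular solutions already span a subspace of the full dimension $\binom{g}{2}$. This is exactly Corollary \ref{cor t vs b N} specialized to $r=2$: provided $p$ is such that $\bar T(z)$ is an embedding, the family $\{\bar N^{\ell_1,\ell_2}(z)\}$ spans a $\K(z)$-subspace of dimension $\binom{g}{2}$; and by Corollary \ref{cor bar red} the map $\bar T(z)$ is indeed an embedding in characteristic $p$ for all prime $p$ outside a finite set. If this dimension count needs to be spelled out, I would pass through the renumbered solutions $\tilde M^{\ell_1,\ell_2}(z)$: by Theorem \ref{thm p-map -2} together with $\tilde T(z)=\on{const}\cdot\bar T(z)$ (Theorem \ref{thm tilde T}), the span of the $\bar N^{\ell_1,\ell_2}(z)$ is carried isomorphically onto the span of the $\tilde M^{\ell_1,\ell_2}(z)$, which has dimension $\binom{g}{2}$ with explicit basis $\{\tilde M^{\ell_1,\ell_2+1}(z):1\le\ell_1<\ell_2\le g\}$ furnished by Lemma \ref{lem tilde M} and Corollary \ref{cor t vs b}.

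Putting these together: for all prime $p$ with finitely many exceptions, the span of the $p$-hypergeometric solutions $\bar N^{\ell_1,\ell_2}(z)$ is a $\binom{g}{2}$-dimensional subspace of the $\binom{g}{2}$-dimensional space of all characteristic-$p$ solutions on $\Sing L^{\ox(2g+1)}[2g-3]$ with $\ka=-2$, hence equals it, which is precisely the assertion of the theorem.

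I do not expect a genuine obstacle at this stage, because the substantive work has already been carried out elsewhere: the dimension bound in Corollary \ref{cor dim 2} rests on the $p$-curvature analysis of $\wedge^2 V$ in Section \ref{sec 8}, transported to $\Sing L^{\ox(2g+1)}[2g-3]$ via the isomorphism $T(z)$ and the duality \eqref{do}; and the spanning statement rests on the embedding property of $\bar T(z)$ and the combinatorics of the renumbered $p$-hypergeometric solutions. The only point requiring care is the bookkeeping of excluded primes — one must intersect over the (still finite) union of the exceptional sets coming from Corollary \ref{cor dim 2} and Corollary \ref{cor bar red}.
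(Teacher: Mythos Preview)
Your proposal is correct and follows essentially the same approach as the paper: invoke Corollary \ref{cor t vs b N} (for $r=2$) to get that the $p$-hypergeometric solutions span a $\binom{g}{2}$-dimensional space, then match this against the dimension count of Corollary \ref{cor dim 2}. The paper's proof is just these two sentences; your additional unpacking of the machinery behind Corollary \ref{cor t vs b N} (via $\tilde T(z)$, $\bar T(z)$, and the $\tilde M^{\ell_1,\ell_2}$) is accurate but not needed.
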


\begin{proof} 
By Corollary \ref{cor t vs b N}, the $p$-hypergeometric solutions generate a space of dimension
$\binom{g}{2}$. Now the theorem follows from Corollary \ref{cor dim 2}.
\end{proof}

\section{$p$-curvature on $\wedge^3V$}
\label{sec 9}

Choose a good basis of $V$.

\subsection{Zeros of $p$-curvature on $V_{r,0}$ and $V_{0,r}$}

\begin{lem}
\label{lem CP}

Let $\mc P_r^{\on{KZ}}(z) \subset \wedge^rV $ be the primitive subspace. 
 Then $\tilde C_a(\mc P_r^{\on{KZ}}(z)) \subset \mc P_r^{\on{KZ}}(z)$
for every $a$.
\qed

\end{lem}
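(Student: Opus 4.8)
The plan is to obtain the statement from two soft facts: that multiplication by the Poincar\'e element $\mc D(z)$ is a morphism of the \KZ/ connection (because $\mc D(z)$ is flat, as recorded in Sections \ref{curves} and \ref{sec 8.5}), and that any such morphism commutes with the $p$-curvature operators.

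First I would recall, following Sections \ref{curves} and \ref{sec 5.3}, that $\mc P_r^{\on{KZ}}(z)=\mc I(\mc P_r(C(z)))$ and that $\mc P_r(C(z))$ is by definition the kernel of the iterated multiplication map $(\Delta(z)\wedge\,)^{\,g-r+1}\colon \wedge^r H_1(C)\to \wedge^{2g-r+2}H_1(C)$. Since $\mc I$ is an isomorphism of the Gauss--Manin connection with the \KZ/ connection of $\ka=2$ and carries $\Delta(z)$ to $\mc D(z)$, the subspace $\mc P_r^{\on{KZ}}(z)\subset\wedge^r V$ equals $\ker\big((\mc D(z)\wedge\,)^{\,g-r+1}\colon \wedge^r V\to \wedge^{2g-r+2}V\big)$; for $r=2$ this is exactly property (iii) of Section \ref{sec 8.5}. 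Because $\mc D(z)$ is flat, this map and each of its iterates intertwine the \KZ/ connections with $\ka=2$ on the source and target, so $\mc P_r^{\on{KZ}}(z)$ is a subbundle invariant under $\nabla_a^{\on{KZ},2}$ for every $a$. Hence it is invariant under the genuine $p$-curvature operator $C_a:=(\nabla_a^{\on{KZ},2})^p$ on $\wedge^r V$: if $v\in\mc P_r^{\on{KZ}}(z)$ then $\nabla_a^k v\in\mc P_r^{\on{KZ}}(z)$ for all $k$, in particular $C_a v\in\mc P_r^{\on{KZ}}(z)$.

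Finally I would pass from $C_a$ to the normalized operator $\tilde C_a(z)$. On $V=\Sing L^{\ox(2g+1)}[2g-1]$ the two differ by the scalar $-2D_a(z)^p\in\K(z)^\times$, by Theorem \ref{thm cu r=1} and \eqref{red p-cu}. Extending by the Leibniz rule to $\wedge^r V$ --- which is the definition of $\tilde C_a(z)$ in Section \ref{sec 8} --- carries this scalar through unchanged, while the Leibniz extension of the $p$-curvature on $V$ is precisely the $p$-curvature $C_a$ on $\wedge^r V$: in characteristic $p$ the $p$-th power of a derivation is again a derivation, so $(\nabla_a^{\on{KZ},2})^p$ acts on $\wedge^\bullet V$ as the derivation determined by its restriction $C_a|_V$, using $\partial_a^p=0$. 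Thus $\tilde C_a(z)=-2D_a(z)^p\,C_a$ on $\wedge^r V$ as well, and invariance of $\mc P_r^{\on{KZ}}(z)$ under $C_a$ gives the same for $\tilde C_a(z)$. The argument is essentially formal and I expect no real obstacle; the one point worth a word of care is the identification in this last step of the Leibniz extension used to define $\tilde C_a(z)$ with the genuine $p$-curvature of the induced connection, which one may also sidestep by arguing directly that the sub-local system $\mc P_r(C(z))\subset\wedge^r H_1(C)$ is $\nabla_a$-invariant, hence $\nabla_a^p$-invariant, and then transporting through $\mc I$ and rescaling to $\tilde C_a(z)$.
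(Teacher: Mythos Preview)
Your argument is correct and is exactly the reasoning the paper implicitly relies on: in the paper the lemma is stated with a bare \qed\ and no proof, the point being that $\mc P_r^{\on{KZ}}(z)$ is the kernel of $(\mc D(z)\wedge\,)^{g-r+1}$ with $\mc D(z)$ flat (cf.\ Section~\ref{sec 8.5}, property~(iii), for $r=2$), hence is $\nabla^{\on{KZ},2}$-invariant and therefore $\tilde C_a$-invariant. Your identification of the Leibniz extension of $C_a|_V$ with the $p$-curvature on $\wedge^rV$ (via $\partial_a^p=0$) and the resulting scalar relation $\tilde C_a=-2D_a(z)^p\,C_a$ on $\wedge^rV$ are the right way to make the final step precise.
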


We have a decomposition
$\mc P_r^{\on{KZ}}(z) = \oplus_{k=0}^r \mc P_{k,r-k}$, $\mc P_{k,r-k} = \mc P_r^{\on{KZ}}(z) 
\cap V_{k,r-k}$. Denote 
\bea
\mc K_{k,r-k} = \cap_{a=1}^{2g+1} \on{Ker}(\tilde C_a:\mc P_{k,r-k} \to \mc P_{k-1,r-k+1}).
\eea
Clearly $\mc{K}_{0,r}=\mc P_{0,r}= V_{0,r}$.

\begin{prop}
\label{thm Kro}

For any $r$, $1\leq r\leq g$, we have $\mc
K_{r,0}=0$.

\end{prop}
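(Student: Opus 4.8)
The plan is to prove the stronger statement that $\bigcap_{a=1}^{2g+1}\ker\bigl(\tilde C_a\colon V_{r,0}\to V_{r-1,1}\bigr)=0$; since $\mc P_{r,0}\subseteq V_{r,0}$ and $\tilde C_a$ carries $\mc P_{r,0}$ into $\mc P_{r-1,1}\subseteq V_{r-1,1}$, this immediately yields $\mc K_{r,0}=0$. This reduces the proposition to a Vandermonde-type non-vanishing argument, in the same spirit as Lemma \ref{lem 6.5} but carried out for arbitrary $r$. Recall that, in a good basis, $V_{r,0}$ has basis $v_I:=v_{i_1}\wedge\dots\wedge v_{i_r}$ indexed by $r$-subsets $I=\{i_1<\dots<i_r\}\subseteq\{1,\dots,g\}$ (this is nonzero since $r\le g$), while $V_{r-1,1}$ has basis $v_J\wedge w_m$ with $J\subseteq\{1,\dots,g\}$, $|J|=r-1$, and $1\le m\le g$.

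First I would compute $\tilde C_a$ on $V_{r,0}$. Since $\tilde C_a$ is a derivation on $\wedge^\bullet V$ with $\tilde C_a(v_j)=z_a^{p(j-1)}K_a$ and $\tilde C_a(w_j)=0$ (by \eqref{atv} and the definition of the good basis, where $K_a=\sum_{m=1}^g z_a^{p(m-1)}w_m$), for $x=\sum_I c_I v_I\in V_{r,0}$ one gets
\[
\tilde C_a(x)=\sum_I c_I\sum_{k=1}^r z_a^{p(i_k-1)}\,v_{i_1}\wedge\dots\wedge v_{i_{k-1}}\wedge K_a\wedge v_{i_{k+1}}\wedge\dots\wedge v_{i_r}.
\]
Substituting $K_a=\sum_m z_a^{p(m-1)}w_m$ and reordering each wedge, the coefficient of the basis vector $v_J\wedge w_m$ in $\tilde C_a(x)$ equals $z_a^{p(m-1)}\,P_J(z_a^p)$, where
\[
P_J(s)=\sum_{i\in\{1,\dots,g\}\setminus J}(\pm c_{J\cup\{i\}})\,s^{\,i-1}
\]
is a polynomial of degree at most $g-1$ whose coefficients are, up to sign, exactly the coordinates $c_{J\cup\{i\}}$ of $x$; indeed $v_J\wedge w_m$ can only arise from the terms $c_I$ with $I=J\cup\{i\}$, $i\notin J$, by removing the slot $i$ and inserting $w_m$ there.

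Now suppose $\tilde C_a(x)=0$ for $a=1,\dots,2g+1$. Taking $m=1$, the coefficient of $v_J\wedge w_1$ is $P_J(z_a^p)$, so $P_J$ vanishes at the $2g+1$ distinct points $z_1^p,\dots,z_{2g+1}^p$; since $\deg P_J\le g-1<2g+1$, it follows that $P_J\equiv 0$, hence $c_{J\cup\{i\}}=0$ for every $i\notin J$. As every $r$-subset $I$ has the form $J\cup\{i\}$ (take $i=\max I$, $J=I\setminus\{i\}$), all $c_I=0$, i.e.\ $x=0$. This proves $\mc K_{r,0}=0$ for $1\le r\le g$.

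I do not expect a genuine obstacle here; the only delicate point is the sign bookkeeping when reordering $v_J\wedge w_m$ (moving $w_m$ past the remaining $v$'s and sorting $J\cup\{i\}$), but these signs are immaterial because the argument only uses the vanishing of each coefficient separately. If one wishes to avoid the sign discussion entirely, one can instead observe that $\tilde C_a$ restricted to $V_{r,0}$ factors, up to the scalars $z_a^{p(i_k-1)}$, through ``wedge with $K_a$'', and run the Vandermonde argument coefficient by coefficient exactly as above.
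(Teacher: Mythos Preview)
Your proof is correct and follows essentially the same approach as the paper: both compute $\tilde C_a$ on $V_{r,0}$ and use a Vandermonde argument on the distinct values $z_1^p,\dots,z_{2g+1}^p$. The paper packages the computation as $\tilde C_a(\omega)=K_a\wedge i_{L_a}\omega$ and then invokes the standard fact that an element of $\wedge^r\langle v_1,\dots,v_g\rangle$ annihilated by all interior products $i_{x_1},\dots,i_{x_g}$ must vanish (using that $L_1,\dots,L_g$ span $\langle x_1,\dots,x_g\rangle$), whereas you carry out the equivalent coefficient extraction of $v_J\wedge w_1$ by hand; these are two presentations of the same argument.
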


\begin{proof}

We need the following  linear algebra lemma. Let $y_1,\dots, y_n$ be a basis of a vector space $Y$ and 
$y_1^*,\dots,y_n^*$ the dual basis of the dual space $Y^*$.  For $\om \in \wedge ^kY$
and  $y^*\in Y^*$,
let $i_{y^*}\om$ denote the interior product of $y^*$ and $\om$.

\begin{lem}
\label{lem aux}
Let $\om\in \wedge^k Y$ and $i_{y_a^*}\om =0$ for $a=1,\dots,n$. Then $\om=0$.
\qed

\end{lem}

Let  $\om = \sum_{1\leq i_1<\dots<i_r\leq g} c_{i_1,\dots,i_g}v_{i_1}\wedge\dots\wedge v_{i_r}$
be an element of $V_{r,0}$. Then
\bea
\tilde C_a(\om) 
&=&
 K_a\wedge \sum_{1\leq i_1<\dots<i_r\leq g} c_{i_1,\dots,i_g}\sum_{j=1}^r z_a^{p(i_j-1)}
(-1)^{j-1}v_{i_1}\wedge \dots \wedge \widehat{v_{i_j}}\wedge \dots\wedge v_{i_r}\,
\\
&=&
K_a \wedge i_{L_a} \om.
\eea
We have $\tilde C_a(\om) =0$ for all $a$ if and only if $ i_{L_a} \om =0$ for all $a$. 
By Lemma \ref{lem aux}. this implies that $\om=0$ since the  vectors
$L_a$, $a = 1, \dots, g$, are linearly independent. 
\end{proof}

\subsection{Intersection of kernels in $V_{2,1}$}

\begin{lem}
\label{lem a.1}

For  $q\geq 3$, the intersection of kernels
of the reduced $p$-curvature operators $\tilde C_m : V_{2,1} \to V_{1,2}$,
$m=1,\dots, 2g+1$, equals zero.
\end{lem}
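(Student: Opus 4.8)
The plan is to turn the condition $\tilde C_m(\alpha)=0$ ($m=1,\dots,2g+1$) into a system of linear relations among interior products, to upgrade these to polynomial identities by a degree count, and then to conclude by the multilinear algebra already used in the proof of Proposition~\ref{thm Kro}.

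First I would write a general element of $V_{2,1}$ as $\alpha=\sum_{k=1}^g\omega_k\wedge w_k$ with $\omega_k\in\wedge^2\langle v_1,\dots,v_g\rangle=V_{2,0}$. Since $\tilde C_m$ is a derivation with $\tilde C_m(w_k)=0$ and $\tilde C_m(v_i)=z_m^{p(i-1)}K_m$, the computation in the proof of Proposition~\ref{thm Kro} gives $\tilde C_m(\omega_k)=K_m\wedge i_{L_m}\omega_k$, hence
\[
\tilde C_m(\alpha)=\sum_{k=1}^g K_m\wedge\bigl(i_{L_m}\omega_k\bigr)\wedge w_k .
\]
Expanding $K_m=\sum_{l=1}^g z_m^{p(l-1)}w_l$ and collecting the result in the basis $\{\,v_i\wedge w_l\wedge w_k\,\}$ ($1\le i\le g$, $1\le l<k\le g$) of $V_{1,2}$, one obtains
\[
\tilde C_m(\alpha)=\sum_{1\le l<k\le g}\bigl(z_m^{p(k-1)}\,i_{L_m}\omega_l-z_m^{p(l-1)}\,i_{L_m}\omega_k\bigr)\wedge w_l\wedge w_k ,
\]
so $\tilde C_m(\alpha)=0$ is equivalent to $z_m^{p(k-1)}\,i_{L_m}\omega_l=z_m^{p(l-1)}\,i_{L_m}\omega_k$ in $\langle v_1,\dots,v_g\rangle$ for all $1\le l<k\le g$.

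Next I would set $u_m:=z_m^p$ (pairwise distinct by the standing hypothesis) and $L(u):=\sum_{s=1}^g u^{s-1}x_s$, so that $P_k(u):=i_{L(u)}\omega_k=\sum_{s=1}^g u^{s-1}\,i_{x_s}\omega_k$ is an $\langle v_1,\dots,v_g\rangle$-valued polynomial of degree $\le g-1$. For each $l<k$ the relations above say that $u^{k-1}P_l(u)-u^{l-1}P_k(u)$, a polynomial of degree $\le k+g-2\le 2g-2$, vanishes at the $2g+1$ distinct points $u_1,\dots,u_{2g+1}$. This is the crux of the argument, and the one place where the number $2g+1$ of base coordinates is used: since $2g-2<2g+1$, each such polynomial vanishes identically, i.e. $u^{k-1}P_l(u)=u^{l-1}P_k(u)$ for all $1\le l<k\le g$.

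Finally, taking $l=1$ gives $P_k(u)=u^{k-1}P_1(u)$ for every $k$; comparing degrees at $k=g$ forces $P_1$ to be a constant vector, i.e. $i_{x_s}\omega_1=0$ for $s=2,\dots,g$. Writing $\omega_1=v_1\wedge\xi+\eta$ with $\xi\in\langle v_2,\dots,v_g\rangle$ and $\eta\in\wedge^2\langle v_2,\dots,v_g\rangle$, the $v_1$-component of $i_{x_s}\omega_1$ forces $\xi=0$, and then $i_{x_s}\eta=0$ for $s=2,\dots,g$ gives $\eta=0$ by Lemma~\ref{lem aux} applied to $\langle v_2,\dots,v_g\rangle$; hence $\omega_1=0$, so $P_1=0$, so $P_k=0$ and $i_{x_s}\omega_k=0$ for all $s$ and all $k$, whence $\omega_k=0$ for every $k$ by Lemma~\ref{lem aux} applied to $\langle v_1,\dots,v_g\rangle$, i.e. $\alpha=0$. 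The main obstacle I anticipate is not conceptual but organizational: carrying out the expansion of $\tilde C_m(\alpha)$ into the basis of $V_{1,2}$ with the correct signs; once that formula is in hand, the degree count and the two interior-product vanishing arguments are routine, exactly as in Proposition~\ref{thm Kro}.
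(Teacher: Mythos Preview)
Your proof is correct and takes a genuinely different route from the paper's. The paper writes a general element of $V_{2,1}$ in coordinates $d_{a,b;c}$, reads off the coefficient of $z_m^{p(s-2)}v_i\wedge w_j\wedge w_k$ in $\tilde C_m\alpha$ to obtain a four-index family of linear equations
\[
d_{s-k,i;j}-d_{i,s-k;j}+d_{i,s-j;k}-d_{s-j,i;k}=0,
\]
and then kills all $d_{a,b;c}$ by two ad hoc choices of the parameters $(s,i,j,k)$. Your argument instead packages the problem via the interior-product identity $\tilde C_m(\omega_k)=K_m\wedge i_{L_m}\omega_k$ already isolated in Proposition~\ref{thm Kro}, reduces the kernel condition to the vector-valued polynomial relations $u^{k-1}P_l(u)=u^{l-1}P_k(u)$, and invokes the degree bound $2g-2$ against the $2g+1$ evaluation points to promote these to identities in $u$. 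From there the cascade $P_k=u^{k-1}P_1$, $\deg P_1\le 0$, $\omega_1=0$ via Lemma~\ref{lem aux}, and finally $\omega_k=0$ is clean and reuses existing machinery rather than introducing new index gymnastics. Your approach makes transparent \emph{why} having $2g+1$ curvature operators suffices (the Vandermonde/degree-count step), whereas the paper's proof leaves this implicit; the paper's version, on the other hand, is more self-contained at the level of explicit coefficients and does not require the reader to track the interior-product formalism.
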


\begin{proof}

A vector $\al \in V_{2,1}$ has the form
\bea
\al= \sum_{1\leq a<b\leq g} \sum_{c=1}^g  d_{a,b;c} \, v_a \wedge v_b \wedge w_c\,
\eea
We set  $d_{a,b;c}=0$ if $ b\leq a$ or if any of the indices $a,b,c$ is not in the set 
$\{1,\dots,g\}$. 
Similarly to previous reasonings, we observe that 
$\al$ lies in the intersection of kernels if its coefficients
$d_{a,b:c}$  satisfy the system of equations
\bean
\label{eq 21}
d_{s-k,i;j}-d_{i,s-k;j}+d_{i,s-j;k}-d_{s-j,i;k}=0  
\eean
where the equations are labeled by the parameters $s,k,i,j$
with the only condition  $j<k$.  Equation \eqref{eq 21}  
comes as the coefficient of $z_m^{p(s-2)} v_i\wedge w_j\wedge w_k$
 in the vector $\tilde C_m\al$. 
 
 This system of equations implies that all coefficients $d_{a,b;c}$ are equal to zero.
Indeed, if $c>a$ and $b>a$, we choose $i=b,\,k=c,\,j=c-a, \,s=c$, and equation
\eqref{eq 21} says that $d_{a,b;c}=0$. If  $c<b$ and $b>a$,
we choose
 $i=a,\, j=c,\, s=c+g+1,\, k=c+g-b+1$, and equation \eqref{eq 21} says that $d_{a,b;c}=0$.
\end{proof}

\subsection{Intersection of kernels in $V_{1,2}$}

Introduce  the following 
$2g-2$ elements of $V_{1,2}$:
\bean
\label{alb}
\al_k= \sum_{a=1}^k\sum_{b=k+1}^g v_{a+b-k-1}\wedge w_a\wedge w_b\,,
\qquad
\beta_k= \sum_{a=1}^k\sum_{b=k+1}^g v_{a+b-k}\wedge w_a\wedge w_b\,,
\eean
where  $k=1,\dots, g-1$.  Denote by $X_{1,2}$ the span of these elements.

Let $Y_{1,2}$ be the intersection of kernels 
of the reduced $p$-curvature operators $\tilde C_m : V_{1,2} \to V_{0,3}$, 
$m=1,\dots, 2g+1$. 

\begin{lem}
\label{lem A.5} 

The elements $\al_k, \beta _k$, \, $k=1,\dots, g-1$,
are linearly independent and $X_{1,2}\subset Y_{1,2}$.

\end{lem}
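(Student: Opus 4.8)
The plan is to treat the two assertions separately. For linear independence, I would expand a hypothetical relation $\sum_{k=1}^{g-1}(\lambda_k\alpha_k+\mu_k\beta_k)=0$ in the monomial basis $\{v_i\wedge w_a\wedge w_b:1\le a<b\le g,\ 1\le i\le g\}$ of $V_{1,2}$ and extract the coefficients of two families of extreme monomials. Fix $1\le a<b\le g$. The monomial $v_a\wedge w_a\wedge w_b$ occurs in $\alpha_k$ precisely when $a\le k<b$ and $a+b-k-1=a$, i.e.\ only for $k=b-1$, and it occurs in no $\beta_k$ (this would force $k=b$, contradicting $k<b$); hence its coefficient in the relation is $\lambda_{b-1}$, forcing $\lambda_{b-1}=0$. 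Letting $b$ run over $2,\dots,g$ kills every $\lambda_k$. Dually, $v_b\wedge w_a\wedge w_b$ occurs only in $\beta_a$ (namely for $k=a$) and in no $\alpha_k$, so its coefficient is $\mu_a$, and letting $a$ run over $1,\dots,g-1$ kills every $\mu_k$. The only care required is checking index ranges so that each selected monomial isolates a single unknown.

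For the inclusion $X_{1,2}\subseteq Y_{1,2}$, I would first use that $\tilde C_m$ is a derivation of the exterior algebra of $V$ with $\tilde C_m(v_i)=z_m^{p(i-1)}K_m$ and $\tilde C_m(w_j)=0$, where $K_m=\sum_{l=1}^g z_m^{p(l-1)}w_l$; thus for any $\gamma=\sum_{i,a,b}c_{i;ab}\,v_i\wedge w_a\wedge w_b\in V_{1,2}$,
\[
\tilde C_m(\gamma)=K_m\wedge\omega_m(\gamma),\qquad \omega_m(\gamma):=\sum_{i,a,b}c_{i;ab}\,z_m^{p(i-1)}\,w_a\wedge w_b .
\]
So it suffices to show that $\omega_m(\alpha_k)$ and $\omega_m(\beta_k)$ lie in $K_m\wedge\langle w_1,\dots,w_g\rangle$ for every $m$. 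The key computation is the factorization
\[
\omega_m(\alpha_k)=\sum_{a=1}^{k}\sum_{b=k+1}^{g}z_m^{p(a+b-k-2)}\,w_a\wedge w_b=P_m^{(k)}\wedge Q_m^{(k)},\qquad \omega_m(\beta_k)=z_m^{p}\,P_m^{(k)}\wedge Q_m^{(k)},
\]
where $P_m^{(k)}:=\sum_{a=1}^{k}z_m^{p(a-1)}w_a$ and $Q_m^{(k)}:=\sum_{b=k+1}^{g}z_m^{p(b-k-1)}w_b$; these follow at once from $a+b-k-2=(a-1)+(b-k-1)$ and $a+b-k-1=(a-1)+1+(b-k-1)$. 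Next I would record the complementary splitting $K_m=P_m^{(k)}+z_m^{pk}Q_m^{(k)}$, gotten by grouping the first $k$ summands of $K_m$ against the last $g-k$. Then
\[
\tilde C_m(\alpha_k)=\big(P_m^{(k)}+z_m^{pk}Q_m^{(k)}\big)\wedge P_m^{(k)}\wedge Q_m^{(k)}=0
\]
since each summand repeats a factor, and likewise $\tilde C_m(\beta_k)=z_m^{p}\,K_m\wedge P_m^{(k)}\wedge Q_m^{(k)}=0$. As this holds for all $m$, each $\alpha_k$ and $\beta_k$ lies in $Y_{1,2}$, whence $X_{1,2}\subseteq Y_{1,2}$.

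I do not expect a genuine obstacle: the whole argument is combinatorial bookkeeping. The one point that makes it go through — and which I would set up first — is the pairing of the factorization $\omega_m(\alpha_k)=P_m^{(k)}\wedge Q_m^{(k)}$ with the matching decomposition $K_m=P_m^{(k)}+z_m^{pk}Q_m^{(k)}$; once those two identities are in hand the triple wedges collapse automatically, and the linear-independence half reduces to the coefficient comparisons described above.
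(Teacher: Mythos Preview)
Your proof is correct and follows essentially the same approach as the paper. For $X_{1,2}\subset Y_{1,2}$, the paper performs exactly your factorization $K_m\wedge P_m^{(k)}\wedge Q_m^{(k)}=0$ via the splitting $K_m=P_m^{(k)}+z_m^{pk}Q_m^{(k)}$ (with a slightly different normalization of $Q_m^{(k)}$); for linear independence, the paper uses the lexicographically minimal summands $v_1\wedge w_1\wedge w_{k+1}$ and $v_2\wedge w_1\wedge w_{k+1}$ instead of your monomials $v_a\wedge w_a\wedge w_b$ and $v_b\wedge w_a\wedge w_b$, but the coefficient-extraction idea is the same.
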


\begin{proof}

Consider the lexicographical ordering on the elements $v_a \wedge w_b\wedge w_c$. Then
the lexicographically minimal summand of $\al_k$ is 
$v_1 \wedge w_1\wedge w_k$ and 
the lexicographically minimal summand of $\beta_k$ is 
$v_2 \wedge w_1\wedge w_k$. These minimal summands are distinct. Hence
$\al_k, \beta _k$, \, $k=1,\dots, g-1$, are linearly independent.

 We have 
\bea
\tilde C_m \al_k  
&=&
 \left(\sum_{i=1}^g  z_i^{p(i-1)} w_i\right) \wedge
\sum_{a=1}^k\sum_{b=k+1}^g z_m^{p(a+b-2)} 
 w_a\wedge w_b\,
\\
&=&
 \left(\sum_{a=1}^k  z_m^{p(a-1)} w_a
 +\sum_{b=k+1}^g z_m^{p(b-1)} w_b\right) \wedge
\left(\sum_{a=1}^k z_m^{p(a-1)} w_a\right)
\wedge
\left(\sum_{b=k+1}^g z_m^{p(b-1)} w_b\right) =0.
\eea
Similarly we show that  $\tilde C_m \beta_k  =0$.

\end{proof}

\begin{lem}
\label{lem K 12}

 The intersection $X_{1,2}\cap \mc P_{1,2}$ has dimension $g-2$.

\end{lem}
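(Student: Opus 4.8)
The plan is to reinterpret $X_{1,2}\cap\mc P_{1,2}$ as the kernel of a power of the Lefschetz operator and then to compute its rank by hand on the spanning set $\{\al_k,\beta_k\}$ from \eqref{alb}.

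Write $L:=\mc D(z)\wedge(-)$. In a good basis $\mc D(z)$ is a nonzero scalar multiple of $\delta=\sum_{i=1}^g v_i\wedge w_i$ (Section \ref{sec 8.5}), so $L$ is, up to that scalar, the Lefschetz operator of the symplectic space $V$; in particular it is homogeneous of bidegree $(1,1)$ for the splitting $\wedge^\bullet V=\bigoplus_{i,j}V_{i,j}$. By the definition of the primitive subspace (Sections \ref{curves}, \ref{sec 3.3}, \ref{sec 5.3}), $\mc P_3^{\on{KZ}}(z)$ is the kernel of $L^{g-2}\colon\wedge^3V\to\wedge^{2g-1}V$, hence $\mc P_{1,2}=V_{1,2}\cap\ker L^{g-2}$ and $X_{1,2}\cap\mc P_{1,2}=\ker\bigl(L^{g-2}|_{X_{1,2}}\bigr)$. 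Since $\dim X_{1,2}=2g-2$ by Lemma \ref{lem A.5}, it suffices to show that $L^{g-2}|_{X_{1,2}}$ has rank $g$; then $\dim(X_{1,2}\cap\mc P_{1,2})=(2g-2)-g=g-2$.

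First I would locate the target. Because $L$ has bidegree $(1,1)$, the image $L^{g-2}(V_{1,2})$ lies in the bidegree $(g-1,g)$ component of $\wedge^{2g-1}V$, which is the $g$-dimensional span of the vectors $\Omega_a:=v_1\wedge\dots\wedge\widehat{v_a}\wedge\dots\wedge v_g\wedge w_1\wedge\dots\wedge w_g$, $a=1,\dots,g$; hence $\operatorname{rank}(L^{g-2}|_{X_{1,2}})\le g$, and it remains to hit every $\Omega_a$. For this I would expand $\mc D(z)^{g-2}=(g-2)!\sum_{|S|=g-2}\bigwedge_{i\in S}(v_i\wedge w_i)$ and wedge with a basis vector $v_j\wedge w_a\wedge w_b$ ($a<b$): a short counting argument shows the product vanishes unless $j\in\{a,b\}$, and that $L^{g-2}(v_a\wedge w_a\wedge w_b)=\pm(g-2)!\,\Omega_b$ while $L^{g-2}(v_b\wedge w_a\wedge w_b)=\pm(g-2)!\,\Omega_a$. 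Applying this to $\al_k=\sum_{a\le k<b}v_{a+b-k-1}\wedge w_a\wedge w_b$, only the terms with $b=k+1$ survive $L^{g-2}$ (they are $v_a\wedge w_a\wedge w_{k+1}$, $a=1,\dots,k$); applied to $\beta_k$, only the terms with $a=k$ survive (they are $v_b\wedge w_k\wedge w_b$, $b=k+1,\dots,g$). A sign bookkeeping — in which the dependence on the summation index cancels between the reordering of the $v$'s and of the $w$'s, so all surviving summands carry a common sign — then gives $L^{g-2}(\al_k)=\pm\,k\,(g-2)!\,\Omega_{k+1}$ and $L^{g-2}(\beta_k)=\pm\,(g-k)\,(g-2)!\,\Omega_k$. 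For $p$ as in this section (so in particular $p>2g+1$) the integers $k$, $g-k$ (for $1\le k\le g-1$) and $(g-2)!$ are units in $\K$, so these vectors are nonzero; hence the $\al_k$ produce $\Omega_2,\dots,\Omega_g$ and the $\beta_k$ produce $\Omega_1,\dots,\Omega_{g-1}$, which together span the full $g$-dimensional target. Thus $L^{g-2}|_{X_{1,2}}$ has rank $g$ and $\dim(X_{1,2}\cap\mc P_{1,2})=g-2$.

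The main obstacle is exactly this sign bookkeeping: proving the vanishing criterion ``$j\in\{a,b\}$'' for $\mc D(z)^{g-2}\wedge(v_j\wedge w_a\wedge w_b)$ and, above all, checking that the surviving terms in $L^{g-2}(\al_k)$ and in $L^{g-2}(\beta_k)$ all have the same sign, so that the sums do not collapse; once that is in place the conclusion is a rank count.
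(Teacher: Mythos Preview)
Your proposal is correct and follows essentially the same route as the paper: both identify $\mc P_{1,2}$ as the kernel of $f=L^{g-2}|_{V_{1,2}}$, show that $f|_{X_{1,2}}$ surjects onto the $g$-dimensional space $V_{g-1,g}$, and conclude by rank–nullity. Your vanishing criterion $j\in\{a,b\}$ and the identification of the surviving summands ($b=k+1$ for $\al_k$, $a=k$ for $\beta_k$) are exactly what underlies the paper's stated formulas; the sign constancy you flag as the main obstacle does hold, since each surviving term can be rewritten as $\pm\,\epsilon_j\wedge w_{m}\wedge\prod_{i\ne j,m}\epsilon_i$ (with $\epsilon_i=v_i\wedge w_i$), and commuting the degree-two factor $\epsilon_j$ past the others introduces no sign.
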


\begin{proof}

Recall that $\mc P_{1,2}$ is the kernel of the map
$f: V_{1,2} \to V_{g-1,g}$ of multiplication by the $(g-2)$-th wedge-power of
the element $\sum_{i=1}^g v_i\wedge w_i$. 
The elements
\bea
\ga_i = w_i \wedge \left( \wedge_{j\ne i} (v_j\wedge w_j)\right), \qquad i=1,\dots, g, 
\eea
form a basis of $V_{g-1,g}$.  

We  have $f(X_{1,2}) = V_{g-1,g}$ because of 
$f(\beta_i) = - (g-i)! \ga_i$, $i=1,\dots, g-1$, and $f(\beta_{g-1})=(g-1)! \ga_g$.
Since  $f(X_{1,2}) = V_{g-1,g}$, we obtain that $\dim 
X_{1,2}\cap \mc P_{1,2} = 2g-2-g = g-2$.
\end{proof}

\begin{thm}
\label{thm ker 12}

We have $X_{1,2}=Y_{1,2}$.

\end{thm}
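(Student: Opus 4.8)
The plan is to prove $X_{1,2}=Y_{1,2}$ by establishing the two inclusions; $X_{1,2}\subseteq Y_{1,2}$ is already Lemma~\ref{lem A.5}, so the real content is the reverse inclusion $Y_{1,2}\subseteq X_{1,2}$, which I would obtain by a dimension count combined with an explicit description of $Y_{1,2}$. First I would write a general element $\al\in V_{1,2}$ as $\al=\sum_{c=1}^{g}\sum_{1\le a<b\le g} d_{c;a,b}\,v_c\wedge w_a\wedge w_b$, with the convention $d_{c;a,b}=0$ unless $1\le a<b\le g$ and $1\le c\le g$, and compute $\tilde C_m\al$ as in the proofs of Lemmas~\ref{lem a.1} and~\ref{lem A.5}: since $\tilde C_m(v_c)=K_m\wedge(\cdots)$ contributes $z_m^{p(c-1)}$ times a shift, $\tilde C_m\al = K_m\wedge\big(\sum z_m^{p(c-1)}d_{c;a,b}\,w_a\wedge w_b\big)$, and this wedge with $K_m=\sum_i z_m^{p(i-1)}w_i$ vanishes for all $m=1,\dots,2g+1$ iff a system of linear equations (one for each coefficient of $z_m^{p(s)}\,w_i\wedge w_j\wedge w_k$ with $i<j<k$, after using that $2g+1$ distinct values $z_m^p$ force all coefficient polynomials in $z_m^p$ to vanish) holds on the $d_{c;a,b}$.

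The key step is then to solve that linear system and show its solution space is exactly $2g-2$-dimensional, spanned by the $\al_k,\beta_k$. For the dimension upper bound I would argue as follows: $Y_{1,2}$ is $\tilde C_a$-invariant-compatible, and more importantly I can bound $\dim Y_{1,2}$ from above. Note $\mc P_{1,2}=V_{1,2}\cap \mc P^{\on{KZ}}_r(z)$ (for $r=3$) and the decomposition $V_{1,2}=\mc P_{1,2}\oplus (\text{complement})$; since by Lemma~\ref{lem CP} the operators $\tilde C_a$ preserve $\mc P_3^{\on{KZ}}(z)$, one checks that $Y_{1,2}\cap\mc P_{1,2}$ and the image of $Y_{1,2}$ in $V_{1,2}/\mc P_{1,2}\cong\mc P_{1,2}$-type pieces behave compatibly. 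Concretely I would instead directly count: the map $V_{1,2}\xrightarrow{(\tilde C_m)}\bigoplus_m V_{0,3}$ has kernel $Y_{1,2}$, and I claim its image has dimension $\dim V_{1,2}-(2g-2) = g\binom{g}{2}-(2g-2)$. To see the image is at least that large, it suffices to exhibit enough independent relations forced by the equations $\tilde C_m\al=0$; this is precisely the elimination argument already used in Lemma~\ref{lem a.1} and in the proof of Lemma~\ref{cor ti vs b}, run on the triples $(c;a,b)$ in lexicographic order. The equations say, after extracting the coefficient of each monomial, that certain alternating-in-shift combinations of the $d_{c;a,b}$ vanish, and by choosing the labels appropriately (as in Lemma~\ref{lem a.1}: take $s$ large or small to isolate a single $d$) one forces all $d_{c;a,b}$ outside the span of the $\al_k,\beta_k$-patterns to vanish.

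The main obstacle I anticipate is the bookkeeping in that elimination: unlike $V_{2,1}$, where Lemma~\ref{lem a.1} showed the intersection is zero, here the kernel is genuinely nonzero and one must identify \emph{which} linear combinations survive. The clean way to organize this is to observe that the condition $K_m\wedge\big(\sum_c z_m^{p(c-1)} d_{c;a,b}\,w_a\wedge w_b\big)=0$ for all $m$ says, for each pair $\{a,b\}$, that the vector-valued polynomial $p\mapsto \sum_c (z^p)^{c-1} d_{c;a,b}\,w_a\wedge w_b$ together with cross terms from other pairs assembles into something divisible, in the appropriate sense, by the "tautological" relation among the $w_i$; this is the same mechanism as in the proof of Lemma~\ref{lem 6.5}, where $x_a$ had to be proportional to $K_a$. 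Translating that proportionality condition across all pairs and all $m$ yields a telescoping/recursion on the indices $c$ (shift by one in $c$ corresponds to multiplication by $z_m^p$), and solving the recursion shows the solution is determined by its "boundary data", which is exactly parametrized by the $2g-2$ elements $\al_k,\beta_k$. Once the upper bound $\dim Y_{1,2}\le 2g-2$ is in hand, Lemma~\ref{lem A.5} gives $\dim X_{1,2}=2g-2\le\dim Y_{1,2}$ together with $X_{1,2}\subseteq Y_{1,2}$, forcing equality.

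Finally I would double check the edge cases $g=2$ (where $V_{1,2}$, $X_{1,2}$, $Y_{1,2}$ are small and the statement is easily verified by hand, consistent with $2g-2=2$) and the interplay with $q\ge 3$ assumed elsewhere in the section, to make sure the recursion does not degenerate. The proof then reads: "By Lemma~\ref{lem A.5}, $X_{1,2}\subseteq Y_{1,2}$ and $\dim X_{1,2}=2g-2$. Conversely, writing out the equations $\tilde C_m\al=0$ for $\al\in V_{1,2}$ and eliminating coefficients as above shows $\dim Y_{1,2}\le 2g-2$. Hence $X_{1,2}=Y_{1,2}$."
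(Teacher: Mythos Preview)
Your overall strategy coincides with the paper's: Lemma~\ref{lem A.5} gives $X_{1,2}\subseteq Y_{1,2}$ with $\dim X_{1,2}=2g-2$, and the remaining content is the upper bound $\dim Y_{1,2}\le 2g-2$, obtained by analyzing the linear system that $\tilde C_m\al=0$ imposes on the coefficients of $\al$. Your computation of $\tilde C_m\al=K_m\wedge\bigl(\sum_{c,a,b} z_m^{p(c-1)}d_{c;a,b}\,w_a\wedge w_b\bigr)$ is correct, and extracting coefficients does yield a system of three-term relations
\[
d_{s-i;j,k}-d_{s-j;i,k}+d_{s-k;i,j}=0
\]
indexed by $s$ and $i<j<k$.

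However, you have not actually solved that system; the passage invoking ``proportionality as in Lemma~\ref{lem 6.5}'' and ``telescoping/recursion determined by boundary data'' is where the gap lies. The analogy with Lemma~\ref{lem 6.5} is misleading: there the condition $K_a\wedge x_a=0$ forces $x_a$ proportional to $K_a$, a one-line fact, whereas here $K_m$ wedged with a \emph{two}-form in the $w_i$ gives genuine three-term relations with a $(2g-2)$-dimensional kernel, and no single proportionality statement captures it. Your sketch does not identify which $d_{c;a,b}$ can be eliminated and which survive.

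The paper supplies exactly this missing step, organized not as a recursion but via a lexicographic argument (Lemma~\ref{lem 9.8}): it shows that the lexicographically minimal summand of any nonzero $\al\in Y_{1,2}$ must lie in the set $S=\{v_1\wedge w_1\wedge w_{k+1},\,v_2\wedge w_1\wedge w_{k+1}\mid k=1,\dots,g-1\}$ of minimal summands of the $\al_k,\beta_k$. This is proved by three explicit claims, each choosing particular values of $s,i,j,k$ in the three-term relation to force $d_{a;b,c}=0$ (Claim~1: $a>c>b$), or to express $d_{a;b,c}$ in terms of lexicographically smaller coefficients (Claims~2 and~3). Once Lemma~\ref{lem 9.8} is in hand, a standard triangularity argument gives $Y_{1,2}\subseteq X_{1,2}$. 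To complete your proof you would need to supply an equivalent of these three claims.
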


\begin{proof}

Define the set 
\bea
S = \{v_1\wedge w_1 \wedge w_{k+1}, \, 
v_2\wedge w_1 \wedge w_{k+1} \mid
k=1,\dots, g-1\} .
\eea
The elements of $S$ are
the lexicographically minimal summands of $\al_k$ and $\beta_k$ with $k=1,\dots, g-1$.
The theorem clearly follows from the following lemma.

\begin{lem}
\label{lem 9.8}

If $\al$ is a nonzero element of $Y_{1,2}$, then the lexicographically minimal summand of
$\al$ belongs to the set $S$. 

\end{lem}

\smallskip
\noindent
{\it Proof of lemma.}
Let  
\bea
\al = \sum_{1\leq b<c\leq g}\sum_{a=1}^g  d_{a;b,c} \,v_a \wedge w_b \wedge w_c.
\eea
We set  $d_{a;b,c}=0$ if $ c\leq b$ or if any of the indices $a,b,c$ is not in the set 
$\{1,\dots,g\}$. 
Similarly to previous reasonings, we observe that 
$\al$ lies in the intersection of kernels if its coefficients
$d_{a;b,c}$  satisfy the system of equations
\bean
\label{eq 12}
d_{s-i;j,k}-d_{s-j;i,k}+d_{s-k;i,j}=0  
\eean
where the equations are labeled by the parameters $s,i,j,k$
with the only condition  $i< j<k$.  Equation \eqref{eq 12} comes as 
the coefficient of $z_m^{p(s-2)} w_i\wedge w_j\wedge w_k$ in 
$\tilde C_m \al$. 

\medskip

\noindent
{\it Claim 1.}
If $a>c>b$, then $d_{a;b,c}=0$. 

\smallskip

\noindent
Indeed, choose
$i=b, \,j=c, \,s=c+g+1,\, k=c+g-a+1$. Then \eqref{eq 12}
implies that $d_{a;b,c}=0$.

\medskip

\noindent
{\it Claim 2.}
If $1<b<c$, then $v_a\wedge w_b\wedge w_c$ cannot be the lexicographically minimal summand 
of $\al$.

\smallskip

\noindent
Indeed, choose $ j=b,\,i=1,\, s=a+1,\,k=c$. Then equation
\eqref{eq 12} takes the form 
\bea
d_{a;b,c}-d_{a+1-b;1,c}+d_{a+1-c;1,b}=0.
\eea
Claim 2 follows since the first term is the lexicographically largest of these three terms.

\medskip

\noindent
{\it Claim 3.}
If $b=1$ and $2<a<c+1$, 
then $v_a\wedge w_1\wedge w_c$ cannot be the lexicographically minimal summand 
of $\al$.

\smallskip

\noindent
Indeed, choose 
$s=c+2, \,j=c+2-a,\, i=1,\,k=c$. Then  \eqref{eq 12} takes the form
\bea
  d_{c+1;c+2-a,c}  -d_{a;1,c}+d_{2;1,c+2-a}=0.
\eea
In this equation, the first coefficient  is zero by Claim 1, and the element 
$v_2\wedge w_1\wedge w_{c+2-a}$
is lexicographically smaller that $v_a\wedge w_1\wedge w_c$. Claim 3 follows.

Claims 1-3 imply the lemma.
\qed

\end{proof}

\begin{cor}
The space of solutions modulo $p$ with values in $\mc P_3^{\on{KZ}}(z)$
 and $\ka=2$ is of dimension $\binom{g}{3} + g-2$.  
The space of solutions has a $\binom{g}{3}$-dimensional subspace generated the
 $p$-hypergeometric solutions $w_{i_1}\wedge w_{i_2}\wedge w_{i_3}$
 with $1\leq i_1<i_2<i_3\leq g$. 
  The projection
of any solution to $\mc P_3^{\on{KZ}}(z)/ \mc P_{0,3}$ lies in $X_{1,2}\cap \mc P_{1,2}$.
 
\end{cor}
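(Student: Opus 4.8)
The plan is to apply the Cartier descent theorem \cite[Theorem 5.1]{K1}: the space of solutions modulo $p$ with values in $\mc P_3^{\on{KZ}}(z)$ and $\ka=2$ is $\mc S:=\bigcap_{a=1}^{2g+1}\ker\big(\tilde C_a\colon\mc P_3^{\on{KZ}}(z)\to\mc P_3^{\on{KZ}}(z)\big)$, so the whole task is to identify $\mc S$. The main device is the bidegree decomposition $\mc P_3^{\on{KZ}}(z)=\mc P_{3,0}\oplus\mc P_{2,1}\oplus\mc P_{1,2}\oplus\mc P_{0,3}$, together with the facts that each $\tilde C_a$ sends $V_{k,3-k}$ into $V_{k-1,4-k}$ and preserves $\mc P_3^{\on{KZ}}(z)$ (Lemma \ref{lem CP}), hence sends $\mc P_{k,3-k}$ into $\mc P_{k-1,4-k}$. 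First I would note that for a fixed $a$ the three images $\tilde C_a\mc P_{3,0}$, $\tilde C_a\mc P_{2,1}$, $\tilde C_a\mc P_{1,2}$ lie in the pairwise distinct summands $\mc P_{2,1}$, $\mc P_{1,2}$, $\mc P_{0,3}$; hence for $v=\sum_k v_{k,3-k}$ the condition $\tilde C_a v=0$ for all $a$ is equivalent to $\tilde C_a v_{k,3-k}=0$ for all $a$ and all $k$. Consequently $\mc S=\mc K_{3,0}\oplus\mc K_{2,1}\oplus\mc K_{1,2}\oplus\mc K_{0,3}$, and it remains to evaluate the four summands.

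These are precisely the computations already made. We have $\mc K_{0,3}=\mc P_{0,3}=V_{0,3}$, of dimension $\binom{g}{3}$. Next $\mc K_{3,0}=0$ by Proposition \ref{thm Kro} (case $r=3$); and $\mc K_{2,1}=0$ since $\mc P_{2,1}\subseteq V_{2,1}$ while the intersection of the kernels of $\tilde C_m\colon V_{2,1}\to V_{1,2}$ is zero by Lemma \ref{lem a.1}. For the remaining term, $\mc K_{1,2}=\mc P_{1,2}\cap Y_{1,2}$, where $Y_{1,2}$ is the intersection of the kernels of $\tilde C_m\colon V_{1,2}\to V_{0,3}$; by Theorem \ref{thm ker 12} one has $Y_{1,2}=X_{1,2}$, so $\mc K_{1,2}=\mc P_{1,2}\cap X_{1,2}$, which has dimension $g-2$ by Lemma \ref{lem K 12}. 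Summing, $\dim\mc S=0+0+(g-2)+\binom{g}{3}=\binom{g}{3}+g-2$, as claimed.

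For the last two assertions I would argue directly from the decomposition of $\mc S$. The vectors $w_{i_1}\wedge w_{i_2}\wedge w_{i_3}$ with $1\le i_1<i_2<i_3\le g$ are wedges of the $p$-hypergeometric solutions $w_i=N^i(z)$, hence solutions with values in $\wedge^3V$ by Theorem \ref{thm KZ p}; they lie in $V_{0,3}=\mc P_{0,3}\subseteq\mc S$, they coincide (up to sign, by expanding the $p$-integrals) with the $p$-hypergeometric solutions $M^{i_1,i_2,i_3}(z)$, and they form a basis of the $\binom{g}{3}$-dimensional summand $\mc P_{0,3}$. Finally, the natural projection $\mc P_3^{\on{KZ}}(z)\to\mc P_3^{\on{KZ}}(z)/\mc P_{0,3}\cong\mc P_{3,0}\oplus\mc P_{2,1}\oplus\mc P_{1,2}$ kills $\mc P_{0,3}$ and sends a solution $v=\sum_k v_{k,3-k}\in\mc S$ to $v_{1,2}$ (because $v_{3,0}=v_{2,1}=0$), which lies in $\mc K_{1,2}=X_{1,2}\cap\mc P_{1,2}$.

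I do not expect a real obstacle here: all of the substantive linear-algebra content is contained in Proposition \ref{thm Kro}, Lemma \ref{lem a.1}, Theorem \ref{thm ker 12}, and Lemma \ref{lem K 12}, and the corollary merely combines them. The only points needing attention are the verification that $\mc S$ splits as the direct sum of the four bidegree pieces (which is the paragraph-one observation about distinct target summands) and the identifications $\mc K_{0,3}=\mc P_{0,3}=V_{0,3}$ and $\mc K_{1,2}=\mc P_{1,2}\cap Y_{1,2}$; both are immediate once one uses that each $\tilde C_a$ preserves $\mc P_3^{\on{KZ}}(z)$ and lowers bidegree.
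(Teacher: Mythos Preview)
Your proof is correct and is exactly the argument the paper has in mind: the corollary is stated without proof because it is the immediate combination of Proposition \ref{thm Kro}, Lemma \ref{lem a.1}, Theorem \ref{thm ker 12}, and Lemma \ref{lem K 12}, assembled via the bidegree splitting and the Cartier descent theorem, just as you describe.
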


\begin{cor}
\label{cor a6}

For all prime $p$ with finitely many exceptions, the space of solutions of the \KZ/ equations with 
values in  $\Sing L^{\ox (2g+1)}[2g-5]$ and $\ka=2$ in characteristic $p$ has dimension 
$\binom{g}{3} + g-2$. That  space has 
 a $\binom{g}{3}$-dimensional subspace generated the
 $p$-hypergeometric solutions $N^{\ell_1,\ell_2,\ell_3}$
 with $1\leq \ell_1<\ell_2<\ell_3\leq g$.

\end{cor}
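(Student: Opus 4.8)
The plan is to obtain Corollary~\ref{cor a6} from the Corollary immediately preceding it (the one describing $\bigcap_a\ker C_a$ on $\mc P_3^{\on{KZ}}(z)$) by transporting the whole picture through the Satake-type map $T(z)$ reduced modulo~$p$. First I would invoke Section~\ref{sec 5.6} together with Corollary~\ref{cor main T}: for all primes $p$ outside a finite set, the restriction $T(z)|_{\mc P_3^{\on{KZ}}(z)}\colon\mc P_3^{\on{KZ}}(z)\to\Sing L^{\ox(2g+1)}[2g-5]$ reduces modulo $p$ to an isomorphism of the \KZ/ connection with $\ka=2$ on the source onto the \KZ/ connection with $\ka=2$ on the target. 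Any morphism $T$ of flat connections intertwines the covariant derivatives $\nabla_a$, hence also their $p$-th powers, the $p$-curvature operators $C_a$; consequently an isomorphism of connections carries $\bigcap_{a=1}^{2g+1}\ker C_a$ on the source bijectively onto $\bigcap_{a=1}^{2g+1}\ker C_a$ on the target. By the Cartier descent theorem \cite[Theorem~5.1]{K1} these intersections are precisely the $\K(z)$-spans of the characteristic-$p$ solutions, so the solution space of the \KZ/ equations on $\Sing L^{\ox(2g+1)}[2g-5]$ with $\ka=2$ has the same $\K(z)$-dimension as that on $\mc P_3^{\on{KZ}}(z)$, namely $\binom{g}{3}+g-2$ by the preceding Corollary. (Here one also imposes $p>2g+1$, needed for the $r=1$ $p$-curvature formula of Theorem~\ref{thm cu r=1} that underlies the preceding Corollary; this merely enlarges the finite exceptional set.)

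Second, I would pin down the $p$-hypergeometric subspace. In the good basis of $V=\Sing L^{\ox(2g+1)}[2g-1]$ fixed in Section~\ref{sec 8.2} one has $w_i=N^i(z)$, so the basis vectors $w_{i_1}\wedge w_{i_2}\wedge w_{i_3}$, $1\le i_1<i_2<i_3\le g$, of the subspace $V_{0,3}=\mc P_{0,3}\subset\mc P_3^{\on{KZ}}(z)$ are literally the wedges $N^{i_1}(z)\wedge N^{i_2}(z)\wedge N^{i_3}(z)$ of the one-dimensional $p$-hypergeometric solutions. Theorem~\ref{thm p-map} (applicable once $p$ does not divide $(2g-1)(2g-3)$) then gives $T(z)\bigl(N^{i_1}(z)\wedge N^{i_2}(z)\wedge N^{i_3}(z)\bigr)=N^{i_1,i_2,i_3}(z)$. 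Since $T(z)|_{\mc P_3^{\on{KZ}}(z)}$ is an isomorphism and the $\binom{g}{3}$ vectors $w_{i_1}\wedge w_{i_2}\wedge w_{i_3}$ are $\K(z)$-linearly independent (the $N^i(z)$ being independent for all but finitely many $p$, cf.\ Theorem~\ref{thm lin ind} for $r=1$), the solutions $N^{i_1,i_2,i_3}(z)$ are $\K(z)$-linearly independent and span a $\binom{g}{3}$-dimensional subspace of the $\bigl(\binom{g}{3}+g-2\bigr)$-dimensional solution space; this is the claimed $p$-hypergeometric subspace. Collecting the finitely many exceptional primes from Section~\ref{sec 5.6}, from $p>2g+1$, from Theorem~\ref{thm p-map}, and from Theorem~\ref{thm lin ind} then finishes the argument.

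For completeness I would recall why the preceding Corollary holds, since that is where the genuine content lies: that $\bigcap_a\ker C_a$ on $\mc P_3^{\on{KZ}}(z)$ has dimension $\binom{g}{3}+(g-2)$ rather than merely $\binom{g}{3}$, so that honest non-$p$-hypergeometric solutions exist. One uses the grading $\mc P_3^{\on{KZ}}(z)=\mc P_{3,0}\oplus\mc P_{2,1}\oplus\mc P_{1,2}\oplus\mc P_{0,3}$, on which $\tilde C_a$ is homogeneous of degree $-1$ (it maps $\mc P_{k,3-k}$ into $\mc P_{k-1,4-k}$ by Lemma~\ref{lem CP}); since distinct graded pieces do not interfere, $\bigcap_a\ker\tilde C_a=\mc K_{3,0}\oplus\mc K_{2,1}\oplus\mc K_{1,2}\oplus\mc P_{0,3}$, and one invokes $\mc K_{3,0}=0$ (Proposition~\ref{thm Kro}), $\mc K_{2,1}=0$ (Lemma~\ref{lem a.1}), $\mc K_{1,2}=X_{1,2}\cap\mc P_{1,2}$ of dimension $g-2$ (Theorem~\ref{thm ker 12} with Lemma~\ref{lem K 12}), and $\dim\mc P_{0,3}=\binom{g}{3}$. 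Given that count, the passage to $\Sing L^{\ox(2g+1)}[2g-5]$ through $T(z)$ is purely formal; I expect the only real friction in writing up Corollary~\ref{cor a6} to be the bookkeeping of the finitely many excluded primes coming from the several inputs above.
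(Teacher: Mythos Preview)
Your proposal is correct and matches the paper's implicit argument: Corollary~\ref{cor a6} is stated without proof precisely because it is an immediate transport of the preceding corollary on $\mc P_3^{\on{KZ}}(z)$ through the isomorphism $T(z)|_{\mc P_3^{\on{KZ}}(z)}$ of Corollary~\ref{cor main T}, together with Theorem~\ref{thm p-map} to identify the image of the wedge $p$-hypergeometric solutions with the $N^{\ell_1,\ell_2,\ell_3}(z)$. Your third paragraph, recapitulating why the kernel on $\mc P_3^{\on{KZ}}(z)$ has dimension $\binom{g}{3}+g-2$ via the pieces $\mc K_{3,0}=0$, $\mc K_{2,1}=0$, $\mc K_{1,2}=X_{1,2}\cap\mc P_{1,2}$, is also accurate and goes beyond what the paper spells out for Corollary~\ref{cor a6} itself.
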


\subsection{Intersection of kernels in $V_{1,r-1}$} 

We expect that the intersection  in $V_{1,r-1}$ of the kernels of the $p$-curvature operators is of
 dimension
$(r-1)\binom{g-1}{r-2}$ and has a basis consisting of elements
 analogous to the elements $\al_k,\beta_k$, $k=1,\dots, g-1$, in \eqref{alb}.

For simplicity we define these elements in $V_{1,3}$ for $g>3$.  For
$i=0, 1,2$ and  $1\leq k_1<k_2\leq g-1$, let
\bea
\al_{i;k_1,k_2} =
\sum_{a=1}^{k_1}\sum_{b=k_1+1}^{k_2}\sum_{c=k_2+1}^g v_{a+b+c-k_1-k_2-2+i}
\wedge w_a\wedge w_b\wedge w_c,
\eea
cf. \eqref{alb}. It is easy to see that these elements lie in the intersection of the kernels.

\vsk.2>

A general problem is to describe the dimension of the intersection of kernels 
in $\mc P_r^{\on{KZ}}(z)$ for arbitrary $r$ and determine the dimension of the space of solutions of the \KZ/ equations
 in $\Sing L^{\ox(2g+1)}[2g+1-2r]$ for $\ka=\pm 2$.

\appendix

\section{Kodaira-Spencer maps and $p$-curvature}
\label{sec 10}

\subsection{Cohomology classes on $C(z)$}

Fix $z=(z_1,\dots, z_{2g+1})$ with distinct coordinates. 
Recall $\Psi(t,z)=\prod_{a=1}^{2g+1}(t-z_a)$.
Consider the following differential 1-forms on the hyperelliptic 
curve $C(z)$:
\bea
\om_i 
&=&
\frac{\Psi(t,z)^{-1/2}}{t-z_i} dt, \qquad i=1,\dots,2g+1,
\\
\mu_k
&=&
\Psi(t,z)^{-1/2} t^{k-1} dt, \qquad k=1,\dots,g.
\eea
The forms $\om_i$ have zero residues at all points of $C(z)$ and define elements $\left[\om_i\right] \in H^1(C(z))$. The forms
$\left[\om_i\right]$, $i=1,\dots, 2g+1$, span $H^1(C(z))$ and satisfy the relation
$\sum_{i=1}^{2g+1}\left[\om_i\right] = 0$, see \cite[Theorem 4.2]{VV1}.
The forms $\mu_k$, $k=1,\dots,g,$ are regular on $C(z)$ and give a basis of the Hodge subspace $F^1\subset
H^1(C(z))$, \cite[Lemma 4.4]{VV1}.

The derivatives of the elements $\left[\mu_k\right]$ with respect to the Gauss-Manin connection are given by the formula
\bean
\label{dmg}
\nabla_{\!\!\frac{\partial}{\partial z_i}}
[\mu_k]= \frac 12\big([\mu_{k-1}] + z_i[\mu_{k-2}]+\dots + z_i^{k-2}[\mu_1] + z_i^{k-1}[\om_i]\big),
\eean
\cite[Lemma 4.5]{VV1}.

\vsk.2>

Consider the linear functions $(\cdot, \mu_k)$, $k=1,\dots,g$, on $H^1(C(z))$ where $(\cdot,\cdot)$ is the Poincar\'e
pairing. These linear functions are zero on $F^1$ and define a coordinate system on $H^1(C(z))/F^1$. 
Let $\nu_k$, $k=1,\dots, g$, be the dual basis of $H^1(C(z))/F^1$, $(\nu_i,\om_j)=\delta_{ij}$.
For any element $\ga\in H^1(C(z))$, its projection $\bar\ga$ to $H^1(C(z))/F^1$ equals the linear combination
\bean
\label{gac}
(\ga,\mu_1)\nu_1+ (\ga,\mu_2)\nu_2+\dots +(\ga,\mu_g) \nu_g\,.
\eean
\vsk.2>

 Consider the direct sum  $W=F^1\oplus H^1(C(z))/F^1$ with basis 
$\mu_k, \nu_k$, $k=1,\dots,g$.  The element
 $\sum_{k=1}^g \nu_k\wedge \mu_k$ in $\wedge^2W$ corresponds
 to the Poincare pairing element.

\vsk.2>

The Poincar\'e pairing of $[\om_i]$ and $[\mu_k]$ is given by the formula 
\bean
\label{ommu}
([\om_i], [\mu_k]) \,=\, 
-\,\frac{2z_i^{k-1}}{D_i(z)}\,,
\eean
where $D_i(z) = (z_i-z_1)\dots (z_i-z_{i-1})(z_i-z_{i+1}) \dots (z_i-z_{n})$, \cite[Theorem 4.7]{VV1}.

\subsection{Kodaira-Spencer maps}

Recall the Kodaira-Spencer maps
\bean
\label{ks}
 \text{KS}_a(z) : F^1 \to H^1(C(z))/F^1, \qquad  a=1,\dots,z_{2g+1}. 
 \eean
 By definition, the map $\on{KS}_a(z)$ sends the class $[\mu_k]$ of a regular 1-form to the
projection  $\overline{\nabla_{\!\!\frac{\der}{\der z_a}} [\mu_k]}$ in
$ H^1(C(z))/F^1$ of the class   $\nabla_{\!\!\frac{\der}{\der z_a}} [\mu_k]$.  

We extend the Kodaira-Spencer map to a map $W\to W$ by setting $\on{KS}_a(z)\vert_{H^1(C(z))/F^1} = 0$.
 
 \begin{lem}
 For $a=1,\dots, 2g+1$, the Kodaira-Spencer map $\on{KS}_a(z) :W\to W$ is defined by the formula:
 \bean
 \label{KSe}
 &&
 \on{KS}_a(z) \ :\  \sum_{k=1}^g (b_k\mu_k+c_k \nu_k) \
 \mapsto\
   \frac{-1}{D_a(z)}\left(\sum_{k=1}^g b_k z_a^{k-1}\right)\left(\sum_{\ell=1}^gz_a^{\ell-1}\nu_k\right).
 \eean

 \end{lem}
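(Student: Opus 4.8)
The plan is to compute $\on{KS}_a(z)$ directly from its definition together with the Gauss--Manin derivative formula \eqref{dmg} and the Poincar\'e pairing formula \eqref{ommu}. First I would recall that $\on{KS}_a(z)$ is declared to vanish on $H^1(C(z))/F^1$, so only its action on the basis vectors $[\mu_k]$, $k=1,\dots,g$, of $F^1$ needs to be identified; equivalently, on a general element $\sum_k(b_k\mu_k + c_k\nu_k)$ only the $\mu_k$-part contributes, which already accounts for the $c_k$ being absent from the right-hand side of \eqref{KSe}. So it suffices to show
\[
\on{KS}_a(z)[\mu_k] \;=\; \frac{-1}{D_a(z)}\,z_a^{k-1}\sum_{\ell=1}^g z_a^{\ell-1}\nu_\ell,
\]
and then extend by linearity.

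Next I would apply \eqref{dmg} with $i=a$: $\nabla_{\der/\der z_a}[\mu_k] = \tfrac12\big([\mu_{k-1}]+z_a[\mu_{k-2}]+\dots+z_a^{k-2}[\mu_1]+z_a^{k-1}[\om_a]\big)$. Passing to the quotient $H^1(C(z))/F^1$, every term $[\mu_j]$ with $1\le j\le k-1$ lies in $F^1$ and hence dies, so $\on{KS}_a(z)[\mu_k]$ is just the projection $z_a^{k-1}\,\overline{[\om_a]}$ of $z_a^{k-1}[\om_a]$. By formula \eqref{gac}, the projection of $[\om_a]$ to $H^1(C(z))/F^1$ in the basis $\nu_1,\dots,\nu_g$ is $\sum_{\ell=1}^g ([\om_a],[\mu_\ell])\,\nu_\ell$. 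Plugging in the Poincar\'e pairing \eqref{ommu}, $([\om_a],[\mu_\ell]) = -\,2 z_a^{\ell-1}/D_a(z)$, gives $\overline{[\om_a]} = \tfrac{-2}{D_a(z)}\sum_{\ell=1}^g z_a^{\ell-1}\nu_\ell$. Combining, $\on{KS}_a(z)[\mu_k] = z_a^{k-1}\cdot\tfrac{-2}{D_a(z)}\sum_{\ell=1}^g z_a^{\ell-1}\nu_\ell$.

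Here I would need to reconcile the factor $-2$ with the factor $-1$ in \eqref{KSe}; I expect the resolution is a normalization of the Poincar\'e pairing element or of the dual basis $\nu_\ell$ (note that $\sum_k\nu_k\wedge\mu_k$ is declared to be \emph{the} Poincar\'e element, which may already absorb a factor of $2$ relative to the pairing $(\cdot,\cdot)$ used in \eqref{ommu}), or a convention $(\nu_i,\om_j)=\delta_{ij}$ versus $(\nu_i,\mu_j)=\delta_{ij}$. The main obstacle is thus bookkeeping of these normalizations rather than any substantive computation: once the conventions are pinned down consistently, the formula follows by summing over $k$ against the coefficients $b_k$:
\[
\on{KS}_a(z)\Big(\sum_{k=1}^g(b_k\mu_k+c_k\nu_k)\Big)
= \sum_{k=1}^g b_k\,\on{KS}_a(z)[\mu_k]
= \frac{-1}{D_a(z)}\Big(\sum_{k=1}^g b_k z_a^{k-1}\Big)\Big(\sum_{\ell=1}^g z_a^{\ell-1}\nu_\ell\Big),
\]
which is \eqref{KSe}. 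I would then remark that this exhibits $\on{KS}_a(z)$ as a rank-one operator, with image spanned by $\sum_\ell z_a^{\ell-1}\nu_\ell$ and kernel the hyperplane $\{\sum_k b_kz_a^{k-1}=0\}\oplus H^1(C(z))/F^1$, matching the shape of the normalized $p$-curvature operator $\tilde C_a(z)$ in \eqref{red p-cu} under the identification of $F^1$ with the span of the one-dimensional $p$-hypergeometric solutions; this is the point of the lemma and the bridge to the appendix's comparison statement.
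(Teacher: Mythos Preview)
Your approach is exactly the one the paper uses: it simply says the formula follows from \eqref{dmg}, \eqref{gac}, and \eqref{ommu}, which is precisely the chain of substitutions you carry out.

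However, you have a computational slip that makes your discussion of ``normalization issues'' unnecessary. When you pass from \eqref{dmg} to the projection, you drop the overall factor $\tfrac12$: formula \eqref{dmg} gives
\[
\nabla_{\!\!\frac{\der}{\der z_a}}[\mu_k]=\tfrac12\big([\mu_{k-1}]+\dots+z_a^{k-2}[\mu_1]+z_a^{k-1}[\om_a]\big),
\]
so after killing the $[\mu_j]$ terms in the quotient you get $\on{KS}_a(z)[\mu_k]=\tfrac12\,z_a^{k-1}\,\overline{[\om_a]}$, not $z_a^{k-1}\,\overline{[\om_a]}$. Then \eqref{ommu} gives $\overline{[\om_a]}=-\tfrac{2}{D_a(z)}\sum_\ell z_a^{\ell-1}\nu_\ell$, and the $\tfrac12$ and the $2$ cancel to produce the coefficient $-1/D_a(z)$ in \eqref{KSe} on the nose. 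There is no need to appeal to any hidden normalization of the Poincar\'e element or of the dual basis; once you keep that $\tfrac12$, your computation is complete and matches the stated formula exactly.
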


\begin{proof}
This formula follows from \eqref{dmg}, \eqref{gac}, and \eqref{ommu}.
\end{proof}

Define the reduced Kodaira-Spencer map by the formula
 \bean
 \label{KSm}
\widetilde{\on{KS}}_a(z) 
\  :\  \sum_{k=1}^g (b_k\mu_k+c_k \nu_k) \
 \to\
\left(\sum_{k=1}^g b_k z_a^{k-1}\right)\left(\sum_{\ell=1}^gz_a^{\ell-1}\nu_k\right).
 \eean
The reduced map has the same kernel and image as the map $\on{KS}_a(z)$.

\begin{cor}
\label{cor KS-p}  

Formula \eqref{KSm} for the reduced Kodaira-Spencer maps 
$\widetilde{\on{KS}}_a :W\to W$, $a=1,\dots, 2g+1$, can be identified with the formula \eqref{atv}
for the reduced $p$-curvature operators $\tilde C_a(z): V \to  V$, $a=1,\dots,g$, if
 $\mu_k, \nu_k, z_a$ in formula \eqref{KSm} are identified with $v_k, w_k, z_a^p$
 in Section \ref{sec 8.2}.

\end{cor}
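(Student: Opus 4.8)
The plan is to prove the corollary by a term-by-term comparison of the two explicit formulas \eqref{atv} and \eqref{KSm} after the substitution indicated in the statement, so that no computation is needed beyond bookkeeping of indices. First I would fix the dictionary precisely: the basis vectors $v_k$ of $V$ correspond to the regular forms $\mu_k$ spanning $F^1$, the basis vectors $w_k$ of $V$ correspond to the quotient classes $\nu_k$ spanning $H^1(C(z))/F^1$, and every occurrence of $z_a$ in \eqref{KSm} (equivalently in \eqref{KSe}) is replaced by $z_a^p$.

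Under this dictionary the vector $K_a = w_1 + z_a^p w_2 + \dots + z_a^{p(g-1)} w_g$ of \eqref{atv} becomes $\sum_{\ell=1}^g z_a^{\ell-1}\nu_\ell$, which is exactly the vector appearing on the right of \eqref{KSm}; and the functional $L_a = x_1 + z_a^p x_2 + \dots + z_a^{p(g-1)} x_g$ of \eqref{atv}, in which $x_k$ is the coordinate dual to $v_k$, becomes the functional on $W$ sending $\sum_k(b_k\mu_k + c_k\nu_k)$ to $\sum_{k=1}^g b_k z_a^{k-1}$, since $x_k(v_j)=\delta_{kj}$ and $x_k(w_j)=0$ translate into ``read off the $\mu_k$-coefficient''. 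Therefore $\tilde C_a(z)(v) = L_a(v)K_a$ is carried precisely onto the map \eqref{KSm}. The structural point underlying this is that $\mu_1,\dots,\mu_g$ (spanning $F^1$) and $\nu_1,\dots,\nu_g$ (spanning $H^1(C(z))/F^1$) form for $W$ the analogue of a good basis of $V$ in the sense of Section~\ref{sec 8.5}: $F^1$ is annihilated by the Kodaira--Spencer maps just as the $v$-directions carry the nontrivial action of $\tilde C_a$, the quotient directions $\langle\nu_k\rangle$ are flat just as $V_{0,1} = \langle N^k(z)\rangle$ is, and $\sum_k\nu_k\wedge\mu_k$ represents the Poincar\'e element just as $\sum_k v_k\wedge w_k$ is proportional to $\mc D(z)$. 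All of this is established in the earlier subsections of the appendix and in Section~\ref{sec 8.5}, so it may simply be invoked.

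I expect no genuine obstacle: the content of the corollary is exactly the coincidence of the formula \eqref{KSm}, which rests on \eqref{dmg}, \eqref{gac}, and \eqref{ommu}, with the formula \eqref{atv}, which rests on Theorem~\ref{thm cu r=1} and the passage to a good basis, once $z_a$ is replaced by $z_a^p$. The only care required is to keep the ``$v$''/``$\mu$'' index matching straight and to apply the substitution consistently. Having set up the identification, one then transfers the kernel computations of Sections~\ref{sec 8} and \ref{sec 9} for the operators $\tilde C_a$ to the corresponding statements about the reduced Kodaira--Spencer maps $\widetilde{\on{KS}}_a$.
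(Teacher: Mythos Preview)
Your proposal is correct and matches the paper's approach: the corollary is stated there without proof, as it follows immediately from the term-by-term comparison of \eqref{atv} and \eqref{KSm} under the dictionary $\mu_k\leftrightarrow v_k$, $\nu_k\leftrightarrow w_k$, $z_a\leftrightarrow z_a^p$, exactly as you carry out.

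One small slip in your structural commentary: you write ``$F^1$ is annihilated by the Kodaira--Spencer maps just as the $v$-directions carry the nontrivial action of $\tilde C_a$,'' but these clauses say opposite things. In fact $F^1=\langle\mu_k\rangle$ is \emph{not} annihilated; it is the source of the nontrivial action, with image in $\langle\nu_k\rangle$, while the extension of $\widetilde{\on{KS}}_a$ to $W$ kills $\langle\nu_k\rangle$. This is precisely parallel to $\tilde C_a$ sending the $v_k$ into $\langle w_k\rangle$ and annihilating the $w_k$. The slip does not affect your argument, which rests entirely on the correct formula comparison in your first two paragraphs.
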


Notice that the $p$-curvature operators are defined in characteristic $p$,
 while the Kodaira–Spen\-cer maps are objects of deformation theory and do not require characteristic restrictions to be defined.

\begin{rem}
\label{rem Katz}  

The Katz $p$-curvature formula, \cite[Theorem 3.2]{K2} presents a $p$-curvature operator as a composition
of three maps where the middle map is the Kodaira-Spencer map and the first and third maps 
are suitable Cartier operators, see also \cite[Fromula (4.36)]{VV1}. The identification in Corollary \ref{cor KS-p}
hides the first and third operators of that composition by  choosing a good basis in Section \ref{sec 8}.

\end{rem}

The identification in Corollary \ref{cor KS-p} enables us to extend the results from Sections \ref{sec 8} and \ref{sec 9} to describe 
the intersection of the kernels of Kodaira–Spencer maps acting on \(\wedge^r W\). For instance, the results 
in Section \ref{sec 9} imply the following corollary.

\begin{cor} The intersection of kernels of the Kodaira-Spencer maps
$\on{KS}_a(z) :\wedge^3 W \to \wedge^3 W$ is of dimension $\binom{g}{3} + 2g-2$, and
the intersection of kernels with the primitive part $\mc P^3 \subset \wedge^3 W$
is of dimension
$\binom{g}{3} + g-2$.

\end{cor}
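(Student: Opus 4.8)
The plan is to reduce the statement to the two computations already carried out for $p$-curvature on $\wedge^3 V$ via the identification in Corollary \ref{cor KS-p}. First I would recall that the reduced Kodaira-Spencer maps $\widetilde{\on{KS}}_a(z):W\to W$ have, by Corollary \ref{cor KS-p}, exactly the same kernels and images as the reduced $p$-curvature operators $\tilde C_a(z):V\to V$ once we identify $\mu_k\leftrightarrow v_k$, $\nu_k\leftrightarrow w_k$, and $z_a\leftrightarrow z_a^p$ (and the same identification holds for the operators induced on $\wedge^3 W$ versus $\wedge^3 V$, since the extension to wedge powers is given by the same Leibniz formula in both cases, and taking intersections of kernels is insensitive to replacing an operator by another with identical kernel). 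Thus $\bigcap_{a=1}^{2g+1}\ker\big(\on{KS}_a(z):\wedge^3 W\to\wedge^3 W\big)$ is canonically identified with $\bigcap_{a=1}^{2g+1}\ker\big(\tilde C_a(z):\wedge^3 V\to\wedge^3 V\big)$, and likewise for the restrictions to the primitive parts $\mc P^3\subset\wedge^3 W$ and $\mc P_3^{\on{KZ}}(z)\subset\wedge^3 V$.

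Next I would assemble the dimension count on the $\wedge^3 V$ side, using the grading $\wedge^3 V=V_{3,0}\oplus V_{2,1}\oplus V_{1,2}\oplus V_{0,3}$ and the fact (noted after \eqref{CC=0} and in Section \ref{sec 9}) that $\tilde C_a$ shifts this grading down by one, so that an element lies in $\bigcap_a\ker\tilde C_a$ iff each of its graded components does. For the primitive part $\mc P_3^{\on{KZ}}(z)$, the graded pieces are $\mc P_{3,0}=V_{3,0}$, $\mc P_{2,1}$, $\mc P_{1,2}$, and $\mc P_{0,3}=V_{0,3}$ (this is the $r=3$ analogue of Lemma \ref{lem g3k}; one uses that the good basis makes the Poincar\'e element $\sum v_i\wedge w_i$, so the primitivity condition — being in the kernel of wedging repeatedly with this element — only cuts down the pieces where $v$'s and $w$'s are mixed). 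By Proposition \ref{thm Kro}, $\mc K_{3,0}=0$; by Lemma \ref{lem a.1}, the intersection of kernels in $V_{2,1}$ is zero (and the same proof restricts to $\mc P_{2,1}$, or one simply notes $\mc K_{2,1}\subseteq$ that zero space); by Theorem \ref{thm ker 12}, the intersection of kernels in $V_{1,2}$ equals $X_{1,2}$ of dimension $2g-2$, whose intersection with $\mc P_{1,2}$ has dimension $g-2$ by Lemma \ref{lem K 12}; and $V_{0,3}=\mc P_{0,3}$ has dimension $\binom{g}{3}$. Summing: on $\mc P_3^{\on{KZ}}(z)$ the intersection has dimension $0+0+(g-2)+\binom{g}{3}=\binom{g}{3}+g-2$, and on all of $\wedge^3 V$ it has dimension $0+0+(2g-2)+\binom{g}{3}=\binom{g}{3}+2g-2$. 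Transporting these numbers back through the identification of the previous paragraph gives the two asserted dimensions for the Kodaira-Spencer kernels on $\wedge^3 W$ and on $\mc P^3\subset\wedge^3 W$.

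The only genuine point requiring care — the main obstacle, such as it is — is verifying that the identification of Corollary \ref{cor KS-p}, which is stated at the level of the operators $\widetilde{\on{KS}}_a$ versus $\tilde C_a$ on $W$ versus $V$, propagates correctly to the induced operators on $\wedge^3 W$ versus $\wedge^3 V$ together with the matching of the respective primitive subspaces. This is purely formal: the wedge-power operator is defined by the same derivation (Leibniz) rule in both settings, the grading $V_{i,j}$ corresponds to the grading of $\wedge^3 W$ by number of $F^1$-factors, the Poincar\'e element $\sum_k\nu_k\wedge\mu_k\in\wedge^2 W$ corresponds to $\sum_i w_i\wedge v_i\in\wedge^2 V$ under the basis identification, hence the kernels of iterated wedging with these elements correspond, and therefore $\mc P^3\subset\wedge^3 W$ corresponds to $\mc P_3^{\on{KZ}}(z)\subset\wedge^3 V$. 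Once this bookkeeping is made explicit, the corollary follows immediately from the cited results of Section \ref{sec 9}, with no further computation.
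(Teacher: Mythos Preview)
Your proposal is correct and follows exactly the approach the paper takes: the corollary is stated as an immediate consequence of the identification in Corollary \ref{cor KS-p} together with the results of Section \ref{sec 9} (Proposition \ref{thm Kro}, Lemma \ref{lem a.1}, Theorem \ref{thm ker 12}, and Lemma \ref{lem K 12}), and your write-up simply spells out that transport in detail. The paper itself gives no argument beyond the sentence preceding the corollary, so your version is in fact more explicit than what appears there.
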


\end{document}